\def\@cite#1#2{{\m@th\upshape\bfseries%
[{#1\if@tempswa{\m@th\upshape\mdseries, #2}\fi}]}}
\theoremstyle{plain}
\newtheorem{theorem}{Theorem}[section]
\newtheorem{corollary}[theorem]{Corollary}
\newtheorem{proposition}[theorem]{Proposition}
\newtheorem{lemma}[theorem]{Lemma}
\theoremstyle{definition}
\newtheorem{definition}[theorem]{Definition}
\newtheorem{example}[theorem]{Example}
\newtheorem{examples}[theorem]{Examples}
\newtheorem{remark}[theorem]{Remark}
\newtheorem{question}[theorem]{Question}
\newtheorem*{acknow}{Acknowledgements}
\theoremstyle{remark}
  \newcommand{\A}{{\mathcal{A}}}
  \newcommand{\B}{{\mathcal{B}}}
  \newcommand{\E}{{\mathcal{E}}}
  \newcommand{\F}{{\mathcal{F}}}
  \newcommand{\G}{{\mathcal{G}}}
  \newcommand{\I}{{\mathcal{I}}}
  \newcommand{\J}{{\mathcal{J}}}
  \newcommand{\K}{{\mathcal{K}}}
\renewcommand{\L}{{\mathcal{L}}}
  \newcommand{\M}{{\mathcal{M}}}
\renewcommand{\O}{{\mathcal{O}}}
\renewcommand{\P}{{\mathcal{P}}}
  \newcommand{\Q}{{\mathcal{Q}}}
\renewcommand{\S}{{\mathcal{S}}}
  \newcommand{\T}{{\mathcal{T}}}
  \newcommand{\U}{{\mathcal{U}}}
  \newcommand{\V}{{\mathcal{V}}}
  \newcommand{\W}{{\mathcal{W}}}
  \newcommand{\X}{{\mathcal{X}}}
\newcommand{\eps}{\varepsilon}
\def\al{\alpha}
\def\be{\beta}
\def\ga{\gamma}
\def\de{\delta}
\def\ze{\zeta}
\def\la{\lambda}
\def\La{\Lambda}
\def\om{\omega}
\def\Om{\Omega}
\def\si{\sigma}
\def\Si{\Sigma}
\def\Ups{\Upsilon}
\newcommand\vth{\vartheta}
\newcommand\vpi{\varphi}
\newcommand{\bB}{\mathbb{B}}
\newcommand{\bC}{\mathbb{C}}
\newcommand{\bD}{\mathbb{D}}
\newcommand{\bF}{\mathbb{F}}
\newcommand{\bN}{\mathbb{N}}
\newcommand{\bT}{\mathbb{T}}
\newcommand{\bZ}{\mathbb{Z}}
\newcommand{\fA}{{\mathfrak{A}}}
\newcommand{\fF}{{\mathfrak{F}}}
\newcommand{\fM}{{\mathfrak{M}}}
\newcommand{\fT}{{\mathfrak{T}}}
\newcommand{\Bs}{{\mathbf{s}}}
\newcommand{\Bt}{{\mathbf{t}}}
\newcommand{\Bv}{{\mathbf{v}}}
\newcommand{\Bw}{{\mathbf{w}}}
\newcommand{\foral}{\text{ for all }}
\newcommand{\qand}{\quad\text{and}\quad}
\newcommand{\qif}{\quad\text{if}\quad}
\newcommand{\qfor}{\quad\text{for}\ }
\newcommand{\qotherwise}{\quad\text{otherwise}}
\newcommand{\ca}{\mathrm{C}^*}
\newcommand{\cenv}{\mathrm{C}^*_{\textup{env}}}
\newcommand{\ol}{\overline}
\newcommand{\wt}{\widetilde}
\newcommand{\wh}{\widehat}
\newcommand{\lh}{\, \lhd \,}
\newcommand{\ad}{\operatorname{ad}}
\newcommand{\alg}{\operatorname{alg}}
\newcommand{\diag}{\operatorname{diag}}
\newcommand{\End}{\operatorname{End}}
\newcommand{\ev}{\operatorname{ev}}
\newcommand{\id}{{\operatorname{id}}}
\newcommand{\mt}{\emptyset}
\newcommand{\Mult}{\operatorname{Mult}}
\newcommand{\ran}{\operatorname{Ran}}
\newcommand{\spn}{\operatorname{span}}
\newcommand{\supp}{\operatorname{supp}}
\newcommand{\sca}[1]{\left\langle#1\right\rangle} 
\newcommand{\nor}[1]{\left\Vert #1\right\Vert} 
\newcommand{\un}[1]{{\underline{#1}}} 
\begin{document}

\title[Operator Algebras and Monomial Ideals]{Operator Algebras of Monomial Ideals in Noncommuting Variables}

\author[E.T.A. Kakariadis]{Evgenios T.A. Kakariadis}
\address{School of Mathematics and Statistics\\ Newcastle University\\ Newcastle upon Tyne\\ NE1 7RU\\ UK}
\email{evgenios.kakariadis@ncl.ac.uk}

\author[O.M. Shalit]{Orr Moshe Shalit}
\address{Department of Mathematics\\Technion - Israel Institute of Technology\\Haifa 3200003\\Israel}
\email{oshalit@technion.ac.il}

\thanks{2010 {\it  Mathematics Subject Classification.}
47L65, 47L75, 46L08, 46L55, 46L40, 46L89}

\thanks{{\it Key words and phrases:} C*-correspondences, C*-envelope, hyperrigidity, monomial ideals, nonselfadjoint operator algebras, operator algebras on Fock spaces, subproduct systems, tensor algebras, subshifts.}

\maketitle


\begin{abstract}
We study operator algebras arising from monomial ideals in the ring of polynomials in noncommuting variables, through the apparatus of subproduct systems and C*-correspondences.
We provide a full comparison amongst the related operator algebras.
For our analysis we isolate a partially defined dynamical system, to which we refer as the {\em quantised dynamics} of the monomial ideal.

In addition we revisit several previously considered constructions.
These include Matsumoto's subshift C*-algebras, as well as the tensor and the Pimsner algebras associated with dynamical systems or graphs.
We sort out the various relations by giving concrete conditions and counterexamples that orientate the operator algebras of our context.

It appears that the boundary C*-algebras do not arise as the quotient with the compact operators unconditionally.
We establish a dichotomy to this effect by examining the resulting tensor algebras.
We identify their boundary representations, we analyse their C*-envelopes, and we give criteria for hyperrigidity.
Moreover we completely classify them in terms of the data provided by the monomial ideals.
For tensor algebras of C*-correspondences and bounded isomorphisms this is achieved up to the level of local conjugacy (in the sense of Davidson and Roydor) for the quantised dynamics.
For tensor algebras of subproduct systems and algebraic isomorphisms this is achieved up to the level of equality of monomial ideals modulo permutations of the variables.

In the process we accomplish more in different directions.
Most notably we show that tensor algebras form a complete invariant for isomorphic (resp. similar) subproduct systems of homogeneous ideals up to isometric (resp. bounded) isomorphisms.

The results on local conjugacy are obtained via an alternative proof of the breakthrough result of Davidson and Katsoulis on piecewise conjugate systems.
For our purposes we use appropriate compressions of the Fock representation.
We then apply this alternative proof locally for the partially defined quantised dynamics.
In this way we avoid the topological graphs machinery and pave the way for further applications.
These include operator algebras of dynamical systems over commuting contractions or over row commuting contractions.
\end{abstract}


\tableofcontents

\section{Introduction}

A contemporary trend in noncommutative geometry is to encode geometrical and topological objects in terms of operator algebras.
This program is developed around two main objectives:

\begin{quoting}
\noindent {\bf (a)} Use C*-algebras and nonselfadjoint operator algebras as an invariant (resp. a complete invariant) for classifying (resp. encoding) the objects.

\noindent {\bf (b)} Explore the passage from intrinsic properties of the object into properties of the associated operator algebras.
\end{quoting}

\noindent In turn the invariants of the related operator algebras may be used to classify the objects.
In this course of study one has to answer two interrelated problems that specify the context:

\begin{quoting}
\noindent {\bf Problem 1.} Which are the (desirable) features of the object that determine the associated operator algebra?

\noindent {\bf Problem 2.} What is the (desirable) level of equivalence for classifying objects in the ``same'' family?
\end{quoting}

\noindent The enumeration is in fact redundant; \emph{an} answer to any of the above problems initiates a new program in finding \emph{an} answer to the other.
There is an extensive literature on the subject and we cannot list all papers.
We will only elaborate on the direct connections with results concerning:  graphs \cite{KatKri04, Sol04}, dynamical systems \cite{DavKak14, DavKat08, DavKat11, KakKat12}, topological graphs \cite{DavRoy11}, homogeneous ideals \cite{DRS11}, complex analytic varieties \cite{DRS15}, stochastic matrices \cite{DorMar14, DorMar15}, and C*-correspondences \cite{AEE98, KakKat14, MPT08, MuhSol00}.

The structure under consideration in the current paper is monomial ideals in the ring of polynomials in noncommuting variables.
This links to an ongoing effort to develop operator-algebraic geometry in the unit ball \cite{DRS11,Pop06}.
The goal is to study a number of C*- and nonselfadjoint algebras related to one monomial ideal, and study those in conjunction with Questions {\bf (a)-(b)} and Problems {\bf (1)-(2)} described above.

\subsection*{Motivation}

Our approach relies on the interplay between C*-algebras and nonselfadjoint operator algebras related to three classes: C*-correspondences, subshifts, and subproduct systems.
All three form active research areas with a rich literature in their own merits.
We provide a short presentation on key elements that we use in the current paper.

The theory of operator algebras of C*-correspo\-ndences has been under considerable development since their initiation by Pimsner \cite{Pim97}.
They have become a central part in the theory of C*-algebras covering a broad number of constructions that generalise the Toeplitz algebra and the Cuntz algebra.
Notable examples of Pimsner algebras are C*-algebras associated with topological graphs (hence Cuntz-Krieger algebras, graph algebras, dynamical systems etc.), Hilbert bimodules, and bimodules of unital completely positive maps (see \cite{Kat03} and \cite[Examples 4.6.10--4.6.12]{BroOza08}).
In addition the class of tensor algebras arising from C*-correspondences includes Peters' semicrossed product \cite{Pet84} and Popescu's noncommutative disc algebras \cite{Pop96}.

A \emph{subshift} is the dynamical system obtained by restricting the left shift $\sigma$ on $\{1, \ldots, d\}^\bZ$ to a closed invariant subspace $\Lambda \subseteq \{1,\ldots, d\}^\bZ$.
The study of subshifts is a significant branch of topological dynamics, e.g. \cite{LinMar95}.
Motivated by the Cuntz-Krieger algebras \cite{CunKri80}, Matsumoto \cite{Mat97} introduced a way to construct a C*-algebra out of a subshift and initiated an in-depth study of these so-called \emph{subshift C*-algebras}.
In \cite{Mat97} Matsumoto constructs $\ca(T)$ and then concentrates on its quotient by the compacts.
In a series of papers (e.g. \cite{Mat98, Mat99}) he shows how the C*-algebraic structure of this quotient is determined by the topological and combinatorial properties of $\La$.
Later Carlsen and Matsumoto \cite{CarMat04} proposed another construction that differs in general from the one in \cite{Mat97}.
The reason for doing so was the discovery of a flaw in one of the earlier papers.
They found that many of the theorems in the earlier papers which relied on the flawed one are true if one replaces the original algebra by their new construction, see \cite[Introduction]{CarMat04}.
In turn the construction of \cite{CarMat04} did not satisfy a desirable universal property, and around the same time Carlsen \cite{Car08} proposed a third subshift C*-algebra by using an associated C*-correspondence.
In addition the object in \cite{Car08} is shown to be an invariant for conjugacy of subshifts.
Matsumoto \cite{Mat02} introduced later C*-algebras associated with symbolic matrix systems and $\la$-graph systems that generalise the previous constructions.

Subshift algebras form an example of operator algebras related to subproduct systems \cite[Section 12]{ShaSol09}.
A \emph{subproduct system} is a collection of Hilbert (C*- or W*-) correspondences $X = \{X(s)\}_{s\in\S}$ indexed by a semigroup $\S$, which carries an associative family of multiplication operators $U_{s,t} \colon X(s) \otimes X(t) \rightarrow X(s+t)$.
They were formally introduced in \cite{ShaSol09} and \cite{BhaMuk10} as a generalisation of \emph{product systems} \cite{Arv03}, and now they form a basic technical tool in the analysis of \emph{noncommutative dynamics}, e.g. semigroups of CP-maps and $*$-endomorphisms.
In fact they were present in the literature in various guises prior to their formal introduction, see \cite{Arv03, BhaSke00, Mar03, MuhSol02}.
Simple cases like $\S = \bN$ are still exploitable with numerous successful applications \cite{BhaMuk10, Muk11, ShaSke11, ShaSke15, ShaSol09, Ver16}.

The operator algebras of a subproduct system are defined concretely by an appropriate compression on the Fock representation of $X(1)$ and quotients by the compacts.
The motivation was to approach the unstudied subproduct systems from the well trodden path of C*-correspondences.
Soon it was realised that there are substantial differences, yet there is enough structure to be tractable to analysis.
Several subsequent works have thus considered operator algebras of subproduct systems \emph{per se} \cite{DRS11, DorMar14, Gur12, KenSha14, Vis11, Vis12}.
There are also works without regard to quantum dynamics or operator algebras, e.g. \cite{GerSke14, Tsi09a, Tsi09b}.
Examples include the continuous functions on the unit sphere $C(\partial \bB_d)$, operator algebras of polynomial relations, algebras of analytic functions on homogeneous varieties of the unit ball, as well as algebras of noncommutative analytic functions on homogeneous subvarieties of the noncommutative unit ball.

A main obstacle so far in the category of subproduct systems is the lack of a universal property for the concretely defined Toeplitz and Cuntz algebras, that could act in analogy to the theory of C*-correspondences.
For example Katsoulis and Kribs \cite{KatKri06} use such results of Katsura \cite{Kat04} to obtain that the C*-envelope (in the sense of Arveson \cite{Arv69}) of the tensor algebra of a C*-correspondence is its Cuntz-Pimsner algebra.
Such a result is known only in some cases for product systems, e.g. \cite{DFK14}, and does not hold for subproduct systems.
In fact, until recently all examples indicated that the C*-envelope of the tensor algebra of a subproduct system could be either its Toeplitz algebra or its Cuntz algebra.
In the course of writing this paper we were informed by Dor-On and Markiewicz that they discovered a subclass for which there may be several possibilities between the Toeplitz and the Cuntz algebras \cite{DorMar15}.
The on-going program asks for properties that control such behaviour and includes a number of further questions about: (a) nuclearity, exactness, ideal structure, KMS structure for the C*-algebras, and (b) boundary representations, hyperrigidity, (hyper)reflexivity for the tensor algebras.
A unified treatment for all the aforementioned objects cannot be proposed at this moment.
Our study on monomial ideals should be seen as one more step in this larger program.

\subsection*{Main results}

We fix an orthonormal basis $\{e_1, \ldots, e_d\}$ for $\bC^d$ and we write $e_{\mu} = e_{\mu_1} \otimes \cdots \otimes e_{\mu_n}$ for every word $\mu = \mu_1 \dots \mu_n  \in \bF_+^d$.
Given a monomial ideal $\I$ in $\bC\sca{x_1,\dots,x_d}$ let $X = (X(n))$ such that
\[
X(n) = (\bC^d)^{\otimes n} \ominus \{e_\mu \mid \ol{x}^\mu \in \I\},
\]
and let the multiplication $U_{n,m}$ be concatenation of words followed by the projection on $X(n+m)$.
We fix the set $\La^* = \{\mu \in \bF_+^d \mid \ol{x}^\mu \notin \I \}$ of {\em allowable words}.
Let us write $\F_X = \oplus_{n \geq 0} X(n)$ and let the shift operators $\{T_i\}_{i=1}^d$ defined by
\[
T_i e_\nu = \begin{cases} e_{i\nu} & \text{ if } i\nu \in \La^*,\\ 0 & \text{ otherwise}. \end{cases}
\]
The C*-algebras
\[
\ca(T) :=\ca(I, T_i \mid i=1, \dots, d) \qand \ca(T)/\K(\F_X)
\]
are the \emph{Toeplitz} and the \emph{Cuntz} algebra of $X$.
The nonselfadjoint subalgebra
\[
\A_X : = \ol{\alg}\{I, T_i \mid i=1, \dots, d\}
\]
of $\ca(T)$ is the \emph{tensor algebra} of $X$.
It may happen that $\ca(T)/\K(\F_X)$ remembers remarkably less than the original data.
For example, if $\I = \sca{xx, xy} \lhd \bC\sca{x,y}$ then $\ca(T) = \ca(T_x, T_y)$ is highly not commutative whereas $\ca(T)/\K(\F_X) \simeq C(\bT)$ (Example \ref{E: ss not sft}).

The family $\{T_i\}_{i=1}^d$ satisfies a number of properties; for example it is orthogonal and $T_\mu^* T_\mu T_i = T_i T_{\mu i}^* T_{\mu i}$.
Hence the linear space
\[
E = \ol{\spn} \{ T_i a \mid a \in A, i=1, \dots, d\},
\]
becomes a C*-correspondence over the commutative unital C*-algebra
\[
A = \ca(T_\mu^* T_\mu \mid \mu \in \La^*).
\]
Consequently we obtain the Toeplitz-Pimsner algebra $\T_E$, the Cuntz-Pimsner algebra $\O_E$, and the tensor algebra $\T_E^+$ in the sense of Muhly and Solel \cite{MuhSol98}.
We give a list of equivalent conditions for the left action to be injective (Proposition \ref{P: kernel}).
For the discussion let us mention that the left action is injective if and only if there exists an $i_0 \in\{1, \dots, d\}$ such that $\La^* i_0 \subseteq \La^*$.
Notice that the left action is not injective for the example of $\I = \sca{xx,xy} \lhd \bC\sca{x,y}$.

In the first part of the paper we settle the relation between the appearing operator algebras  (Theorem \ref{T: dichotomy}).
We show that $\ca(T)$ is the $J$-relative Cuntz-Pimsner algebra of $E$ for the ideal $J$ of $A$ generated by
\[
\{I - T_\mu^*T_\mu \mid \mu \in \La^*\},
\]
whereas $\ca(T)/\K(\F_X)$ is the $A$-relative Cuntz-Pimsner algebra of $E$.
In particular there are canonical $*$-epimorphisms
\[
\T_E \stackrel{\phi_1}{\longrightarrow} \ca(T) \stackrel{\phi_2}{\longrightarrow} \O_E \stackrel{\phi_3}{\longrightarrow} \ca(T)/\K(\F_X)
\]
where:
\begin{enumerate}
\item[{\bf (a)}] $\phi_1$ is injective if and only if $\I = (0)$;
\item[{\bf (b)}] exactly one of the $\phi_2$ and $\phi_3$ is injective; and
\item[{\bf (c)}] $\phi_3$ is injective if and only if the left action is injective.
\end{enumerate}
Thus we get that $\A_X \subseteq \T_E^+$ (Corollary \ref{C: tensor}).
Item (c) above explains why we may lose data (even as much as one generator) when passing to the quotient.

There are two immediate remarks following these first results.
First we get an example of a class of subproduct systems whose operator algebras coincide in a non-trivial way with those of a class of C*-correspondences.
This is quite surprising and contradicts to what was thought in the past\footnote{After this paper was complete, we were made aware of the paper \cite{AS08}, which considers {\em interacting Fock spaces}. 
Interacting Fock spaces provide a framework that encompasses subproduct systems. 
A major theme in \cite{AS08} was the problem of representing the creation operators on an interacting Fock space by operators on the full Fock module of a C*-correspondence, similar to the way we represent $T$ in $E$ and in $\T_E$. }, e.g. \cite{Vis12}.
Secondly we get that an appropriate Cuntz-type C*-algebra for a subshift is either $\ca(T)$ or $\ca(T)/\K(\F_X)$ accordingly to the form of the subshift.
This is again surprising and should be compared to Matsumoto's work where $\ca(T)/\K(\F_X)$ is considered \emph{a priori}, e.g. \cite{Mat97}.

Our analysis works in parallel with what is known for graphs with sources which were not tractable before Katsura's work \cite{Kat04}.
In analogy the new treatment of subshift C*-algebras and the identification of $\O_E$ that we offer here can work effectively to treat general subshifts, that may not satisfy the property (I) of Matsumoto and Carlsen \cite{Car08, CarMat04, Mat97, Mat98, Mat99}.
In Section \ref{S: compare} we include a review of a number of constructions, including the previous approaches on subshifts, with which we compare our findings. 
On the one hand, we find that our construction differs from ones that have been considered.
On the other hand, in Section \ref{Ss: OE as a graph algebra}, we show that when $E$ arises from a sofic subshift, $\O_E$ arises via two familiar constructions: it the {\em graph C*-algebra} of the {\em follower set graph of the subshift}.

In this setting,  any structure of $\I$ is tracked via the related $E$.
We define an intrinsic dynamical system that gives the encoding by
\[
\al_i \colon A \to A : a \mapsto T_i^*a T_i, \foral i=1, \dots ,d.
\]
We coin $(A,\al) \equiv (A,\al_1, \dots, \al_d)$ as \emph{the quantised dynamics of $\I$} (Section \ref{Ss: qd}).
Even though the quantised dynamics form a complete \emph{conjugacy} invariant for the ideal $\I$ (Theorem \ref{T: qd complete invariant}), some elements are not recognised by $E$.
Following Davidson and Katsoulis \cite{DavKat11} and Davidson and Roydor \cite{DavRoy11} we show that unitary equivalence of the C*-correspondences is equivalent to {\em local conjugacy} (which is a weaker condition than conjugacy) of the quantised dynamics.
By universality, local conjugate systems have $*$-isomorphic Toeplitz-Pimsner algebras and completely isometric tensor algebras.
By the co-universal property of the C*-envelope and by using \cite{KatKri06} this passes to the Cuntz-Pimsner algebras.
In addition we show that this holds for the C*-algebras $\ca(T)$ and $\ca(T)/\K(\F_X)$ as well (Corollary \ref{C: rel CP}).

Next we turn our focus to the structure of $\T_E^+$ and $\A_X$.
By using a universal property for $\O_E$ (Theorem \ref{T: CP}) and \cite{KatKri06} we prove that $\T_E^+$ is hyperrigid (Theorem \ref{T: hyper T_E^+}).
If $\I$ is finitely generated then the same holds for $q(\A_X)$ where $q \colon \ca(T) \to \ca(T)/\K(\F_X)$ is the quotient map (Proposition \ref{P: hyper A_X}).
Thus $\ca(T)/\K(\F_X)$ is the C*-envelope of $\A_X$ if $q|_{\A_X}$ is completely isometric, and we provide a criterion for this to happen (Theorem \ref{T: cenv A_X}).
This criterion becomes a necessary condition for a big class of monomial ideals, which contains subshifts of finite type.
Hyperrigidity is further used to obtain a universal property for $\ca(T)/\K(\F_X)$ when $\I$ is of finite type (Theorem \ref{T: universal}).
This is quite pleasing as it establishes a strong interaction between C*-algebras and nonselfadjoint operator algebras.

We already commented on that $\T_E^+$ is an isometric isomorphic invariant for local conjugate quantised dynamics.
Remarkably the converse also holds, and moreover it holds for continuous isomorphisms (Corollary \ref{C: classification}).
This can be derived by the results of Davidson and Roydor \cite{DavRoy11} (see Remark \ref{R: DavRoy11} as well).
Nevertheless, we do so by applying the tools we exhibit in the appendix instead of the topological graph language.
In particular we provide an alternative proof of \cite[Theorem 3.22]{DavKat11}.
Our remark here is that one can work directly on an appropriate compression of the Fock representation.
We then obtain the proof of Corollary \ref{C: classification} by applying locally these ideas.

On the other hand $\A_X$ appears to provide a sharper encoding of the monomial ideals.
This is not a coincidence, as $\A_X$ is in a sense the universal operator algebra generated by a row contraction satisfying the relations in $\I$.
Therefore its space of (completely contractive) representations is parameterized by the noncommutative variety
\[
V(\I) = \{\un{S} = [S_1, \dots, S_d] \in (\B(H)^d)_1 \mid p(\un{S}) = 0 \textrm{ for all } p\in \I \}.
\]
However, our results are stronger than expected.
We show that $\A_X$ is a complete (algebraic isomorphic) invariant for monomial ideals that coincide up to a permutation of the symbols (Theorem \ref{T: class sps}).
In the process we show that this is true for $\bC\sca{x_1,\dots,x_d}/\I$ up to graded algebraic isomorphisms.
In contrast to what one would expect this behaviour is not met in other settings and it is quite unique to monomial ideals (Remark \ref{R: rig}).

Key to the proof of Theorem \ref{T: class sps} is Lemma \ref{L: vac pre} that applies in a more general setting.
We prove that algebraic isomorphisms between tensor algebras of subproduct systems related to \emph{homogeneous} ideals can be substituted by graded ones.
Then by using results of the second author with Davidson and Ramsey \cite{DRS11}, and of Dor-On and Markiewicz \cite{DorMar14} we settle the following classification problem (Theorem \ref{T: sps cla}): tensor algebras form a complete invariant up to isometric isomorphisms (resp. bounded isomorphisms) for isomorphic (resp. similar) subproduct systems of homogeneous ideals.

\subsection*{Open Questions}

Our alternative proof of \cite[Theorem 3.22]{DavKat11} is flexible enough to treat other classes of operator algebras related to classical systems.
The key requirement for applying our arguments is for a specific representation on a finite Hilbert space to be completely contractive for the class of the operator algebras under study.
We briefly describe some more examples of this phenomenon in Section \ref{S: app pc}.
We reserve the full discussion on general classes for a forthcoming project.

Some of our intermediate results are not obtained in the full generality of monomial ideals.
In Questions \ref{Q: hyperrigidity}, \ref{Q: aut cont}, and \ref{Q: aut cont 2} we gather some of the possible considerations.
For the conclusion let us add some more that appear to be programs on their own.

There are several notions of equivalence for subshifts such as conjugacy and flow equivalence.
It will be interesting to research similarities and differences with local conjugacy of the quantised dynamics.
Since local conjugacy (of the quantised dynamics) requires the same number of symbols we expect it to be stronger than conjugacy (of the subshifts).
However it is reasonable to ask whether local conjugate systems imply conjugacy on the corresponding edge shifts.
On the other hand flow equivalence is translated into strong shift equivalence of the associated $0$-$1$ matrices \cite{ParSul75, Wil74} (see also \cite{LinMar95}).
There is also a fourth equivalent relation, that of shift equivalence \cite{Wil74}.
It is reasonable to ask for the connections with local conjugacy.

The related C*-correspondences that we provide herein suggest a second direction.
The first author and Katsoulis \cite{KakKat14} have formulated shift equivalence relations for C*-correspon\-dences, following the work of Abadie, Eilers and Exel \cite{AEE98}, of Muhly and Solel \cite{MuhSol00}, and of Muhly, Pask and Tomforde \cite{MPT08}.
A combination of these works shows that these relations provide strong Morita equivalence of the Cuntz-Pimsner algebras.
A further project is to study this question for the relative Cuntz-Pimsner algebras $\ca(T)$.

Moreover another direction is to analyse the case of subshift matrix systems and $\la$-graph systems under the new prism we offer by Theorem \ref{T: dichotomy}.
A similar phenomenon is expected to appear, as \cite[Theorem A, equation 1.3]{Mat02} suggests.
Again we anticipate that the appropriate C*-algebra depends on the injectivity of a particular C*-correspondence.
We remark that the construction in \cite[Section 6]{Mat02} does not suffice for this purpose: in the case of subshifts it coincides with $q(E)$ instead of $E$ (see also Remark \ref{R: quotient corre}).

Theorem \ref{T: class sps} implies that the tensor algebras of the subproduct systems completely encode the subshifts, hence offering a complete invariant for languages.
All results include (in fact they are stronger in the case of) two-sided subshifts, and in particular for deterministic automata.
It now looks reasonable to move to other directions, for example towards non-deterministic automata or multivariable subshifts.
The language of subproduct systems is flexible enough to encode such structures and accommodate such results.

\begin{acknow}
The authors acknowledge support from London Ma\-thematical Society (Scheme 4, Grant Ref: 41411). The second author was partially supported by Israel Science Foundation Grant no. 474/12, by EU FP7/2007-2013 Grant no. 321749, and by GIF Grant no. 2297-2282.6/20.1.

The authors would like to thank Guy Salomon for the helpful remarks and comments.

The first author would like to dedicate this paper to Doukissa Markopoulou.
Thank you for your courage; life became darker but the light stays on.
\end{acknow}

\section{Preliminaries}\label{S: defn}

\subsection{Dilation theory}

The reader should be well acquainted with the representation theory of nonselfadjoint operator algebras \cite{BleLeM04, Pau02}.
For this paper a nonselfadjoint operator algebra will be a norm-closed non-involutive subalgebra of $\B(H)$ for a Hilbert space $H$.
The morphisms in the category of operator algebras consist of completely contractive homomorphisms, which we will refer to as \emph{representations}.

Arveson \cite{Arv69} noticed that a nonselfadjoint operator algebra $\A$ may admit a number of completely isometric homomorphisms $\iota_k \colon \A \to \B(H_k)$ such that $\ca(\iota_k(\A))$ is not $*$-isomorphic to $\ca(\iota_{k'}(\A))$ for $k \neq k'$.
Every $\ca(\iota_k(\A))$ is called \emph{a C*-cover of $\A$}.
This striking event can be realised in the sense that different algebraic relations (even $*$-algebraic relations) on the generators may define the same object.
Thus (and in contrast to C*-algebras) $*$-algebraic relations may not define uniquely universal objects.
It is dilation theory that facilitates the comparison and the identification of such objects.
The interested reader is addressed to \cite{DFK14} for a discussion on nonselfadjoint operator algebras relative to families of homomorphisms.

A second question that Arveson \cite{Arv69} posed was whether there is a co-universal C*-cover, with a role similar to the one that the injective envelope plays in ring theory.
Hamana \cite{Ham79} showed that this is indeed true: given a nonselfadjoint operator algebra $\A$ there is a C*-cover, denoted by $\cenv(\A)$, such that for any other C*-cover $\ca(\iota(\A))$ there exists a necessarily unique $*$-epimorphism $\Phi \colon \ca(\iota(\A)) \to \cenv(\A)$ such that $\Phi \iota (a) = a$ for all $a\in \A$.
The kernel of $\Phi$ is called \emph{the \v{S}ilov ideal of $\A$} in analogy to the \v{S}ilov boundary of commutative Banach algebras.
This result holds for unital operator spaces as well.
The first author \cite{Kak11-2} has provided an elementary proof of Hamana's Theorem.

Arveson's motivation was to find an interplay between the C*-envelope and dilation theory.
A representation $\rho \colon \A \to \B(K)$ is a \emph{dilation} of a representation $\phi \colon \A \to \B(H)$ if $H \subseteq K$ and $P_H \rho(a)|_H = \phi(a)$ for all $a\in \A$.
Dritschel and McCullough \cite{DriMcC05} showed that every completely isometric representation $\phi$ of $\A$ admits a maximal dilation $\rho$ (in the sense that all dilations of $\rho$ are trivial).
Remarkably they then obtain that $\cenv(\A) \simeq \ca(\rho(\A))$.
The interested reader is addressed to \cite{KakNotes} for an overview on the subject.

It follows that the maximal representation $\rho \colon \A \to \B(K)$ (that is, $\rho$ admits only trivial dilations) extends to a unique $*$-representation $\wt{\rho} \colon \cenv(\A) \to \B(K)$ \cite{Arv08}.
It remains an open problem whether the converse of this scheme holds.
To capture this property, Arveson \cite{Arv11} introduced the notion of hyperrigidity.
An operator algebra $\A \subseteq \ca(\A)$ is \emph{hyperrigid} if for every faithful $*$-representation $\pi \colon \ca(\A) \to \B(K)$ the restriction $\pi|_\A$ is maximal.
Then the same is true for non-injective $*$-representations $\pi$ of $\ca(\A)$, and $\ca(\A)$ is the C*-envelope of $\A$.

We mention that there is also the notion of \emph{the Choquet boundary}.
In fact Arveson's initial vision was to derive the existence of the C*-envelope through it.
He was able to achieve this in the separable case \cite{Arv08}, and eventually Davidson and Kennedy \cite{DavKen13} solved this long standing open problem.

\subsection{C*-correspondences}\label{S: corre}

The reader should be well acquainted with the general theory of Hilbert C*-modules \cite{Lan95, ManTro01}.
Throughout the last 15 years there have been many influential authors working on C*-correspondences (e.g.
\cite{AEE98, DPZ98, DykShl01, FMR03, FowRae99, KatKri06, KPW98, Kwa11, MPT08, MuhSol98, MuhSol00, MuhTom04, Pas73} to mention but a few) starting with the seminal work of Pimsner \cite{Pim97}.
The notation and definitions have been under several changes and in what follows we will try to highlight the main features by fixing notation for this paper.
Mainly we follow the breakthrough work of Katsura \cite{Kat04}.
Our suggestions to the reader include \cite{BroOza08} for an introduction to the language of C*-correspondences, and \cite{Kak13-2} for a brief history on the gauge invariant uniqueness theorem.

A \emph{C*-correspondence ${}_A E_A$} is a right Hilbert C*-module $E$ over a C*-algebra $A$ that admits a left action by a $*$-homomorphism $\phi_E \colon A \to \L(E)$.
We will often write $E$ instead of ${}_A E_A$ for simplicity.
A C*-correspondence is called \emph{regular} if it is \emph{injective} (i.e. $\phi_E$ is injective) and $\phi_E(A)$ sits inside the compacts $\K(E)$.
It is called \emph{non-degenerate} if $\ol{\spn}\{\phi_E(a)\xi \mid a \in A, \xi \in E\} = E$.
It is called \emph{full} if $\ol{\spn}\{\sca{\xi,\eta} \mid \xi, \eta \in E\} = A$.

A pair $(\pi,t)$ is said to be a \emph{representation} of $E$ if $\pi \colon A \to \B(H)$ is a $*$-representation and $t \colon E \to \B(H)$ is a linear mapping such that
\[
t(\xi)^* t(\eta) = \pi(\sca{\xi,\eta}) \qand \pi(a) t(\xi) = t(\phi_E(a)\xi)
\]
for all $\xi,\eta \in E$ and $a\in A$; then we get $t(\xi)\pi(a) = t(\xi a)$ for free.
If $\pi$ is injective then $t$ is isometric.
Every such pair defines a $*$-representation $\psi_t \colon \K(E) \to \B(H)$ such that $\psi_t(\Theta^E_{\xi,\eta}) = t(\xi) t(\eta)^*$ for all $\xi, \eta \in E$ \cite{KPW98}.
A pair $(\pi,t)$ is said to admit a gauge action if there is a point-norm continuous family $\{\be_z\}_{z \in \bT}$ of $*$-automorphisms on the C*-algebra
\[
\ca(\pi,t) := \ca(\pi(a), t(\xi) \mid a\in A, \xi \in E)
\]
such that
\[
\be_z(\pi(a)) = \pi(a) \qand \be_z(t(\xi)) = z t(\xi)
\]
for all $a\in A$, $\xi \in E$, and $z \in \bT$.
Let $J \subseteq \phi_E^{-1}(\K(E))$ be an ideal in $A$.
A representation $(\pi,t)$ of $E$ is called \emph{$J$-covariant} if
\[
\pi(a) = \psi_t(\phi_E(a)) \foral a \in J.
\]
If $J$ is \emph{Katsura's ideal}
\[
J_E := \ker\phi_E^\perp \cap \phi_E^{-1}(\K(E))
\]
then $(\pi,t)$ is called simply \emph{covariant} \cite{Kat04}.
The covariance of a pair $(\pi,t)$ is quantified by the ideal
\[
I_{(\pi,t)}':=\{a \in A \mid \pi(a) \in \psi_t(\K(E))\}.
\]
This follows by the insightful \cite[Proposition 3.3]{Kat04} where it is shown that $I_{(\pi,t)}' \subseteq J_E$ when $\pi$ is faithful.
There are special cases where the ideal $I_{(\pi,t)}'$ can be described fairly easily.

\begin{lemma}\label{L: cov}
Suppose that $\K(E)$ admits a unit $e$ and let $(\pi,t)$ be a representation of ${}_A E_A$.
Then $\pi(a) \in \psi_t(\K(E))$ if and only if $\pi(a)(I - \psi_t(e))=0$ if and only if $(I - \psi_t(e)) \pi(a) = 0$.
\end{lemma}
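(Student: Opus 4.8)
The plan is to reduce everything to the single operator identity
\[
\pi(a)\psi_t(e) = \psi_t(e)\pi(a) = \psi_t(\phi_E(a)),
\]
from which all three equivalences follow formally. First I would record two elementary facts. Since $e$ is the unit of the C*-algebra $\K(E)$ it is a self-adjoint idempotent, so $\psi_t(e)$ is a projection; and since $\psi_t$ is multiplicative with $ek = ke = k$ for every $k \in \K(E)$, any $x = \psi_t(k)$ satisfies $\psi_t(e)x = \psi_t(ek) = x = \psi_t(ke) = x\psi_t(e)$. This already yields the implications $\pi(a) \in \psi_t(\K(E)) \Rightarrow \pi(a)\psi_t(e) = \pi(a)$ and $\pi(a) \in \psi_t(\K(E)) \Rightarrow \psi_t(e)\pi(a) = \pi(a)$, that is, the first condition implies the second and the third.

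For the converses I would establish the compatibility relations
\[
\psi_t(\phi_E(a) k) = \pi(a)\psi_t(k) \qand \psi_t(k \phi_E(a)) = \psi_t(k)\pi(a) \qforal k \in \K(E).
\]
Both are checked on a rank-one operator $k = \Theta^E_{\eta,\zeta}$ using $\phi_E(a)\Theta^E_{\eta,\zeta} = \Theta^E_{\phi_E(a)\eta,\zeta}$, the defining relations $\pi(a)t(\eta) = t(\phi_E(a)\eta)$ and $\psi_t(\Theta^E_{\eta,\zeta}) = t(\eta)t(\zeta)^*$, and then extended to all of $\K(E)$ by linearity together with the norm-continuity of $\psi_t$. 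The one point requiring care is that $\phi_E(a)$ a priori lies only in $\L(E)$, so I must argue that $\phi_E(a) \in \K(E)$ in order for $\psi_t(\phi_E(a))$ to be meaningful. This is where the hypothesis that $\K(E)$ is unital is used decisively, via the claim that $e = \id_E$. Indeed, from $e\,\Theta^E_{\eta,\zeta} = \Theta^E_{e\eta,\zeta} = \Theta^E_{\eta,\zeta}$ one gets $(e\eta - \eta)\sca{\zeta,\omega} = 0$ for all $\zeta,\omega$; writing $v = e\eta - \eta$ and $b = \sca{v,v}$ and specialising $\zeta = \omega = v$ gives $b^2 = 0$, hence $b = 0$ and $v = 0$. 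Thus $e$ acts as the identity on $E$, which forces $\L(E) = \K(E)$, and in particular $\phi_E(a) \in \K(E)$ with $\phi_E(a)e = e\phi_E(a) = \phi_E(a)$.

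Substituting $k = e$ into the two compatibility relations then produces the key identity displayed above. The three-fold equivalence now closes as follows: $\pi(a)\psi_t(e) = \pi(a)$ holds if and only if $\pi(a) = \psi_t(\phi_E(a))$, if and only if $\psi_t(e)\pi(a) = \pi(a)$, so conditions two and three are each equivalent to $\pi(a) = \psi_t(\phi_E(a))$; the latter trivially implies $\pi(a) \in \psi_t(\K(E))$ since $\phi_E(a) \in \K(E)$, while the first condition implies the second as noted above. I expect no genuine obstacle beyond the bookkeeping: the only substantive step is the identification $e = \id_E$ (equivalently $\L(E) = \K(E)$), which is exactly what guarantees that the left action lands in the compacts, so that the identity $\pi(a)\psi_t(e) = \psi_t(\phi_E(a))$ is even well-posed.
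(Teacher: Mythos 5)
Your proof is correct, and its engine is the same as the paper's: the module-compatibility identities $\pi(a)\psi_t(k) = \psi_t(\phi_E(a)k)$ and $\psi_t(k)\pi(a) = \psi_t(k\phi_E(a))$ evaluated at $k = e$, which is literally the paper's entire one-line proof. The difference lies in how you make the right-hand sides meaningful. The paper never claims $\phi_E(a)$ is compact: since $\K(E)$ is an ideal in $\L(E)$, the products $\phi_E(a)e$ and $e\phi_E(a)$ lie in $\K(E)$ regardless, so $\psi_t(\phi_E(a)e)$ is well-posed as it stands, and the three equivalences close exactly as in your final paragraph (condition two gives $\pi(a) = \pi(a)\psi_t(e) = \psi_t(\phi_E(a)e) \in \psi_t(\K(E))$, and so on). You instead prove the stronger structural fact that the unit $e$ of $\K(E)$ acts as the identity on $E$ --- your $\Theta$-computation and the $b^2=0$ trick are fine --- whence $\L(E) = \K(E)$ and $\phi_E(a)$ itself is compact. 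Your assertion that this identification is ``decisive'' and ``the only substantive step'' is therefore an overstatement: it is forced on you only by your choice to write $\psi_t(\phi_E(a))$ rather than $\psi_t(\phi_E(a)e)$; the lemma needs nothing beyond the ideal property of $\K(E)$. What your detour buys is a sharper conclusion: under the unitality hypothesis, all three conditions are equivalent to genuine covariance $\pi(a) = \psi_t(\phi_E(a))$, and the left action is automatically by compacts --- facts that are true and consistent with the paper's applications, where $e = \sum_{i=1}^d \Theta^E_{\delta_i,\delta_i}$ and $E$ is finitely generated, but that the statement itself does not require.
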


\begin{proof}
It follows by the observation that $\pi(a) \psi_t(e) = \psi_t(\phi_E(a) e)$ and that $\psi_t(e) \pi(a) = \psi_t(e \phi_E(a))$.
\end{proof}

Specific representations are given on the full Fock space on $E$ denoted by $\F_E$.
Let $E^{\otimes n +1} = E^{\otimes n} \otimes_A E$ (the interior tensor product) for $n \geq 1$, and set $E^{\otimes 0} = A$.
On the direct sum Hilbert module $\F_E:= \sum_{n \geq 0} E^{\otimes n}$ let the operators (the sums are taken in the strong operator topology)
\[
t_\infty(\xi) = \sum_{n \geq 0} \tau_n^{n+1}(\xi) \qand \pi_\infty(a) = \sum_{n \geq 0} \phi_n(a)
\]
for all $\xi \in E$ and $a \in A$, where $\tau_n^{n+1}(\xi) \colon E^{\otimes n} \to E^{\otimes n+1}$ is such that $\tau_n^{n+1}(\xi) \eta = \xi \otimes \eta$, $\phi_0 =\id$, and $\phi_n = \phi_E \otimes \id$ for $n \geq 1$.
It is not immediate but if we let $q \colon \L(\F_E) \to \L(\F_E)/ \K(\F_E J)$ be the quotient map for $J \subseteq \phi_E^{-1}(\K(E))$ then $(q \pi_\infty, q t_\infty)$ is a $J$-covariant representation \cite{Kak13-2}.

The universal C*-algebra $\T_E$ generated by $A$ and $E$ with respect to the pairs $(\pi,t)$ is called \emph{the Toeplitz-Pimsner algebra of $E$}.
The norm closed (non-involutive) subalgebra of $\T_E$ generated by $A$ and $E$ is called \emph{the tensor algebra of $E$}, and will be denoted by $\T_E^+$.
The universal C*-algebra $\O(J,E)$ generated by $A$ and $E$ with respect to the $J$-covariant representations $(\pi,t)$ is called \emph{the $J$-relative Cuntz-Pimsner algebra of $E$} \cite{MuhSol98}.
In particular $\O_E:= \O(J_E,E)$ is called \emph{the Cuntz-Pimsner algebra} \cite{Kat04}.
Katsoulis and Kribs \cite{KatKri06} show that $\cenv(\T_E^+) \simeq \O_E$ for any C*-correspondence $E$.

By universality $\O(J,E)$ is the quotient of $\T_E$ by the ideal generated by the differences
\[
\pi(a) - \psi_{t}(\phi_E(a)) \foral a \in J,
\]
where $(\pi,t)$ defines a faithful representation of $\T_E$.
In general $A$ may not embed faithfully in $\O(J,E)$.
In fact we have that $A \subseteq \O(J,E)$ if and only if $J \subseteq J_E$.
In particular when $J \subseteq J_E$ we obtain a commutative diagram
\[
\xymatrix{
\T_E \ar[rr] \ar[dr] & & \O_E \\
& \O(J,E) \ar@{-->}[ur] &
}
\]
where all arrows are $*$-epimorphisms that map elements of some index to elements of the same index.
These are immediate consequences of \emph{the gauge invariant uniqueness theorem} as presented in \cite{Kak13-2} (see also \cite{KakPet13}): if $J \subseteq J_E$ then $(\pi,t)$ defines a faithful representation of $\O(J,E)$ if and only if $(\pi,t)$ admits a gauge action, $\pi$ is injective and $I_{(\pi,t)}'=J$.
In particular if $(\pi,t)$ admits a gauge action and $\pi$ is faithful then:
\begin{inparaenum}
\item $\ca(\pi,t)$ is $*$-isomorphic to $\O(J,E)$ for $J = I_{(\pi,t)}'$;
\item $J \subseteq J_E$ by \cite[Proposition 3.3]{Kat04}; and
\item $\O(J,E)$ is a C*-cover of $\T_E^+$.
\end{inparaenum}

Let $E_A$ and $F_B$ be right Hilbert C*-modules and let $\ga \colon A \to B$ be a $*$-homomorphism.
A right $A$-module map $U \equiv (\ga, U) \colon E_A \to F_B$ is called an \emph{inner-product map} if there exists a linear mapping $U^* \colon F \to E$ such that
\[
\sca{U \xi, \eta}_F = \ga (\sca{\xi, U^* \eta}_E) \foral \xi, \eta \in E.
\]
When $\ga \colon A \to B$ is a $*$-isomorphism then an inner-product map is in $\L(E,F)$ with $U ^* \equiv (\ga^{-1}, U^*)$.
An adjointable map is called \emph{unitary} if $U^*U = I_E$ and $UU^* = I_F$, in which case $E_A$ is said to be \emph{unitarily equivalent} to $F_B$.
An adjointable map is called \emph{invertible} if there exists an adjointable $(\ga^{-1}, V) \colon F \to E$ such that $VU = I_E$ and $UV = I_F$; in this case we write $V = U^{-1}$.
If $U$ is an invertible adjointable map then $U^*U \in \L(E)$ is invertible as well and the map $U |U|^{-1}$ defines a unitary adjointable map in $\L(E,F)$.

An adjointable mapping $(\ga,U) \colon {}_A E_A \to {}_B F_B$ is a \emph{C*-correspondences mapping} if it is a left module map as well.
If $(\ga, U)$ is a unitary mapping then ${}_A E_A$ and ${}_B F_B$ are \emph{unitarily equivalent}.
Inverses of left module maps are again left module maps.
Therefore similar C*-correspondences are automatically unitarily equivalent.

If ${}_A E_A$ and ${}_B F_B$ are unitarily equivalent by a $(\ga,U)$ then $(\pi,t)$ is a representation for $F$ if and only if $(\pi\ga, t U)$ is a representation for $E$.
Due to the universal property we then obtain that the Toeplitz-Pimsner algebras are $*$-isomorphic.
In particular the $*$-isomorphism is completely isometric and restricts to an isomorphism of the tensor algebras.
By \cite{KatKri06} and the C*-envelope machinery (or by algebraic manipulations and Proposition \ref{P: isom rel} below) we then get that the Cuntz-Pimsner algebras are also $*$-isomorphic.
For $J$-relative Cuntz-Pimsner algebras we have the following.

\begin{proposition}\label{P: isom rel}
Let $(\ga,U) \colon {}_A E_A \to {}_B F_B$ be a C*-correspondences unitary mapping.
Let $J_1 \subseteq \phi_E^{-1}(\K(E))$ and $J_2 \subseteq \phi_F^{-1}(\K(F))$.
If $\ga(J_1) = J_2$ then the relative Cuntz-Pimsner algebras $\O(J_1,E)$ and $\O(J_2,F)$ are $*$-isomorphic.
\end{proposition}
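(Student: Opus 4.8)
The plan is to exploit the universal property of the relative Cuntz--Pimsner algebras together with the bijection between representations of $F$ and of $E$ induced by $(\ga,U)$ that was recorded above: a pair $(\pi,t)$ is a representation of $F$ if and only if $(\pi\ga,tU)$ is a representation of $E$. I would show that under this bijection the $J_2$-covariant representations of $F$ correspond exactly to the $J_1$-covariant representations of $E$, and then conclude by universality. Since the representation theory has already been set up, the content of the proof is precisely this matching of covariance conditions.

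First I would record the two structural facts about a unitary C*-correspondence mapping $(\ga,U)$. Because $U$ is a unitary right-module map intertwining the inner products via $\ga$, the conjugation $\theta \colon \K(E) \to \K(F)$, $\theta(k) = UkU^*$, is a $*$-isomorphism that sends the rank-one operator $\Theta^E_{\xi,\eta}$ to $\Theta^F_{U\xi,U\eta}$; this is a direct computation using $U(\xi a) = (U\xi)\ga(a)$ together with the inner-product identity $\ga(\sca{\eta,U^*\zeta}_E) = \sca{U\eta,\zeta}_F$. Since $(\ga,U)$ is moreover a left-module map, $U\phi_E(a) = \phi_F(\ga(a))U$, whence $\theta(\phi_E(a)) = \phi_F(\ga(a))$ for every $a$ with $\phi_E(a) \in \K(E)$, in particular for $a \in J_1$, where $\ga(a) \in J_2 \subseteq \phi_F^{-1}(\K(F))$ by hypothesis.

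Next I would connect the two generalised-compact representations. For a representation $(\pi,t)$ of $F$ the map $\psi_{tU}$ attached to the pair $(\pi\ga,tU)$ satisfies $\psi_{tU}(\Theta^E_{\xi,\eta}) = t(U\xi)t(U\eta)^* = \psi_t(\Theta^F_{U\xi,U\eta})$, so by linearity and continuity $\psi_{tU} = \psi_t \circ \theta$ on $\K(E)$. Combining with the previous step, for $a \in J_1$ one gets $\psi_{tU}(\phi_E(a)) = \psi_t(\phi_F(\ga(a)))$. Hence $(\pi\ga,tU)$ is $J_1$-covariant, that is $\pi\ga(a) = \psi_{tU}(\phi_E(a))$ for all $a \in J_1$, if and only if $\pi(\ga(a)) = \psi_t(\phi_F(\ga(a)))$ for all $a \in J_1$; since $\ga(J_1) = J_2$ this is exactly the $J_2$-covariance of $(\pi,t)$.

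Finally I would invoke universality. Feeding the universal generating pair of $\O(J_2,F)$ through $(\ga,U)$ yields, by the above, a $J_1$-covariant representation of $E$, so the universal property of $\O(J_1,E)$ produces a $*$-homomorphism $\O(J_1,E) \to \O(J_2,F)$ acting on generators by $a \mapsto \ga(a)$ and $\xi \mapsto U\xi$. Running the same argument with the inverse mapping $(\ga^{-1},U^{-1}) \colon F \to E$, which satisfies $\ga^{-1}(J_2) = J_1$, produces a $*$-homomorphism in the opposite direction; the two are mutually inverse on generators and hence give the asserted $*$-isomorphism. I expect the only delicate point to be the bookkeeping in the first step, namely verifying $\theta(\Theta^E_{\xi,\eta}) = \Theta^F_{U\xi,U\eta}$ and that $\theta$ intertwines the left actions, where the module identities for $(\ga,U)$ must be used with care; everything afterwards is a formal consequence of the universal property.
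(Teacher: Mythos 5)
Your proposal is correct and follows essentially the same route as the paper's proof: transfer representations through $(\ga,U)$, show that $J_2$-covariance of $(\pi,t)$ is equivalent to $J_1$-covariance of $(\pi\ga, tU)$, and conclude by universality in both directions. The only divergence is in the verification of the key identity $\psi_{tU}(\phi_E(a)) = \psi_t(\phi_F\ga(a))$ for $a \in J_1$: the paper proves it by a vector-level computation followed by an approximate-identity argument in $\K(E)$, whereas you deduce it directly from the observations that conjugation by $U$ sends $\Theta^E_{\xi,\eta}$ to $\Theta^F_{U\xi,U\eta}$ and intertwines the left actions (so that $U\phi_E(a)U^* = \phi_F(\ga(a))$), a slightly cleaner but equivalent shortcut.
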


\begin{proof}
First notice that $(\ga,U)$ induces an isomorphism $\psi_U \colon \K(E) \to \K(F)$.
The proof is identical to \cite[Lemma 2.2]{KPW98} (in fact one may use concrete faithful representations of $\K(E)$ and $\K(F)$ to achieve this).
Let $(\pi,t)$ be a $J_2$-covariant pair for $F$.
It suffices to show that $\psi_t(\phi_F\ga(a)) = \psi_{t U}( \phi_E(a))$ for all $a \in J_1$.
Indeed, in this case $\ga(a) \in J_2$ and the computation
\[
\pi \ga(a) = \psi_t(\phi_F\ga(a)) = \psi_{t U}( \phi_E(a))
\]
gives that $(\pi\ga, t U)$ is a $J_1$-covariant pair for $E$.
Universality of the relative Cuntz-Pimsner algebras will then complete the proof.
For convenience let $\ga(a) = b$ and observe that
\[
\psi_t(\phi_F(b)) t(U(\xi)) = t(\phi_F(b) U(\xi)) = tU(\phi_E(a) \xi),
\]
for all $\xi \in E$.
Then by applying $t(U(\eta))^*$ for all $\eta \in E$ and by taking limits of finite sums we get that
\[
\psi_t(\phi_F(b) \psi_U(k)) = \psi_t(\phi_F(b)) \psi_t(\psi_U(k)) = \psi_{tU}(\phi_E(a) k),
\]
for all $k \in \K(E)$.
However $\phi_F(b) \in \K(F) = \psi_U(\K(E))$ and by considering an approximate identity for $\K(E)$ we then have the required equation.
\end{proof}

\subsection{Subproduct systems}\label{S: SPS}

We will require knowledge of subproduct systems as initiated by the second author and Solel \cite{ShaSol09} for W*-correspon\-dences and by Viselter \cite{Vis11} for C*-correspon\-dences.
Here we will restrict our attention to subproduct systems over $\bZ_+$.
We strongly recommend the work of Dor-On and Markiewicz \cite{DorMar14} for a series of elegant results.

Let $E$ be a non-degenerate C*-correspondence over $A$.
A \textit{standard subproduct system $X$} consists of a sequence $(X(n), p_n)$ such that:
\begin{enumerate}
\item $X(0) = A$, $X(1) = E$;
\item $X(n)$ is an orthocomplemented subcorrespondence of $E^{\otimes n}$, for every $n$;
\item $X(m+n) \subseteq X(m) \otimes X(n)$ for every $m,n$; and
\item if $p_{n}$ is the orthogonal projection of $E^{\otimes n}$ onto $X(n)$ then the following associativity condition holds
    \[p_{k+m+n} ( I_{E^{\otimes k}} \otimes p_{m+n}) = p_{k+m+n}( p_{k+m} \otimes I_{E^{\otimes n}}) = p_{k+m+n}. \]
\end{enumerate}
Non-degeneracy implies that $X(0) \otimes X(n) = X(n)$ for all $n \in \bZ_+$.
In the case of W*-correspondences it suffices to assume that the $X(n)$ are closed subspaces (since then they are automatically orthocomplemented \cite{Pas73}).

For $p = \sum_n p_n$ in the full Fock space $\F_E$ define $\F_X = p \F_E$ and the compression operators
\[
T(a) = p \pi_\infty(a)|_{\F_X} \qand T(\xi) = p t_\infty(\xi)|_{\F_X}
\]
for all $a \in A$ and $\xi \in X(n)$ with $n \geq 1$.
The associativity condition on the $p_n$ shows that $\F_X$ is co-invariant for $t_\infty$ and reducing for $\pi_\infty$.
The nonselfadjoint operator algebra $\A_X$ generated by the $T(a)$ and $T(\xi)$ will be called \emph{the tensor algebra of $X$}.
Notice here that
\[
\A_X = \ol{\alg}\{p\pi_\infty(a), p t_\infty(\xi) \mid a\in A, \xi \in \F_X\} = \ol{p \cdot \T_E^+}.
\]
We will write
\[
\ca(T):= \ca(\A_X) \subseteq \B(\F_X).
\]
The C*-algebras $\ca(T)$ and $\ca(T) / \K(\F_X)$ are the so-called \emph{Toeplitz algebra} and \emph{Cuntz algebra}, respectively, of the subproduct system $X$.
In parts of the literature they are denoted by $\T_X$ and $\O_X$ \cite{DRS11}, or $\T(X)$ and $\O(X)$ \cite{DorMar14,Vis12} (in fact, the Cuntz algebra $\O(X)$ is defined slightly differently in \cite{Vis12}, but for all subproduct systems encountered in this paper, Viselter's definition coincides with $\ca(T) / \K(\F_X)$).
We avoid using this abbreviated notation here so as not to create confusion with $\T_E$ or $\O_E$, with a couple of exceptions required so as to make a connection to the literature.
Likewise, the tensor algebra $\A_X$ is also denoted by $\T^+(X)$ or $\T^+_X$, notation that we will also avoid.

The subproduct systems $X = (X(n), p_n)$ and $Y=(Y(n), q_n)$ are called \emph{similar} if there exists a sequence $V = (V_n)$ of invertible C*-correspondences maps $V_n \colon X(n) \to Y(n)$ such that $\sup_n \nor{V_n} < \infty$, $\sup_n \nor{V_n^{-1}} < \infty$ and
\begin{align*}
(\dagger) \quad V_{m+n} (p_{m+n}(\xi \otimes \eta)) = q_{m+n}(V_m \xi \otimes V_n \eta)
\end{align*}
for all $m,n$ and all $\xi \in X(m), \eta \in X(n)$.
Then $V = (V_n)$ is said to be a \emph{similarity}.
It follows that $V$ satisfies $(\dagger)$ if and only if
\[
V_n p_n = q_n \cdot \underbrace{V_1 \otimes \cdots \otimes V_1}_{n-\text{times}}, \foral n \in \bZ_+.
\]
By assumption $E^{\otimes n} = X(n) \oplus X(n)^\perp$, hence $V = (V_n)$ satisfies $(\dagger)$ if and only if
\[
\underbrace{V_1 \otimes \cdots \otimes V_1}_{n-\text{times}} = \begin{bmatrix} V_n & 0 \\ \ast & \ast \end{bmatrix} \in \L(X(n) \oplus X(n)^\perp, Y(n) \oplus Y(n)^\perp).
\]
Reflexivity of similarity is induced by $(V_1 \otimes \cdots \otimes V_1)^{-1} = V_1^{-1} \otimes \cdots \otimes V_1^{-1}$.

The subproduct systems $X = (X(n), p_n)$ and $Y=(Y(n), q_n)$ are called \emph{isomorphic} if they are similar by a sequence $U = (U_n)$ of unitary C*-correspondences maps $U_n \colon X(n) \to Y(n)$.
However in this case every $U_n$ is a unitary and a compression of the unitary $U_1^{\otimes n}$, hence we obtain
\[
\underbrace{U_1 \otimes \cdots \otimes U_1}_{n-\text{times}} = \begin{bmatrix} U_n & 0 \\ 0 & \ast \end{bmatrix} \in \L(X(n) \oplus X(n)^\perp, Y(n) \oplus Y(n)^\perp).
\]
for every $n$.
Therefore there is a significant difference between similar and isomorphic subproduct systems.
The reader is addressed to \cite{DRS11} for examples that depict this phenomenon.

\section{Subproduct systems of homogeneous ideals}\label{S: sps hom}

When $E = \bC^d$ (over $\bC)$ then the resulting subproduct systems are characterised by homogeneous ideals of the polynomial ring $\bC\sca{x_1, \dots, x_d}$ in $d$ noncommuting variables \cite{ShaSol09}.
Let $\{e_1, \dots, e_d\}$ be an o.n. basis of $\bC^d$.
For any word $w \in \bF_+^d$ with $w = w_1 \dots w_n$ we write $|w| := n$ for the length of $w$.
For an $X = (X(n), p_n)$ with $X(0) = \bC$ and $X(1) = E = \bC^d$ let
\[
\I_X := \spn\{f \in \bC\sca{x_1, \dots, x_d} \mid \exists n>0 \text{ such that } f(\un{e}) \in E^{\otimes n} \ominus X(n)\},
\]
with the understanding that
\[
f(\un{e}) = \sum_{w \in \bF_+^d} \la_w e_{w_1} \otimes \dots \otimes e_{w_{|w|}} \text{ when } f(\un{x}) = \sum_{w \in \bF_+^d} \la_w \un{x}^w.
\]
Then $\I_X$ is a homogeneous ideal of $\bC\sca{x_1, \dots, x_d}$.
On the other hand if $\I$ is a homogeneous ideal in $\bC\sca{x_1, \dots, x_d}$ let
\[
X_{\I}(n) := E^{\otimes n} \ominus \{f(\un{e}) \mid f \in \I\},
\]
and $p_n$ be the projection of $E^{\otimes n}$ onto $X_\I(n)$.
Then $X_\I = (X_\I(n), p_n)$ is a subproduct system.
By \cite[Proposition 7.2]{ShaSol09} we get the connection
\[
X_{\I_X} = X \qand \I_{X_\I} = \I.
\]

When $\I=(0)$ then we write $\fA_d$ for the multivariable noncommutative disc algebra.
It is accustomed to denote by $\un{L} = [L_1, \ldots, L_d]$ the generators of $\fA_d$.
When $\I$ is the commutator ideal $\I_d$, i.e. the ideal generated by $L_i L_j - L_j L_i$, then we will write $\A_d$ for the multivariable commutative disc algebra, that is, $\A_d$ is the compression of $\fA_d$ by the closure commutator ideal $\I_d$.

Given a homogeneous ideal $\I$ in $\bC\sca{x_1, \dots, x_d}$ the operators $T_i = T(e_i)$ that generate $\A_{X_\I}$ satisfy $f(\un{T}) = 0$ for all $f \in \I$.
If $\un{S} = [S_1, \ldots, S_d] \in \B(H^d, H)$ is a row contraction such that $f(\un{S}) = 0$ for all $f \in \I$ then the mapping $T_i \mapsto S_i$ extends to a unital completely contractive representation of $\A_{X_\I}$ in $\B(H)$ (for example, combine \cite[Theorem 7.5]{ShaSol09} with \cite[Theorem 8.2]{ShaSol09}).
There is an ``algebraic'' identification which we record for further use.

\begin{proposition}\label{P: id hom sps}
Let $X_\I = (X(n), p_n)$ be the subproduct system associated with a homogeneous ideal $\I$ in $\bC\sca{x_1,\dots,x_d}$.
If $\ol{\I}$ denotes the closure of $\I \equiv \{f(\un{L}) \mid f \in \I\}$ in $\fA_d$, then $\A_{X_\I}$ is completely isometrically isomorphic to $\fA_d / \ol{\I}$.
In particular, the isomorphism is given by $\psi(x + \ol{\I})=p x$, where $p = \sum_n p_n$, and $\A_{X_\I} = p \cdot \fA_d$.
\end{proposition}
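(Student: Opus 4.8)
The plan is to realise $\A_{X_\I}$ as a compression of $\fA_d$ and then invoke the universal property of $\A_{X_\I}$ to pin down the kernel. Since $E=\bC^d$ over $\bC$, the tensor algebra is $\T_E^+=\fA_d$ and $\pi_\infty$ is scalar, so by the formula $\A_X=\ol{p\cdot\T_E^+}$ recorded above we have $\A_{X_\I}=\ol{p\,\fA_d}$ with $p=\sum_n p_n$. Because $\F_X=p\F_E$ is co-invariant for $t_\infty$, the compression $\Phi\colon\fA_d\to\B(\F_X)$, $\Phi(x)=px|_{\F_X}$, is a unital completely contractive homomorphism with $\Phi(L_i)=T_i$ and $\ol{\Phi(\fA_d)}=\A_{X_\I}$. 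As the generators satisfy $f(\un T)=0$ for every $f\in\I$, we get $\I\subseteq\ker\Phi$ and hence $\ol{\I}\subseteq\ker\Phi$; thus $\Phi$ factors through a unital completely contractive homomorphism $\psi\colon\fA_d/\ol{\I}\to\A_{X_\I}$, $\psi(x+\ol{\I})=px$, which maps onto $p\,\fA_d$.

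It remains to show that $\psi$ is completely isometric, which will also force $\ker\Phi=\ol{\I}$. Fix a unital completely isometric representation $\jmath\colon\fA_d/\ol{\I}\to\B(H)$ and put $S_i=\jmath(L_i+\ol{\I})$. Since $[L_1,\dots,L_d]$ is a row contraction in $M_{1,d}(\fA_d)$ and both the quotient map and $\jmath$ are completely contractive, $\un S=[S_1,\dots,S_d]$ is a row contraction; moreover $f(\un S)=\jmath\big(f(\un L)+\ol{\I}\big)=0$ for all $f\in\I$, because $f(\un L)\in\I\subseteq\ol{\I}$. By the universal property of $\A_{X_\I}$ (the completely contractive functional calculus for row contractions satisfying $\I$), the assignment $T_i\mapsto S_i$ extends to a unital completely contractive representation $\rho\colon\A_{X_\I}\to\B(H)$. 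Then $\rho\circ\psi$ and $\jmath$ are unital homomorphisms agreeing on the generators $L_i+\ol{\I}$, hence (by continuity) $\rho\circ\psi=\jmath$. As $\jmath$ is completely isometric while $\rho$ and $\psi$ are completely contractive, for every matrix $(y_{ij})$ over $\fA_d/\ol{\I}$ we obtain $\nor{(y_{ij})}=\nor{\jmath^{(k)}(y_{ij})}=\nor{\rho^{(k)}\psi^{(k)}(y_{ij})}\le\nor{\psi^{(k)}(y_{ij})}\le\nor{(y_{ij})}$, forcing equality and thus complete isometry of $\psi$.

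Finally, a completely isometric $\psi$ is in particular isometric, so its range $p\,\fA_d$ is complete, hence closed, and $\A_{X_\I}=\ol{p\,\fA_d}=p\,\fA_d$; therefore $\psi$ is the asserted completely isometric isomorphism. The only genuine difficulty is the identification $\ker\Phi=\ol{\I}$ (equivalently, the complete isometry of $\psi$), and the argument above reduces it to the universal property stated before the proposition. I expect this to be the main obstacle; an alternative, more hands-on route uses the gauge action: an element of $\ker\Phi$ has all of its homogeneous Fourier components annihilated by $\Phi$, each degree-$n$ component is determined by its action on the vacuum and so corresponds to a vector in $X(n)^\perp=\{f(\un e)\mid f\in\I\text{ homogeneous of degree }n\}$, whence that component lies in $\I$; Ces\`aro summation of the Fourier series then places the element in $\ol{\I}$.
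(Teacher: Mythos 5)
Your proof is correct and follows essentially the same route as the paper: both construct the completely contractive map $\psi\colon \fA_d/\ol{\I}\to\A_{X_\I}$ from the canonical (compression) homomorphism $L_i\mapsto T_i$, and both invoke the universal property of $\A_{X_\I}$ for row contractions annihilating $\I$ applied to the image of $\un{L}$ in the quotient to produce the reverse completely contractive map, whence complete isometry. The only cosmetic difference is that you route the reverse map through a fixed completely isometric representation $\jmath$ of $\fA_d/\ol{\I}$ and use the sandwich $\rho\psi=\jmath$, whereas the paper maps directly into the abstract quotient via $\si$ and checks $\psi\si=\id$, $\si\psi=\id$; these are the same argument.
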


\begin{proof}
Let us write $X$ for $X_\I$.
Let $\un{T} = [T_1, \dots, T_d]$ be the generators of $\A_{X}$.
By construction there is a unital completely contractive homomorphism $\fA_d \to \A_{X} : L_i \mapsto T_i$ for all $i=1, \dots, d$.
Its kernel contains $\I$ and consequently it contains $\ol{\I}$.
Thus we obtain a completely contractive homomorphism
\[
\psi \colon \fA_d/ \ol{\I} \to \A_{X} : L_i + \ol{\I} \mapsto T_i.
\]
On the other hand the row contraction $\un{\wh{L}} = [L_1 + \ol{\I}, \dots, L_d + \ol{\I}]$ satisfies $f(\un{\wh{L}}) = 0$ for all $f \in \I$.
By the remarks preceding the statement there is a unital completely contractive homomorphism $\si \colon \A_{X} \to \fA_d/ \ol{\I}$ such that $\si(T_i) = L_i + \ol{\I}$ for all $i=1,\dots, d$.

Thus $\psi \si$ is a completely contractive homomorphism sending $T_i$ to itself.
Hence it is the identity, and likewise for $\si \psi$.
Therefore $\psi \colon \fA_d/ \ol{\I} \to \A_{X}$ is a completely isometric isomorphism that maps every $L_i + \ol{\I}$ to $T_i = p L_i$.
Since $p L_i = p L_i p$, this extends to all polynomials $f \in \fA_d$.
The proof is completed by recalling that the quotient map $\fA_d \to \fA_d/\ol{\I}$ has closed range.
\end{proof}

\subsection{The character space}

A \emph{character} of an operator algebra, i.e. an algebraic homomorphism in $\bC$, is automatically completely contractive.
When $X$ is associated with a homogeneous ideal $\I$ in $\bC\sca{x_1,\dots,x_d}$ then the character space $\M_{\A_X}$ of $\A_X$ is identified with the set
\[
Z(\I) = \{z \in \ol{\bB}_d \mid f(z)=0 \foral f \in \I\},
\]
by the bijection
\[
\M_{\A_X} \ni \rho \longleftrightarrow (\rho(T_1),\dots, \rho(T_d)) \in Z(\I).
\]
Indeed, by Proposition \ref{P: id hom sps} we have that a character $\rho$ of $\A_X$ must vanish on $\ol{\I}$ and hence on $\I$.
Since $\rho$ is an algebraic homomorphism, we obtain that $f(\rho(\un{T})) = \rho(f(\un{T})) = 0$ for all $f \in \I$, which shows that $\rho(\un{T}) = [\rho(T_1),\dots, \rho(T_d)] \in Z(\I)$.
On the other hand if $z\in Z(\I)$ let $\rho(T_i) = z_i$.
Hence we have that $\rho(f(\un{T})) := f(\rho(\un{T})) = 0$ and by the remarks preceding Proposition \ref{P: id hom sps} we have that $\rho$ extends to a contractive homomorphism from $\A_X$ into $\bC$.

Given $\la \in Z(\I)$ we write $\rho_\la \in \M_{\A_X}$ obtained by the above identification.
Then for any element $t \in \A_X$ such that $t = \lim_n f_n(\un{T})$ for some polynomials $f_n \in \bC\sca{x_1, \dots, x_d}$ we get that
\[
\rho_\la(t) = \lim_n \rho_\la(f_n(\un{T})) = \lim_n f_n(\rho_\la(\un{T})) = \lim_n f_n(\la).
\]
Since $\A_X$ is the norm closure of polynomials in $T$, the above formula defines $\rho_\la$.
Furthermore every $\la \in Z(\I)$ is in $\ol{\bB}_d$, and the latter is the character space of $\fA_d$.
Even more $\ol{\bB}_d$ is the character space of the commutative algebra $\A_d$ as well.
Since it will be clear by the domain, we will write $\rho_\la$ for the characters of $\fA_d$, $\A_d$, and $\A_X$ defined by the same $\la \in Z(\I) \subseteq \ol{\bB}_d$.

\begin{lemma}\label{L: characters}
Let $t \in \A_X$ and $x \in \fA_d$ such that $t = x + \ol{\I}$ given by Proposition \ref{P: id hom sps}. If $\la \in Z(\I)$ then we obtain that $\rho_\la(t) = \rho_\la(x + \ol{\I}) = \rho_\la(x)$.

In particular, for every $x \in \fA_d$ the map $\wh{x} \colon \ol{\bB}_d \rightarrow \bC$ given by $\la \mapsto \rho_\la(x)$ is continuous on $\ol{\bB}_d$ and holomorphic in $\bB_d$.
\end{lemma}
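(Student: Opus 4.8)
The plan is to handle the two assertions separately: the first follows because $\rho_\la$ annihilates $\ol\I$, and the ``in particular'' part follows from uniform approximation by polynomials.

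For the first statement, I would start from the character $\rho_\la$ of $\fA_d$, which sends each $L_i$ to $\la_i$. For any polynomial $f \in \I$ we have
\[
\rho_\la(f(\un L)) = f(\rho_\la(\un L)) = f(\la) = 0
\]
because $\la \in Z(\I)$; since every character is completely contractive (hence continuous), it follows that $\rho_\la$ vanishes on the closure $\ol\I$, and therefore descends to a well-defined character of the quotient $\fA_d/\ol\I$. On the other hand, transporting the character $\rho_\la$ of $\A_X$ through the completely isometric isomorphism $\psi$ of Proposition \ref{P: id hom sps} also yields a character of $\fA_d/\ol\I$. Both of these characters send $L_i + \ol\I$ to $\la_i$, so, being continuous homomorphisms that agree on the generators, they coincide. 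This produces the chain $\rho_\la(t) = \rho_\la(x + \ol\I) = \rho_\la(x)$.

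For the ``in particular'' statement, I would approximate. Write $x = \lim_n f_n(\un L)$ in norm with each $f_n$ a polynomial in $\bC\sca{x_1,\dots,x_d}$. For a fixed $\la \in \ol\bB_d$ we then have $\wh x(\la) = \rho_\la(x) = \lim_n \rho_\la(f_n(\un L)) = \lim_n f_n(\la)$. The decisive point is that this convergence is \emph{uniform} in $\la$: since characters are completely contractive we have $\|\rho_\la\| \le 1$ uniformly over $\la$, whence
\[
\sup_{\la \in \ol\bB_d} |\wh x(\la) - f_n(\la)| = \sup_{\la \in \ol\bB_d} |\rho_\la(x - f_n(\un L))| \le \| x - f_n(\un L)\| \longrightarrow 0.
\]
Thus the polynomial functions $\la \mapsto f_n(\la)$ converge uniformly on $\ol\bB_d$ to $\wh x$.

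With the uniform convergence in hand the analytic conclusions are routine. A uniform limit of continuous functions is continuous, so $\wh x$ is continuous on $\ol\bB_d$; restricting to the open ball, the uniform (hence locally uniform) limit of the holomorphic polynomials $f_n$ is holomorphic on $\bB_d$ by the Weierstrass convergence theorem for several complex variables. The only genuine obstacle is securing the uniform bound $\|\rho_\la\| \le 1$ over all $\la$, and this is precisely what the automatic complete contractivity of characters supplies; everything after that is standard.
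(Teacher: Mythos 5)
Your proof is correct. On the first assertion you are essentially on the paper's path: the paper derives the vanishing of $\rho_\la$ on $\ol{\I}$ from Popescu's von Neumann inequality $|f(\la)| \leq \nor{f(\un{L})}$, whereas you invoke the automatic complete contractivity of characters --- for $\fA_d$ these are the same fact in different clothing --- and then you identify the two induced characters of $\fA_d/\ol{\I}$ through $\psi$ by their agreement on the generators (the paper compresses this step into the remark that $\psi$ is isometric and preserves polynomials). Where you genuinely diverge is the ``in particular'' part. The paper re-applies the first statement to the commutator ideal $\I_d$, so that $\wh{x}$ becomes an element of the commutative disc algebra $\A_d$, and then quotes Arveson's identification of $\A_d$ with the norm closure of the polynomials in $\Mult(H^2_d)$ to read off continuity on $\ol{\bB}_d$ and holomorphy in $\bB_d$. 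You instead argue directly: pick polynomials $f_n$ with $f_n(\un{L}) \to x$ in norm, use the uniform bound $\nor{\rho_\la} \leq 1$ to get $f_n \to \wh{x}$ uniformly on $\ol{\bB}_d$, and finish with the standard uniform-limit theorems. Your route is more elementary and self-contained, needing nothing about the Drury--Arveson multiplier algebra; the paper's route is shorter given its references and records slightly more information, namely that $\wh{x}$ actually lies in $\A_d$ (so it is a multiplier of $H^2_d$ with multiplier norm at most $\nor{x}$), which is stronger than continuity plus holomorphy alone.
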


\begin{proof}
The first equality holds because the isomorphism between $\A_X$ and $\fA_d/ \ol{\I}$ is isometric and preserves polynomials.
For the second equality recall that $f(\la) = 0$ for all $f \in \I$.
The von Neumann inequality $|f(\la)| \leq \nor{f(\un{L})}$ on $\fA_d$ \cite{Pop91} implies that $\rho_\la$ vanishes on $\ol{\I}$, thus it factors through the quotient.

Now we may apply in particular for the commutator ideal $\I_d$ to obtain that the function
\[
\la \mapsto \rho_\la(x) = \rho_\la(x + \ol{\I}_d)
\]
is in $\A_d$.
The latter is identified with the norm closure of polynomials in $\Mult(H^2_d)$ (see \cite{Arv98}) and the proof is complete.
\end{proof}

\subsection{Isomorphisms}

For the rest of the section fix the homogeneous ideals $\I \lhd \bC\sca{x_1,\dots,x_d}$ and $\J \lhd \bC\sca{y_1,\dots,y_{d'}}$.
If $\phi \colon \A_X \to \A_Y$ is an isomorphism then it defines a continuous map $\phi^* \colon \M_{\A_Y} \to \M_{\A_X}$.
We say that $\phi$ is a \emph{vacuum preserving isomorphism} if $\phi^* \rho_0 = \rho_0$ where we write $0$ for both zeroes in $\bB_d$ and in $\bB_{d'}$.

\begin{lemma}\label{L: vac pre}
Let $X$ and $Y$ be subproduct systems associated with the homogeneous ideals $\I \lhd \bC\sca{x_1,\dots,x_d}$ and $\J \lhd \bC\sca{y_1,\dots,y_{d'}}$.
If $\phi \colon \A_X \to \A_Y$ is an algebraic (resp. bounded, isometric) isomorphism then there exists a vacuum preserving algebraic (resp. bounded, isometric) isomorphism $\phi' \colon \A_X \to \A_Y$.
\end{lemma}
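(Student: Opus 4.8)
The plan is to isolate the single obstruction to vacuum preservation, show it lies in the open ball, and then remove it by composing with a conformal automorphism coming from Popescu's free holomorphic automorphisms of $\fA_d$. First I would make the obstruction explicit. Using Proposition~\ref{P: id hom sps} I regard $\A_X = \fA_d/\ol{\I}$ and $\A_Y = \fA_{d'}/\ol{\J}$, so that the induced homeomorphism $\phi^*\colon\M_{\A_Y}\to\M_{\A_X}$ identifies with a self-map of the varieties $Z(\J)\to Z(\I)$. Write $w\in Z(\I)$ for the point with $\phi^*\rho_0=\rho_w$; concretely, if $\phi(T_i)=x_i+\ol{\J}$ with $x_i\in\fA_{d'}$ then $w_i=\rho_0(\phi(T_i))$ is exactly the degree-zero (vacuum) coefficient of $\phi(T_i)$ in the grading of $\A_Y$. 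Since the $\rho_\la$ are distinct for distinct $\la\in Z(\I)$, the map $\phi$ is vacuum preserving if and only if $w=0$; the whole point is therefore to arrange $w=0$ after a harmless modification.

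The second step is to show $w\in\bB_d$ (the open ball). By Lemma~\ref{L: characters}, for each $i$ the function $g_i(\la):=\rho_\la(\phi(T_i))$ is continuous on $\ol{\bB}_{d'}$ and holomorphic on $\bB_{d'}$, and $g=(g_1,\dots,g_d)$ implements $\phi^*$ via $\phi^*\rho_\la=\rho_{g(\la)}$; applying the same to $\phi^{-1}$ gives a holomorphic inverse, so $g$ is a biholomorphism of $Z(\J)$ onto $Z(\I)$ in the appropriate sense. Now $\sum_i|g_i|^2\le 1$ on $\bB_{d'}$, so the maximum principle applied to the plurisubharmonic function $\|g(\cdot)\|^2$ forbids the interior value $\|g(0)\|=1$ unless $g$ maps the open variety entirely into the sphere; as $g$ is onto $Z(\I)$, which contains the interior point $0$, this is impossible. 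Hence $w=g(0)\in\bB_d$.

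With $w\in\bB_d$ I would invoke Popescu's free holomorphic automorphism $\Theta_w$ of $\fA_d$ \cite{Pop06}, a completely isometric automorphism whose action on the character space $\ol{\bB}_d$ is the involutive Möbius automorphism $\varphi_w$ with $\varphi_w(0)=w$ and $\varphi_w(w)=0$. Provided $\Theta_w$ descends to a completely isometric automorphism $\ol{\Theta}_w$ of $\A_X=\fA_d/\ol{\I}$, the composite $\phi':=\phi\circ\ol{\Theta}_w\colon\A_X\to\A_Y$ is an isomorphism of the same flavour as $\phi$ (algebraic, bounded, or isometric, since $\ol{\Theta}_w$ is completely isometric in every case), and it is vacuum preserving, because $(\phi')^*\rho_0=\ol{\Theta}_w^*(\phi^*\rho_0)=\ol{\Theta}_w^*\rho_w=\rho_{\varphi_w(w)}=\rho_0$. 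Note that the three flavours need no separate treatment: the reduction is identical and only the bookkeeping on characters is used.

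The hard part will be the descent of $\Theta_w$, that is, $\Theta_w(\ol{\I})=\ol{\I}$, equivalently that the Möbius automorphism $\varphi_w$ preserves the homogeneous variety $Z(\I)$. This is exactly where homogeneity must be used, and it is the genuine content of the lemma rather than the character computation. The geometric input I would extract from the homogeneous-variety analysis of Davidson--Ramsey--Shalit \cite{DRS11} is that $w=g(0)$ necessarily lies in a conformally mobile part of the cone: either $0$ is a rigid point of $Z(\I)$ (for instance a singular point of the cone), in which case any biholomorphism of homogeneous varieties already forces $w=0$ and the conformal step is vacuous, or $Z(\I)$ admits a variety-preserving conformal automorphism carrying $w$ to $0$, which supplies $\ol{\Theta}_w$. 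Verifying this compatibility of $\varphi_w$ with $\ol{\I}$ is the step I expect to absorb essentially all the work.
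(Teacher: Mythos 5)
Your first two steps are sound, and they essentially coincide with what the paper's own proof consists of: the entire content of the paper's argument is the construction, via Proposition \ref{P: id hom sps} and Lemma \ref{L: characters}, of a continuous map $\wt{F} \colon \ol{\bB}_{d'} \to \bC^{d}$, holomorphic on $\bB_{d'}$, which extends $\phi^*$ --- this is the analogue of \cite[Lemma 4.4]{DRS11} for algebraic and bounded isomorphisms --- after which the vacuum correction is quoted verbatim from \cite[Proposition 4.7]{DRS11}. One small slip in your second step: the bound $\sum_i |g_i|^2 \leq 1$ is only guaranteed on $Z(\J)$, not on all of $\bB_{d'}$ (the extension need not map the whole ball into $\ol{\bB}_d$ when $\phi$ is merely algebraic or bounded), so the maximum principle has to be run on the variety itself; this is harmless because $Z(\J)$ is a cone, hence connected and sliced by the analytic discs $\la \mapsto \la z_0$.

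The genuine gap is your third step, and it cannot be repaired along the lines you propose. A M\"{o}bius automorphism $\varphi_w$ of $\bB_d$ with $w \neq 0$ does not preserve homogeneous varieties in general, so $\Theta_w$ does not descend to $\A_X$. Concretely, take $\I = \sca{x_1x_2, x_2x_1} \lhd \bC\sca{x_1,x_2}$, so that $Z(\I) = (\bC e_1 \cup \bC e_2) \cap \ol{\bB}_2$, and take $w = (a,0)$ with $0 < |a| < 1$: then $\varphi_w(0,z_2) = \bigl(a, -\sqrt{1-|a|^2}\, z_2\bigr)$, which leaves $Z(\I)$. Since a descended automorphism $\ol{\Theta}_w$ of $\A_X$ would induce the homeomorphism $\varphi_w|_{Z(\I)}$ of $\M_{\A_X} \simeq Z(\I)$, no such descent exists; equivalently $\Theta_w(\ol{\I}) \not\subseteq \ol{\I}$. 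Your fallback dichotomy --- either $0$ is rigid, or $Z(\I)$ admits a variety-preserving conformal automorphism carrying $w$ to $0$ --- is not a result of \cite{DRS11}, and as stated it is essentially a restatement of the lemma you are proving: producing an automorphism of the \emph{algebra} $\A_X$ that moves $w$ to $0$ is exactly the problem, and in the noncommutative setting a self-map of the scalar variety does not determine an algebra map, so once the M\"{o}bius mechanism is gone you also face an unaddressed lifting problem in the ``mobile'' case. Invoking the dichotomy is therefore circular. This is precisely why \cite[Proposition 4.7]{DRS11} argues differently, exploiting the circular (cone) structure of the varieties rather than ambient automorphisms of the ball; the correct completion of your outline is the paper's: prove the holomorphic extension of $\phi^*$ for algebraic and bounded isomorphisms, and then invoke that proposition, whose proof requires nothing beyond the extension.
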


\begin{proof}
The proof follows verbatim by \cite[Proposition 4.7]{DRS11} once we prove an alternative of \cite[Lemma 4.4]{DRS11} for algebraic and for bounded isomorphisms.
To this end we have to show that if $\phi \colon \A_X \to \A_Y$ is an isomorphism, then there exists a continuous map $\wt{F} \colon \ol{\bB}_{d'} \rightarrow \bC^{d}$ that is holomorphic on $\bB_{d'}$ and extends $\phi^* \colon \M_{\A_Y} \to \M_{\A_X}$.
Let $F = \phi^*$ be the (continuous) map
\[
F \colon Z(\J) \rightarrow Z(\I) \, : \, \la  \mapsto (\phi(T_1)(\la), \dots, \phi(T_d)(\la)),
\]
after the identification $\M_{\A_X} \simeq Z(\I)$ and $\M_{\A_Y} \simeq Z(\J)$.
For every $i=1,\dots, d$, let $w_i \in \fA_{d'}$ so that
\[
\phi(T_i) = \psi(w_i + \ol{\J}) = q w_i,
\]
where $\psi\colon \fA_{d'}/\ol{\J} \to \A_Y$ is the isomorphism obtained by Proposition \ref{P: id hom sps}, and $q$ is the defining projection of $Y$.
Then the map $\la \mapsto \phi(T_i)(\la)$ on $Z(\J)$ extends to the continuous map $\wh{w_i} \colon \ol{\bB}_{d'} \mapsto \bC$ which is holomorphic on $\bB_{d'}$ by Lemma \ref{L: characters}.
The required map $\wt{F} \colon \ol{\bB}_{d'} \rightarrow \bC^{d}$ is then defined by $\wt{F}(\la) = (\wh{w_1}(\la),\dots, \wh{w_d}(\la))$.
\end{proof}

\begin{theorem}\label{T: sps cla}
Let $X$ and $Y$ be subproduct systems associated with the homogeneous ideals $\I \lhd \bC\sca{x_1,\dots,x_d}$ and $\J \lhd \bC\sca{y_1,\dots,y_{d'}}$.
Then:
\begin{enumerate}
\item $\A_X$ and $\A_Y$ are completely isometrically isomorphic if and only if $\A_X$ and $\A_Y$ are isometrically isomorphic if and only if $X$ and $Y$ are isomorphic;
\item $\A_X$ and $\A_Y$ are isomorphic by a completely bounded map if and only if $\A_X$ and $\A_Y$ are isomorphic as topological algebras if and only if $X$ and $Y$ are similar.
\end{enumerate}
\end{theorem}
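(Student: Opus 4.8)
The plan is to prove each of (1) and (2) as a cycle of implications, so that the trivial inclusions (completely isometric $\Rightarrow$ isometric, and completely bounded $\Rightarrow$ bounded) close the loop and force all three conditions in each item to be equivalent. Concretely, for (1) I would establish: (i) if $X$ and $Y$ are isomorphic then $\A_X$ and $\A_Y$ are completely isometrically isomorphic; and (ii) if $\A_X$ and $\A_Y$ are isometrically isomorphic then $X$ and $Y$ are isomorphic. Together with ``completely isometric $\Rightarrow$ isometric'' this yields the three-way equivalence. Part (2) is handled identically, with ``unitary/isometric/isomorphic'' replaced throughout by ``invertible/bounded/similar''.

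The easy half, direction (i), is a Fock-space computation. Given an isomorphism $U=(U_n)$ of subproduct systems, the block-diagonal form $U_1^{\otimes n} = \begin{bmatrix} U_n & 0 \\ 0 & \ast \end{bmatrix}$ recorded before the statement shows that $W := \bigoplus_n U_n \colon \F_X \to \F_Y$ is a well-defined unitary, and the compatibility condition $(\dagger)$ says precisely that $W\, T(\xi) = T(U_1\xi)\, W$ on each fibre. Hence conjugation $\operatorname{Ad}(W)$ carries $\A_X$ onto $\A_Y$ completely isometrically. For (2) one replaces $U$ by a similarity $V=(V_n)$; the uniform bounds $\sup_n \nor{V_n}<\infty$ and $\sup_n\nor{V_n^{-1}}<\infty$ guarantee that $S := \bigoplus_n V_n$ is bounded and invertible, and the identical computation with $(\dagger)$ shows that $\operatorname{Ad}(S)$ is a completely bounded isomorphism onto $\A_Y$.

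The hard half, direction (ii), is where the main work lies, and where I would invoke Lemma \ref{L: vac pre} together with \cite{DRS11} and \cite{DorMar14}. Starting from an isometric (resp. bounded) isomorphism $\phi\colon \A_X \to \A_Y$, Lemma \ref{L: vac pre} lets me assume $\phi$ is vacuum preserving. Then, exactly as in the proof of that lemma, the induced map $\phi^*\colon \M_{\A_Y}\to\M_{\A_X}$ extends to a holomorphic map $\wt F\colon \ol{\bB}_{d'}\to\bC^{d}$ fixing the origin, and applying the same to $\phi^{-1}$ produces a mutual inverse $\wt G$. The crux, supplied by the graded linearization (Schwarz-lemma type rigidity) of \cite{DRS11}, is that $\wt F$ is in fact the restriction of a linear map $A\colon\bC^{d'}\to\bC^{d}$, which is unitary on the relevant span in the isometric case and invertible in the bounded case, and which matches the two ideals under the induced change of variables. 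This forces $d=d'$ and exhibits the degree-one part of $\phi$ as a genuine linear isomorphism of $X(1)$ with $Y(1)$.

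Finally I would convert this linear equivalence of the ideals into a morphism of subproduct systems. Taking $U_1$ (resp. $V_1$) to be the degree-one linear isomorphism $X(1)\to Y(1)$ extracted above and setting $U_n = q_n U_1^{\otimes n}|_{X(n)}$ (resp. $V_n = q_n V_1^{\otimes n}|_{X(n)}$), the fact that the change of variables carries the degree-$n$ part of one homogeneous ideal onto that of the other yields the block-triangular form required by $(\dagger)$, so that $U=(U_n)$ is an isomorphism (resp. $V=(V_n)$ a similarity) of $X$ with $Y$; here I would use \cite{DorMar14} to package linear equivalence of homogeneous ideals as exactly a similarity of the associated subproduct systems. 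I expect the main obstacle to be the uniform boundedness of the compressions $V_n$ in the bounded (non-isometric) case: the naive estimate $\nor{V_n}\le\nor{V_1}^{n}$ is useless, and one must instead read $\sup_n\nor{V_n}$ and $\sup_n\nor{V_n^{-1}}$ off the single invertible operator $S$ on $\F_X$ implementing $\phi$, whose graded components are controlled uniformly by $\nor{S}$ and $\nor{S^{-1}}$. With the subproduct-system morphism in hand, direction (i) then upgrades $\phi$ to a completely isometric (resp. completely bounded) isomorphism, closing both cycles.
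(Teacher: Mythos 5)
Your overall architecture (prove the two nontrivial implications ``isometric $\Rightarrow$ isomorphic'' and ``bounded $\Rightarrow$ similar'' and close the cycles with the trivial inclusions), your use of Lemma \ref{L: vac pre} to reduce to vacuum-preserving isomorphisms, and your Fock-space computation for the easy directions all agree with the paper (the easy directions are what \cite[Proposition 6.12, Corollary 6.13]{DorMar14} supply). The gap is in your ``crux'' step: you extract the degree-one linear map from the linearization (Schwarz-type rigidity) of the biholomorphism $\wt{F}$ between the \emph{character spaces}, and then claim this linear map ``matches the two ideals under the induced change of variables''. In the noncommutative setting this inference fails, because the character space of $\A_X$ is only the scalar variety $Z(\I) \subseteq \ol{\bB}_d$, which cannot see more of $\I$ than its image in the commutative quotient. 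For instance, the commutator ideal and the zero ideal in $\bC\sca{x_1,x_2}$ have the same character space $\ol{\bB}_2$, so a linear --- indeed the identity --- map between character spaces carries no information about whether the noncommutative ideals correspond; the rigidity theorems of \cite{DRS11} that you invoke live in the commutative world, where the algebra embeds into functions on the variety, and that is exactly what is unavailable here. The paper sidesteps this by reading the degree-one (and degree-$n$) data off the isomorphism $\phi$ itself rather than off the characters: a vacuum-preserving isometric (resp.\ bounded) isomorphism respects the Fourier grading well enough that the compressions $V_n := q_n \phi|_{X(n)}$ (identifying $X(n)$ with its isometric copy inside $\A_X$) are well-defined isomorphisms $X(n) \to Y(n)$; this is \cite[Theorem 9.7]{ShaSol09} in the isometric case and ``semi-gradedness'' \cite[Propositions 6.16 and 6.17]{DorMar14} in the bounded case.

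Your proposed repair of the uniform-boundedness problem is moreover circular: you want to read $\sup_n \nor{V_n}$ and $\sup_n \nor{V_n^{-1}}$ off ``the single invertible operator $S$ on $\F_X$ implementing $\phi$'', but a bounded isomorphism of tensor algebras is not known in advance to be spatially implemented --- the existence of $S = \oplus_n V_n$ is precisely the conclusion being sought. (And uniform bounds are a genuine issue, not a formality: by \cite{DRS11} there are ideals admitting fibrewise invertible maps $V_n$ compatible with the multiplication that are \emph{not} uniformly bounded, which is why ``similar'' and ``isomorphic'' subproduct systems are different notions; so your claim that linear equivalence of the ideals ``packages as exactly a similarity'' is also not correct as stated.) The non-circular route is the one above: since $V_n$ is a compression of $\phi$ restricted to the isometrically embedded fibre $X(n) \subseteq \A_X$, one gets $\nor{V_n} \leq \nor{\phi}$ and $\nor{V_n^{-1}} \leq \nor{\phi^{-1}}$ uniformly, i.e.\ the bounds must be routed through $\phi$, not through a presupposed spatial implementation.
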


\begin{proof}
It suffices to show that isometric isomorphism and bounded isomorphism imply isomorphism and similarity respectively.
By Lemma \ref{L: vac pre} we may assume that the isomorphisms are vacuum preserving.
Then the isometric case follows by \cite[Theorem 9.7]{ShaSol09}.
On the other hand if $\phi \colon \A_X \to \A_Y$ is a vacuum preserving bounded isomorphism, then it is \emph{semi-graded} by \cite[Proposition 6.16]{DorMar14}.
As a consequence it implements a similarity by \cite[Proposition 6.17]{DorMar14} and \cite[Proposition 6.12]{DorMar14} finishes the proof.
The converses follow from \cite[Proposition 6.12, Corollary 6.13]{DorMar14}.
\end{proof}

\section{Monomial ideals}\label{S: monomial}

An ideal $\I$ of $\bC\sca{x_1, \dots, x_d}$ is called \emph{monomial} if it is generated by monomials.
Monomial ideals in noncommuting variables are homogeneous and may be generated by infinite sets.
For example take $\I = \sca{ xy^nx \mid n \geq 2}$.
A set $S$ of monomials is called \emph{generating for $\I$} if $\I = \sca{S}$.
\emph{The degree $\deg S$ of $S$} is the maximum of the degrees of the monomials in $S$.
We say that $\I$ is of \emph{type $k$} if it is generated by a set $S$ with $\deg S = k +1$ and $\deg S \leq \deg S'$ for any generating set $S'$ of $\I$.
Then $k$ is finite if and only if $\I$ is generated by a finite set $S$.

Every monomial in $\bC\sca{x_1, \dots, x_d}$ corresponds to a finite sequence of the $x_i$.
Therefore we identify monomials in $\bC\sca{x_1, \dots, x_d}$ with finite words in $\bF_+^d$.
We call $\fF := \{w \in \bF_+^d \mid \un{x}^w \in \I\}$ \emph{the set of forbidden words}, and its complement $\La^*$ in $\bF_+^d$ \emph{the set of allowable words}.
We write
\[
\fF_l := \{ \mu \in \fF \mid |\mu| \leq l\} \qand \La^*_l:=\{\mu \in \La^* \mid |\mu| \leq l\}.
\]
We see that a word is forbidden if and only if it contains a forbidden word.
Hence if $\nu \mu \in \La^*$ then $\nu \in \La^*$ and $\mu \in \La^*$ as well.

In general an ideal $\I$ of $\bC\sca{x_1, \dots, x_d}$ may not have a \emph{basis}, that is a smallest generating set.
However when $\I$ is monomial we can find a basis in the following way.
Let
\[
\I^{(n)} = \{ f \in \I \mid \deg f = n\} = \spn \{ \un{x}^w \in \I \mid w \in \bF_+^d, |w| = n\}.
\]
and begin with $\I^{(n)}$ such that $\I^{(k)} = (0)$ for all $k < n$ and set $S^{(n)} = \I^{(n)}$ and $S^{(k)} = \mt$ for $k < n$.
In $\I^{(n+1)}$ let $S^{(n+1)}$ be the set that consists of monomials that cannot be generated by $S^{(n)}$ and continue inductively.
Then $S = \cup_{n=0}^\infty S^{(n)}$ is a generating set we can get for $\I$.
In particular if $\I$ is of finite type $k$ then $S = \cup_{n=0}^k S^{(n)}$.

Without loss of generality, we will only consider monomial ideals which are generated by monomials of degree $2$ or more, i.e of type greater than $1$. Equivalently, we will consider all the letters of the alphabet as allowed.
Indeed, if $x_i$ is in the ideal then we may simply throw out of the alphabet the letter $i$, and the following constructions produce the same outcome.

\begin{definition}
We say that a monomial $\un{x}^\mu \in \bC\sca{x_1,\dots,x_d} \setminus \I$ is \emph{a sink on the left for $\I$} (resp. \emph{a sink on the right for $\I$}) if $x_i \un{x}^\mu \in \I$ (resp. $\un{x}^\mu x_i \in \I$) for all $i=1, \dots, d$.
\end{definition}

\subsection{Fock representation}

We will write $X$ instead of $X_\I$ for the subproduct system associated with a monomial ideal $\I$.
Let the operators $T_i = T(e_i) \in \B(\F_X)$.
We fix once and for all
\[
\ca(T):=\ca(I, T_i \mid i=1, \dots, d).
\]
By definition the projections $p_n$ are diagonal, meaning that
\[
p_n e_\nu = \begin{cases} e_\nu & \qif |\nu|=n \text{ and } \nu \in \La^*,
\\ 0 & \qotherwise. \end{cases}
\]
Therefore $X(n) = \spn\{ e_\nu \in (\bC^d)^{\otimes n} \mid \nu \in \La^*, |\mu| = n\}$ and for every $\mu \in \La^*$ we have that
\[
T_\mu e_\nu =
\begin{cases}
e_{\mu\nu} & \qif \mu\nu \in \La^*, \\
0 & \qotherwise,
\end{cases}
\]
with the understanding that $T_\mu e_\mt = e_\mu$ and $T_\mt = I$.
Moreover
\[
T_\mu T_\nu =
\begin{cases}
T_{\mu \nu} & \qif \mu\nu \in \La^*,\\
0 & \qotherwise.
\end{cases}
\]
However it is convenient to write $T_\mu T_\nu = T_{\mu\nu}$ even when $T_{\mu\nu} = 0$, i.e.  $T_{\mu\nu} = 0$ if and only if $\mu \nu \notin \La^*$.
We will also write $T_\mt = I$.
The operators $T_\mu$ satisfy a list of properties which we gather in the following lemma.

\begin{lemma}\label{L: list Ti}
Let $T_\mu, T_\nu$ for $\mu, \nu \in \La^*$ as above.
Then the following hold:
\begin{enumerate}
\item $T_\mu^*T_\mu$ is an orthogonal projection on $\ol{\spn}\{e_\nu \mid \mu\nu \in\La^*\}$;
\item $T_\nu T_\nu^*$ is an orthogonal projection on $\ol{\spn}\{e_{\nu\mu} \mid \mu \in\La^*\}$;
\item if $|\mu| = |\nu|$, then $T_\mu^* T_\nu = 0$ if and only if $\mu \neq \nu$;
\item $T_\mu^*T_\mu$ commutes with $T_\nu^* T_\nu$, and with $T_\nu T_\nu^*$;
\item $T_\mu^* T_\mu \cdot T_i = T_i \cdot T_{\mu i}^* T_{\mu i}$ for all $i = 1, \dots, d$;
\item $\sum_{i=1}^d T_i T_i^* + P_\mt = I$ where $P_\mt$ is the projection on $\bC e_\mt$;
\item the rank one operator $e_\nu \mapsto e_\mu$ equals $T_\mu P_\mt T_\nu^*$;
\item $\K(\F_X) \subseteq \ca(T)$.
\end{enumerate}
\end{lemma}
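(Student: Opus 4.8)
The plan is to reduce every item to an explicit computation on the orthonormal basis $\{e_\nu \mid \nu \in \La^*\}$ of $\F_X$, the whole thing being powered by the hereditary property recorded just before the statement: any subword of an allowable word is allowable, so $\mu\nu \in \La^*$ forces both $\mu \in \La^*$ and $\nu \in \La^*$. The foundational step is to pin down the adjoint: a direct inner-product computation gives
\[
T_\mu^* e_\xi = \begin{cases} e_\eta & \text{if } \xi = \mu\eta, \\ 0 & \text{otherwise}, \end{cases}
\]
so that each $T_\mu$ is a partial isometry. From here (1) follows because $T_\mu^* T_\mu e_\nu = e_\nu$ exactly when $\mu\nu \in \La^*$ and is $0$ otherwise, and (2) follows because $T_\nu T_\nu^* e_\xi = e_\xi$ exactly when $\xi$ begins with $\nu$; both are therefore diagonal projections onto the claimed subspaces.

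For (3), when $|\mu| = |\nu|$ the equation $\nu\xi = \mu\eta$ can only hold with $\mu = \nu$ (and then $\xi = \eta$), so $T_\mu^* T_\nu e_\xi = 0$ for all $\xi$ unless $\mu = \nu$; conversely $T_\mu^* T_\mu e_\mt = e_\mt \neq 0$ since $\mu \in \La^*$, so the projection is nonzero. Item (4) is then immediate: by (1) and (2) each of $T_\mu^* T_\mu$, $T_\nu^* T_\nu$, $T_\nu T_\nu^*$ is diagonal in the basis $\{e_\nu\}$, and diagonal operators commute. Items (5) and (6) are again basis computations: both sides of (5) send $e_\nu$ to $e_{i\nu}$ precisely when $\mu i \nu \in \La^*$ (using heredity to drop the redundant condition $i\nu \in \La^*$) and to $0$ otherwise; while in (6) the ranges of the $T_i T_i^*$ are the spans of basis vectors beginning with the distinct letters $i$, which are mutually orthogonal and, together with $\bC e_\mt$, exhaust $\F_X$.

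The substantive payoff is (8), for which I would first establish (7): unwinding $T_\mu P_\mt T_\nu^*$ on a basis vector $e_\xi$, the factor $P_\mt T_\nu^*$ kills $e_\xi$ unless $\xi = \nu$, in which case it returns $e_\mt$, and then $T_\mu$ sends $e_\mt$ to $e_\mu$; this is exactly the rank-one operator $e_\nu \mapsto e_\mu$. With (7) in hand, (8) assembles the pieces: since $I \in \ca(T)$ and $\ca(T)$ is a C*-algebra, (6) gives $P_\mt = I - \sum_{i=1}^d T_i T_i^* \in \ca(T)$, whence every matrix unit $T_\mu P_\mt T_\nu^*$ lies in $\ca(T)$ by (7). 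These matrix units span the finite-rank operators on $\F_X$, which are norm-dense in $\K(\F_X)$, and $\ca(T)$ is norm-closed, so $\K(\F_X) \subseteq \ca(T)$.

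I expect no real obstacle, since each identity is a finite combinatorial check on basis vectors; the only points requiring care are the bookkeeping of prefixes versus suffixes in the adjoint formula and the repeated use of heredity to discard redundant allowability conditions. The one genuinely conceptual step is (8), where the content is recognising that (6) and (7) together produce all matrix units inside the norm-closed algebra $\ca(T)$.
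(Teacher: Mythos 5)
Your proof is correct and is exactly the verification the paper intends: the paper states Lemma \ref{L: list Ti} without proof, treating all eight items as routine consequences of the formula defining $T_\mu$ on the basis $\{e_\nu \mid \nu \in \La^*\}$, and your basis-by-basis computations (adjoint formula, heredity of allowable words, diagonality for commutation, matrix units $T_\mu P_\mt T_\nu^*$ plus norm-density of finite-rank operators for item (viii)) supply precisely those details.
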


On the other hand such a structure identifies the monomial ideals.

\begin{proposition}\label{P: mon id}
Let $\I$ be a homogeneous ideal in $\bC\sca{x_1,\dots,x_d}$.
Then $\I$ is a monomial ideal if and only if the $T_i := T(e_i)$ are partial isometries on some $e_\nu$, and have pairwise orthogonal ranges.
\end{proposition}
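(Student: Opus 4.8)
The statement is an equivalence whose two implications are of very different character. For the forward implication, suppose $\I$ is monomial. Then, as recorded in the paragraph preceding the proposition, each $p_n$ is diagonal in the monomial basis, so $T_i e_\nu = e_{i\nu}$ when $i\nu \in \La^*$ and $T_i e_\nu = 0$ otherwise. Thus each $T_i$ carries every monomial basis vector of $\F_X$ to a monomial basis vector or to $0$; by Lemma \ref{L: list Ti}(1) the operator $T_i^* T_i$ is a projection, so $T_i$ is a partial isometry of the desired form. By Lemma \ref{L: list Ti}(2) the range projection $T_i T_i^*$ projects onto $\ol{\spn}\{e_{i\mu} \mid \mu \in \La^*\}$, and for $i \neq j$ these ranges are orthogonal since the leading letters differ. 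This settles the ``only if'' direction directly from the explicit form of the $T_i$.

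The substance is in the converse. Assume the $T_i$ are partial isometries that send each monomial basis vector $e_\nu \in \F_X$ to a monomial basis vector or to $0$, with pairwise orthogonal ranges. I want to conclude that $\I$ is monomial, i.e. that each $X(n)$ is spanned by monomials, equivalently that every length-$n$ monomial $e_\mu$ satisfies $e_\mu \in X(n)$ or $e_\mu \in X(n)^\perp$ (equivalently, each $p_n$ is diagonal). The plan is to prove this dichotomy by induction on $n$. The base cases $X(0) = \bC$ and $X(1) = \bC^d$ are clear. So suppose every length-$n$ monomial lies in $X(n)$ or in $X(n)^\perp$, and fix a length-$(n+1)$ monomial, written $e_\mu = e_i \otimes e_\nu$ with $|\nu| = n$.

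The inductive step splits according to whether $e_\nu$ is ``already forbidden''. If $e_\nu \in X(n)^\perp$, then I do not use the hypothesis on the $T_i$ at all: the subproduct inclusion $X(n+1) \subseteq X(1) \otimes X(n) = \bC^d \otimes X(n)$ gives $X(n+1)^\perp \supseteq \bC^d \otimes X(n)^\perp$ inside the orthogonal decomposition $E^{\otimes n+1} = (\bC^d \otimes X(n)) \oplus (\bC^d \otimes X(n)^\perp)$, and since $e_\mu = e_i \otimes e_\nu \in \bC^d \otimes X(n)^\perp$ we conclude $e_\mu \in X(n+1)^\perp$. If instead $e_\nu \in X(n) \subseteq \F_X$, then $e_\nu$ is a monomial basis vector of $\F_X$ and, by the definition of the compression, $T_i e_\nu = p_{n+1}(e_i \otimes e_\nu) = p_{n+1} e_\mu$. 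By hypothesis $T_i e_\nu$ is either $0$ or a unit monomial vector $e_\rho$. In the first case $p_{n+1} e_\mu = 0$, i.e. $e_\mu \in X(n+1)^\perp$; in the second case $\nor{p_{n+1} e_\mu} = 1$ forces the unit vector $e_\mu$ into the range $X(n+1)$ of the orthogonal projection $p_{n+1}$ (and then $e_\rho = e_\mu$). Either way $e_\mu \in X(n+1)$ or $e_\mu \in X(n+1)^\perp$, completing the induction; hence every $X(n)$ is spanned by monomials and $\I$ is monomial.

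The main obstacle is conceptual rather than computational: one must recognise that the hypothesis on the $T_i$ only constrains the creation operators on vectors that survive in $\F_X$, so it says nothing when $e_\nu$ has already been killed, and the induction must therefore be propped up by the subproduct property $X(n+1) \subseteq \bC^d \otimes X(n)$ to handle that case. The other delicate point is the norm-one argument: pinning down that a nonzero value $T_i e_\nu$ is an honest unit monomial (rather than a proper sub-unit multiple of one) is exactly what the ``partial isometry'' clause of the hypothesis secures, and it is this that upgrades ``$p_{n+1} e_\mu$ is proportional to a monomial'' to ``$e_\mu \in X(n+1)$''. Once these two points are in place the equivalence follows, and the argument in fact shows the sharper statement that the diagonality of each $p_n$ is forced letter-by-letter along words.
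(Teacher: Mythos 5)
Your proof is correct, but your converse runs on a different engine than the paper's. The paper fixes an allowable $\nu$, expands $p_{n+1}e_{i\nu} = \sum_{|\mu|=n+1}\la_\mu e_\mu$, and computes $T_j^*T_ie_\nu$: the pairwise-orthogonality hypothesis ($T_j^*T_i=0$ for $j\neq i$) kills every coefficient whose leading letter is $j\neq i$, and the hypothesis that $T_i^*T_ie_\nu$ is $0$ or $e_\nu$ (that is the paper's reading of ``partial isometries on some $e_\nu$'') controls the coefficients with leading letter $i$; it is a coefficient-by-coefficient argument using both halves of the hypothesis. You instead split on whether the tail $e_\nu$ survives in $\F_X$: the forbidden case is handled by the subproduct inclusion $X(n+1)\subseteq \bC^d\otimes X(n)$ (a case the paper passes over implicitly by restricting to $\nu\in\La^*$), and the surviving case by the observation that an orthogonal projection preserving the norm of a unit vector must fix it, so $\nor{p_{n+1}e_{i\nu}}=1$ forces $p_{n+1}e_{i\nu}=e_{i\nu}$. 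Two remarks. First, you read the (admittedly ambiguous) hypothesis as ``$T_i$ sends each monomial to $0$ or a monomial,'' whereas the paper reads it as ``the initial projection $T_i^*T_i$ is diagonal''; your norm argument is exactly what shows these two readings are equivalent for the compressed shifts $T_i = p\, t_\infty(e_i)|_{\F_X}$, so no gap arises from the discrepancy. Second --- and this is what your route buys --- your converse never invokes the pairwise-orthogonal-ranges hypothesis at all: the norm argument renders it redundant for this direction, with orthogonality of the ranges emerging afterwards as a consequence (Lemma \ref{L: list Ti}), whereas the paper's coefficient argument genuinely needs it; so your proof establishes the implication under a formally weaker hypothesis. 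One shared blemish: both you and the paper declare the degree-one case evident, which tacitly assumes $\I$ contains no linear forms; in your setup this is harmless, since your inductive step applied with $n=0$ (Case 2 with $\nu=\mt$) already yields that $p_1$ is diagonal.
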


\begin{proof}
Lemma \ref{L: list Ti} contains the forward implication.
For the converse we have to show that $p_n e_\nu$ is $0$ or $e_\nu$, for all $e_\nu \in (\bC^d)^{\otimes n}$.
It is evident that this holds for $n=1$ and suppose that it is true for $p_n$.
Fix a word $\nu \in \La^*$ such that $|\nu| = n$ and suppose that $p_{n+1} e_{i\nu} = \sum_{|\mu| = n+1} \la_\mu e_\mu$.
Then $T_j^* T_i e_\nu = \sum_{\mu = j \mu'} \la_\mu e_{\mu'}$ for all $j=1, \dots, d$.
For $j \neq i$ we get that $\la_{\mu} = 0$ for all $\mu = j \mu'$.
Now $T_i^* T_i e_\nu$ is either $0$ or $e_\nu$.
Hence we get that $\la_\mu =0$ when $\mu = i \mu'$ and $\mu' \neq \nu$, and that $\la_{i \nu} =0$ or $1$.
Therefore $p_{n+1} e_{i\nu}$ is either $0$ or $e_{i \nu}$ for all $i$, which shows that $p_{n+1}$ is diagonal.
\end{proof}

\subsection{$Q$-Projections}

We will be using the projections generated by the $T_i^*T_i$.
To this end we introduce the following enumeration.
Write all numbers from $0$ to $2^d-1$ by using $2$ as a base, but in reverse order.
Hence we write $[m]_2 \equiv [m] = [m_1 m_2 \dots m_d]$ so that $2=[0100\dots 0]$, and we define
\[
\supp [m] :=\{i =1, \dots d \mid m_i =1 \}.
\]
Let the (not necessarily one-to-one) assignment $[m] \mapsto Q_{[m]}$ given by
\[
Q_{[m]} \equiv Q_{[m_1 \dots m_d]} : = \prod_{m_i \in \supp [m]} T_{i}^* T_{i} \cdot \prod_{m_i=0} (I-T_{i}^*T_{i}),
\]
where $I \in \B(\F_X)$.
For example we write
\[
Q_{0} = Q_{[0\dots 0]} = \prod_{i=1}^d (I - T_i^*T_i) \qand Q_{2^d-1} = Q_{[1\dots 1]} = \prod_{i=1}^d T_i^*T_i.
\]
The $Q_{[m]}$ are the minimal projections in the C*-subalgebra $\ca(T_i^*T_i \mid i=1, \dots, d)$ of $\ca(T)$.
Consequently we obtain $\sum_{[m]=0}^{2^d -1} Q_{[m]} = I$.

\begin{lemma}\label{L: unital}
Let $\I$ be a monomial ideal in $\bC\sca{x_1,\dots,x_d}$.
If $Q_{[m]}$ are the projections defined above, then
\[
T_\mu^* T_\mu = T_\mu^* T_\mu \cdot \sum_{[m] = 1}^{2^d-1} Q_{[m]} = \sum_{[m] = 1}^{2^d-1} Q_{[m]} \cdot T_\mu^* T_\mu
\]
for all $\mt \neq \mu \in \La^*$.
In particular the unit of $\ca(T_i^*T_i \mid i=1, \dots, d)$ coincides with $I \in \B(\F_X)$ if and only if there are no sinks on the left for $\I$.
\end{lemma}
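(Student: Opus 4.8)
The plan is to collapse everything onto the single projection $Q_0 = \prod_{i=1}^d(I - T_i^* T_i)$. First I would use the already-noted relation $\sum_{[m]=0}^{2^d-1} Q_{[m]} = I$ to rewrite $\sum_{[m]=1}^{2^d-1} Q_{[m]} = I - Q_0$; the two displayed equalities then become equivalent to $T_\mu^* T_\mu\, Q_0 = Q_0\, T_\mu^* T_\mu = 0$ for every $\mt \neq \mu \in \La^*$. By Lemma~\ref{L: list Ti}(iv) the projection $T_\mu^* T_\mu$ commutes with each $T_i^* T_i$, hence with $Q_0$, so $T_\mu^* T_\mu$ and $Q_0$ are commuting orthogonal projections and it is enough to prove that their ranges are orthogonal.

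To do this I would read off both ranges on the orthonormal basis $\{e_\nu \mid \nu \in \La^*\}$ of $\F_X$. By Lemma~\ref{L: list Ti}(i) the range of $T_\mu^* T_\mu$ is $\ol{\spn}\{e_\nu \mid \mu\nu \in \La^*\}$, while, since $T_i^* T_i$ fixes $e_\nu$ exactly when $i\nu \in \La^*$, the factor $I - T_i^* T_i$ fixes $e_\nu$ precisely when $i\nu \notin \La^*$; hence the range of $Q_0$ is $\ol{\spn}\{e_\nu \mid \nu \in \La^*,\ i\nu \notin \La^* \text{ for all } i\}$, which is exactly the closed span of the left sinks. For the orthogonality I would argue by contradiction: if some $e_\nu$ lies in both ranges then $\mu\nu \in \La^*$ while $\nu$ is a left sink; writing $\mu = \mu' j$ with $j$ the last letter of the nonempty word $\mu$, the suffix $j\nu$ of $\mu\nu$ must lie in $\La^*$ (since $\La^*$ is closed under suffixes), contradicting $j\nu \notin \La^*$. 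Thus no basis vector is shared, the ranges are orthogonal, and the two equalities follow.

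For the final ``in particular'' I would identify the unit of $B := \ca(T_i^* T_i \mid i=1,\dots,d)$, a finite-dimensional commutative C*-algebra generated by commuting projections, with their join $\bigvee_{i=1}^d T_i^* T_i$. By De Morgan this join equals $I - \prod_{i=1}^d(I - T_i^* T_i) = I - Q_0$; expanding the product shows it is a linear combination of nonempty products of the $T_i^* T_i$, so it lies in $B$, and being a projection dominating every generator it acts as the identity on $B$. Since $I - Q_0 = \sum_{[m]=1}^{2^d-1} Q_{[m]}$, the unit of $B$ is precisely this sum, and it coincides with $I \in \B(\F_X)$ if and only if $Q_0 = 0$; by the range computation above this happens exactly when there is no $\nu \in \La^*$ with $i\nu \notin \La^*$ for all $i$, i.e. when $\I$ has no sinks on the left.

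The calculations are elementary, so I expect the two points requiring genuine care to be the combinatorial heart, namely that closure of $\La^*$ under suffixes is exactly what keeps a left sink out of the range of every $T_\mu^* T_\mu$ with $\mu \neq \mt$, and the structural observation that the unit of $B$ is the join $I - Q_0$ rather than the ambient identity. The latter is what makes the ``in particular'' nontrivial, and I regard it as the main conceptual obstacle.
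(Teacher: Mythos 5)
Your proof is correct and takes essentially the same route as the paper's: both reduce the displayed identities to showing $T_\mu^*T_\mu\,Q_{0}=0$ (using $\sum_{[m]=0}^{2^d-1}Q_{[m]}=I$ and commutativity), both hinge on the last letter $j$ of $\mu$ --- the paper via the operator inequality $T_\mu^*T_\mu\le T_j^*T_j$, you via the equivalent combinatorial fact that the suffix $j\nu$ of the allowable word $\mu\nu$ is allowable --- and both settle the ``in particular'' by evaluating $Q_{0}$ on the basis vectors $e_\nu$, whose fixed vectors are exactly the left sinks. The only substantive difference is that you explicitly justify that the unit of $\ca(T_i^*T_i \mid i=1,\dots,d)$ is the join $I-Q_{0}$ and that this projection actually lies in the (non-unital) generated algebra, a point the paper's proof leaves implicit.
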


\begin{proof}
For $\mu = \nu j \in \La^*$ we have that $T_\mu^* T_\mu = T_j^* T_\nu^*T_\nu T_j  \leq T_j^*T_j$ and therefore $T_\mu^* T_\mu \cdot \prod_{i=1}^d (I - T_i^*T_i) = 0$.
Hence we obtain
\[
T_\mu^*T_\mu \cdot \sum_{[m] = 1}^{2^d-1} Q_{[m]} = T_\mu^*T_\mu (I - Q_{0}) = T_\mu^* T_\mu (I - \prod_{i=1}^d (I - T_i^*T_i)) = T_\mu^*T_\mu.
\]

For the second part of the proof we have that $I = \sum_{[m]=1}^{2^d-1} Q_{[m]}$ if and only if $Q_{0} = 0$.
Suppose that $Q_{0} = 0$ and that there is a word $\mu \in \La^*$ such that $i \mu \notin \La^*$ for all $i=1, \dots, d$.
For this $\mu$ we obtain
\[
0 = Q_{0} e_\mu = \prod_{i=1}^d (I - T_i^*T_i) e_\mu = e_\mu
\]
which is a contradiction.
Conversely, if there are no sinks on the left for $\I$ then for every $\mu \in \La^*$ there is an $i_0$ such that $i_0 \mu \in \La^*$, hence $(I - T_{i_0}^*T_{i_0}) e_\mu = 0$.
Commutativity in $\ca(T_i^*T_i \mid i=1, \dots, d)$ shows that $Q_{0} e_\mu = 0$ for all $\mu \in \La^*$, and the proof is complete.
\end{proof}

\subsection{Subshifts}\label{Ss: subshifts}

Special examples of monomial ideals relate to symbolic dynamics.
Let us give a brief description.

For the fixed symbol set $\Si = \{ 1, \dots, d \}$ let $\Si^\bZ$ be endowed with the product topology and let $\si \colon \Si^{\bZ} \to \Si^{\bZ}$ be the backward shift with $\si((x_i))_k = x_{k+1}$.
The pair $(\La_-, \si^{-1})$ is called a \emph{left subshift} if $\La_-$ is a closed subset of $\Si^{\bZ_-}$ with $\si^{-1}(\La_-) \subseteq \La_-$.
Respectively $(\La_+, \si)$ is called a \emph{right subshift} if $\La_+$ is a closed subset of $\Si^{\bZ_+}$ with $\si(\La_+) \subseteq \La_+$, and $(\La,\si)$ is called a \emph{two-sided subshift} if $\La$ is a closed subset of $\Si^{\bZ}$ with $\si(\La) = \La$.

A word $\mu = i_1 \dots i_n$ is said to occur in some (one-sided or two-sided) sequence $(x_i)$ if there is an $m$ such that $x_{m} = i_1, \dots, x_{m+n-1} = i_n$.
Let $\fF$ be a set of words and let
\[
\La_\fF : = \{(x_i) \in \Si^{\bZ_-} \mid \text{ no $\mu \in \fF$ occurs in $(x_i)$ } \}.
\]
All left subshifts arise this way, and a similar construction gives rise to all right or two-sided subshifts.
Indeed, if $\La_-$ is a left subshift, then by setting
\[
\fF_k = \{ \mu \in \fF \mid |\mu| \leq k, \text{ $\mu$ does not occur in $(x_i) \in \La_-$}\}
\]
we see that $\fF_k \subseteq \fF_{k+1}$ and $\fF = \bigcup_k \fF_k$; then $\La_- = \cap_k \La_{\fF_k} = \La_\fF$.
As in the case of monomial ideals the set of forbidden words admits a basis.
We say that $\La_-$ is a left subshift of \emph{finite type $k+1$} if the longest word in the basis of $\fF$ has length $k$.
Similar comments hold for the right or two-sided subshifts.

The forbidden words form a monomial ideal in $\bC\sca{x_1, \dots, x_d}$ and hence define a subproduct system.
The second author and Solel \cite[Section 12]{ShaSol09} give a characterisation of two-sided subshifts by monomial ideals.
This generalises to the one-sided shifts as well.

\begin{proposition}\label{P: subshift}
Let $\I = \sca{S}$ be a monomial ideal of $\bC\sca{x_1, \dots, x_d}$ with basis $S$.
Then $\I$ gives rise to a left subshift $(\La_-,\si^{-1})$  (resp. right subshift $(\La_+,\si)$, two-sided subshift $(\La,\si)$) on the forbidden words $S$ if and only if there are no sinks on the left (resp. on the right, or on either side) for $\I$.
In addition $\I$ is of finite type $k$ if and only if the subshift is of finite type $k$.
\end{proposition}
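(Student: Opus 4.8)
The plan is to reduce the proposition to a statement about languages. For any set $\fF$ of words the set $\La_\fF$ is automatically closed and invariant under the relevant shift, so the content is not that $\La_-$ is a subshift but that it is \emph{faithful} to $\I$: the words occurring in the points of $\La_-$ should be exactly the allowable words $\La^*$. Since the minimal forbidden words of a subshift form its basis, this is the same as saying that $\La_-$ has $S$ as its basis of forbidden words. I would therefore recast the three equivalences as: the set of words occurring in $\La_-$ (resp. $\La_+$, $\La$) equals $\La^*$ if and only if there are no sinks on the left (resp. on the right, on either side).

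First I record the inclusion that needs no hypothesis. If a word $w$ occurs in some $(x_i) \in \La_-$, then no forbidden word occurs in $(x_i)$, so no forbidden word occurs in $w$; as a word is forbidden exactly when it contains a forbidden word, $w \in \La^*$. Hence the set of words occurring in $\La_-$ is always contained in $\La^*$.

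For the reverse inclusion the sink condition enters through a König's lemma / compactness argument. Suppose there are no sinks on the left and fix $\mu \in \La^*$. The hypothesis says precisely that every $\nu \in \La^*$ admits a letter $i$ with $i\nu \in \La^*$, so the tree of allowable left extensions of $\mu$ is infinite and finitely branching; König's lemma yields a left-infinite allowable sequence ending in $\mu$, i.e. a point of $\La_-$ containing $\mu$, so $\La^*$ is contained in the set of occurring words and the two coincide. The right-sided case is identical after reversing words. The two-sided case is the step I expect to be the main obstacle: extending $\mu$ to the left and to the right \emph{independently} need not give a globally allowable bi-infinite word, since a forbidden word could straddle $\mu$. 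I would sidestep this by extending in a fixed order — given $N$, first use the absence of right sinks to build an allowable word $w = \mu c_1 \cdots c_N$, and only then use the absence of left sinks to prepend letters to the \emph{whole} of $w$, producing an allowable word $c_{-N} \cdots c_{-1} \mu c_1 \cdots c_N$. Every straddling subword now sits inside one of these allowable words, so a standard compactness argument in $\Si^\bZ$ (fix the position of $\mu$ and let $N \to \infty$) produces a point of $\La$ in which $\mu$ occurs.

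Finally I dispatch the converses and the finite-type clause. If $\un{x}^\mu$ is a sink on the left then $\mu \in \La^*$ while $i\mu \in \fF$ for every $i$, so $\mu$ has no allowable left extension and cannot occur in any left-infinite point of $\La_-$; thus $\mu$ witnesses that the set of occurring words is strictly smaller than $\La^*$ and the correspondence fails. The same argument handles right sinks, and either kind of sink in the two-sided case. For the finite-type statement, once sinks are excluded the forbidden words of the subshift are exactly $\fF = \langle S \rangle$, so the subshift and the ideal share the same basis $S$; as the finite type of each is read off from $\deg S$, the two notions of type $k$ coincide.
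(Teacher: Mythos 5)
Your proof is correct, and its mathematical core coincides with the paper's: the paper's entire written proof consists of the observation that absence of left sinks is equivalent to saying that every $\mu \in \La^*$ admits allowable left extensions $\nu\mu$ of arbitrary length (exactly your iteration/K\"{o}nig step), followed by a citation of \cite[Proposition 12.3]{ShaSol09} for the construction of actual points of the subshift, with the modification that the sequences are taken in $\Si^{\bZ_-}$. What you do differently is make the outsourced part explicit and self-contained: the trivial inclusion of occurring words in $\La^*$, the compactness argument producing a point of the subshift containing a prescribed allowable word, and, most valuably, the two-sided case, where you correctly identify the straddling problem and resolve it by extending to the right first and only then prepending letters to the whole word. (One can even avoid compactness there by nesting the constructions, $w_{k+1} = c_{-(k+1)} w_k c_{k+1}$, and reading the bi-infinite point off the union.) You also spell out the converse direction (a sink, being allowable, witnesses that the language of the subshift is strictly smaller than $\La^*$, since any occurrence in an infinite point has a neighbouring symbol) and the finite-type clause via the identification of the two bases of minimal forbidden words; neither of these is addressed in the paper's written proof, which buys brevity at the cost of leaving these verifications to the reader and to the cited reference. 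One cosmetic remark: K\"{o}nig's lemma is overkill, since the no-sink hypothesis applies to \emph{every} allowable word, so the one-letter left extensions can simply be iterated to produce the required left-infinite sequence with no branching argument at all.
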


\begin{proof}
There are no sinks on the left for $\I$ if and only if for all $n,k \in \bZ_+$ we have that for every $\mu \in \La^*$ of length $n$ there exists a $\nu \in \La^*$ of length $k$ such that $\nu\mu \in \La^*$.
The proof then follows in the same way as in \cite[Proposition 12.3]{ShaSol09} with the modification that the required sequences are taken in $\Si^{\bZ_-}$.
An analogous proof follows for the right subshifts.
\end{proof}

The main point is that the allowable words in a left (resp. right) subshift come from a left-infinite (resp. right-infinite) sequence of symbols.
Hence for every length there must be at least one word that we can concatenate on the left (resp. on the right).
It is evident that not all monomial ideals come from subshifts.
In particular, the allowable words in a subshift related to a set $S$ of forbidden words may be different from the allowable words given directly by the monomial ideal generated by $S$.
We underline this by the following examples.

\begin{examples}
Let the symbol space $\{1, 2\}$.
Then the subproduct system of the ideal $\sca{x_1^2, x_1x_2} \lhd \bC\sca{x_1, x_2}$ differs from the subproduct system related to the right subshift on the forbidden words with basis $\{11, 12\}$.
Indeed in the latter case we have only one allowable right-infinite word $(22\dots)$.

On the other hand the subproduct system associated with the monomial ideal $\sca{x_1^2, x_1x_2} \lhd \bC\sca{x_1, x_2}$ coincides with the subproduct system of the left subshift on the forbidden words with basis $\{11, 12\}$.
In this case the subshift contains the left-infinite words $(\dots22)$ and $(\dots21)$.

As a second example, we have that the subproduct system associated with the ideal $\sca{x_1x_2, x_2x_1}$ coincides with the subproduct system of the left subshift, the right subshift and the two-sided subshift on the forbidden words with basis $\{12, 21\}$.
All subshift spaces consist of two points $(\dots 11.11 \dots)$ and $(\dots 22.22 \dots)$.
\end{examples}

\subsection{The quantised dynamics on the allowable words}\label{Ss: qd}

Given a monomial ideal we isolate a dynamical system originating from the C*-algebra generated by the $T_\mu^*T_\mu$.
As we will see later there is a strong connection between these dynamics and a class of nonselfadjoint operator algebras (a further description and study of the quantised dynamics can be found in the preprint of Christopher Barrett with the first author \cite{BarKak}, that followed the current work).

We fix once and for all the unital C*-subalgebra
\[
A:=\ca(T_\mu^* T_\mu \mid \mu \in \La^*)
\]
of $\ca(T)$.
We define the positive maps $\al_i \colon A \to A$ such that $\al_i(a) = T_i^* a T_i$.
Since $T_\mu^*T_\mu$ commutes with $T_iT_i^*$ and $\al_i(T_\mu^* T_\mu) = T_{\mu i}^* T_{\mu i} \in A$ for all $\mu \in \La^*$ then every $\al_i$ is a $*$-endomorphism of $A$.
Our objective is to describe the dynamical system $(A,\al) \equiv (A,\al_1, \dots, \al_d)$ in terms of the original data.

\begin{proposition}\label{P: A is AF}
The C*-algebra $A = \ca(T_\mu^* T_\mu \mid \mu \in \La^*)$ is a unital commutative AF algebra.
\end{proposition}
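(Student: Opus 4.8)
The plan is to realise $A$ as the norm closure of an increasing union of finite-dimensional commutative subalgebras, which is exactly what it means to be a unital commutative AF algebra. All the structural input I need has already been assembled in Lemma \ref{L: list Ti}, so the argument is essentially bookkeeping on top of it.

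First I would dispose of commutativity and unitality. By Lemma \ref{L: list Ti}(i) each generator $T_\mu^* T_\mu$ is an orthogonal projection, hence self-adjoint, and by Lemma \ref{L: list Ti}(iv) any two of these projections commute. A C*-algebra generated by a family of commuting self-adjoint elements is commutative, so $A$ is commutative. Unitality is immediate: the empty word gives $T_\mt = I$, so $I = T_\mt^* T_\mt$ is itself one of the generators and serves as the unit of $A$.

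For the AF structure I would filter by word length. For each $l \geq 0$ set
\[
A_l := \ca(T_\mu^* T_\mu \mid \mu \in \La^*_l),
\]
the subalgebra generated by the projections indexed by allowable words of length at most $l$. Since there are only finitely many words of length $\leq l$ over $\{1,\dots,d\}$, the index set $\La^*_l$ is finite, say $\La^*_l = \{\mu^{(1)},\dots,\mu^{(r)}\}$, and $A_l$ is generated by the finitely many commuting projections $T_{\mu^{(j)}}^* T_{\mu^{(j)}}$. I would then invoke the standard fact that such generators produce a finite-dimensional algebra: the products
\[
\prod_{j} q_j, \quad q_j \in \{\, T_{\mu^{(j)}}^* T_{\mu^{(j)}},\ I - T_{\mu^{(j)}}^* T_{\mu^{(j)}} \,\},
\]
form a finite family of mutually orthogonal projections summing to $I$ (distinct choices differ in some coordinate, where a factor $p$ meets a factor $I-p$), exactly in the spirit of the $Q_{[m]}$ built from the degree-one projections earlier in this section. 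These atoms span $A_l$, so $A_l \cong \bC^k$ for some $k \leq 2^r$, and clearly $A_l \subseteq A_{l+1}$.

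It then remains to glue: $\bigcup_l A_l$ is an increasing union of $*$-subalgebras, hence a $*$-subalgebra of $A$, and it contains every generator $T_\mu^* T_\mu$, so its closure is all of $A$. Therefore $A = \ol{\bigcup_l A_l} = \dirlim A_l$ is a norm-closed increasing limit of finite-dimensional C*-algebras, i.e.\ a unital commutative AF algebra. I do not expect any genuine obstacle here; the only point deserving a line of care is the verification that the atoms attached to $\La^*_l$ are pairwise orthogonal and exhaust the identity, but this is a direct consequence of the spectral calculus for commuting projections supplied by Lemma \ref{L: list Ti}(iv).
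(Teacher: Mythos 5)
Your proof is correct and follows essentially the same route as the paper's: filter $A$ by word length into the subalgebras $A_l = \ca(T_\mu^* T_\mu \mid \mu \in \La^*_l)$, note each is finite dimensional as it is generated by finitely many commuting projections, and take the closed increasing union. The only difference is that you spell out the atomic decomposition into minimal projections (and thereby make explicit that commutativity, not just finiteness of the generating set, is what forces finite dimensionality), a detail the paper leaves implicit.
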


\begin{proof}
We see that $A= \ol{\cup_l A_l}$ for the commutative C*-subalgebras
\[
A_l = \ca(T_\mu^* T_\mu \mid |\mu| \leq l, \mu \in \La^*) \text{ with } l \geq 1,
\]
of $\ca(T)$.
Every $A_l$ is finite dimensional since it is generated by a finite family of projections and by definition $A_l \subseteq A_{l+1}$.
\end{proof}

The C*-algebra $A$ can be characterised by using the allowable words in $\La^*$.
For $l \geq 0$ we define the equivalence relation $\sim_l$ on $\La^*$ by the rule
\[
\mu \sim_l \nu \Leftrightarrow \{w \in \La^*_l \mid w\mu \in \La^*\} = \{w \in \La^*_l \mid w \nu \in \La^* \}.
\]
We stress that this equivalence relation is different than the ones considered by Matsumoto \cite{Mat97} and by Carlsen-Matsumoto \cite{CarMat04}.
We define the topological space $\Om_l = \La^*/ \sim_l$ and write $[\mu]_l$ for the points in $\Om_l$.
Every $\mu \in \La^*$ partitions $\La^*_l$ into the set of the $w_i \in \La^*_l$ for which $w_i \mu \in \La^*$ and its complement.
There is a finite number of such partitions since $\La^*_l$ is finite.
These partitions completely identify single points in $\Om_l$, hence $\Om_l$ is a discrete finite space.
Furthermore the mapping
\[
\vth \colon \Om_{l+1} \to \Om_l: [\mu]_{l+1} \mapsto [\mu]_l
\]
is well defined (continuous) and onto.
Indeed if $\mu \not\sim_l \nu$ then we may suppose that there exists a word $w$ with $|w| \leq l$ such that $w \mu \in \La^*$ and $w \nu \notin \La^*$; hence $[\mu]_{l+1} \neq [\nu]_{l+1}$, so $\vth$ is well defined.
Continuity and surjectivity are evident.
We can then form the projective limit $\Om$ by the directed sequence
\[
\xymatrix{
\Om_0 & \Om_1 \ar[l]_\vth & \Om_2 \ar[l]_\vth & \dots \ar[l]_\vth & \ar[l]  \Om
}
\]
for which we obtain the following identification.
We write $\Om_\I$ for $\Om$ when we want to highlight the ideal $\I$ which $\Om$ is related to.

\begin{proposition}
Let $\I \lh \bC\sca{x_1, \dots, x_d}$ be a monomial ideal.
With the aforementioned notation, we have that $A_l \simeq C(\Om_l)$ and consequently $A \simeq C(\Om)$, where $A = \ol{\cup_l A_l}$ for $A_l = \ca(T_\mu^*T_\mu \mid \mu \in \La^*_l)$.
\end{proposition}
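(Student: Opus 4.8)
The plan is to identify the spectrum of each finite-dimensional commutative C*-algebra $A_l$ with the finite set $\Om_l$ and then to pass to the limit through Gelfand duality. First I would record that every generator of $A_l$ acts diagonally on the canonical basis $\{e_\nu \mid \nu \in \La^*\}$ of $\F_X$: by Lemma \ref{L: list Ti}(1) one has $T_\mu^*T_\mu e_\nu = e_\nu$ when $\mu\nu \in \La^*$ and $T_\mu^*T_\mu e_\nu = 0$ otherwise. Hence every element of $A_l$ is diagonal, and for each $\nu \in \La^*$ the vector functional $a \mapsto \ip{a e_\nu, e_\nu}$ restricts to a character $\chi_\nu$ of $A_l$ determined by $\chi_\nu(T_w^*T_w) = 1$ exactly when $w\nu \in \La^*$, for $w \in \La^*_l$. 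Thus $\chi_\nu$, viewed as a $\{0,1\}$-valued function on the generating projections, is precisely the indicator of the set $\{w \in \La^*_l \mid w\nu \in \La^*\}$.

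Next I would establish that $\nu \mapsto \chi_\nu$ descends to a bijection $\Om_l \to \widehat{A_l}$. Two of these characters coincide, $\chi_\mu = \chi_\nu$, if and only if $\{w \in \La^*_l \mid w\mu \in \La^*\} = \{w \in \La^*_l \mid w\nu \in \La^*\}$, that is, if and only if $\mu \sim_l \nu$; this yields a well-defined injection $[\nu]_l \mapsto \chi_{[\nu]_l}$. For surjectivity I would use that $A_l$ is generated by the commuting projections $\{T_w^*T_w \mid w \in \La^*_l\}$, so its minimal projections are exactly the nonzero products $\prod_{w \in S} T_w^*T_w \cdot \prod_{w \in \La^*_l \setminus S}(I - T_w^*T_w)$ (in the spirit of the $Q_{[m]}$ of the previous subsection, with $\La^*_l$ replacing the single letters). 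Since the $T_w^*T_w$ are diagonal and the $\{e_\nu\}$ span $\F_X$, such a product is nonzero if and only if it fixes some $e_\nu$, in which case $S = \{w \in \La^*_l \mid w\nu \in \La^*\}$ and the associated character is $\chi_\nu$. Hence the minimal projections of $A_l$, equivalently its characters, are in bijection with the realized patterns $\{w \in \La^*_l \mid w\nu \in \La^*\}$, which are precisely the points of $\Om_l$; Gelfand duality then gives $A_l \simeq C(\Om_l)$.

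Finally, for the statement $A \simeq C(\Om)$ I would check compatibility with the inclusions $A_l \hookrightarrow A_{l+1}$ induced by $\La^*_l \subseteq \La^*_{l+1}$. Since $\chi_\nu^{(l+1)}$ restricts on $A_l$ to $\chi_\nu^{(l)}$, the dual restriction map $\widehat{A_{l+1}} \to \widehat{A_l}$ coincides with the bonding map $\vth \colon \Om_{l+1} \to \Om_l$, $[\nu]_{l+1} \mapsto [\nu]_l$. By Proposition \ref{P: A is AF} we have $A = \ol{\cup_l A_l} = \varinjlim A_l$, and Gelfand duality turns this inductive limit into the projective limit of the spectra, so $\widehat{A} \simeq \varprojlim \widehat{A_l} = \varprojlim \Om_l = \Om$ and therefore $A \simeq C(\Om)$. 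The main obstacle is the surjectivity argument in the second paragraph, where one must genuinely use the diagonal action together with the fact that $\{e_\nu \mid \nu \in \La^*\}$ is a basis, so that no nonzero projection of $A_l$ can annihilate every allowable basis vector; the rest is bookkeeping with the diagonal structure and Gelfand duality.
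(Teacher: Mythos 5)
Your proof is correct and follows essentially the same route as the paper's: both arguments rest on identifying the minimal projections of $A_l$ (the nonzero products of the $T_w^*T_w$ and $I - T_w^*T_w$ over $w \in \La^*_l$) with the realized patterns $\{w \in \La^*_l \mid w\nu \in \La^*\}$, i.e.\ with the points of $\Om_l$, and then passing to the limit via compatibility of the bonding maps with the inclusions $A_l \hookrightarrow A_{l+1}$. The only difference is presentational: you argue dually on the character space, producing characters as diagonal vector states $\ip{\,\cdot\, e_\nu, e_\nu}$ and invoking Gelfand duality for inductive limits, whereas the paper directly defines the $*$-isomorphism $A_l \to C(\Om_l)$ by sending minimal projections to characteristic functions and verifies the corresponding commuting square of algebras.
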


\begin{proof}
Recall that $A_l = \ca(T_\mu^* T_\mu \mid \mu \in \La^*_l)$ is generated by its minimal projections.
If $\La^*_l = \{\mu_1, \dots, \mu_n\}$ then the minimal projections arise as products of some $T_{\mu_i}^* T_{\mu_i}$ and the rest of the $I-T_{\mu_j}^*T_{\mu_j}$.
For the sake of clarity suppose that we have the minimal projection
\[
a = \prod_{j= 1}^k T_{\mu_j}^* T_{\mu_j} \prod_{j= k + 1}^n (I - T_{\mu_j}^* T_{\mu_j})
\]
and let a point $[\nu]_l \in \Om_l$ such that $\mu_1 \nu ,\dots, \mu_k \nu \in \La^*$ and $\mu_{k+1} \nu, \dots, \mu_n \nu \notin \La^*$.
At least one such $\nu$ exists if and only if $a \neq 0$.
Furthermore $[\nu]_l = [\nu']_l$ for any $\nu' \in\La^*$ with $\mu_1 \nu' ,\dots, \mu_k \nu' \in \La^*$ and $\mu_{k+1} \nu', \dots, \mu_n \nu' \notin \La^*$.
Therefore the mapping that associates $a$ to $\chi_{[\nu]_l}$ for the $\nu$ as above is well defined and one-to-one.
Thus it extends to an injective $*$-homomorphism $A_l \to C(\Om_l)$.
Now every $[\nu]_l \in \Om_l$ is completely characterised by the $\mu_j \in \La^*_l$ for which $\mu_j \nu \in \La^*$.
Hence the $*$-homomorphism is surjective.

Finally in order to show that $A \simeq C(\Om)$ it suffices to show that the diagram
\[
\xymatrix{
C(\Om_l) \ar[rr]^{\al_\vth} \ar[d] & & C(\Om_{l+1}) \ar[d] \\
A_l \ar[rr]^{\id} & & A_{l+1}
}
\]
commutes, where $\al_\vth(f) = f  \vth$.
This follows by the definitions since $\al_\vth(\chi_{[\nu]_l})$ is the characteristic function on the set $\{ [\mu]_{l+1} \in \Om_{l+1} \mid [\mu]_l = [\nu]_l\}$.
\end{proof}

\begin{proposition}\label{P: finite A}
Let $\I$ be a monomial ideal of $\bC\sca{x_1, \dots, x_d}$.
If $\I$ is of finite type $k$ then $A_l = A_{k}$ for all $l \geq k+1$.
Consequently $A$ is generated by a finite number of pairwise orthogonal projections, hence $\Om$ is a finite (discrete) set.
\end{proposition}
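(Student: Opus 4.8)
The plan is to prove the stronger containment $A_l \subseteq A_k$ for \emph{every} $l$, from which $A_l = A_k$ for all $l \geq k$ (in particular for $l \geq k+1$) follows at once, since $A_k \subseteq A_l$ whenever $l \geq k$ by definition. The heart of the matter is a combinatorial observation about the range projections $T_\mu^* T_\mu$, which by Lemma \ref{L: list Ti}(i) is the orthogonal projection onto $\ol{\spn}\{e_\nu \mid \mu\nu \in \La^*\}$. Since $\I$ is of finite type $k$, its basis $S$ consists of forbidden words of length at most $k+1$, and recall that a word is forbidden precisely when it contains one of these basis words.

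First I would fix $\mu \in \La^*$ with $|\mu| \geq k$ and let $\mu'$ denote its suffix of length $k$; note $\mu' \in \La^*_k$, since any contiguous subword of an allowable word is allowable. I claim $T_\mu^* T_\mu = T_{\mu'}^* T_{\mu'}$. By Lemma \ref{L: list Ti}(i) it suffices to show that for every $\nu \in \La^*$ one has $\mu\nu \in \La^*$ if and only if $\mu'\nu \in \La^*$. As $\mu$, $\mu'$ and $\nu$ are each individually allowable, the only way $\mu\nu$ (resp.\ $\mu'\nu$) can be forbidden is for a basis word to straddle the junction. Such a straddling word has length at most $k+1$ and uses at least one letter of $\nu$, hence it involves at most $k$ letters from the tail of $\mu$, i.e.\ only letters lying inside $\mu'$. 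Consequently a basis word straddles $\mu \mid \nu$ if and only if the identical basis word straddles $\mu' \mid \nu$, which gives the asserted equality of projections.

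The straddling argument above is the only genuine obstacle; the remainder is bookkeeping. Each generator $T_\mu^* T_\mu$ of $A_l$ with $|\mu| > k$ is thereby equal to $T_{\mu'}^* T_{\mu'}$ for some $\mu' \in \La^*_k$, while the generators with $|\mu| \leq k$ already lie in $A_k$; hence $A_l \subseteq A_k$ for every $l$, and so $A_l = A_k$ for $l \geq k$. Since $A = \ol{\cup_l A_l}$ by Proposition \ref{P: A is AF}, the chain stabilises and $A = A_k$ is finite-dimensional and commutative, whence it is generated by its finitely many pairwise orthogonal minimal projections. Finally, invoking the identification $A \simeq C(\Om)$ from the preceding proposition, finite-dimensionality of $A$ forces $C(\Om)$ to be finite-dimensional, so $\Om$ is a finite discrete set; equivalently, the connecting maps $\vth \colon \Om_{l+1} \to \Om_l$ become bijections for $l \geq k$, whence $\Om = \Om_k$.
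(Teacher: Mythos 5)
Your proof is correct, but it takes a genuinely different route from the paper's. You work on the \emph{appending} side, directly with the projections: for $|\mu| \geq k$ you identify $T_\mu^*T_\mu$ with $T_{\mu'}^*T_{\mu'}$, where $\mu'$ is the length-$k$ suffix of $\mu$, the point being that a basis word of $\I$ (of length at most $k+1$) straddling the junction of $\mu\nu$ can touch at most $k$ letters of $\mu$ and hence already sits inside $\mu'\nu$, while conversely $\mu'\nu$ is a suffix of $\mu\nu$. This makes every generator of $A_l$ literally one of the generators of $A_k$ and gives $A_l \subseteq A_k$ outright. The paper instead works on the \emph{prepending} side, through the Gelfand picture $A_l \simeq C(\Om_l)$: it shows by contradiction that $\mu \sim_k \nu$ implies $\mu \sim_l \nu$ for $l \geq k+1$ (writing a hypothetical forbidden word as $w\nu = yqz$ with $|q| \leq k+1$ and extracting a suffix $w'$ of $w$ with $|w'| \leq k$ and $w'\nu \notin \La^*$), so that the connecting maps $\vth \colon \Om_l \to \Om_k$ are bijections and the finite-dimensional algebras $A_k \subseteq A_l$ must coincide. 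The combinatorial core---a forbidden basis word straddling a junction uses at most $k$ letters from one side---is the same in both arguments, but your version is more elementary (no contradiction scheme, no appeal to the spectrum identification) and yields a sharper structural fact, namely an explicit identification of each long-word projection with a specific short-word projection. What the paper's version buys is that it stays in the language of the equivalence relations $\sim_l$ and the spaces $\Om_l$, which is the formulation the rest of the paper actually uses (for instance in describing the quantised dynamics and in the sofic-shift discussion of Remark \ref{R: sofic}).
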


\begin{proof}
Fix $l \geq k + 1$.
We have to show that if $\mu \sim_{k} \nu$ then $\mu \sim_l \nu$.
If $w \mu \notin \La^*$ and $w\nu \notin \La^*$ for all $ w\in \La^*_l$ then $\mu \sim_l \nu$.
On the other hand if $w \mu \notin \La^*$ for all $w \in \La^*_l$ but there is one such that $w \nu \in \La^*$ then we exchange the roles of $\mu$ and $\nu$.
Hence without loss of generality let $w \in \La^*_l$ such that $w\mu \in \La^*$ and suppose that $w \nu \notin \La^*$ in order to reach contradiction.

Necessarily we have that $\nu \neq \mt$, hence $w \nu$ is a forbidden word of length $l + |\nu| \geq l + 1 > k+1$.
Therefore there are words $y,z \in \La^*$ and $q \notin \La^*$ with $|q| \leq k+1$ such that $w \nu = y q z$.
The word $z$ cannot contain $\nu$, for if $z = \nu' \nu$ then $w = yq \nu' \notin \La^*$ which is a contradiction.
Also the word $y$ cannot contain $w$, for if $y = w w'$ then $\nu = w' q z \notin \La^*$ which is a contradiction.
In the above cases we have included the options $z=\nu$ or $y = w$; consequently $\nu = \nu' z$ and $w = y w'$ with $\nu', w' \neq \mt$.
Thus we obtain $w' \nu = q z$, hence $|w'| + |\nu| = |q| + |z|$.
Since $|z| < |\nu|$ we get $|w'| < |q| \leq k+1$ so that $|w'| \leq k$.
The equation $w'\nu = q z$ shows that $w' \nu \notin \La^*$ and since $\nu \sim_k \mu$ we have that $w' \mu \notin \La^*$.
As a consequence $w \mu = y w' \mu \notin \La^*$ which is a contradiction.
\end{proof}

The converse of the above proposition does not hold.

\begin{example}\label{E: converse finite}
Consider the ideal $\I = \sca{x_1x_2^nx_1 \mid n \geq 1}$, which is not of finite type.
The sets $\Om_l$ stabilise at $l = 2$ at the equivalence classes $[\mt]_2$, $[x_1]_2$ and $[x_2x_1]_2$, and therefore $\Om = \{[\mt]_2, [x_1]_2, [x_2x_1]_2\}$.
Indeed, the only allowable words are of the form $\mt$, $x_1^m$, $x_2^n$, $x_1^m x_2^n$, $x_2^n x_1^m$, and $x_2^n x_1^m x_2^k$ for $k,m,n > 0$, and it is straightforward to check that all allowable words fall into one of the three specified $\sim_l$ equivalence classes for $l \geq 2$.
\end{example}

\begin{remark}\label{R: sofic}
In light of the above example, we wish to describe the class of monomial ideals for which $A$ is finite dimensional.
In the case where $\I$ comes from a two-sided subshift $(\La,\si)$ then $A$ is finite dimensional if and only if the subshift is {\em sofic}.
Indeed, recall that $(\La,\si)$ is called \emph{sofic} if it is a factor of a subshift of finite type.
This is equivalent to having a finite number of equivalence classes with respect to the equivalence relation
\[
\mu \sim \nu \Leftrightarrow \{w \in \La^* \mid \mu w \in \La^*\} = \{w \in \La^* \mid \nu w \in \La^*\},
\]
e.g. \cite[Theorem 3.2.10]{LinMar95}.
By using the forward shift $(\La,\si^{-1})$ and similar ideas one may show that the same holds for the equivalence relation
\[
\mu \sim \nu \Leftrightarrow \{w \in \La^* \mid w \mu \in \La^*\} = \{w \in \La^* \mid w \nu \in \La^*\},
\]
and eventually obtain that $A$ is finite dimensional if and only if the subshift $(\La,\si)$ is sofic.
In short if $A$ is not finite dimensional then the sets $\Om_l$ never stabilize.
But the number of equivalence classes with respect to $\sim$ is at least $|\Om_l|$, thus the subshift is not sofic.
Conversely if $A$ is finite dimensional then the $A_l$ stabilise eventually, say at $k$, thus $\mu \sim_k \nu$ implies that $\mu \sim_l \nu$ for all $l \geq k$, and hence that $\mu \sim \nu$.

More generally, a monomial ideal $\I$ may not come from a subshift.
In this case we pass to an augmented system.
That is if $\I \lhd \bC\sca{x_1,\dots,x_d}$ then let $(\hat\La,\si)$ be the two-sided subshift generated by $\I'=\sca{\I}$ as an ideal in $\bC\sca{x_0, x_1,\dots, x_d}$.
Indeed $\I'$ has no sinks on either side and the allowable words of the augmented shift $(\hat{\La}, \si)$ are then of the form
\[
0^{n_1} \mu_1 0^{n_2} \mu_2 \ldots \mu_k 0^{n_k}
\]
for $\mu_i \in \La^*$ and some $n_i \in \bZ_+$.
Then the quantised space $\hat{A}$ related to $\hat\La^*$ coincides with the quantised space $A$ of $\La^*$.
For example the equivalence class of $\mu_1 0^{n_2} \dots \mu_k 0^{n_k}$ coincides with the equivalence class of the last word $\mu_1$ appearing on the left, since $0$ does not interfere within the forbidden words from $\I$.
Therefore $A$ is finite dimensional if and only if the augmented shift $(\hat\La,\si)$ is sofic.
Note that it does not make any difference if we introduce more than one new variables for defining the augmented two-sided subshift.
\end{remark}

Next we use the identification of $A$ with $C(\Om)$ to get the following translation of each $\al_i \colon A \to A$ in a continuous partially defined map $\vpi_i$ on $\Om$.
If $A^i$ is the ideal $T_i^*T_i A$ in $A$ with unit $T_i^*T_i$, then $\al_i$ defines a unital $*$-homomorphism $\al_i \colon A \to A^i$.
The ideal $A^i$ is the direct limit of $A^i_l = A_l \cap A^i$, and the corresponding projective limit $\Om^i$ is determined by $\vth \colon \Om_{l+1} \to \Om_l$ and the spaces
\[
\Om_l^i = \{[\mu]_l \in \Om_l \mid i \mu \in \La^*\}.
\]
Hence $\al_i \colon A \to A^i$ is a unit preserving map from $A = C(\Om)$ into $A^i := T_i^* T_i A = C(\Om^i)$, and therefore induces a continuous map $\vpi_i : \Om^i \rightarrow \Om$.
If $\La^*_l = \{\mu_1, \dots, \mu_n\}$ then $\vpi_i$ is defined on $\Om^i_{l+1}$ in the following way:
\begin{quoting}
if $[\nu]_{l+1} \in \Om^i_{l+1}$ is such that $\mu_1 i \nu, \dots, \mu_k i \nu \in \La^*$ and $\mu_{k+1} i \nu, \dots, \mu_n i \nu \notin \La^*$, then $\vpi_i([\nu]_{l+1}) = [\nu']_{l}$ is such that $\mu_1 \nu', \dots, \mu_k \nu' \in \La^*$ and $\mu_{k+1} \nu', \dots, \mu_n \nu' \notin \La^*$.
\end{quoting}
Recall here that the universal property of the projective limit implies that this is enough to describe $\vpi_i$.
Indeed let the identification $\pi \colon A \to C(\Om)$ and fix $\mu \in \La^*_l = \{\mu_1, \dots, \mu_n\}$.
Then we get that
\[
\pi(T_\mu^* T_\mu) = \chi_{\{ [w]_l \, \mid \, \mu w \in \La^*\} } \qand \pi\al_i(T_\mu^* T_\mu) = \chi_{\{ [w]_l \, \mid \, \mu i w \in \La^*\} }.
\]
Now for every $[\nu]_{l+1} \in \Om^i_{l+1}$ we get a split into some $\mu_1 i \nu, \dots, \mu_k i \nu \in \La^*$ and the rest $\mu_{k+1} i \nu, \dots, \mu_n i \nu \notin \La^*$.
Then we obtain
\[
\pi\al_i(T_\mu^* T_\mu) ([\nu]_{l+1}) = \begin{cases} 1 & \qif \mu i \nu \in \La^*, \\ 0 & \qotherwise. \end{cases}
\]
Hence $\pi\al_i(T_\mu^* T_\mu) ([\nu]_{l+1}) = 1$ if and only if $\mu \in \{\mu_{1}, \dots, \mu_k\}$.
On the other hand $\vpi_i([\nu]_{l+1}) = [\nu']_l$ with $\mu_1 \nu', \dots, \mu_k \nu' \in \La^*$ and $\mu_{k+1}\nu', \dots, \mu_n\nu' \notin \La^*$, thus
\[
\pi(T_\mu^*T_\mu)\vpi_i([\nu]_{l+1}) = \begin{cases} 1 & \qif \mu \nu' \in \La^*, \\ 0 & \qotherwise. \end{cases}
\]
Thus $\pi(T_\mu^*T_\mu)\vpi_i([\nu]_{l+1}) = 1$ if and only if $\mu \in \{\mu_{1}, \dots, \mu_k\}$.

\begin{definition}\label{D: qd}
Let $\I$ be a monomial ideal of $\bC\sca{x_1, \dots, x_d}$.
We call the $(A,\al) \equiv (A,\al_1, \dots, \al_d)$, or alternatively the $(\Om, \vpi) \equiv (\Om,\vpi_1, \dots, \vpi_d)$, the \emph{quantised dynamics on the allowable words $\La^*$ of $\I$}.
\end{definition}

We chose the above terminology to prevent confusion between the above dynamical system and the dynamical system determined by the shift, when $\I$ gives rise to a subshift.

\begin{remark}
When the ideal $\I$ is of finite type $1$ then there is a simple description of the quantised dynamical system.
In this case $\Om$ is a finite space by Proposition \ref{P: finite A}.
In particular we have that $\Om = \{[\mt]_1, [i_1]_1, \ldots, [i_k]_1\}$, where it may happen or not that $[\mt]_1$ contains some of the variables.
Every set $\Om^i$ is the set of equivalence classes of variables (and the empty word) to which one may append $i$ on the left to obtain a legal word.

On $\Om^i$ the map $\vpi_i$ is defined to be the constant function with value $[i]_1$.
To see this, recall that $\vpi_i \colon \Om^i \rightarrow \Om$ is the dual of the map $\al_i = \ad_{T_i^*} \colon C(\Om) \rightarrow C(\Om^i)$.
But $A$ is generated by $T_j^*T_j$ for $j=1, \ldots, d$.
Hence by identifying $A$ with $C(\Om)$ we see that it is generated by the functions $\chi_{\Om^j}$ for $j=1, \ldots, d$.
Thus we need only check that if we define $\vpi_i$ to be constant $[i]_1$ on $\Om^i$ then $\al_i(\chi_{\Om^j}) = \chi_{\Om^j} \vpi_i$.
Since we are in a subshift of type $1$, we have the identifications
\[
\al_i(\chi_{\Om^j}) = T_i^* T_j^* T_j T_i  =
\begin{cases}
T_i^* T_i  = 1_{\Om^i} & \textrm{ if } ji \in \La^*, \\
0 & \textrm{ if } ji \notin \La^*.
\end{cases}
\]
On the other hand we have $ji \in \La^*$ if and only if $[i]_1 \in \Om^j$, so
\[
\chi_{\Om^j} \vpi_i =
\begin{cases}
1_{\Om^i} & \textrm{ if } ji \in \La^*, \\
0 & \textrm{ if } ji \notin \La^*.
\end{cases}
\]
This can be pictured as a graph on the points $[\mt]_1, [{i_1}]_1, \ldots, [{i_k}]_1$, with an edge labeled $i$ from every $p \in \Om^i$ to $[i]_1$ (including, maybe, loops).
The language can be completely read from this graph: there is an edge labeled $i$ from $[j]_1$ to $[i]_1$ if and only if the word $ij$ is a legal word.
\end{remark}

\begin{remark}\label{R: graph}
Similarly to the previous remark, whenever the augmented system of an ideal is sofic, then one may picture the quantised dynamics as a finite labeled graph.
\end{remark}

Two dynamical systems $(A,\al_{1}, \dots, \al_{d})$ and $(B,\be_{1}, \dots, \be_{d})$ are said to be {\em conjugate} if (after perhaps reordering the maps) there is a $*$-isomorphism $\ga \colon A \rightarrow B$ such that $\ga \al_{i} = \be_{i} \ga$ for all $i$.
Equivalently, (after perhaps reordering the maps) there is a homeomorphism $\ga_s \colon \Om_\J \rightarrow \Om_\I$ mapping $\Om_\J^i$ into $\Om_\I^i$ such that $\ga_s \vpi_{\J,i}|_{\Om_\J^i} = \vpi_{\I,i} \ga_s|_{\Om_\J^i}$ for all $i$.

\begin{example}\label{E: qd conjugacy}
Let $\J$ be the ideal in $\bC \sca{x_1, x_2}$ generated by $\{x_1^2, x_2^2\}$.
This ideal corresponds to the two-sided subshift $\La_\J$ on two symbols $\{1,2\}$ with illegal words $\fF = \{11,22\}$.
This subshift consists of two points
\[
\La_\J = \{(\dots 121 . 2121 \dots), (\dots 212 . 1212 \dots)\} ,
\]
and the shift just permutes these two points.
In this case, the quantised dynamics 
attain the following description.
The space $\Om_\J$ is a three point set $\{0,1,2\}$ where $1$ corresponds to the equivalence class of all words beginning with $1$, $2$ likewise, and $0$ corresponds to the equivalence class of the empty word.
Then $\Om_\J^1 = \{0,2\}$ and $\Om_\J^2 = \{0,1\}$.
The map $\vpi_{\J,1} \colon \Om_\J^1 \rightarrow \Om_\J$ is the constant function $1$, and $\vpi_{\J,2} \colon \Om_\J^2 \rightarrow \Om_\J$ is the constant function $2$.

Now let $\I$ be the ideal generated by $\{x_1x_2, x_2x_1\}$.
We find that $\Om_\I = \{0,1,2\}$ as above.
We also have that $\Om_\I^1 = \{0,1\}$ and $\Om_\I^2 = \{0,2\}$,
that $\vpi_{\I,1} \colon \Om_\I^1 \rightarrow \Om_\I$ is the constant function $1$, and the map $\vpi_{\I,2} \colon \Om_\I^2 \rightarrow \Om_\I$ is the constant function $2$.

The form of the partial dynamical systems is depicted in the following graphs
\[
\xymatrix@C=1em@R=4mm{
& 0 \ar@{-->}^{2}@/^1pc/[ddr] \ar@{->}_{1}@/_1pc/[ddl] & & & & 0 \ar@{-->}^{2}@/^1pc/[ddr] \ar@{->}_{1}@/_1pc/[ddl] &  \\
& & & & & & \\
1 \ar@{-->}^{2}@/_1pc/[rr] & & 2 \ar@{->}_{1}@/_1pc/[ll] & & 1 \ar@{->}^{1}@(dl,dr) & & 2 \ar@{-->}_{2}@(dr,dl) \\
& \text{ graph for $\J$} & & & & \text{ graph for $\I$ } & \\
}
\]
where $0 = [\mt]_1$, $1=[1]_1$, $2=[2]_1$, and the solid arrows represent $\vpi_1$ and the broken arrows represent $\vpi_2$.
We have here two very simple dynamical systems: each one is a three point set with a pair of distinct constant valued maps acting on it.
However, they cannot be conjugate, since these maps are {\em partially defined} constant maps.
\end{example}

\begin{theorem}\label{T: qd complete invariant}
The quantised dynamical system is a complete invariant of the monomial ideal: if the quantised dynamical systems of two monomial ideals $\I$ and $\J$ are conjugate, then $\I$ and $\J$ are the same modulo a permutation of the variables. In particular, if the quantised dynamics of two subshifts on the same set of symbols are conjugate, then the subshifts are conjugate.
\end{theorem}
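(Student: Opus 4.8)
The plan is to show that the partially defined dynamics $(\Om,\vpi_1,\dots,\vpi_d)$ recovers the whole language $\La^*$ in a way that is manifestly conjugacy-invariant. The first step is to read the maps $\vpi_i$ concretely. Unwinding the identification $A\simeq C(\Om)$ used before Definition \ref{D: qd}, one finds that $\vpi_i$ is nothing but the operation of \emph{prepending the letter $i$}: for $[\nu]\in\Om^i$ (equivalently $i\nu\in\La^*$) one has $\vpi_i([\nu]_l)=[i\nu]_l$, since $[i\nu]_l$ is by definition the class recording exactly which $\mu\in\La^*_l$ satisfy $\mu i\nu\in\La^*$, which is precisely the defining datum of $\vpi_i([\nu]_{l+1})$ computed there. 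Thus composing the $\vpi_i$ builds allowable words letter by letter from the right.

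Next I would prove the key dictionary. For a word $w=i_n\cdots i_1$ set $\Phi^w_\I:=\vpi_{\I,i_n}\circ\cdots\circ\vpi_{\I,i_1}$, a continuous map defined on a clopen domain $D_\I(w)\subseteq\Om_\I$. The claim is
\[
w\in\La^*_\I \iff D_\I(w)\neq\mt .
\]
For the forward direction, since $\La^*$ is closed under passing to subwords (Section~\ref{S: monomial}), membership $w\in\La^*$ forces every suffix $i_k\cdots i_1$ into $\La^*$; hence $\vpi_{i_1},\vpi_{i_2},\dots$ can be applied successively starting at the point $[\mt]\in\Om_\I$, so $[\mt]\in D_\I(w)$. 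Conversely, the images of finite words are dense in the profinite space $\Om_\I$, so a nonempty clopen $D_\I(w)$ contains some $[\nu]$ with $\nu\in\La^*$; tracking the chain of domain conditions gives $i_1\nu,\,i_2i_1\nu,\dots,\,w\nu\in\La^*$, and prefix-closure yields $w\in\La^*$.

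Now suppose the quantised dynamics of $\I$ and $\J$ are conjugate. Matching the maps forces $d=d'$, and produces a permutation $\rho$ of $\{1,\dots,d\}$ (pairing $\vpi_{\I,i}$ with $\vpi_{\J,\rho(i)}$) together with a homeomorphism $h\colon\Om_\J\to\Om_\I$ satisfying $h\circ\vpi_{\J,\rho(i)}=\vpi_{\I,i}\circ h$ on $\Om_\J^{\rho(i)}$ and $h(\Om_\J^{\rho(i)})=\Om_\I^i$ (the domains correspond exactly because the intertwining $*$-isomorphism sends $\al_{\I,i}(1)=\chi_{\Om_\I^i}$ to $\al_{\J,\rho(i)}(1)=\chi_{\Om_\J^{\rho(i)}}$). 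A routine induction on word length, pushing the intertwining and the domains through one letter at a time, gives $h(D_\J(\rho(w)))=D_\I(w)$ for every $w=i_n\cdots i_1$, where $\rho(w):=\rho(i_n)\cdots\rho(i_1)$. As $h$ is a bijection, $D_\I(w)\neq\mt\iff D_\J(\rho(w))\neq\mt$, so by the dictionary $w\in\La^*_\I\iff\rho(w)\in\La^*_\J$. Hence the substitution $x_i\mapsto x_{\rho(i)}$ identifies the allowable (equivalently, the forbidden) words of $\I$ with those of $\J$, i.e. carries $\I$ onto $\J$; this is equality modulo a permutation of the variables. For the final assertion, when $\I$ and $\J$ come from subshifts on a common symbol set, $\rho$ relabels symbols and induces a shift-commuting homeomorphism of $\Si^{\bZ}$ carrying $\La_\I$ onto $\La_\J$, which is precisely a conjugacy of subshifts.

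The main obstacle is the dictionary $w\in\La^*\iff D_\I(w)\neq\mt$, and in particular its converse: there one must descend from an arbitrary point of the projective limit $\Om_\I$, which need not be of the form $[\nu]$ for a finite word, to an honest finite allowable word. This is exactly where the density of finite words together with the openness of the domains $D_\I(w)$ does the work. Once this language-recovery principle is established, everything else — the concrete form of $\vpi_i$, the transport along the conjugacy, and the passage to subshifts — is bookkeeping.
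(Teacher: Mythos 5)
Your proof is correct, but it takes the long way around: it is the exact Gelfand dual of the paper's argument, which is essentially one line. The paper observes that, writing $\al_\mu = \ad_{T_\mu^*}$, a word $\mu = \mu_1\cdots\mu_k$ is forbidden if and only if
\[
T_\mu^*T_\mu \;=\; \al_{\mu_k}\cdots\al_{\mu_1}(I) \;=\; 0,
\]
and this condition is transported verbatim by the conjugating $*$-isomorphism $\ga$ (which is unital and intertwines the two tuples of endomorphisms up to the reordering); hence the forbidden words, and therefore the ideals, agree up to the permutation. Your clopen domain $D_\I(w)$, for $w = i_n\cdots i_1$, is precisely the support of the projection $\al_{i_1}\cdots\al_{i_n}(I) = T_w^*T_w$ in $C(\Om_\I)$, so your dictionary ``$w\in\La^*_\I \iff D_\I(w)\neq\mt$'' is the same statement as ``$w\in\La^* \iff \al_w(I)\neq 0$''. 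The difference is cost: in the algebraic picture the dictionary is immediate ($T_w e_\mt = e_w \neq 0$ when $w\in\La^*$, and $T_w = 0$ otherwise), whereas your topological route must supply the density of finite-word classes in the profinite space $\Om_\I$, the clopen-ness of the domains $D_\I(w)$, and an induction transporting domains along the conjugating homeomorphism --- all of which you do supply correctly, so there is no gap. What your formulation adds is the concrete reading of $\vpi_i$ as ``prepend the letter $i$'' and the resulting language-recovery principle, which is a genuinely useful picture (it is implicitly what makes Definition \ref{D: QPloc} and Example \ref{E: qd conjugacy} work), but for this particular theorem it is more machinery than the statement needs.
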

\begin{proof}
Note that conjugacy determines the number of variables.
Letting $\al_\mu = \ad_{T^*_\mu}$, we have that $\mu = \mu_1 \dots \mu_k$ is a forbidden word if and only if $T_\mu^*T_\mu = \al_\mu (I) = \al_{\mu_k} \cdots \al_{\mu_1}(I) = 0$.
Since this determines the forbidden words, this determines the ideal, as well.
\end{proof}

Note that the converse of the second assertion in the proposition fails, since the number of symbols is invariant under conjugacy of the quantised dynamics but not under conjugacy of the shifts.

\begin{example}\label{E: quantised dynamics}
Let $\J$ be as in Example \ref{E: qd conjugacy}.
Let $\K$ be the ideal in the polynomial algebra $\bC \sca{x_1, \ldots, x_4}$ generated by
\[
\begin{pmatrix}
x_1^2 & & x_1 x_3 & \\
& x_2^2 & & x_2 x_4 \\
x_3 x_1 & & x_3^2 & \\
& x_4 x_2 & & x_4^2
\end{pmatrix}.
\]
This corresponds to the two-sided subshift $\La_\K$ on four symbols $\{1,2,3,4\}$ where a word is legal if and only if it contains no two consecutive even symbols and no two consecutive odd symbols.
The subshift $\La_\K$ differs from $\La_\J$, since it has uncountably many points.
On the other hand, the space $\Om_\K$ is also a three point set $\{0, 1, 2\}$, where $0$ is the equivalence class of $\mt$, $1$ is the equivalence class of all words beginning with an odd symbol, and similarly $2$.
Here too we have $\Om_\K^i = \{0,2\}$ for $i$ odd and $\Om_\K^i = \{0,1\}$ for $i$ even; the map $\vpi_{\K,i}$ is identically $1$ (resp. $2$), if $i$ is odd (resp. even).

We see that the quantized dynamics $(\Om_\J,\vpi_{\J,1},\vpi_{\J,2})$ and $(\Om_\K,\vpi_{\K,1}, \ldots, \vpi_{\K,4})$ are given by the same maps on the same space.
This may seem strange in light of Theorem \ref{T: qd complete invariant} but the difference lies on that in the second dynamical system each map is repeated twice.
\end{example}

\begin{remark}
In the previous discussion we enforced $I$ to be included in the C*-algebra $A$.
We could as well restrict our attention to the C*-algebra
\[
A_0 := \ca(T_\mu^* T_\mu \mid \mt \neq \mu \in \La^*) \subseteq \ca(T).
\]
Notice here that $\al_i(A_0) \subseteq A_0$ and that $A_0$ is also unital by Lemma \ref{L: unital}.
Therefore similar conclusions can be derived for $A_0$ which can be identified with a compact Hausdorff space $\Om_0$.
The reason of considering this setting is explained in Remark \ref{R: unital} that will follow.
\end{remark}

\section{The C*-correspondence of a monomial ideal}\label{S: cor}

The C*-algebra $\ca(T)$ is trivially a C*-correspondence over itself.
We want to isolate a C*-correspondence ${}_A E_A$ generated by the $T_i$ inside $\ca(T)$ with the inherited inner product.
There are (at least) three equivalent identifications.
The use of the symbol $A$ is not coincidental.

\subsection{Concrete construction}

First we remark that all $\sca{T_i,T_i} = T_i^* T_i$ must be in the $A$ we are looking for.
Now equation $T_j^* T_j \cdot T_i = T_i \cdot T_{j i}^* T_{j i}$ implies that all $T_i \cdot T_{ji}^* T_{ji}$ must be in the module $E$ we are looking for (with the understanding that $T_{ji}=0$ if $ji \notin \La^*$).
Consequently the inner product
\begin{align*}
(T_i \cdot T_{ji}^* T_{ji})^* T_i \cdot T_{ji}^* T_{ji}
& =
T_{ji}^* T_j \cdot T_i T_i^* T_i \cdot T_{ji}^* T_{ji}
=
T_{ji}^* T_j T_i T_{ji}^* T_{ji}
=
T_{ji}^* T_{ji}
\end{align*}
must be in $A$.
Inductively $E$ contains all ``vectors'' $T_i \cdot T_{\mu}^* T_{\mu}$ and $A$ contains all projections $T_{\mu}^* T_\mu$,  for $\mu \in \La^*$ and $i=1, \dots, d$.

\begin{definition}\label{D: E-A}
Let $\I$ be a monomial ideal of $\bC\sca{x_1, \dots, x_d}$ and let $\La^*$ be the set of allowable words of $\I$.
Then the linear space
\[
E := \ol{\spn}\{ T_i a \mid a \in A, i =1, \dots, d \}
\]
becomes a C*-correspondence over the C*-algebra
\[
A = \ca(T_\mu^* T_\mu \mid \mu \in \La^*)
\]
by defining the operations
\[
a \cdot \xi \cdot b : = a\xi b \,\, \text{and} \,\, \sca{\xi,\eta} = \xi^*\eta, \foral a,b \in A \text{ and } \xi, \eta \in E,
\]
inside $\ca(T)$.
We refer to ${}_A E_A$ as \emph{the C*-correspondence associated with the monomial ideal $\I$}.
We write $1 \equiv 1_A = I_{\F_X} \equiv I$ for the unit of $A$.
\end{definition}

\begin{remark}\label{R: unital}
Alternatively we consider the \emph{minimal} C*-correspondence generated by the $T_i$.
In this case we get the linear space
\[
E_0 := \ol{\spn}\{ T_i, T_i a \mid a \in A_0, i =1, \dots, d\}
\]
which becomes a C*-correspondence over the C*-algebra
\[
A_0: = \ca(T_\mu^* T_\mu \mid \mt \neq \mu \in \La^*)
\]
in the same way.
We include the subscript $0$ in the notation of $E_0$ to make a distinction with $E$ as Hilbert modules, even though $E_0$ and $E$ coincide as linear spaces.
The C*-algebra $A_0$ admits the unit $\sum_{[m] = 1}^{2^d-1} Q_{[m]}$ by Lemma \ref{L: unital}, but it may not contain $I \in \B(\F_X)$.
Consequently the left action on $E_0$ need not be unital, even though the right action is, since we have that
\[
T_i \cdot \sum_{[m] = 1}^{2^d-1} Q_{[m]} = T_iT_i^*T_i \sum_{[m] = 1}^{2^d-1} Q_{[m]} = T_i T_i^* T_i = T_i.
\]
A number of the arguments we will present henceforth hold by substituting $1$ with $\sum_{[m] = 1}^{2^d-1} Q_{[m]}$.
We will keep track of when this happens.
\end{remark}

\subsection{Direct sum construction}\label{Ss: direct sum}

For every $i=1, \dots, d$, let $E_i=A^i :=T_i^* T_i A$ as a vector space and define
\[
\sca{\xi_i, \eta_i} = \xi^*_i \eta_i \,\, \text{and} \,\, \xi_i \cdot a = \xi_i a, \foral a \in A \text{ and } \xi_i, \eta_i \in E_i,
\]
with the operations taking place inside $\ca(T)$.
Then $E_i = \sca{\de_i}$ for $\de_i = T_i^* T_i \in A$ as a Hilbert $A$-module and
\[
\sca{\de_i \cdot a, \de_i \cdot b} = a^* T_i^* T_i b = T_i^*T_i a^* b.
\]
Recall the $*$-homomorphism $\al_i \colon A \to A$ such that $\al_i(a) = T_i^* a T_i$.
Then $E_i$ becomes a C*-correspondence over the commutative $A$ by defining
\[
\phi_i(a) (\de_i \cdot b) = \de_i \cdot \al_i(a)b.
\]
Since $\al_i(a) T_i^* T_i = \al_i(a)$ the computation
\[
\Theta^{E_i}_{\de_i \cdot \al_i(a), \de_i} (\de_i \cdot b) = \de_i \cdot \al_i(a) T_i^*T_i b
\]
gives that $\phi_i(a) = \Theta^{E_i}_{\de_i \cdot \al_i(a), \de_i}$.
Furthermore we get that
\[
\sca{\de_i \cdot T_i^* T_i, \de_i \cdot b} = T_i^* T_i T_i^* T_i b = \sca{\de_i, \de_i \cdot b}
\]
for all $b \in A$.
Therefore we have that $\de_i = \de_i \cdot T_i^* T_i$, thus every $\phi_i$ is unital.

\begin{proposition}\label{P: E dir sum}
Let ${}_A E_A$ be the C*-correspondence of a monomial ideal $\I \lh \bC\sca{x_1, \dots, x_d}$ and let ${}_A (E_i) {}_A$ be as above.
Then $E$ is unitarily equivalent to the sum C*-correspondence of the $E_i$ over $A$.
\end{proposition}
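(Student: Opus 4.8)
The approach is to exhibit the unitary explicitly and then verify the defining axioms of a C*-correspondences unitary map (over $\ga = \id_A$). Write $E_\oplus := \bigoplus_{i=1}^d E_i$ for the sum correspondence, with coordinatewise right action, inner product $\sca{(\xi_i)_i,(\eta_i)_i} = \sum_{i=1}^d \xi_i^*\eta_i$, and left action $\phi(a)(\xi_i)_i = (\phi_i(a)\xi_i)_i$. I would define
\[
U \colon E_\oplus \to E, \qquad U\big((\xi_i)_i\big) = \sum_{i=1}^d T_i \xi_i,
\]
and propose $V \colon E \to E_\oplus$, $V(\eta) = (T_i^*\eta)_i$, as its adjoint. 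Each $T_i\xi_i$ lies in $E$, so $U$ is well defined; and since $T_i^*\eta = T_i^*T_i(T_i^*\eta) \in T_i^*T_i A = E_i$ (using $T_i^*T_iT_i^* = T_i^*$), the map $V$ genuinely lands in $E_\oplus$. Both are manifestly right $A$-module maps, e.g. $U((\xi_i)_i \cdot a) = \sum_i T_i\xi_i a = U((\xi_i)_i) \cdot a$.

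Next I would check that $U$ is adjointable with $U^* = V$ by the direct computation
\[
\sca{U((\xi_i)_i), \eta}_E = \Big(\sum_i T_i\xi_i\Big)^*\eta = \sum_i \xi_i^* T_i^* \eta = \sum_i \sca{\xi_i, T_i^*\eta}_{E_i} = \sca{(\xi_i)_i, V(\eta)}_{E_\oplus}.
\]
To see that $U$ is then unitary I would verify $V U = I_{E_\oplus}$ and $U V = I_E$. The first reads $VU((\xi_i)_i) = \big(\sum_j T_i^*T_j\xi_j\big)_i = (T_i^*T_i\xi_i)_i = (\xi_i)_i$, where the off-diagonal terms vanish by Lemma \ref{L: list Ti}(3) (as $T_i^*T_j = 0$ for $i\ne j$, both words having length one) and $T_i^*T_i\xi_i = \xi_i$ since $\xi_i \in T_i^*T_i A$ with $T_i^*T_i$ idempotent. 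The second reads $UV(\eta) = \sum_i T_iT_i^*\eta = \eta$, checked on generators $\eta = T_j a$ using $T_iT_i^*T_j = \de_{ij}T_j$ (again Lemma \ref{L: list Ti}(3) together with the partial-isometry identity $T_jT_j^*T_j = T_j$).

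The crux is the intertwining of the left actions, for which I would first establish the commutation relation $a T_i = T_i \al_i(a)$ for all $a\in A$. On a generator $a = T_\mu^*T_\mu$ this is precisely Lemma \ref{L: list Ti}(5) combined with $\al_i(T_\mu^*T_\mu) = T_i^*T_\mu^*T_\mu T_i = T_{\mu i}^*T_{\mu i}$; since $\al_i$ is a $*$-endomorphism, the set of $a$ satisfying $aT_i = T_i\al_i(a)$ is a norm-closed subspace closed under products, and as it contains the self-adjoint generators $T_\mu^*T_\mu$ of $A$ it is all of $A$. Granting this and $T_iT_i^*T_i = T_i$, one gets $T_iT_i^* a T_i = T_iT_i^*T_i\al_i(a) = T_i\al_i(a) = aT_i$, so that for $\xi_i = T_i^*T_i b_i \in E_i$,
\[
U\big(\phi(a)(\xi_i)_i\big) = \sum_i T_i\,\phi_i(a)\xi_i = \sum_i T_iT_i^* a T_i b_i = \sum_i a T_i b_i = a\,U((\xi_i)_i),
\]
which is the desired identity $U\phi(a) = \phi_E(a)U$; here $T_i\phi_i(a)\xi_i = T_i(T_i^*T_i\al_i(a)b_i) = T_iT_i^*aT_ib_i$ and $\sum_i T_i b_i = \sum_i T_iT_i^*T_i b_i = U((\xi_i)_i)$.

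Putting these together, $(\id_A, U)$ is a unitary adjointable C*-correspondences map, so $E$ is unitarily equivalent to $E_\oplus = \bigoplus_i E_i$, as claimed. I expect the only genuine work to be the commutation relation $aT_i = T_i\al_i(a)$, which carries the content of the left-module compatibility; everything else is forced by the orthogonality $T_i^*T_j = 0$ for $i\ne j$ and the partial-isometry identities, all immediate from Lemma \ref{L: list Ti}.
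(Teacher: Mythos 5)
Your proof is correct and takes essentially the same approach as the paper: the paper's entire proof is the one-line assertion that $U(\de_1 \cdot a_1, \dots, \de_d \cdot a_d) := T_1a_1 + \cdots + T_d a_d$ defines the required equivalence, which is exactly your map $U$. Your write-up merely supplies the verifications the paper leaves implicit (adjointability with $U^* (\eta)= (T_i^*\eta)_i$, the identities $VU=I$ and $UV=I$ via $T_i^*T_j = \de_{ij}T_i^*T_i$, and the left-module intertwining via $aT_i = T_i\al_i(a)$), all of which are sound.
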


\begin{proof}
The map $U(1 \cdot a_1, \dots, 1 \cdot a_d):= T_1a_1 + \cdots + T_d a_d$ defines the required equivalence.
\end{proof}

\subsection{Topological graph construction}\label{Ss: topological}

Katsura introduced a construction that generalises both graph algebras and dynamical systems \cite{Kat04-top}.
A \emph{topological graph} is a tuple $(\Ups^0, \Ups^1, r, s)$ such that $\Ups^0$ and $\Ups^1$ are locally compact Hausdorff spaces, $r \colon \Ups^1 \to \Ups^0$ is a continuous proper map and $s \colon \Ups^1 \to \Ups^0$ is a local homeomorphism.
For $\xi \in C_0(\Ups^1)$ let the map $\sca{\xi,\xi} \colon \Ups^0 \to [0,\infty]$ given by $\sca{\xi,\xi}(v) = \sum_{y \in s^{-1}(v)} |\xi(y)|^2$.
The linear space
\[
C_d(\Ups^1):=\{ \xi \in C_0(\Ups^1) \mid \sca{\xi,\xi} \in C_0(\Ups^0)\}
\]
becomes a C*-correspondence over $C_0(\Ups^0)$ by defining the inner product
\[
\sca{\xi, \eta}(v) = \sum_{y \in s^{-1}(v)} \ol{\xi(y)} \eta(y)
\]
and the module actions
\[
(\xi \cdot a)(y) = \xi(y) a(s(y)) \qand (a \cdot \xi) (y) = a(r(y)) \xi(y)
\]
for all $a \in C_0(\Ups^0)$ and $\xi, \eta \in C_d(\Ups^1)$.

Suppose that ${}_A E_A$ is the C*-correspondence associated with a monomial ideal.
Then $A$ is identified with $C(\Om)$ for some compact Hausdorff space $\Om$ and every $\al_i$ gives rise to a continuous mapping $\vpi_i \colon \Om^i \to \Om$, where $\Om^i$ is the clopen set induced by the projection $T_i^*T_i$ under the identification $A \simeq C(\Om)$.
Let $\chi_i$ be the characteristic function on $\Om^i$.
Let $\Ups_i^1 = \Om^i$ and define $\Ups^1$ be the disjoint union of the $\Ups_i^1$ for all $i=1, \dots, d$.
Thus a $y \in \Ups^1$ is determined by a tuple $(i,y_i)$ with $y_i \in \Ups_i^1$.
Let $\Ups^0 = \Om$ and define the continuous maps $s,r \colon \Ups^1 \to \Ups^0$ by
\[
s(i,y_i) = y_i \qand r(i,y_i) = \vpi_i(y_i).
\]
Then $s$ is a local homeomorphism.

\begin{proposition}\label{P: E top cor}
Let ${}_A E_A$ be the C*-correspondence of a monomial ideal $\I \lh \bC\sca{x_1, \dots, x_d}$.
Then $E$ is unitarily equivalent to the C*-correspondence of the topological graph $(\Ups^0, \Ups^1, r, s)$ as constructed above.
\end{proposition}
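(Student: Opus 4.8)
The plan is to build the unitary summand by summand, leaning on Proposition \ref{P: E dir sum} to replace $E$ by the orthogonal sum $\bigoplus_{i=1}^d E_i$ with $E_i = T_i^* T_i A = A^i = \sca{\de_i}$, $\de_i = T_i^* T_i$, and left action $\phi_i(a)(\de_i \cdot b) = \de_i \cdot \al_i(a) b$. Under the identification $A \simeq C(\Om)$ the projection $\de_i$ becomes the characteristic function $\chi_i$ of the clopen set $\Om^i$, so $E_i$ becomes $\chi_i C(\Om) = C(\Om^i)$ as a right module, while $\al_i \colon A \to A^i$ becomes precisely composition with $\vpi_i$, i.e. $\al_i(a) = \hat a \circ \vpi_i$, where $\hat a \in C(\Om)$ is the function representing $a$. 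This last fact is exactly how $\vpi_i$ was defined in the preceding subsection, and it is the only non-formal ingredient needed.

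Next I would dispose of the analytic side of the topological-graph definition. Since $\Om$ is compact Hausdorff and each $\Om^i$ is clopen, the disjoint union $\Ups^1 = \bigsqcup_{i=1}^d \Om^i$ is itself compact, whence $C_d(\Ups^1) = C_0(\Ups^1) = C(\Ups^1) = \bigoplus_{i=1}^d C(\Om^i)$ and the defining condition $\sca{\xi,\xi} \in C_0(\Ups^0)$ is automatic. Because $s(i,y_i) = y_i$, the restriction of $s$ to the $i$-th copy $\Ups_i^1 = \Om^i$ is simply the inclusion $\Om^i \hookrightarrow \Om$, so for $v \in \Om$ the fibre $s^{-1}(v)$ is the finite set $\{(i,v) \mid v \in \Om^i\}$ and splits across the index $i$. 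Consequently the graph inner product, right action and left action all decompose over the summands $C(\Om^i)$: for $\xi,\eta$ supported on $\Ups_i^1$ one has $\sca{\xi,\eta}(v) = \ol{\xi(i,v)}\,\eta(i,v)$ for $v \in \Om^i$ and $0$ otherwise, $(\xi \cdot a)(i,y_i) = \xi(i,y_i)\,a(y_i)$, and $(a \cdot \xi)(i,y_i) = a(\vpi_i(y_i))\,\xi(i,y_i)$, using $r(i,y_i) = \vpi_i(y_i)$.

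With both sides presented as $\bigoplus_i C(\Om^i)$, I would define the candidate unitary $W \colon E \to C_d(\Ups^1)$ on each summand by $W(\de_i \cdot a) = \chi_i \cdot \hat a \in C(\Om^i)$, extended linearly. The verification is then a term-by-term comparison against the computation above: the inner product matches because $\sca{\de_i \cdot a, \de_i \cdot b} = T_i^* T_i\, \hat a^{\,*}\hat b$ corresponds to $\ol{\hat a}\,\hat b$ on $\Om^i$; the right action matches since multiplication on $C(\Om^i)$ is pointwise; and the left action matches precisely because $\al_i(a) = \hat a \circ \vpi_i$ equals the range-map multiplier $a \circ r$ appearing in $(a \cdot \xi)(i,y_i) = a(\vpi_i(y_i))\,\xi(i,y_i)$. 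Surjectivity is clear, as every function in $C(\Om^i)$ is of the form $\chi_i \hat a$ by clopenness of $\Om^i$ and $A \simeq C(\Om)$.

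The routine points — isometry on each summand and the two module identities — are immediate; the one step deserving care, and the only place where the geometry enters, is the matching of the left action, i.e. confirming that the $*$-homomorphism $\al_i$ really is pullback along $\vpi_i$ and that this $\vpi_i$ coincides with the range map $r$ on the $i$-th copy. I expect this to be the crux, but it is already secured by the construction of $(\Om,\vpi)$ and of the topological graph in the preceding subsection, so the proposition should follow by assembling these identifications.
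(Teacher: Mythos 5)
Your proof is correct and takes essentially the same approach as the paper: the paper's proof likewise works summand by summand, identifying each $E_i = A^i = C(\Om^i)$ with the functions supported on the $i$-th copy $\Ups^1_i = \Om^i$ of the edge space, via the map $U(\xi) := T_1 \xi_1 + \cdots + T_d \xi_d$ with $\xi_i = \chi_i \xi$, which is exactly the inverse of your $W$. The only differences are the direction of the unitary and that you write out the verifications (inner product, module actions, and that $\al_i$ is pullback along $\vpi_i = r|_{\Ups^1_i}$) which the paper leaves implicit.
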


\begin{proof}
Every $\xi \in C_d(\Ups^1)$ can be decomposed into the sum of the $\xi_i = \chi_i \xi$ for $i=1, \dots, d$.
Each $\xi_i$ is a continuous function in $A^i = C(\Om^i) \subseteq C(\Ups^0)$.
Hence the map $U(\xi):= T_1 \xi_1 + \cdots + T_d \xi_d$ defines the required equivalence.
For surjectivity recall that $T_i a = T_i (T_i^*T_i)a$ with $T_i^*T_i a \in C(\Ups^1_i)$.
\end{proof}

\subsection{Analysis of the C*-correspondence}

We proceed to the properties of the C*-cor\-respondence ${}_A E_A$.

\begin{proposition}\label{P: E-A}
Let ${}_A E_A$ be the C*-correspondence of a monomial ideal $\I \lh \bC\sca{x_1, \dots, x_d}$.
Then the left action is non-degenerate and by the compacts.
If, in addition, there are no sinks on the left for $\I$ then ${}_A E_A$ is full.
\end{proposition}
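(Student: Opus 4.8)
The plan is to establish the three assertions by working in the two concrete models of ${}_A E_A$ built above: the direct-sum model of Proposition \ref{P: E dir sum} for the statements about the left action, and the original concrete model of Definition \ref{D: E-A} for fullness. Throughout I would rely on the relations collected in Lemma \ref{L: list Ti}, notably the orthogonality $T_i^* T_j = 0$ for $i \neq j$, the resolution $\sum_{i=1}^d T_i T_i^* + P_\mt = I$, and the commutativity of $A$, together with the fact that $A$ is unital with unit $I$.

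For non-degeneracy and compactness of the left action I would pass to $E \cong \bigoplus_{i=1}^d E_i$. The computation preceding Proposition \ref{P: E dir sum} already records that every $\phi_i$ is unital and that $\phi_i(a) = \Theta^{E_i}_{\de_i \cdot \al_i(a), \de_i}$. Hence $\phi_E(a) = \bigoplus_{i=1}^d \phi_i(a)$ is a finite sum of rank-one (hence compact) operators, giving $\phi_E(A) \subseteq \K(E)$; and $\phi_E(I) = \bigoplus_i \id_{E_i} = \id_E$, so non-degeneracy is immediate since $A$ is unital. As a self-contained alternative within the concrete model, one checks directly that $\phi_E(a) = \sum_{i=1}^d \Theta_{a T_i, T_i}$ as operators on $E$: indeed $\sum_i \Theta_{a T_i, T_i}(\zeta) = a \big( \sum_i T_i T_i^* \big) \zeta = a(I - P_\mt)\zeta = a\zeta$ for $\zeta \in E$, using $a T_i = T_i \al_i(a) \in E$ and the fact that $P_\mt T_i = 0$ forces $P_\mt|_E = 0$.

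For fullness I would compute $\ol{\spn}\sca{E,E}$ in the concrete model. Since $\sca{T_i a, T_j b} = a^* T_i^* T_j b$ vanishes for $i \neq j$ by orthogonality and equals $a^* \de_i b = \de_i a^* b$ for $i = j$ by commutativity (writing $\de_i = T_i^* T_i$), and since $\spn\{a^* b \mid a,b \in A\} = A$, I obtain $\ol{\spn}\sca{E,E} = \sum_{i=1}^d \de_i A$, the closed ideal of $A$ generated by the projections $\de_i$ (a finite sum of closed ideals, hence closed). It then remains to show this ideal is all of $A$ under the no-sinks hypothesis. Here I would invoke Lemma \ref{L: unital}: absence of sinks on the left is equivalent to $Q_0 = \prod_{i=1}^d (I - T_i^* T_i) = 0$. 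Expanding the product, every nonempty-subset term contains some $\de_{i_0}$, so $I - Q_0 \in \sum_i \de_i A$ always; thus $Q_0 = 0$ yields $I \in \sum_i \de_i A$ and therefore $\sum_i \de_i A = A$, i.e. $E$ is full. (In fact this argument shows fullness is equivalent to $Q_0 = 0$, but only the stated direction is needed.)

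The main obstacle is the fullness step, and specifically the correct identification of $\ol{\spn}\sca{E,E}$ with the ideal generated by the $\de_i$ and its translation into the combinatorial no-sinks condition: the orthogonality and commutativity reductions must be applied carefully, and the bridge to the hypothesis runs through Lemma \ref{L: unital} via the vacuum-type projection $Q_0$. By contrast, non-degeneracy follows at once from unitality of the $\phi_i$, and compactness from the finite-rank description of each $\phi_i(a)$, so these present no real difficulty.
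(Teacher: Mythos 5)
Your proposal is correct and takes essentially the same route as the paper: non-degeneracy from unitality of the left action, compactness by exhibiting $\phi_E(a)$ as the finite sum of rank-one operators $\sum_{i=1}^d \Theta^E_{aT_i,T_i}$ (the paper records exactly this identity on the generators $a = T_\mu^*T_\mu$, since $T_j T_{\mu j}^*T_{\mu j} = aT_j$ by Lemma \ref{L: list Ti}, and your direct-sum packaging is just the same computation transported through Proposition \ref{P: E dir sum}), and fullness through Lemma \ref{L: unital}. The only variation is organizational: you first identify $\ol{\spn}\sca{E,E}$ with the closed ideal $\sum_{i=1}^d \de_i A$ and then place $I$ in it by expanding $Q_{0}=0$, whereas the paper decomposes each $a\in A$ against the minimal projections $Q_{[m]}$ and writes $a_i=\sca{\de_i,\de_i a_i}$ --- both being the same application of Lemma \ref{L: unital}.
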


\begin{proof}
By definition the left action $\phi_E$ is unital, hence $E$ is non-degenerate.
The computation
\begin{align*}
\sum_{j=1}^d \Theta^E_{T_j T_{\mu j}^* T_{\mu j}, T_j} (T_i a)
 =
\sum_{j=1}^d T_j T_{\mu j}^* T_{\mu j} T_j^* T_i a
 =
T_i T_{\mu i}^* T_{\mu i} a
 =
T_{\mu}^* T_\mu T_i a,
\end{align*}
shows that $\phi_E(T^*_\mu T_\mu) = \sum_{j=1}^d \Theta^E_{T_j T_{\mu j}^* T_{\mu j}, T_j}$, hence $\phi_E(A) \subseteq \K(E)$.

If $\I$ has no sinks on the left, then the joint projection of the $T_i^*T_i$ equals $I \in \B(\F_X)$ by Lemma \ref{L: unital}.
By using the minimal projections $Q_{[m]}$ in $\ca(T_i^*T_i \mid i=1, \dots,d)$ we can write $a = \sum_{i=1}^d a_i$ with $a_i = T_i^*T_i a_i$ for suitable $a_i \in A$.
Since $a_i = \sca{\de_i, \de_i a_i}$ we get that $a_i \in \sca{E,E}$ for all $i=1, \dots, d$ which completes the proof.
\end{proof}

\begin{remark}\label{R: E0-A0}
In particular we have that $E_0$ of Remark \ref{R: unital} is full and the action is by the compacts.
\end{remark}

It will be essential to identify the kernel of the left action on $E$.
By definition we have that $\ker\phi_E = \{a \in A \mid a T_i = 0 \foral i=1, \dots, d\}$.

\begin{lemma}\label{L: kernel}
If for each $i=1, \dots, d$ there is a $\mu_i \in \La^*$ such that $\mu_i i \notin \La^*$, then $P_\mt = T_{\mu_1}^*T_{\mu_1} \cdots T_{\mu_d}^*T_{\mu_d} $.
In particular we get that $P_\mt \in A$.
\end{lemma}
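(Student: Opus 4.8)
The plan is to work directly on the orthonormal basis $\{e_\nu \mid \nu \in \La^*\}$ of $\F_X$ and identify the product of projections as a diagonal operator. By Lemma \ref{L: list Ti}(1) each factor $T_{\mu_i}^* T_{\mu_i}$ is the orthogonal projection onto $\ol{\spn}\{e_\nu \mid \mu_i \nu \in \La^*\}$, and by Lemma \ref{L: list Ti}(4) these $d$ projections pairwise commute. A product of finitely many commuting orthogonal projections is again an orthogonal projection, onto the intersection of their ranges; and since each factor is diagonal with respect to the basis $\{e_\nu\}$, this intersection is the coordinate subspace
\[
\ol{\spn}\{e_\nu \mid \mu_i \nu \in \La^* \text{ for all } i=1,\dots,d\}.
\]
So the whole lemma reduces to showing that this set of basis vectors is exactly $\{e_\mt\}$.

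For the combinatorial core I would argue as follows. First, $e_\mt$ belongs to every range, since $\mu_i \mt = \mu_i \in \La^*$ for each $i$ by hypothesis. Conversely, suppose $\mt \neq \nu$ satisfies $\mu_i \nu \in \La^*$ for all $i$, and write $\nu = j \nu'$ where $j \in \{1,\dots,d\}$ is the first letter of $\nu$. Taking $i = j$, the word $\mu_j \nu = \mu_j\, j\, \nu'$ would be allowable; but since prefixes of allowable words are allowable (recall that $\nu\mu \in \La^*$ forces $\nu \in \La^*$), its prefix $\mu_j j$ would lie in $\La^*$, contradicting the hypothesis $\mu_j j \notin \La^*$. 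Hence no such $\nu$ exists and the intersection is spanned by $e_\mt$ alone.

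Combining the two steps, the product $T_{\mu_1}^* T_{\mu_1} \cdots T_{\mu_d}^* T_{\mu_d}$ is the orthogonal projection onto $\bC e_\mt$, which is precisely $P_\mt$ by Lemma \ref{L: list Ti}(6). Since each factor lies in $A = \ca(T_\mu^* T_\mu \mid \mu \in \La^*)$, so does the product, yielding $P_\mt \in A$ and establishing the ``in particular'' claim.

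I do not expect any serious obstacle here: the operator-theoretic step (product of commuting diagonal projections $=$ projection onto the intersection of ranges) is routine, and the only genuine content is the combinatorial observation that appending the offending letter $j$ to $\mu_j$ is already forbidden, which rules out every nonempty $\nu$. The one point to state carefully is that the factors are \emph{simultaneously} diagonal, so that the intersection of ranges really is the span of the common surviving basis vectors rather than some smaller closed subspace.
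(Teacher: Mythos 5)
Your proof is correct and follows essentially the same route as the paper: the paper also works diagonally on the basis $\{e_\nu\}$, checks that each factor fixes $e_\mt$, and kills any $e_\nu$ with $\nu = i\nu' \neq \mt$ because $\mu_i i \notin \La^*$ forces $\mu_i \nu \notin \La^*$ — the same combinatorial observation you phrase via prefixes of allowable words. Your packaging through "product of commuting diagonal projections equals the projection onto the common surviving basis vectors" is just a slightly more explicit statement of what the paper's computation does implicitly.
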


\begin{proof}
First note that $T_{\mu_1}^*T_{\mu_1} \cdots T_{\mu_n}^*T_{\mu_n} \geq P_\mt$ for any $\mu_1, \dots, \mu_n \in \La^*$.
Suppose that the $\mu_i$ are as in the statement and let $e_\nu \in \La^*$ with $\nu \neq \mt$.
We can write $\nu = i \nu'$ for some $i \in \{1, \dots, d\}$ and $\nu' \in \La^*$.
Then we get that $T_{\mu_i}^* T_{\mu_i} e_{\nu} = 0$ since $\mu_i i \notin \La^*$ by assumption.
Therefore we obtain that $T_{\mu_1}^*T_{\mu_1} \cdots T_{\mu_d}^*T_{\mu_d} e_\nu = 0$.
Since $\nu \neq \mt$ was arbitrary, the proof is complete.
\end{proof}

\begin{proposition}\label{P: kernel}
Let ${}_A E_A$ be the C*-correspondence of a monomial ideal $\I \lh \bC\sca{x_1, \dots, x_d}$.
The following are equivalent:
\begin{enumerate}
\item $P_\mt \in A$;
\item $\ker\phi_E = \sca{P_\mt}$;
\item $\ker\phi_E = \bC \cdot P_\mt$;
\item $\ker\phi_E \neq (0)$;
\item for every $i = 1, \dots, d$ there is a $\mu_i \in \La^*$ such that $\mu_i i \notin \La^*$;
\item for every $i=1, \dots, d$ there is a $\mu_i \in \bF_+^d$ such that $\un{x}^{\mu_i} \notin \I$ and $\un{x}^{\mu_i} x_i \in \I$;
\item $J_E := \ker\phi_E^\perp \cap \phi^{-1}(\K(E)) = \sca{1 - P_\mt} = A (1 - P_\mt)$;
\item $1 \notin J_E$.
\end{enumerate}
If these conditions hold then $\ker\phi_E = \sca{T_{\mu_1}^*T_{\mu_1} \cdots T_{\mu_d}^*T_{\mu_d}}$ for any tuple of words $(\mu_1, \dots, \mu_d)$ such that $\mu_i i \notin \La^*$ for all $i=1, \dots, d$.
\end{proposition}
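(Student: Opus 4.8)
The plan is to first reduce $\ker\phi_E$ to a concrete form and then run all the equivalences off that single description. By the formula recalled just before the statement, $\ker\phi_E=\{a\in A\mid aT_i=0\ \text{for all }i\}$. Since each $T_i$ is a partial isometry with $T_iT_i^*T_i=T_i$, the condition $aT_i=0$ is equivalent to $aT_iT_i^*=0$; and as the $T_iT_i^*$ are pairwise orthogonal projections (Proposition \ref{P: mon id}) with $\sum_i T_iT_i^*=I-P_\mt$ (Lemma \ref{L: list Ti}), I would rewrite
\[
\ker\phi_E=\{a\in A\mid a(I-P_\mt)=0\}=\{a\in A\mid a=aP_\mt\}.
\]
Because $A$ is generated by the diagonal projections $T_\mu^*T_\mu$, every $a\in A$ is diagonal in the basis $\{e_\nu\}$, so $a=aP_\mt$ forces $a$ to be a scalar multiple of $P_\mt$ as an operator. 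Hence either $P_\mt\notin A$ and $\ker\phi_E=(0)$, or $P_\mt\in A$ and $\ker\phi_E=\bC P_\mt$; since $P_\mt$ is a minimal (rank one) projection, $\sca{P_\mt}=AP_\mt=\bC P_\mt$ in the latter case. This settles the equivalence of (i)--(iv) in one stroke.

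Conditions (v) and (vi) are literal translations of each other via $\un{x}^\mu\notin\I\Leftrightarrow\mu\in\La^*$ and $\un{x}^\mu x_i\in\I\Leftrightarrow\mu i\notin\La^*$, so it only remains to tie (v) to (i). The implication (v)$\Rightarrow$(i) is immediate from Lemma \ref{L: kernel}, which exhibits $P_\mt=T_{\mu_1}^*T_{\mu_1}\cdots T_{\mu_d}^*T_{\mu_d}\in A$. The reverse implication (i)$\Rightarrow$(v) is where I expect the real work, and I would argue by contraposition. If (v) fails there is an index $i_0$ with $\mu i_0\in\La^*$ for every $\mu\in\La^*$, that is $\La^* i_0\subseteq\La^*$. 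Taking $\mu=\mt$ gives $i_0\in\La^*$ with $i_0\neq\mt$, and for every $l$ and every $w\in\La^*_l$ we have $w\mt=w\in\La^*$ and $wi_0\in\La^*$; thus $\{w\in\La^*_l\mid w\mt\in\La^*\}=\La^*_l=\{w\in\La^*_l\mid wi_0\in\La^*\}$, i.e. $\mt\sim_l i_0$ for all $l$. Under the identification $A\simeq C(\Om)$ the diagonal value of any $a\in A$ at a word depends only on its class in $\Om$, so every $a\in A$ takes the same diagonal value at $\mt$ and at $i_0$. Since $P_\mt$ has diagonal value $1$ at $\mt$ and $0$ at $i_0$, this forces $P_\mt\notin A$, negating (i).

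For (vii) and (viii) I would first record that $\phi_E^{-1}(\K(E))=A$ by Proposition \ref{P: E-A}, so $J_E=\ker\phi_E^\perp$ is just the annihilator of $\ker\phi_E$ in the commutative unital algebra $A$. Assuming (i), $\ker\phi_E=\bC P_\mt$, and the annihilator of $P_\mt$ is exactly $A(1-P_\mt)=\sca{1-P_\mt}$, which is (vii); as $1-P_\mt$ is a proper projection it is not invertible in $A$, so $1\notin A(1-P_\mt)=J_E$, which is (viii). Conversely $1\in J_E=\ker\phi_E^\perp$ means $b=1\cdot b=0$ for every $b\in\ker\phi_E$, i.e. $\ker\phi_E=(0)$, so (viii) is precisely the negation of (iv); and (vii), whose very statement presupposes $P_\mt\in A$, returns us to (i). This closes every loop.

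Finally, the displayed formula for $\ker\phi_E$ under these equivalent conditions is then immediate: for any admissible tuple Lemma \ref{L: kernel} gives $T_{\mu_1}^*T_{\mu_1}\cdots T_{\mu_d}^*T_{\mu_d}=P_\mt$, and we have already identified $\ker\phi_E=\bC P_\mt=\sca{P_\mt}$. The only genuinely delicate point in the whole argument is the contrapositive (i)$\Rightarrow$(v): it hinges on producing the nontrivial allowable word $i_0$ that is $\sim_l$-equivalent to the empty word for every $l$, together with the structural fact that membership in $A$ is detected by constancy of diagonal values along $\Om$-classes. Everything else is bookkeeping on partial isometries and the commutative algebra $A$.
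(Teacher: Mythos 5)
Your proof is correct; its skeleton matches the paper's (the reduction of $\ker\phi_E$ to $\{a\in A \mid a(I-P_\mt)=0\}$, Lemma \ref{L: kernel} for (v)$\Rightarrow$(i), and the same quick disposal of (vi)--(viii) using that the left action is by compacts so $J_E=\ker\phi_E^\perp$), but the pivotal implication is handled by a genuinely different argument. The paper closes the loop with a direct proof of (iv)$\Rightarrow$(v): since $A$ is AF, a nonzero $a\in\ker\phi_E\cap A_l$ is expanded in the minimal projections of $A_l$, the relation $a=aP_\mt$ forces $a$ to be a scalar multiple of the full product $\prod_{j=1}^n T_{\mu_j}^*T_{\mu_j}$ over $\La^*_l$, and evaluating $aT_ie_\mt=0$ produces the words $\mu_{j_i}$ with $\mu_{j_i}i\notin\La^*$. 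You instead prove (i)$\Rightarrow$(v) by contraposition: a failure of (v) yields a letter $i_0$ with $\La^*i_0\subseteq\La^*$, hence $\mt\sim_l i_0$ for every $l$, so the point evaluations at $e_\mt$ and $e_{i_0}$ agree on every generator $T_\mu^*T_\mu$ and therefore on all of $A$, while $P_\mt$ separates them; thus $P_\mt\notin A$. Likewise your diagonality dichotomy ($\ker\phi_E=(0)$ or $\bC P_\mt$ according as $P_\mt\notin A$ or $P_\mt\in A$) collapses (i)--(iv) in one stroke, where the paper runs the cycle (i)$\Rightarrow$(ii)$\Rightarrow$(iii)$\Rightarrow$(iv). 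The trade-off: the paper's computation exhibits the explicit form that a nonzero kernel element must take, a minimal-projection calculation it reuses almost verbatim to identify the ideal $J$ in Proposition \ref{P: C*(T)}, whereas your separation argument is shorter, avoids that bookkeeping, and makes the obstruction conceptually transparent---membership of $P_\mt$ in $A$ is exactly the ability of $A$ to distinguish $\mt$ from $i_0$---fitting naturally with the $C(\Om)$ picture of Subsection \ref{Ss: qd}.
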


\begin{proof}
We will be using that $I - P_\mt = \sum_{i=1}^d T_i T_i^*$.
Consequently if $a \in \ker\phi_E$ then we get that $a (I - P_\mt) = 0$.
Also observe that $P_\mt (I - T_\mu^* T_\mu) = 0$ for all $\mu \in \La^*$; hence $P_\mt T_\mu^* T_\mu = P_\mt$ for all $\mu \in \La^*$.

\noindent [(i) $\Rightarrow$ (ii)]: It is immediate that $P_\mt T_i =0$ for all $i=1, \dots, d$.
Thus if $P_\mt \in A$ then $P_\mt \in \ker\phi_E$. Now if $a \in \ker \phi_E$, then $a(I - P_\mt) = 0$, which shows that $a = a P_\mt \in \sca{P_\mt}$.

\noindent [(ii) $\Rightarrow$ (iii)]: This follows from the facts that $P_\mt T_\mu^* T_\mu = P_\mt$ for all $\mu \in \La^*$, and that the projections $T_\mu^* T_\mu$ generate the unital C*-algebra $A$.

\noindent [(iii) $\Rightarrow$ (iv)]: Immediate, since $P_\mt \neq 0$.

\noindent [(iv) $\Rightarrow$ (v)]: Recall that $A$ is an AF algebra by Proposition \ref{P: A is AF}.
Hence $\ker\phi_E = \ol{\cup_l \ker\phi_E \cap A_l}$.
Suppose that $|\La_l^*| = n$ and let the orthogonal minimal projections $\{Q_j\}$ that generate $A_l$ with the understanding that $Q_0 = \prod_{s=1}^n T_{\mu_s}^* T_{\mu_s}$ for some enumeration $\{\mu_1, \dots, \mu_n\}$ of $\La_l^*$.
Fix a non-zero element $a \in A_l \cap \ker\phi_E$; then $a = a P_\mt$.
Since $P_\mt (I - T_\mu^*T_\mu) = 0$ we obtain that $P_\mt Q_j = \de_{0,j} P_\mt$ and therefore
\[
a = a P_\mt = \sum_j \la_j Q_j P_\mt =  \la_0 P_\mt.
\]
As $a \neq 0$, it follows that $P_\mt \in A_k$.
Thus we can write $P_\mt = \sum_j \la_j' Q_j$.
However the projections $Q_j$ are orthogonal and thus for $i \neq 0$ we have
\[
0 = P_\mt Q_i = \la_i' Q_i.
\]
Consequently we have that $P_\mt = \la_0' Q_0$ which implies that $\la_0' = 1$ and so $P_\mt = \prod_{s=1}^n T_{\mu_s}^* T_{\mu_s}$.
As $P_\mt e_i = 0$ for $i \in \{1, \dots, d\}$ there is a $\mu_s \in \La^*_k$ such that $\mu_s i \notin \La^*$.

\noindent [(v) $\Rightarrow$ (i)]: This follows by Lemma \ref{L: kernel}.

\noindent [(v) $\Leftrightarrow$ (vi), (ii) $\Leftrightarrow$ (vii), (iv) $\Leftrightarrow$ (viii)]: These are immediate (recall that the left action is by the compacts, hence $J_E = \ker\phi_E^\perp$).
\end{proof}

\begin{remark}\label{R: kernel}
Proposition \ref{P: kernel} reads the same for the $E_0$ of Remark \ref{R: unital}.
Indeed if $1$ is the identity of $A_0$ then $a(1 - T_\mu^*T_\mu) = a(I - T_\mu^*T_\mu)$ and $a P_\mt = P_\mt a$ for all $a \in A_0$ and $\mu \in \La^*$.
\end{remark}

\begin{remark}\label{R: quotient corre}
There is another C*-correspondence we could relate to a monomial ideal.
Let $q \colon \ca(T) \to \ca(T)/\K(\F_X)$ be the quotient map, and form the C*-correspondence $q(E)$ over the C*-algebra $q(A)$ by using the same $*$-algebraic relations with $E$.
The left action on $q(E)$ is again by the compacts since this is verified by $*$-algebraic relations.
However there are some substantial differences between $q(E)$ and $E$.

First of all $\sum_{i=1}^d q(T_i) q(T_i)^* = I$ and therefore $q(E)$ is always injective.
Furthermore $\ca(T)/\K(\F_X)$ is the Cuntz-Pimsner algebra of $q(E)$ given by the covariant representation $\pi \colon q(a) \mapsto q(a)$ and $t \colon q(T_i) \mapsto q(T_i)$.
This follows from almost tautological algebraic equations, and by that $(\pi, t)$ is injective and admits a gauge action.

Similar remarks hold also for the C*-cor\-respon\-dence $q(E_0)$ over $q(A_0)$ for the C*-correspon\-dence $E_0$ of Remark \ref{R: unital}.
\end{remark}

\section{C*-algebras associated with a monomial ideal}\label{S: C*-alg}

Given a monomial ideal $\I \lh \bC\sca{x_1, \dots, x_d}$ we can form the C*-algebras $\ca(T)$ and $\ca(T) / \K(\F_X)$ related to the subproduct system $X_\I$.
On the other hand the C*-corresponden\-ce ${}_A E_A$ associated with $\I$ initiates automatically two more C*-algebras, namely the Pimsner algebras $\T_E$ and $\O_E$.
In this section we show the connection between all four.

\begin{theorem}\label{T: dichotomy}
Let ${}_A E_A$ be the C*-correspondence of a monomial ideal $\I \lh \bC\sca{x_1, \dots, x_d}$.
Then the following diagram holds:
\vspace{-.5em}
{\small
\begin{align*}
\xymatrix@R=2em@C=.3em{ & \ar@{-}[dl]!<-5ex,0ex> \ar@{-}[dr]!<10ex,0ex> & \\
\ca(T) \not\simeq \T_E \, \Leftrightarrow \, \I \neq (0) \, \Leftrightarrow \, E \not\simeq \bC^d \ar@{-}[d]!<-12ex,0ex> \ar@{-}[d]!<12ex,0ex> & &
\ca(T) \simeq \T_E \, \Leftrightarrow \, \I = (0) \, \Leftrightarrow \, E \simeq \bC^d \ar@{=>}[d] \\
\ker\phi_E \neq (0) \ar@{<=>}@<-9ex>[d] \qquad \qquad \, \ker\phi_E = (0) \ar@{<=>}@<10ex>[d] & &
\O_E \simeq \ca(T)/\K(\F_X) \simeq \O_d \, , \, \ker\phi_E = (0)\\
\O_E \simeq \ca(T) \quad \, \O_E \simeq \ca(T)/ \K(\F_X) & &
}
\end{align*}
} with the understanding that all $*$-isomorphisms are canonical.
\end{theorem}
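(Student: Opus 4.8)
The plan is to realise all four C*-algebras as relative Cuntz--Pimsner algebras $\O(J,E)$ of the single correspondence ${}_A E_A$, and then to read the whole diagram off from the position of the relevant ideals inside $J_E$. Consider the Fock representation $(\pi,t)$ of $E$ on $\F_X$, where $\pi\colon A\to\B(\F_X)$ and $t\colon E\to\B(\F_X)$ are the inclusions (so $\pi(a)=a$ and $t(\xi)=\xi$ as operators in $\ca(T)$). Then $\ca(\pi,t)=\ca(T)$, the map $\pi$ is faithful, and $(\pi,t)$ admits the standard gauge action $T_i\mapsto zT_i$. By the gauge invariant uniqueness theorem recalled in Subsection~\ref{Ss: corre}, once I verify $I_{(\pi,t)}'\subseteq J_E$ and compute $I_{(\pi,t)}'$ exactly, I obtain $\ca(T)\simeq\O(I_{(\pi,t)}',E)$ canonically; the rest is bookkeeping with nested ideals.

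The computational heart is the identity
\[
\pi(a)-\psi_t(\phi_E(a))=\chi_\mt(a)\,P_\mt\qquad(a\in A),
\]
where $\chi_\mt$ is the character of $A\simeq C(\Om)$ given by evaluation at the empty-word class $[\mt]$, equivalently $\chi_\mt(T_\mu^*T_\mu)=1$ for all $\mu\in\La^*$. I would first verify it on generators: from $\phi_E(T_\mu^*T_\mu)=\sum_j\Theta^E_{T_jT_{\mu j}^*T_{\mu j},T_j}$ of Proposition~\ref{P: E-A} one computes $\psi_t(\phi_E(T_\mu^*T_\mu))=\sum_j T_jT_{\mu j}^*T_{\mu j}T_j^*=T_\mu^*T_\mu-P_\mt$, by checking both sides on each $e_\nu$ and using $P_\mt T_\mu^*T_\mu=P_\mt$. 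Since $P_\mt$ is central in $A$ with $\pi(a)P_\mt=\chi_\mt(a)P_\mt$ and $\chi_\mt$ is multiplicative, a short induction extends the identity to the $*$-algebra generated by the $T_\mu^*T_\mu$, and then continuity gives it on all of $A$. Because every $t(\xi)t(\eta)^*$ annihilates $e_\mt$ we have $P_\mt\notin\psi_t(\K(E))$, so $\pi(a)\in\psi_t(\K(E))$ forces $\chi_\mt(a)=0$; hence $I_{(\pi,t)}'=\ker\chi_\mt$. This sits inside $J_E$ by \cite[Proposition 3.3]{Kat04} (as $\pi$ is faithful), so $\ca(T)\simeq\O(\ker\chi_\mt,E)$ canonically.

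From here the diagram falls out. The defects $\pi(a)-\psi_t(\phi_E(a))$ are all scalar multiples of $P_\mt$, and by Lemma~\ref{L: list Ti}(7)--(8) the two-sided ideal they generate in $\ca(T)$ is exactly $\K(\F_X)$; enlarging the covariance ideal $\ker\chi_\mt$ to all of $A$ (legitimate, since the left action is by compacts) therefore realises $\ca(T)/\K(\F_X)\simeq\O(A,E)$ in \emph{every} case. Now I split. If $\ker\phi_E=(0)$ then $J_E=A$ by Proposition~\ref{P: kernel}, so $\O(A,E)=\O_E$ and $\ca(T)/\K(\F_X)\simeq\O_E$. If $\ker\phi_E\neq(0)$ then $P_\mt\in A$ and $J_E=A(1-P_\mt)$ by Proposition~\ref{P: kernel}; since $\chi_\mt(P_\mt)=1$ forces $P_\mt$ to be the indicator of the singleton $\{[\mt]\}$, hence a minimal projection, the ideal $A(1-P_\mt)$ has codimension one and equals $\ker\chi_\mt$, whence $\ca(T)\simeq\O_E$. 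Finally $A=\bC$ iff $\La^*=\bF_+^d$ iff $\I=(0)$ iff $E\simeq\bC^d$; in that case $\ker\chi_\mt=(0)$, the canonical map $\phi_1\colon\T_E\to\ca(T)$ is an isomorphism, and $\ca(T)/\K(\F_X)\simeq\O(\bC,\bC^d)=\O_d$, while for $\I\neq(0)$ we have $\ker\chi_\mt\neq(0)$ and $\phi_1$ fails to be injective.

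The main obstacle I anticipate is the key identity: establishing $\psi_t(\phi_E(T_\mu^*T_\mu))=T_\mu^*T_\mu-P_\mt$ cleanly on basis vectors, and confirming that $P_\mt$ escapes $\psi_t(\K(E))$. A secondary subtlety is the matching $\ker\chi_\mt=A(1-P_\mt)$, which hinges on showing that $P_\mt\in A$ forces the class $[\mt]$ to be the singleton $\{\mt\}$, so that $P_\mt$ is the characteristic function of a point of $\Om$. Once the identity is in place, the three relative Cuntz--Pimsner descriptions and the final dichotomy are routine consequences of the gauge invariant uniqueness theorem together with Proposition~\ref{P: kernel}.
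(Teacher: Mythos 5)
Your proposal is correct and follows essentially the same route as the paper's proof (which is assembled from Propositions \ref{P: C*(T)}, \ref{P: dichotomy} and Corollary \ref{C: C*(T)}): the Fock representation with its gauge action plus the gauge invariant uniqueness theorem realises $\ca(T)$ as a relative Cuntz--Pimsner algebra, the defect identity $\pi(a)-\psi_t(\phi_E(a))=\pi(a)P_\mt$ computes the covariance ideal (your $\ker\chi_\mt$ is exactly the paper's ideal generated by $\{1-T_\mu^*T_\mu\mid\mu\in\La^*\}$), the ideal generated by $P_\mt$ is $\K(\F_X)$, and Proposition \ref{P: kernel} drives the dichotomy. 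The only genuine divergence is the final step when $\ker\phi_E\neq(0)$: you prove $J_E=A(1-P_\mt)=\ker\chi_\mt$ via the claim that $P_\mt$ is supported on the singleton $\{[\mt]\}$ --- which is true, but needs more than $\chi_\mt(P_\mt)=1$ (namely that $P_\mt$ kills every $e_\nu$ with $\nu\neq\mt$ together with density of the finite-word classes in $\Om$) --- whereas the paper avoids this entirely by the sandwich $J_E=\sca{1-P_\mt}\subseteq I_{(\pi,t)}'\subseteq J_E$, the second inclusion being Katsura's \cite[Proposition 3.3]{Kat04}.
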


The proof is induced by Proposition \ref{P: C*(T)}, Proposition ֿ\ref{P: dichotomy}, and Corollary \ref{C: C*(T)} that will follow.

\begin{remark}\label{R: dichotomy}
Theorem \ref{T: dichotomy} reads the same for the C*-correspondence $E_0$ of Remark \ref{R: unital} by substituting $\ca(T)$ with $\ca(T_\mu \mid \mt \neq \mu \in \La^*)$.
We will deliberately use both $I$ and the unit $1$ of the C*-algebras throughout the proofs even when these operators coincide.
By doing so, the proofs will read the same for both $E$ and $E_0$.
\end{remark}

\begin{proposition}\label{P: C*(T)}
Let ${}_A E_A$ be the C*-correspondence of a monomial ideal $\I$ in $\bC\sca{x_1, \dots, x_d}$.
Then $\ca(T)$ is the relative Cuntz-Pimsner algebra $\O(J,E)$ for the ideal $J$ generated by $\{1 - T_\mu^* T_\mu \mid \mu \in \La^*\}$.
Moreover $\ca(T)/\K(\F_X)$ is the relative Cuntz-Pimsner algebra $\O(A,E)$.

In particular $J \subseteq J_E \subseteq A$, and there are canonical $*$-epimorphisms
\[
\T_E \to \ca(T) \to \O_E \to \ca(T)/\K(\F_X).
\]
\end{proposition}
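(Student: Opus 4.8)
The plan is to realise $\ca(T)$ as a relative Cuntz--Pimsner algebra through the gauge invariant uniqueness theorem, applied to the tautological representation. Let $\pi \colon A \to \B(\F_X)$ and $t \colon E \to \B(\F_X)$ be the inclusions $\pi(a) = a$ and $t(\xi) = \xi$ coming from $A, E \subseteq \ca(T) \subseteq \B(\F_X)$. The relations $t(\xi)^* t(\eta) = \xi^* \eta = \sca{\xi,\eta}$ and $\pi(a)t(\xi) = a\xi = t(\phi_E(a)\xi)$ hold by the definition of the operations on $E$, so $(\pi,t)$ is a representation; it is faithful since $\pi$ is injective, and $\ca(\pi,t) = \ca(A,E) = \ca(T)$. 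Moreover $(\pi,t)$ admits the gauge action $\be_z = \ad_{U_z}$ with $U_z e_\mu = z^{|\mu|}e_\mu$: the generators $T_\mu^* T_\mu$ of $A$ are diagonal in the $e_\nu$--basis while each $T_i$ raises the length by one, so $\be_z(\pi(a)) = \pi(a)$ and $\be_z(t(\xi)) = z\,t(\xi)$.

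The crux is the computation of $I_{(\pi,t)}'$. By Proposition \ref{P: E-A} the left action is unital and lands in $\K(E)$, so $e := \phi_E(1) = I_E$ is a unit for $\K(E)$, and the formula $\phi_E(1) = \sum_{j} \Theta^E_{T_j, T_j}$ together with Lemma \ref{L: list Ti}(6) gives $\psi_t(e) = \sum_{j} T_j T_j^* = I - P_\mt$. Lemma \ref{L: cov} then yields $I_{(\pi,t)}' = \{a \in A \mid a P_\mt = 0\}$. Since $\mu\mt = \mu \in \La^*$ we have $T_\mu^* T_\mu e_\mt = e_\mt$, hence $(1 - T_\mu^* T_\mu)P_\mt = 0$ and $J \subseteq I_{(\pi,t)}'$. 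For the reverse inclusion observe that $a \mapsto a e_\mt$ is a character of $A$, which under $A \simeq C(\Om)$ is evaluation at the unique point where all $T_\mu^* T_\mu$ take the value $1$, namely $[\mt]$; thus $a P_\mt = a([\mt]) P_\mt$, while $J$ is the ideal of functions vanishing on $\bigcap_\mu \{[w] \mid \mu w \in \La^*\} = \{[\mt]\}$ (equivalently, in each finite-dimensional $A_l$ the only minimal projection not killing $e_\mt$ is $\prod_{\mu \in \La_l^*}T_\mu^* T_\mu$, every other carrying a factor $1 - T_\nu^* T_\nu \in J$). Hence $I_{(\pi,t)}' = J$, and the gauge invariant uniqueness theorem \cite{Kak13-2} gives $\ca(T) \simeq \O(J,E)$, together with $J \subseteq J_E$ by \cite[Proposition 3.3]{Kat04}; the inclusion $J_E \subseteq A$ is automatic.

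For the second isomorphism I would pass to the quotient representation $(q\pi, qt)$. The identity $\psi_t(\phi_E(a)) = \pi(a)\psi_t(e) = a(I - P_\mt)$ shows
\[
\pi(a) - \psi_t(\phi_E(a)) = a P_\mt \in \K(\F_X) \foral a \in A,
\]
so $(q\pi, qt)$ is $A$-covariant and, as $A = \phi_E^{-1}(\K(E))$ by Proposition \ref{P: E-A}, the relative algebra $\O(A,E)$ is a well-defined quotient of $\O(J,E)$. Under $\O(J,E) \simeq \ca(T)$ the kernel of $\O(J,E) \to \O(A,E)$ corresponds to the ideal generated by $\{a P_\mt \mid a \in A\} = \sca{P_\mt}$; by Lemma \ref{L: list Ti}(7)--(8) every rank one operator $e_\nu \mapsto e_\mu$ equals $T_\mu P_\mt T_\nu^*$, whence $\sca{P_\mt} = \K(\F_X)$. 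Therefore $\ca(T)/\K(\F_X) \simeq \O(J,E)/\K(\F_X) \simeq \O(A,E)$.

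Finally, the chain of canonical $*$-epimorphisms is read off from the inclusions $(0) \subseteq J \subseteq J_E \subseteq A$ of ideals inside $\phi_E^{-1}(\K(E)) = A$: imposing successively more covariance relations gives
\[
\T_E = \O((0),E) \to \O(J,E) = \ca(T) \to \O(J_E,E) = \O_E \to \O(A,E) = \ca(T)/\K(\F_X),
\]
each map sending generators to generators of the same index. The main obstacle is the identification $I_{(\pi,t)}' = J$: once $\K(E)$ is recognised as unital with $\psi_t(e) = I - P_\mt$, everything reduces to understanding the rank one projection $P_\mt$ relative to $A$, i.e. to seeing it as a scalar multiple of evaluation at the empty word $[\mt] \in \Om$.
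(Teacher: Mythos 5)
Your proposal is correct and follows essentially the same route as the paper's proof: the tautological Fock representation with its gauge action, the gauge invariant uniqueness theorem from \cite{Kak13-2}, Lemma \ref{L: cov} with $\psi_t(e) = I - P_\mt$ giving $I_{(\pi,t)}' = \{a \in A \mid aP_\mt = 0\}$, the AF/minimal-projection argument identifying this ideal with $J$, and the computation $\pi(a) - \psi_t(\phi_E(a)) = \pi(a)P_\mt$ for the second part. Your repackaging of the reverse inclusion via the character $a \mapsto \sca{a e_\mt, e_\mt}$ (evaluation at $[\mt] \in \Om$) is only a cosmetic variant of the paper's direct manipulation of the minimal projections of $A_l$, as your own parenthetical makes clear.
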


\begin{proof}
The mappings $t \colon T_i \mapsto T_i$ and $\pi \colon a \mapsto a$ define a representation $(\pi,t)$ for $E$ such that $\ca(T) = \ca(\pi,t)$.
If we let $u_z e_\nu = z^{|\nu|} e_\nu$ for $z \in \bT$ and $\nu \in \La^*$ then the family $\{\ad_{u_z}\}_{z \in \bT}$ defines a gauge action of $\ca(T)$.
Since $\pi$ is faithful on $A$ we obtain that $\ca(\pi,t) \simeq \O(J,E)$ for the ideal $J = \{ a \in A \mid \pi(a) \in \psi_t(\K(X))\}$ by \cite{Kak13-2}, and that $J \subseteq J_E$ by \cite{Kat04}.

We now show that $J$ is generated by $1-T_\mu^*T_\mu$ for all $\mu \in \La^*$.
Since $e = \sum_{i=1}^d\Theta_{\de_i, \de_i}^E$ is a unit of $\K(E)$ we have that $a \in J$ if and only if $\pi(a)(I - \psi_t(e)) = 0$ by Lemma \ref{L: cov}.
That is $a \in J$ if and only if
\[
a P_\mt = a(I - \sum_{i=1}^d T_i T_i^*) = 0.
\]
Since $P_\mt \leq T_\mu^* T_\mu$ we get that $1 - T_\mu^* T_\mu \in J$ for all $\mu \in \La^*$.
On the other hand recall that $A$ is an AF algebra.
If $a \in J \cap A_l \neq (0)$ then without loss of generality we may assume that $a = \sum \la_j Q_j$ where each minimal projection $Q_j$ of $A_l$ satisfies $\la_j Q_j = Q_j a \in J$ and has the form
\[
\prod_{j=1}^k T_{\mu_j}^* T_{\mu_j} \prod_{j=k+1}^n (I - T_{\mu_j}^* T_{\mu_j}),
\]
for some enumeration $\{\mu_1, \dots, \mu_n\} = \La^*_l$.
Since $a P_\mt =0$ then the products giving the $Q_j$ that appear in the sum for $a$ above must satisfy $k < n$.
Hence $a$ is generated by some $1-T_\mu^*T_\mu$ and the proof of the first part is complete.

For the second part notice that
\begin{align*}
\pi(a) - \psi_t(\phi_E(a))
& =
\pi(a) - \psi_t\left(\sum_{i=1}^d\Theta^E_{\xi_i\al_i(a), \xi_i}\right)
 =
\pi(a) - \sum_{i =1}^ d T_i \pi\al_i(a) T_i^* .
\end{align*}
However $\pi(a) T_i = T_i \pi\al_i(a)$ and therefore
\begin{align*}
\pi(a) - \psi_t(\phi_E(a))
& =
\pi(a) (I - \sum_{i=1}^d T_iT_i^*) =  \pi(a) P_\mt.
\end{align*}
Since $\ca(T)/\K(\F_X)$ is the quotient of $\ca(T)$ by the ideal generated by $P_\mt$ we have that $\ca(T)/\K(\F_X)$ is the relative Cuntz-Pimsner algebra for the ideal generated by $A$; hence $\ca(T)/ \K(\F_X)$ is $\O(A,E)$.
\end{proof}

The canonical $*$-epimorphism $\T_E \to \O_E$ is not faithful because $J_E \neq (0)$.
Indeed if $\phi_E$ is injective then $J_E = A$, whereas if $\phi_E$ is not injective then $J_E \neq (0)$ by Proposition \ref{P: kernel}.
Now we proceed to the examination of the relation between the remaining $\ca(T)$, $\O_E$, and $\ca(T)/\K(\F_X)$.

\begin{proposition}\label{P: dichotomy}
Let ${}_A E_A$ be the C*-correspondence of a monomial ideal $\I \lh \bC\sca{x_1, \dots, x_d}$.
Then we get the following dichotomy:
\begin{enumerate}
\item $\phi_E$ is injective if and only if $\O_E \simeq \ca(T)/\K(\F_X)$ by the canonical $*$-homomorphism;
\item $\phi_E$ is not injective if and only if $\O_E \simeq \ca(T)$ by the canonical $*$-homomorphism.
\end{enumerate}
Hence there are canonical $*$-epimorphisms $\ca(T) \to \O_E \to \ca(T)/\K(\F_X)$ where in any case exactly one of them is faithful.
\end{proposition}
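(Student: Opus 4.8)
The plan is to assemble the proposition from the two preceding structural results: the concrete description of $J$ coming from Proposition \ref{P: C*(T)} and the computation of Katsura's ideal $J_E$ in Proposition \ref{P: kernel}. Recall that Proposition \ref{P: C*(T)} identifies $\ca(T) = \O(J,E)$, $\O_E = \O(J_E,E)$ and $\ca(T)/\K(\F_X) = \O(A,E)$ with $J \subseteq J_E \subseteq A$, so the two canonical $*$-epimorphisms in question are those of the chain $\ca(T) \to \O_E \to \ca(T)/\K(\F_X)$. I would first record that the composite of these two maps is precisely the quotient map $q \colon \ca(T) \to \ca(T)/\K(\F_X)$, whose kernel $\K(\F_X)$ is nonzero and lies inside $\ca(T)$ by Lemma \ref{L: list Ti}. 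The whole argument then rests on showing that, in each of the two mutually exclusive cases, one of the inclusions $J \subseteq J_E$ or $J_E \subseteq A$ is in fact an equality, so that the corresponding canonical map degenerates to an identity.

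Suppose first that $\phi_E$ is not injective. By Proposition \ref{P: kernel} we then have $P_\mt \in A$ and $J_E = A(1 - P_\mt)$. From the proof of Proposition \ref{P: C*(T)} the ideal $J$ is exactly $\{a \in A \mid a P_\mt = 0\}$, and since $A$ is commutative (Proposition \ref{P: A is AF}) and $P_\mt$ is now a projection in $A$, this set equals $A(1 - P_\mt)$. Hence $J = J_E$, so $\ca(T) = \O(J,E) = \O(J_E,E) = \O_E$ with the canonical map being the identity; this establishes the forward implication of (ii). Suppose next that $\phi_E$ is injective. Then $\ker\phi_E = (0)$, and since the left action is by the compacts (Proposition \ref{P: E-A}) we have $\phi_E^{-1}(\K(E)) = A$, whence $J_E = \ker\phi_E^\perp \cap \phi_E^{-1}(\K(E)) = A$. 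Therefore $\O_E = \O(J_E,E) = \O(A,E) = \ca(T)/\K(\F_X)$, again by the identity map, which gives the forward implication of (i).

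To close the biconditionals and the final assertion I would argue formally. The two cases are exhaustive and mutually exclusive, so by the above at least one of the two canonical maps is an isomorphism. They cannot both be isomorphisms, for their composite is $q$, which is not faithful since $\K(\F_X) \neq (0)$. Thus in each case exactly one of the maps is faithful. This also yields the converse implications: if $\O_E \simeq \ca(T)/\K(\F_X)$ canonically but $\phi_E$ were not injective, then the forward part of (ii) would make $\ca(T) \to \O_E$ an isomorphism as well, forcing $q$ to be faithful, a contradiction; hence $\phi_E$ is injective. Symmetrically, if $\O_E \simeq \ca(T)$ canonically but $\phi_E$ were injective, the forward part of (i) would make $\O_E \to \ca(T)/\K(\F_X)$ an isomorphism, again forcing $q$ faithful; hence $\phi_E$ is not injective.

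I expect the only genuinely computational step to be the identification $J = \{a \in A \mid aP_\mt = 0\} = A(1-P_\mt) = J_E$ in the non-injective case, matching the generating set $\{1 - T_\mu^* T_\mu\}$ of $J$ against the explicit form of $J_E$ from Proposition \ref{P: kernel}; this is where the two prior propositions must be dovetailed. Everything else is bookkeeping, namely that coinciding covariance ideals induce identity maps between the relevant relative Cuntz--Pimsner algebras and that the two canonical maps compose to the non-faithful quotient $q$.
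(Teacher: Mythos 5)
Your proposal is correct and takes essentially the same approach as the paper: both proofs reduce everything to the ideal computations $J_E = A$ in the injective case and $J = J_E = \sca{1 - P_\mt}$ in the non-injective case (via Propositions \ref{P: C*(T)} and \ref{P: kernel}), and both use the non-faithfulness of $q \colon \ca(T) \to \ca(T)/\K(\F_X)$ to handle the remaining implications. The only difference is organizational: the paper proves item (i) directly as an equivalence and then deduces one direction of (ii) formally, whereas you prove both forward implications by the ideal computations and obtain both converses from the exclusivity argument, which is a slightly tidier arrangement of the same ingredients.
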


\begin{proof}
Recall that $J_E = \ker\phi_E^\perp$ since the left action is by the compacts.
Proposition \ref{P: C*(T)} implies that $\O_E \simeq \ca(T)/\K(\F_X)$ if and only if $J_E = A$ which settles item (i).
Hence if $\O_E \simeq \ca(T)$ then $\phi_E$ is not injective since $q \colon \ca(T) \to \ca(T)/\K(\F_X)$ cannot be injective.
On the other hand if $\ker\phi_E \neq (0)$ then $P_\mt \in A$ which implies that $J_E = \sca{1 - P_\mt}$.
Since $(1- P_\mt) P_\mt =0$ we have that $1 - P_\mt \in J$ as in the proof of Proposition \ref{P: C*(T)}.
Hence we get that $J = J_E$ which implies that $\O_E \simeq \O(J,E) = \ca(T)$.
\end{proof}

For the next result recall that $\T_d$ denotes the Toeplitz-Cuntz algebra associated with the row isometries of multiplicity $d$.

\begin{corollary}\label{C: C*(T)}
If ${}_A E_A$ is the C*-correspondence of a monomial ideal $\I \lh \bC\sca{x_1, \dots, x_d}$, then $\ca(T) \simeq \T_E$ by the canonical $*$-homomorphism if and only if $\I = (0)$, if and only if $\ca(T) \simeq \T_d$, if and only if $E = \bC^d$.
\end{corollary}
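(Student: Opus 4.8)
The plan is to prove the four asserted conditions
\[
\text{(a)}\ \ca(T)\simeq\T_E\ \text{(canonically)},\quad \text{(b)}\ \I=(0),\quad \text{(c)}\ \ca(T)\simeq\T_d,\quad \text{(d)}\ E=\bC^d
\]
equivalent by establishing (a)$\Leftrightarrow$(b), (b)$\Leftrightarrow$(d), (b)$\Rightarrow$(c) and (c)$\Rightarrow$(b). The engine throughout is Proposition \ref{P: C*(T)}, which presents $\ca(T)$ as the relative Cuntz-Pimsner algebra $\O(J,E)$ with $J=\sca{1-T_\mu^*T_\mu \mid \mu\in\La^*}$, together with the identification $\T_E=\O((0),E)$ (the $(0)$-covariance condition is vacuous).

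For (a)$\Leftrightarrow$(b) I would first recall that the canonical epimorphism $\T_E\to\O(J,E)=\ca(T)$ is the quotient by the ideal generated by the differences $\pi(a)-\psi_t(\phi_E(a))$, $a\in J$. Since the left action is by the compacts (Proposition \ref{P: E-A}) and the Fock representation of $\T_E$ has $I_{(\pi_\infty,t_\infty)}'=(0)$ by the gauge invariant uniqueness theorem, we get $\pi_\infty(a)\neq\psi_{t_\infty}(\phi_E(a))$ whenever $0\neq a\in J$; hence the canonical map is injective precisely when $J=(0)$. It then remains to see that $J=(0)\Leftrightarrow\I=(0)$. If $\I=(0)$ then $\La^*=\bF_+^d$, so each $T_\mu^*T_\mu$ is the projection onto $\ol{\spn}\{e_\nu\mid\mu\nu\in\La^*\}=\F_X$ by Lemma \ref{L: list Ti}, i.e. $1-T_\mu^*T_\mu=0$ and $J=(0)$. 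Conversely, if $\I\neq(0)$, take a shortest forbidden word $w=\mu i$ (of length $\geq 2$ by our standing convention); its proper prefix $\mu$ and the single letter $i$ are allowable while $\mu i\notin\La^*$, so $T_\mu^*T_\mu e_i=0\neq e_i$ and $1-T_\mu^*T_\mu$ is a nonzero generator of $J$.

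For (b)$\Leftrightarrow$(d) I would argue through the coefficient algebra $A$. If $\I=(0)$ then every $T_\mu^*T_\mu=I$, so $A=\ca(T_\mu^*T_\mu\mid\mu\in\La^*)=\bC$ and $E=\ol{\spn}\{T_i a\mid a\in\bC\}=\spn\{T_1,\dots,T_d\}$ with $\sca{T_i,T_j}=T_i^*T_j=\delta_{ij}I$; thus $E=\bC^d$. Conversely a unitary equivalence $E\simeq\bC^d$ forces $A\simeq\bC$, hence $A=\bC I$, so every $T_\mu^*T_\mu$ is a scalar projection, nonzero because $e_\mt$ lies in its range (as $\mu\in\La^*$), whence $T_\mu^*T_\mu=I$ for all $\mu\in\La^*$. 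This says $\mu\nu\in\La^*$ for all allowable $\mu,\nu$, and an induction on length starting from the (allowable) single letters yields $\La^*=\bF_+^d$, i.e. $\I=(0)$.

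Finally (b)$\Rightarrow$(c) is immediate: by (d) we have $E=\bC^d$, whence $\T_E=\T_{\bC^d}=\T_d$, and combining with (a) gives $\ca(T)\simeq\T_d$. The delicate point, and where I expect the main obstacle, is (c)$\Rightarrow$(b). Reading $\simeq$ as the canonical identification (consistent with the convention in Theorem \ref{T: dichotomy}), $\T_d=\T_{\bC^d}$ is generated by a row isometry, so a canonical isomorphism forces the generators $T_i$ to be isometries, i.e. $T_i^*T_i=I$ for every $i$, and the induction of the previous paragraph then returns $\I=(0)$. If instead one insists on an \emph{abstract} $*$-isomorphism, the argument must be reorganised: the finite-dimensional case (where $\ca(T)=\K(\F_X)$ is a matrix algebra, immediately excluding the infinite-dimensional $\T_d$) is separated from the infinite-dimensional case, where $\K(\F_X)\subseteq\ca(T)$ must be matched with the unique nontrivial ideal of $\T_d$, forcing $\ca(T)/\K(\F_X)\simeq\O_d$; verifying that this simplicity of $\O(A,E)=\ca(T)/\K(\F_X)$ can occur only when $\I=(0)$ is the genuine crux and is best approached through the dichotomy of Proposition \ref{P: dichotomy}.
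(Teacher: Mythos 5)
Your proposal is correct and takes essentially the same route as the paper: both rest on Proposition \ref{P: C*(T)} to identify $\ca(T)$ with $\O(J,E)$ for $J = \sca{1 - T_\mu^*T_\mu \mid \mu \in \La^*}$, characterise the canonical isomorphism $\T_E \simeq \ca(T)$ by $J=(0)$, and translate the vanishing of the generators $1-T_\mu^*T_\mu$ into $\I=(0)$ by an induction on word length. Your write-up is simply more thorough than the paper's four-line proof, in that it justifies the ``by definition'' step via the Fock representation and the gauge invariant uniqueness theorem, and supplies the converse implications from $\ca(T)\simeq\T_d$ and from $E=\bC^d$ back to $\I=(0)$ (under the canonical reading, which is the paper's stated convention), directions the paper dismisses with ``obviously'' or leaves unaddressed.
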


\begin{proof}
By definition $\T_E \simeq \ca(T) = \O(J,E)$ if and only if $J=(0)$.
By Proposition \ref{P: C*(T)} this is equivalent to having that $T_\mu^* T_\mu = I$ for all $\mu \in \La^*$.
In particular $i \nu \in \La^*$ for all $i = 1, \dots, d$ and $\nu \in \La^*$, which implies that $\I = (0)$.
Conversely if $\I = (0)$ then obviously $\ca(T) \simeq \T_d$ and $E = \bC^d$.
\end{proof}

It is an open problem to determine when the Toeplitz algebra $\T(X)$ or when Cuntz-Pimsner algebra $\O(X)$ (as defined in \cite{Vis12}) of a subproduct system $X$ is nuclear or exact.
For the Toeplitz and the Cuntz-Pimsner algebras of a C*-correspondence these problems were solved in \cite{DykShl01,Kat04}.
Using the above results this problem is now resolved in the case where $X$ is the subproduct system coming from a monomial ideal.
The key is the identification of $\ca(T)$ as a relative Cuntz-Pimsner algebra.

\begin{corollary}\label{C: nuc}
Let $X = X_\I$ be a subproduct system associated with a monomial ideal $\I \lh \bC\sca{x_1, \dots, x_d}$.
Then the Toeplitz algebra $\T(X) := \ca(T) = \ca(\A_X)$ and the Cuntz-Pimsner algebra $\O(X):= \ca(T)/\K(\F_X)$ associated with $X$ are both nuclear.
\end{corollary}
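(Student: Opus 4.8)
The plan is to exploit the identification of $\ca(T)$ with a relative Cuntz--Pimsner algebra furnished by Proposition \ref{P: C*(T)}, together with the fact that nuclearity passes to quotients. The first thing I would record is that the coefficient algebra $A = \ca(T_\mu^* T_\mu \mid \mu \in \La^*)$ is nuclear: by Proposition \ref{P: A is AF} it is a (unital, commutative) AF algebra, and commutative C*-algebras are nuclear. This is the only structural input about the monomial ideal that is needed.

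Next I would invoke the results of \cite{DykShl01, Kat04} quoted just before the statement, which settle nuclearity (and exactness) for the Toeplitz--Pimsner and Cuntz--Pimsner algebras of a C*-correspondence in terms of the coefficient algebra. Since $A$ is nuclear, these results give that the Toeplitz--Pimsner algebra $\T_E$ of the C*-correspondence ${}_A E_A$ associated with $\I$ is nuclear.

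The crux is then a short reduction. By Proposition \ref{P: C*(T)} there is a canonical $*$-epimorphism $\T_E \to \ca(T)$ (the identification $\ca(T) \simeq \O(J,E)$ realises $\ca(T)$ as a quotient of $\T_E$), so $\ca(T)$ is a quotient of the nuclear algebra $\T_E$; because quotients of nuclear C*-algebras are nuclear, $\ca(T)$ is nuclear. Finally $\ca(T)/\K(\F_X)$ is by construction the quotient of $\ca(T)$ by the compacts (equivalently, the further canonical epimorphism $\ca(T) \to \ca(T)/\K(\F_X)$ of Proposition \ref{P: C*(T)}), so it too is a quotient of a nuclear algebra and hence nuclear. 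Note that this argument is uniform and does not require splitting into the cases of Proposition \ref{P: dichotomy}: injectivity of $\phi_E$ plays no role.

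I do not anticipate a genuine obstacle here once Proposition \ref{P: C*(T)} is in hand; the entire content is packaged in that identification plus the cited permanence results. The one point to be careful about is that the literature on nuclearity is phrased for $\T_E$ and $\O_E$, rather than for an arbitrary relative Cuntz--Pimsner algebra $\O(J,E)$. This is exactly why one should route the argument through $\T_E$ and apply the permanence property ``quotients of nuclear C*-algebras are nuclear'', instead of attempting to quote a nuclearity statement for $\O(J,E)$ directly.
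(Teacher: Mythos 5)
Your proposal is correct and follows essentially the same route as the paper: the paper's proof notes that $A$ is commutative, invokes Katsura's theorem to conclude that $\T_E$ is nuclear, and then observes that $\ca(T)$ and $\ca(T)/\K(\F_X)$ are quotients of $\T_E$ (via Proposition \ref{P: C*(T)}), hence nuclear. Your additional remarks — that one should route through $\T_E$ rather than quote nuclearity for $\O(J,E)$ directly, and that the dichotomy of Proposition \ref{P: dichotomy} is not needed — are accurate but do not change the substance of the argument.
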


\begin{proof}
Since $A$ is commutative then $\T_E$ is nuclear by \cite[Theorem 7.2]{Kat04}.
Therefore so are its quotients $\ca(T)$ and $\ca(T) / \K(\F_X)$.
\end{proof}

\begin{remark}
Nuclearity of $\ca(T)/\K(\F_X)$ has been observed by Matsumoto \cite[Lemma 4.10]{Mat98} in the case of the two-sided subshifts and by using a different line of reasoning.
\end{remark}

When $\I = (0)$ then $\O_E$ is the Cuntz algebra $\O_d$.
We provide a universal property for $\O_E$ when $\I \neq (0)$ as well.
A universal property for $\ca(T)/\K(\F_X)$ when $\I$ is of finite type will be given in Theorem \ref{T: universal}.
Both Theorem \ref{T: CP} and Theorem \ref{T: universal} should be compared with \cite[Theorem 4.9]{Mat97}.
In particular \cite[Theorem 4.9]{Mat97} follows from Theorem \ref{T: CP} below when $\phi_E$ is injective (see also in conjunction with Remark \ref{R: quotient corre}).

\begin{theorem}\label{T: CP}
Let ${}_A E_A$ be the C*-correspondence of a monomial ideal $\I \lh \bC\sca{x_1, \dots, x_d}$ with $\I \neq \{0\}$.
Then $\O_E$ is the universal C*-algebra generated by $d$ partial isometries $s_i$ such that
\begin{enumerate}
\item the mapping $T_\mu^* T_\mu \mapsto s_\mu^* s_\mu$ extends to a $*$-representation of $A$;
\item $s_j^*s_i = \de_{i,j} s_i^* s_i$;
\item $s_\mu^* s_\mu s_i s_i^*= s_i s_i^* s_{\mu}^* s_{\mu}$ for all $\mu \in \La^*$ and $i=1, \dots, d$;
\item $I - s_{\mu_1}^* s_{\mu_1} \dots s_{\mu_d}^* s_{\mu_d} = \sum_{i=1}^d s_i s_i^*$ for any (and hence for every) $d$-tuple of words $\mu_i \in \La^*$ such that $\mu_i i \notin \La^*$, or $I = \sum_{i=1}^d s_is_i^*$ if no such $d$-tuple of words exist.
\end{enumerate}
\end{theorem}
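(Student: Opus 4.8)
The plan is to establish the universal property in the usual two steps: produce a $*$-homomorphism from the universal object onto $\O_E$, conversely realise $\O_E$ inside the universal object, and then check that the two maps invert one another on generators. Write $\U$ for the universal C*-algebra generated by $d$ partial isometries $s_1, \dots, s_d$ subject to (i)--(iv), let $(\pi_u, t_u)$ be the universal covariant representation of $E$ so that $\O_E = \ca(\pi_u, t_u)$, and put $\sigma_i := t_u(T_i)$. First I would check that the canonical generators $\sigma_i$ of $\O_E$ satisfy (i)--(iv); by the universal property of $\U$ this yields a $*$-epimorphism $\Psi \colon \U \to \O_E$ with $\Psi(s_i) = \sigma_i$. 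Each $\sigma_i$ is a partial isometry since $\sigma_i^* \sigma_i = \pi_u(\sca{T_i, T_i}) = \pi_u(T_i^* T_i)$ is a projection, and (i) holds for the $*$-representation $\pi_u|_A$ once one observes $\sigma_\mu^* \sigma_\mu = \pi_u(T_\mu^* T_\mu)$ (an induction on $|\mu|$ using Lemma \ref{L: list Ti}(v) and $T_{\mu j}^* T_{\mu j} \le T_j^* T_j$). Relation (ii) is Lemma \ref{L: list Ti}(iii), and (iii) follows from $\pi_u(a) \sigma_i = t_u(\phi_E(a) T_i) = t_u(T_i \al_i(a)) = \sigma_i \pi_u(\al_i(a))$ for $a \in A$, which gives $\pi_u(a) \sigma_i \sigma_i^* = \sigma_i \pi_u(\al_i(a)) \sigma_i^* = \sigma_i \sigma_i^* \pi_u(a)$ for self-adjoint $a$.

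The geometry of the ideal enters only in (iv), and here I would use the dichotomy of Proposition \ref{P: dichotomy}. Since the left action is unital and by the compacts (Proposition \ref{P: E-A}), $\phi_E(1)$ is the unit $e = \sum_i \Theta^E_{T_i, T_i}$ of $\K(E)$, so $\psi_{t_u}(\phi_E(1)) = \sum_i \sigma_i \sigma_i^*$. If $\phi_E$ is injective there are no sinks on the left and $J_E = A$, whence $J_E$-covariance at $1$ reads $I = \sum_i \sigma_i \sigma_i^*$; this is the second alternative of (iv). If $\phi_E$ is not injective then $J_E = A(1 - P_\mt)$ with $P_\mt = T_{\mu_1}^* T_{\mu_1} \cdots T_{\mu_d}^* T_{\mu_d} \in \ker \phi_E$ for any tuple with $\mu_i i \notin \La^*$ (Proposition \ref{P: kernel}, Lemma \ref{L: kernel}); covariance at the generator $1 - P_\mt$, together with $\phi_E(P_\mt) = 0$, then gives $I - \sigma_{\mu_1}^* \sigma_{\mu_1} \cdots \sigma_{\mu_d}^* \sigma_{\mu_d} = \sum_i \sigma_i \sigma_i^*$, the first alternative of (iv), its independence of the tuple being Lemma \ref{L: kernel}.

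For the reverse map I would start from an arbitrary family $s_1, \dots, s_d$ satisfying (i)--(iv), with $\pi_s$ the $*$-representation of $A$ furnished by (i), and manufacture a covariant representation of $E$ by setting $t_s(T_i a) := s_i \pi_s(a)$. The identity $t_s(T_i a)^* t_s(T_j b) = \pi_s(a)^* s_i^* s_j \pi_s(b) = \pi_s(\sca{T_i a, T_j b})$, which uses (ii), shows simultaneously that $t_s$ is well defined and contractive on $\ol{\spn}\{T_i a\} = E$ and that $t_s(\xi)^* t_s(\eta) = \pi_s(\sca{\xi, \eta})$; the left-module identity $\pi_s(a) t_s(\xi) = t_s(\phi_E(a)\xi)$ reduces to $s_\mu^* s_\mu s_i = s_i s_{\mu i}^* s_{\mu i}$, which is (iii) combined with $s_i s_i^* s_i = s_i$. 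It remains to verify $J_E$-covariance of $(\pi_s, t_s)$: since $\psi_{t_s}(\phi_E(1)) = \sum_i s_i s_i^*$ (using $t_s(T_i) = s_i$, which follows from $T_i = T_i \cdot T_i^* T_i$ and $s_i s_i^* s_i = s_i$), relation (iv) says precisely that $\pi_s(a) = \psi_{t_s}(\phi_E(a))$ at $a = 1$ in the injective case and at $a = 1 - P_\mt$ in the non-injective case; as the set of $a$ with $\pi_s(a) = \psi_{t_s}(\phi_E(a))$ is an ideal of $A$, covariance on this single generator propagates to all of $J_E$. The universal property of $\O_E = \O(J_E, E)$ then yields $\Phi \colon \O_E \to \ca(s_i)$ with $\Phi(\sigma_i) = s_i$, and comparing $\Phi$ with $\Psi$ on generators shows the two are mutually inverse, so $\O_E \simeq \U$.

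The step I expect to be the main obstacle is the covariance bookkeeping of the second and third paragraphs: identifying $\phi_E(1)$ with the unit of $\K(E)$, matching each alternative of (iv) to the correct case of Proposition \ref{P: dichotomy}, and keeping track of the unital normalisation $\pi_s(1) = I$ (needed so that the two sides of the covariance equation agree), all of which rest squarely on the preliminary description of $\ker \phi_E$ and $J_E$ in Proposition \ref{P: kernel}.
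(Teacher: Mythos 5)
Your proposal is correct, and its central computation --- that any family satisfying (i)--(iv) yields a $J_E$-covariant representation of $E$, with the two alternatives in (iv) matched to the two possibilities for $J_E$ via Proposition \ref{P: kernel} and Lemma \ref{L: kernel} --- is exactly the paper's first half, down to the same use of Lemma \ref{L: list Ti} for the module identities. Where you genuinely diverge is the converse direction. The paper shows that $\O_E$ is generated by elements satisfying (i)--(iv) by invoking the dichotomy of Proposition \ref{P: dichotomy}: $\O_E$ is canonically $\ca(T)$ when $\phi_E$ is not injective and $\ca(T)/\K(\F_X)$ when it is, and in either concrete picture the generators $T_i$, respectively $q(T_i)$, visibly satisfy the relations. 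You instead verify the relations abstractly on $\sigma_i = t_u(T_i)$ inside the universal covariant representation, reading (iv) off from $J_E$-covariance applied at the single generator $1$ (respectively $1 - P_\mt$) of $J_E$; in fact, despite your citation of Proposition \ref{P: dichotomy}, all you use at that point is Proposition \ref{P: kernel} together with covariance, so your verification is independent of any concrete identification of $\O_E$. This intrinsic route is more portable (it would survive in settings where no concrete model of $\O_E$ is available), whereas the paper's route has the side benefit of exhibiting the relations on faithful concrete representations. Your packaging through the universal algebra $\U$ with mutually inverse maps $\Phi$ and $\Psi$ amounts to the same thing as the paper's direct verification of the universal property, and you are right to make explicit (where the paper is silent) that covariance at a generator of $J_E$ propagates to the whole ideal, because the set of $a \in A$ on which $\pi_s(a) = \psi_{t_s}(\phi_E(a))$ is an ideal of $A$.
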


\begin{proof}
If $s_i$ are as above then they define a representation of $E$, which we will denote by $\pi \colon T_\mu^* T_\mu \mapsto s_\mu^* s_\mu$ and $t \colon \de_i \mapsto s_i$.
Indeed items (i) and (ii) show the compatibility with the inner product and item (iii) shows the compatibility with the left action.

We show that item (iv) implies that $(\pi,t)$ is covariant.
Since $1 \in A$ acts as an identity on $E$ we may suppose that $\pi$ is non-degenerate, i.e. $\pi(1) = I$.
If there are no words $\mu_i i \notin \La^*$ for all $i=1, \dots, d$, then $1 \in J_E$ by Proposition \ref{P: kernel}.
Recall that $\phi_E(1) = \sum_{i=1}^d \Theta^E_{\de_i,\de_i}$ therefore we obtain
\[
\pi(1) = I = \sum_{i=1}^d s_i s_i^* = \psi_t(\Theta^E_{\de_i,\de_i}).
\]
Hence $(\pi,t)$ is covariant in this case.
Now if $\mu_i i \notin \La^*$ for some word $\mu_i \in \La^*$ for all $i=1, \dots, d$, then $T_{\mu_1}^* T_{\mu_1} \dots T_{\mu_d}^* T_{\mu_d} \in \ker\phi_E$ and
\[
\ker\phi_E^\perp = \sca{1 - T_{\mu_1}^* T_{\mu_1} \dots T_{\mu_d}^* T_{\mu_d}} \lhd A,
\]
by Proposition \ref{P: kernel}.
Thus we get that
\[
\phi_E(1 - T_{\mu_1}^* T_{\mu_1} \dots T_{\mu_d}^* T_{\mu_d}) = \phi_E(1) = \sum_{i=1}^d \Theta^E_{\de_i,\de_i} \,.
\]
Covariance of $(\pi,t)$ is then given by the computation
\begin{align*}
\pi(1 - T_{\mu_1}^* T_{\mu_1} \dots T_{\mu_d}^* T_{\mu_d})
& =
I - s_{\mu_1}^* s_{\mu_1} \dots s_{\mu_d}^* s_{\mu_d} \\
& =
\sum_{i=1}^d s_i s_i^*
 =
\psi_t(\sum_{i=1}^d \Theta^E_{\de_i,\de_i}).
\end{align*}

Finally to see that $\O_E$ is the universal C*-algebra of the statement it suffices to find a faithful representation of $\O_E$ that satisfies these properties.
If there are no words $\mu_i \in \La^*$ such that $\mu_i i \notin \La^*$ then $\O_E \simeq \ca(T)/\K(F_X)$ and the $q(T_i) \in \ca(T)/ \K(\F_X)$ satisfy the enlisted properties and provide a faithful representation.
If there are such words then $\O_E \simeq \ca(T)$ and the $T_i$ satisfy the properties and provide a faithful representation of $\O_E$.
\end{proof}

In the literature there are several C*-algebras associated with subshifts and consequently with monomial ideals.
These have been introduced as generalisations of the Cuntz-Krieger algebras.
In Section \ref{Ss: subshift compare} we show that they are distinct from the algebras $\O_E$ and $\T_E$ that we introduced above.
The main difference is that in our case we pass to the quotient by the compacts only when the left action is injective.
As we show with the next example, cutting $\ca(T)$ by the compacts unconditionally may erase important information from the original data.

\begin{example}\label{E: ss not sft}
Let $\I$ be the monomial ideal generated by $\{x_1^2, x_1x_2\}$ inside $\bC\sca{x_1,x_2}$.
By definition
\[
T_1 e_\mu = \begin{cases} e_1 & \qif \mu = \mt, \\ 0 & \qotherwise, \end{cases}
\]
and $T_2 e_\mu = e_{2 \mu}$.
Therefore $T_1$ is a compact operator, and $T_2$ is unitarily equivalent to the direct sum of the forward shift with itself (one on the $e_{2^n}$ and one on the $e_{2^{n}1}$ for $n \in \bZ_+$).
As a consequence we have that $q(T_1) =0$ and $q(T_2) = u \oplus u$, where $u$ is the bilateral shift.
Therefore we obtain that $\ca(T)/\K(\F_X) = C(\bT)$.
\end{example}

\section{Tensor algebras associated with a monomial ideal}\label{S: tensor}

We focus on two kinds of nonselfadjoint operator algebras associated with a monomial ideal $\I$.
These are:
\begin{enumerate}
\item the tensor algebra $\T_E^+$ of the C*-correspondence $_{A} E_A$ in the sense of Muhly and Solel \cite{MuhSol98}; and
\item the tensor algebra $\A_X$ of the subproduct system $X_\I$ in the sense of Shalit and Solel \cite{ShaSol09}.
\end{enumerate}
Let us record here the following corollary of our previous analysis.

\begin{corollary}\label{C: tensor}
Let ${}_A E_A$ and $X$ be the C*-correspondence and the subproduct system respectively of a monomial ideal $\I \lhd \bC\sca{x_1, \dots, x_d}$.
Then there is a completely isometrical embedding $\A_X \hookrightarrow \T_E^+$.
\end{corollary}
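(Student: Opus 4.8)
The plan is to realise both algebras concretely inside $\ca(T)$ and to invoke that $\ca(T)$ is a C*-cover of $\T_E^+$. Recall that $\A_X = \ol{\alg}\{I, T_i \mid i = 1, \dots, d\}$ is a subalgebra of $\ca(T) \subseteq \B(\F_X)$. Since $1 = I \in A$ and each generator satisfies $T_i = T_i \cdot 1 \in E$, we have the inclusion $\A_X \subseteq \ol{\alg}\{A, E\}$, where the latter denotes the norm-closed subalgebra of $\ca(T)$ generated by $A$ and $E$. As this is an inclusion of subalgebras of the single operator algebra $\ca(T) \subseteq \B(\F_X)$, it is automatically completely isometric.

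The substantive step is to identify $\ol{\alg}\{A, E\}$ with $\T_E^+$. By Proposition \ref{P: C*(T)} the C*-algebra $\ca(T)$ is the relative Cuntz-Pimsner algebra $\O(J, E)$ with $J \subseteq J_E$, and this identification is implemented by the representation $(\pi, t)$ with $\pi \colon a \mapsto a$ and $t \colon T_i \mapsto T_i$, which is faithful on $A$ and admits the gauge action $\{\ad_{u_z}\}_{z \in \bT}$. By the gauge-invariant uniqueness machinery recorded in Subsection \ref{Ss: corre}, it follows that $\O(J, E) \simeq \ca(T)$ is a C*-cover of $\T_E^+$. Hence the canonical $*$-epimorphism $\T_E \to \ca(T)$ of Proposition \ref{P: C*(T)} restricts, on the subalgebra generated by $A$ and $E$, to a completely isometric homomorphism of $\T_E^+$ onto $\ol{\alg}\{A, E\}$; indeed this restriction is exactly the canonical embedding $\T_E^+ \to \O(J, E)$ witnessing the C*-cover, because all the canonical maps preserve indices.

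Combining the two steps, the composition of the completely isometric inclusion with the inverse of this completely isometric identification,
\[
\A_X \hookrightarrow \ol{\alg}\{A, E\} \simeq \T_E^+ ,
\]
is the sought completely isometric embedding. The point that demands care, and which I view as the crux of the argument, is the assertion that the quotient $\T_E \to \ca(T)$ does not distort the tensor algebra, i.e. that $\ca(T)$ is genuinely a C*-cover of $\T_E^+$ and not merely a quotient; this is precisely where $J \subseteq J_E$, the faithfulness of $\pi$, and the presence of a gauge action enter, through the gauge-invariant uniqueness theorem. The remainder is the routine verification of the inclusions of generators. By Remark \ref{R: dichotomy} the same argument applies verbatim to the minimal correspondence $E_0$.
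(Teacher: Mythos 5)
Your proposal is correct and follows essentially the same route as the paper: the paper's proof is the one-line observation that $\ca(T) = \ca(\A_X)$ is a C*-cover of $\T_E^+$ by Proposition \ref{P: C*(T)} (together with the gauge-invariant uniqueness facts recorded in Subsection \ref{Ss: corre}), so that $\A_X = \ol{\alg}\{I, T_i\}$ sits inside the completely isometric copy $\ol{\alg}\{A,E\}$ of $\T_E^+$ in $\ca(T)$. Your write-up simply spells out the details that the paper leaves implicit.
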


\begin{proof}
It is immediate since $\ca(T) = \ca(\A_X)$ is a C*-cover of $\T_E^+$ by Proposition \ref{P: C*(T)}.
\end{proof}

\subsection{The tensor algebra $\T_E^+$}\label{Ss: tensor}

First we show that the quantised dynamics $(A,\al)$ of the allowable words determine the representations of the C*-correspondence ${}_A E_A$.

\begin{proposition}\label{P: piVrep}
Let ${}_A E_A$ be the C*-correspondence of a monomial ideal $\I \lh \bC\sca{x_1, \dots, x_d}$.
Then a family $(\pi,\{V_i\}_{i=1}^d)$ defines a representation $(\pi,t)$ of ${}_A E_A$ if and only if
\begin{enumerate}
\item $\pi \colon A \to \B(H)$ is a $*$-representation;
\item $\pi(a) V_i = V_i \pi\al_i(a)$ for all $a \in A$ and $i=1, \dots d$;
\item $V_j^* V_i = \pi(T_j^* T_i)$ for all $j,i = 1, \dots d$.
\end{enumerate}
\end{proposition}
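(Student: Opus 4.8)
The plan is to establish the correspondence $V_i = t(T_i)$, noting first that $T_i = T_i\cdot(T_i^*T_i) \in E = \ol{\spn}\{T_i a \mid a\in A, i\}$ since $T_i^*T_i \in A$. Given a representation $(\pi,t)$ of ${}_A E_A$, condition (i) holds by definition of a representation, condition (iii) is immediate from the inner-product axiom, namely $V_j^*V_i = t(T_j)^*t(T_i) = \pi(\sca{T_j,T_i}) = \pi(T_j^*T_i)$, and condition (ii) will follow from the module identity isolated below.

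The key computation is the identity $\phi_E(a)T_i = T_i\al_i(a)$ in $E$, for all $a\in A$ and $i=1,\dots,d$. Each generator $T_\mu^*T_\mu$ of $A$ commutes with $T_iT_i^*$ by Lemma \ref{L: list Ti}, hence so does every $a\in A$; therefore
\[
T_i\al_i(a) = T_iT_i^*aT_i = aT_iT_i^*T_i = aT_i = \phi_E(a)T_i.
\]
With this in hand, condition (ii) follows from the left-action axiom: $\pi(a)V_i = t(\phi_E(a)T_i) = t(T_i\al_i(a)) = t(T_i)\pi(\al_i(a)) = V_i\pi\al_i(a)$.

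For the converse, given $(\pi,\{V_i\})$ satisfying (i)--(iii), I would define $t$ on the dense subspace $\spn\{T_i a\}$ by $t(\sum_i T_i a_i) = \sum_i V_i\pi(a_i)$ and then verify the axioms. Well-definedness is the crux and uses that the $T_i$ have pairwise orthogonal ranges, i.e.\ $T_i^*T_j = 0$ for $i\neq j$ (Lemma \ref{L: list Ti}): if $\sum_i T_i c_i = 0$, then compressing by $T_jT_j^*$ forces $T_jc_j = 0$, whence $T_j^*T_jc_j = 0$ and $\nor{V_j\pi(c_j)}^2 = \nor{\pi(c_j^*T_j^*T_jc_j)} = 0$ by (iii); so $\sum_j V_j\pi(c_j) = 0$ and $t$ is unambiguous. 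The inner-product axiom then reads
\[
t(\xi)^*t(\eta) = \sum_{i,j}\pi(a_i)^*V_i^*V_j\pi(b_j) = \sum_{i,j}\pi(a_i^*T_i^*T_jb_j) = \pi(\sca{\xi,\eta})
\]
by (iii), which also gives $\nor{t(\xi)} \le \nor{\xi}$, so $t$ extends contractively to all of $E$. Finally, the left-action axiom follows from (ii) and the module identity $aT_i = T_i\al_i(a)$, since $\pi(a)t(\xi) = \sum_i V_i\pi(\al_i(a)a_i) = t(\sum_i T_i\al_i(a)a_i) = t(\phi_E(a)\xi)$.

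The main obstacle is precisely the well-definedness and boundedness of $t$ in the converse: an element of $E$ does not determine the coefficients $a_i\in A$ in an expression $\sum_i T_i a_i$, so rather than defining $t$ on coefficients I must extract the well-defined components via orthogonality of the ranges and then control the remaining ambiguity through condition (iii). Once contractivity is obtained from the inner-product identity, extension from the dense span to $E$ is routine.
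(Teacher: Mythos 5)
Your proof is correct, and it takes a mildly but genuinely different route from the paper's. The paper first passes through Proposition \ref{P: E dir sum}, writing $E$ as the direct sum of the singly generated correspondences $E_i = \sca{\de_i}$ with $\de_i = T_i^*T_i$, where the left action $\phi_i(a)(\de_i \cdot b) = \de_i \cdot \al_i(a)b$ was already built into the construction; it then sets $V_i = t(\de_i)$, and the forward direction is a two-line computation, while the converse is asserted in one sentence via the formula $t(\sum_i \de_i a_i) := \sum_i V_i \pi(a_i)$. You instead work concretely inside $\ca(T)$ with $E = \ol{\spn}\{T_i a\}$, set $V_i = t(T_i)$ (which is the same operator, since the unitary of Proposition \ref{P: E dir sum} sends $\de_i$ to $T_i\de_i = T_i$), and re-derive the module identity $aT_i = T_i\al_i(a)$ from Lemma \ref{L: list Ti}(iv) rather than quoting the direct-sum structure; incidentally the paper's Lemma \ref{L: list Ti}(v) gives this identity on generators even more directly. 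What your version buys is an explicit treatment of the converse: the extraction of components via $T_jT_j^*\xi = T_ja_j$ (orthogonality of ranges), the resolution of the coefficient ambiguity through condition (iii), the inner-product identity, and the contractivity needed to extend $t$ from the dense span. This is exactly the content the paper's one-line converse suppresses (in the direct-sum picture the same ambiguity, $\de_i a_i = \de_i b_i$ with $a_i \neq b_i$, is present and is resolved by the same estimate, or by noting that $V_i$ is a partial isometry so $V_i\pi(\de_i a_i) = V_i\pi(a_i)$), so your write-up is a self-contained and slightly more detailed substitute for the paper's argument.
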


\begin{proof}
Write $E$ as the direct sum of $E_i = \sca{\de_i}$ by Proposition \ref{P: E dir sum}.
If $(\pi,t)$ is a representation of $E$ then let $V_i = t(\de_i)$.
Then we obtain that
\begin{align*}
\pi(a) V_i = \pi(a) t(\de_i) = t(\phi_E(a) \de_i) = t( \de_i \al_i(a)) = t(\de_i) \pi\al_i(a) = V_i \pi\al_i(a),
\end{align*}
and that
\[
V_j^* V_i = t(\de_j)^* t(\de_i) = \pi(\sca{\de_j, \de_i}) = \pi(T_j^* T_i),
\]
for all $a \in A$ and $j,i = 1, \dots, d$.
Conversely, if a family $(\pi,\{V_i\}_{i=1}^d)$ satisfies the properties of the statement then $(\pi, t)$ defines a representation for $E$ with $t(\sum_{i=1}^d \de_i a_i) := \sum_{i=1}^d V_i \pi(a_i)$.
\end{proof}

The quantised dynamics determine the completely contractive representations of $\T_E^+$ as well.
Since $E$ is a non-degenerate C*-correspondence the following result can be obtained by using the {\em completely contractive covariant representations} of $E$ in the sense of Muhly and Solel \cite{MuhSol98}.
We sketch an alternative proof that settles also the case of $E_0$.

\begin{proposition}
Let ${}_A E_A$ be the C*-correspondence of a monomial ideal $\I \lh \bC\sca{x_1, \dots, x_d}$.
Then a family $(\pi, \{V_i\}_{i=1}^d)$ defines a completely contractive representation of $\T_E^+$ if and only if
\begin{enumerate}
\item $\pi \colon A \to \B(H)$ is a $*$-representation;
\item $\pi(a) V_i = V_i \pi\al_i(a)$ for all $a \in A$ and $i=1, \dots d$;
\item $[V_j^* V_i] \leq [\pi(T_j^* T_i)]$.
\end{enumerate}
A similar conclusion holds for $\T_{E_0}^+$ and representations $\pi \colon A_0 \to \B(H)$.
\end{proposition}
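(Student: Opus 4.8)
The plan is to pass back and forth between completely contractive representations of $\T_E^+$ and honest (isometric) representations of $E$, exactly as in Proposition \ref{P: piVrep}: in one direction I would use an Arveson extension together with a Schwarz inequality, and in the other a minimal isometric dilation. This is morally the Muhly--Solel dictionary between completely contractive covariant representations and representations of the tensor algebra, but I would run it so that it does not rely on non-degeneracy of the left action, which is what allows the same argument to cover $E_0$.

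For the forward implication, suppose $(\pi,\{V_i\})$ comes from a (unital) completely contractive representation $\rho$ of $\T_E^+$, so that $\pi = \rho|_A$ and $V_i = \rho(\de_i)$. Condition (i) is immediate: $\rho$ restricted to the C*-subalgebra $A \subseteq \T_E^+$ is a unital completely contractive homomorphism, hence completely positive, hence a $*$-representation. Condition (ii) is algebraic: inside $\T_E$ one has $\pi_\infty(a) t_\infty(\de_i) = t_\infty(\phi_E(a)\de_i) = t_\infty(\de_i \al_i(a)) = t_\infty(\de_i)\pi_\infty(\al_i(a))$ using $\phi_E(a)\de_i = \de_i \al_i(a)$, and applying $\rho$ gives $\pi(a) V_i = V_i \pi\al_i(a)$. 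For (iii) I would invoke Arveson's extension theorem to extend $\rho$ to a unital completely positive map $\Psi \colon \T_E \to \B(H)$ with $\Psi|_{\T_E^+} = \rho$. The matrix-valued Schwarz inequality for the completely positive map $\Psi$ applied to $\de_1, \dots, \de_d$ gives
\[
[\Psi(\de_j^* \de_i)] \geq [\Psi(\de_j)^* \Psi(\de_i)] = [V_j^* V_i].
\]
Since $\de_j^* \de_i = \sca{\de_j, \de_i} = T_j^* T_i \in A$ and $\Psi$ agrees with the $*$-representation $\pi$ on $A$, the left-hand side equals $[\pi(T_j^* T_i)]$, which is precisely (iii).

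For the converse I would begin from (i)--(iii) and note that (iii) says exactly that the map $\tilde V \colon E \otimes_\pi H \to H$, $\tilde V(\de_i \otimes h) = V_i h$ (well defined and left-covariant by (ii)), is a contraction. The plan is then to build a minimal isometric dilation $(\pi', \{V_i'\})$ of $(\pi,\{V_i\})$ on a Hilbert space $K \supseteq H$, using the defect operator $(I - \tilde V^* \tilde V)^{1/2}$ and a Fock-type construction over the defect space, arranged so that $H$ reduces $\pi'$ with $\pi'|_H = \pi$, the subspace $H$ is co-invariant for each $V_i'$ with $P_H V_i'|_H = V_i$, and $V_j'^* V_i' = \pi'(T_j^* T_i)$. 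This last equality means, by Proposition \ref{P: piVrep}, that $(\pi',\{V_i'\})$ is an isometric representation of $E$; by the universal property of $\T_E$ it integrates to a $*$-representation of $\T_E$ whose restriction $\rho'$ to $\T_E^+$ is completely isometric. Because $H$ is co-invariant, hence semi-invariant, for $\rho'(\T_E^+)$, the compression $\rho = P_H \rho'(\cdot)|_H$ is a completely contractive homomorphism of $\T_E^+$ with $\rho|_A = \pi$ and $\rho(\de_i) = V_i$, as required.

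The main obstacle is the converse dilation: one must produce the isometric dilation of a contractive covariant representation and verify that it restricts back correctly, and---crucially for the $E_0$ statement---carry the whole argument through when the left action is non-unital. For $E_0$ the unit of $A_0$ is $e_0 = \sum_{[m]=1}^{2^d-1} Q_{[m]}$ rather than $I$, so $\pi$ and $\rho$ need not be unital on $\B(H)$; I would compress everything to the projection $\rho(e_0) = \pi(e_0)$, on whose range $A_0$ acts unitally, and then rerun both directions verbatim with $I$ replaced by $e_0$, exactly as flagged in Remarks \ref{R: unital} and \ref{R: dichotomy}. Both the Arveson extension of the forward direction and the dilation of the converse are insensitive to this change once one works relative to $e_0$, so the same proof settles $\T_{E_0}^+$.
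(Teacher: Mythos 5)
Your handling of $E$ itself is correct and is essentially the paper's own proof: the forward direction is the Schwarz inequality for a completely positive (Arveson) extension of the representation, which the paper packages as the computation with the matrix $G$ whose first row is $T_1, \dots, T_d$, and the converse is a defect-operator isometric dilation, integrated through the universal property of $\T_E$ and then compressed to the co-invariant copy of $H$ --- exactly what the paper's citation of \cite[Theorem 2.1]{DavKat11} carries out.

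The genuine gap is in the $E_0$ clause, and this clause is the whole reason the paper sketches this proof rather than quoting \cite{MuhSol98} directly. You diagnose the obstruction as non-unitality of $\pi$ and propose to compress to $p := \pi(e_0)$, where $e_0 = \sum_{[m]=1}^{2^d-1} Q_{[m]}$, and to rerun both directions verbatim on $pH$. But the actual obstruction is degeneracy of the \emph{left action} of $A_0$ on $E_0$ (Remark \ref{R: unital}), not non-unitality of $\pi$, and your compression is lossy exactly when that degeneracy occurs. Concretely, take $\I = \sca{x_1x_2, x_2x_2} \lh \bC\sca{x_1,x_2}$, so that the word $2$ is a sink on the left; then $Q_{0}T_2 e_\mt = e_2 \neq 0$, hence $e_0 \cdot T_2 = T_2 - Q_{0}T_2 \neq T_2$ in $E_0$. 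Now for a family satisfying (i)--(iii) over $A_0$, condition (iii) gives $V_i = V_i p$ (because $V_i^*V_i \leq \pi(T_i^*T_i) \leq p$), but condition (ii) only gives $pV_i = V_i \pi(\al_i(e_0))$, and here $\al_i(e_0) = T_i^*T_i - T_i^* Q_{0} T_i \neq T_i^*T_i$; so $pV_i \neq V_i$ genuinely happens --- the Fock representation of $\T_{E_0}$ itself has $\pi_\infty(e_0)t_\infty(T_2) = t_\infty(e_0 \cdot T_2) \neq t_\infty(T_2)$. Consequently the compressed family only remembers $pV_ip$, and whatever representation of $\T_{E_0}^+$ you build on $pH$ sends $T_i$ to $pV_ip$ rather than to $V_i$: the converse direction is never established for the given family. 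The repair is to do nothing: your own converse construction, via $\wt{V} \colon E_0 \otimes_\pi H \to H$ and the defect $(I - \wt{V}^*\wt{V})^{1/2}$, nowhere uses unitality or non-degeneracy of the left action (covariance alone shows that $\wt{V}^*\wt{V}$, hence the defect, commutes with $\phi_{E_0}(a) \otimes I$), so it applies to $E_0$ verbatim with no compression; this insensitivity is also why the paper's matrix defect $\left([\pi(T_j^*T_i)] - [V_j^*V_i]\right)^{1/2}$ on $H^{(d)}$ lets its proof ``read the same'' for $E$ and $E_0$ (Remark \ref{R: dichotomy}). As written, however, your $E_0$ argument is incorrect.
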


\begin{proof}
If $(\pi,t)$ defines a completely contractive representation $\pi \times t$ of $\T_E^+$ then let
\[
G = \begin{bmatrix} T_1 & \cdots & T_d \\ 0 & \cdots & 0 \\ \vdots & \cdots & \vdots \\ 0 & \cdots & 0 \end{bmatrix}
\]
and compute
\begin{align*}
[V_j^*V_i]
& =
t \otimes \id_n (G)^* \cdot t \otimes \id_n(G) \\
& =
(\pi \times t) \otimes \id_n (G)^* \cdot (\pi \times t) \otimes \id_n(G) \\
& \leq
\left( (\pi \times t) \otimes \id_n \right) (G^*G) = [\pi(T_j^*T_i)],
\end{align*}
for $V_i = t(\de_i)$ with $i=1, \dots, d$.
For the converse we write
\[
[V_j^* V_i] = [V_1, \dots, V_d]^* [V_1, \dots, V_d].
\]
Hence $[V_1, \dots, V_d]$ is a row contraction since $\pi(T_j^*T_i) \leq \de_{i,j} I$.
By using the defect operator
\[
D = \left([\pi(T_j^*T_i)] - [V_j^*V_i]\right)^{1/2}
\]
the proof then follows as in \cite[Theorem 2.1]{DavKat11}.
The case of $E_0$ follows in the same way.
\end{proof}

Suppose that $A$ acts faithfully on a Hilbert space $H$ and let the Hilbert space $K = \oplus_{\mu \in \La^*} H$.
On $K$ we define the $*$-representation
\[
\pi_0(a) = \diag\{ \al_\nu(a) \mid \nu \in \La^*\},
\]
where $\al_\nu = \ad_{T_\nu^*}$.
We remark here that $\al \colon \bF_+^d \to \End(A)$ is an anti-homomorphism because $\al_\nu \al_\mu = \al_{\mu \nu}$.
Moreover let $L_i$ be the regular shifts on $\bF_d^+$, i.e. $L_i e_\nu = e_{i \nu}$.
Let $P = \oplus_{\mu \in \La^*} T_\mu^*T_\mu$ and define
\[
\pi(\cdot) = P \pi_0(\cdot) P \qand V_i = P L_i P
\]
for all $i=1, \dots, d$.
Since $A$ is commutative we get that $P$ commutes with $\pi_0$.
Moreover we have that $P L_i P = P L_i$, hence $V_\mu = P L_\mu$ for all $\mu \in \La^*$.
The family $(\pi, \{V_i\}_{i=1}^d)$ satisfies the conditions of Proposition \ref{P: piVrep}, hence it defines a representation $(\pi,t)$ of $E$.
In addition $(\pi,t)$ admits the gauge action given by $\be_z : = \ad_{u_z}$ with $u_z(\xi \otimes e_\mu) = z^{|\mu|} \xi \otimes e_\mu$.
In particular we get
\[
\sum_{i=1}^d V_iV_i^* (\xi \otimes e_\mu) = \begin{cases} \xi \otimes e_\mu & \qif \mu \neq \mt, \\ 0 & \qotherwise, \end{cases}
\]
and $\sum_{i=1}^d V_iV_i^* = \psi_t(e)$ for the identity $e = \sum_{i=1}^d \Theta^E_{\de_i, \de_i}$ of $\K(E)$.
In view of Lemma \ref{L: cov} we have that $\pi(a) \in \psi_t(\K(E))$ if and only if
\[
0 = \pi(a)(I - \psi_t(e)) = \pi(a) P_\mt = a.
\]
Hence by the gauge invariant uniqueness theorem \cite{Kat04} the pair $(\pi,t)$ defines a faithful representation of $\T_E$.
Therefore the restriction of $\pi \times t$ to the tensor algebra $\T_E^+$ is a completely isometric homomorphism.

We will use that $V_\mu^* V_\mu = \pi(T_\mu^* T_\mu)$ for all $\mu \in \La^*$.
This follows by induction and the computation
\begin{align*}
V_i^* V_\mu^* V_\mu V_i
& =
V_i^* \pi(T_\mu^*T_\mu) V_i
 =
V_i^* V_i \pi\al_i(T_\mu^* T_\mu) \\
& =
\pi(T_i^* T_i T_i^* T_\mu^* T_\mu T_i)
 =
\pi(T_i^* T_\mu^* T_\mu T_i).
\end{align*}
 Since $V_\mu^* V_\mu = \pi(T_\mu^* T_\mu)$ is a projection we obtain
\[
V_\mu \pi(a) = V_\mu V_\mu^* V_\mu \pi(a) = V_\mu \pi(T_\mu^* T_\mu a).
\]
For $\mu \in \La^*$, let $E_\mu \colon \T_E^+ \to A$ be the compression to the $(\mt,\mu)$-entry.
Then $E_\mu$ is a contractive map such that
\[
E_\mu(\sum_{\nu \in \La^*_l} V_\nu \pi(a_\nu)) = T_\mu^* T_\mu a_\mu.
\]
Therefore every element $f \in \T_E^+$ can be written as
\[
f = f' + \sum_{\mu \in \La^*_l} V_\mu \pi(a_\mu)
 \]
with $E_\mu(f') = 0$ for every $\mu \in \La^*_l$.
Moreover if $f = g' + \sum_{\mu \in \La^*_l} V_\mu \pi(b_\mu)$ for some $b_\mu \in A$ with $E_\mu(g') = 0$ for every $\mu \in \La^*_l$, then
\[
T_\mu^* T_\mu (b_\mu - a_\mu) =0 \foral \mu \in \La^*_l,
\]
i.e. the coefficients $a_\mu$ are unique modulo $T_\mu^*T_\mu$, for every $\mu \in \La^*_l$.
Consequently we obtain that
\[
V_\mu \pi(b_\mu) = V_\mu \pi(T_\mu^*T_\mu b_\mu) = V_\mu \pi(T_\mu^*T_\mu a_\mu) = V_\mu \pi(a_\mu),
\]
therefore
\begin{align*}
g'
& =
f - \sum_{\mu \in \La^*_l} V_\mu \pi(b_\mu)
  =
f - \sum_{\mu \in \La^*_l} V_\mu \pi(a_\mu) = f',
\end{align*}
which shows in what sense the decomposition is unique.

The universal property of $\O_E$ obtained in Theorem \ref{T: CP} enables us to provide the following result for the tensor algebra $\T_E^+$.

\begin{theorem}\label{T: hyper T_E^+}
Let ${}_A E_A$ be the C*-correspondence of a monomial ideal $\I \lh \bC\sca{x_1, \dots, x_d}$.
Then the tensor algebra $\T_E^+$ is hyperrigid.
\end{theorem}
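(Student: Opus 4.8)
The plan is to identify $\cenv(\T_E^+)$ with $\O_E$ via \cite{KatKri06}, and then to verify Arveson's criterion for hyperrigidity \cite{Arv11}: it suffices to show that every nondegenerate unital $*$-representation $\pi \colon \O_E \to \B(H)$ has the unique extension property relative to $\T_E^+$. Concretely, I would fix such a $\pi$ together with a unital completely positive map $\phi \colon \O_E \to \B(H)$ satisfying $\phi|_{\T_E^+} = \pi|_{\T_E^+}$, and prove that necessarily $\phi = \pi$. The whole argument runs through the multiplicative domain $\M_\phi = \{x \in \O_E \mid \phi(x^*x) = \phi(x)^*\phi(x) \text{ and } \phi(xx^*) = \phi(x)\phi(x)^*\}$, which by Choi's theorem is a C*-subalgebra of $\O_E$ on which $\phi$ restricts to a $*$-homomorphism.

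I would use the presentation of $\O_E$ from Theorem \ref{T: CP} by the partial isometries $s_1, \dots, s_d$. Since $A \subseteq \T_E^+$ and each generator $s_i$ lies in $t(E) \subseteq \T_E^+$, the hypothesis $\phi|_{\T_E^+} = \pi|_{\T_E^+}$ already gives $\phi(a) = \pi(a)$ for all $a \in A$ and $\phi(s_i) = \pi(s_i)$ for all $i$. First, $A \subseteq \M_\phi$: for $a \in A$ we have $a^*a, aa^* \in A$, whence $\phi(a^*a) = \pi(a^*a) = \pi(a)^*\pi(a) = \phi(a)^*\phi(a)$, and symmetrically for $aa^*$. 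Next I would check that each $s_i \in \M_\phi$. The left condition is immediate, because $s_i^*s_i = T_i^*T_i \in A \subseteq \T_E^+$, so $\phi(s_i^*s_i) = \pi(s_i^*s_i) = \phi(s_i)^*\phi(s_i)$.

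The crux is the right condition $\phi(s_is_i^*) = \phi(s_i)\phi(s_i)^*$, and this is precisely where the covariance relation Theorem \ref{T: CP}(iv) is essential. That relation expresses $Q := \sum_{i=1}^d s_is_i^*$ as an element of $A$ (either $I$, or $I - s_{\mu_1}^*s_{\mu_1}\cdots s_{\mu_d}^*s_{\mu_d}$), so $Q \in A \subseteq \T_E^+$ and therefore $\phi(Q) = \pi(Q)$. Combining the Kadison--Schwarz inequality $\phi(s_is_i^*) \geq \phi(s_i)\phi(s_i)^*$ with the identity
\[
\sum_{i=1}^d \bigl( \phi(s_is_i^*) - \phi(s_i)\phi(s_i)^* \bigr) = \phi(Q) - \sum_{i=1}^d \pi(s_i)\pi(s_i)^* = \pi(Q) - \pi(Q) = 0,
\]
I conclude that each nonnegative summand vanishes, giving $\phi(s_is_i^*) = \phi(s_i)\phi(s_i)^*$ for every $i$. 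Hence every $s_i$ lies in $\M_\phi$.

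Since $\M_\phi$ is a C*-algebra containing $A$ and all the $s_i$, and these generate $\O_E$, we get $\M_\phi = \O_E$; thus $\phi$ is a $*$-homomorphism agreeing with $\pi$ on a generating set, forcing $\phi = \pi$. This establishes the unique extension property for every $\pi$, and hence hyperrigidity. The main obstacle is exactly the $s_is_i^*$ step: it is the covariance (Cuntz--Pimsner) relation, unavailable in the Toeplitz algebra $\T_E$, that lets the Schwarz defects telescope to zero, which is the structural reason hyperrigidity holds in $\O_E = \cenv(\T_E^+)$ and not merely in $\T_E$. (Separability, needed to invoke \cite{Arv11}, is automatic here since $A \simeq C(\Om)$ with $\Om$ a projective limit of finite sets and $E$ is countably generated.)
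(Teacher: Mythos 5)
Your proof is correct, and it shares the paper's skeleton: identify $\cenv(\T_E^+) = \O_E$ via \cite{KatKri06} and exploit the presentation of $\O_E$ from Theorem \ref{T: CP}, with relations (i)--(ii) (i.e.\ $s_i^*s_i$ lies in the copy of $A$) handling one direction and the covariance relation (iv) handling the other. The verification technique, however, is genuinely different. The paper takes a unital $*$-representation $\Phi$ of $\O_E$, a \emph{maximal dilation} $\rho$ of $\Phi|_{\T_E^+}$ (Dritschel--McCullough), extends it uniquely to $\O_E$, writes each $\rho(s_i)$ as a $2\times 2$ operator matrix, and kills the corners: $Y_i = 0$ by equating $(1,1)$-entries in $\rho(s_i^*s_i)=\rho(s_i)^*\rho(s_i)$, and $X_i = 0$ by applying $\rho$ to relation (iv). You instead take a UCP extension $\phi$ of $\pi|_{\T_E^+}$ and show it is multiplicative via Choi's multiplicative domain: the left condition from $s_i^*s_i \in A$, and the right condition by telescoping the Kadison--Schwarz defects against relation (iv). These are dual pictures of the same phenomenon --- your Schwarz defect $\phi(s_is_i^*)-\phi(s_i)\phi(s_i)^*$ is precisely the paper's corner $X_iX_i^*$ when $\phi$ is the compression of $\rho$ --- but your route buys something concrete: it avoids invoking the existence of maximal dilations and the extension of maximal representations to the C*-envelope, needing only Choi's theorem and the Schwarz inequality for $2$-positive maps. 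One small correction: your closing separability caveat is unnecessary. The paper's working definition of hyperrigidity (restrictions to $\A$ of $*$-representations of $\ca(\A)$ are maximal) is equivalent, representation by representation, to the unique extension property you verify, with no separability hypothesis; separability only enters if one wants Arveson's approximation-theoretic characterizations from \cite{Arv11}.
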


\begin{proof}
By \cite{KatKri06} the C*-envelope of $\T_E^+$ is $\O_E$.
Suppose that $\O_E$ is generated by a family $\{s_i\}_{i=1}^d$ as in Theorem \ref{T: CP} and let $\Phi \colon \O_E \to \B(H)$ be a unital $*$-representation with $\Phi(s_i) = S_i$.
Let $\rho \colon \T_E^+ \to \B(K)$ be a maximal dilation of $\Phi|_{\T_E^+}$ and write
\[
\rho(s_i) = \begin{bmatrix} S_i & X_i \\ Y_i & Z_i \end{bmatrix}
\]
with respect to the decomposition of $K = H \oplus H^\perp$.
We will denote by the same symbol the unique extension of $\rho$ to a representation of $\O_E$.
Then the $\rho(s_i)$ satisfy the relations of Theorem \ref{T: CP}.
We aim to show that $\rho$ is a trivial dilation.
It suffices to show this for the generators $a \in A$ and $s_i$ of $\T_E^+$.

First $\Phi|_A$ is a $*$-representation, hence it is a direct summand of $\rho|_A$.
Since $\Phi(T_i^* T_i) = \Phi(s_i)^* \Phi(s_i) = S_i^* S_i$ then by equating the $(1,1)$-entries of the equation $\rho(T_i^* T_i) = \rho(s_i)^* \rho(s_i)$ we get that $Y_i = 0$ for all $i=1, \dots, d$.
If $\ker\phi_E \neq (0)$ then there are words $\mu_i \in \La^*$ such that $\mu_i i \notin \La^*$ and $I - s_{\mu_1}^* s_{\mu_1} \dots s_{\mu_d}^* s_{\mu_d} = \sum_{i=1}^d s_is_i^*$.
By applying $\rho$ we obtain that
\begin{align*}
I - \begin{bmatrix} \Phi(s_{\mu_1}^* s_{\mu_1} \dots s_{\mu_d}^* s_{\mu_d}) & 0 \\ 0 & \ast \end{bmatrix}
& =
\sum_{i=1}^n \begin{bmatrix} S_iS_i^* + X_iX_i^* & \ast \\ \ast & \ast \end{bmatrix},
\end{align*}
and by equating the $(1,1)$-entries we obtain that $X_i=0$ for all $i=1, \dots, d$.
Hence $\rho$ is a trivial dilation of $\Phi|_{\T_E^+}$.

If $\ker\phi_E = (0)$ then $I = \sum_{i=1}^d s_i s_i^*$ and a same computation completes the proof.
\end{proof}

\subsection{The tensor algebra $\A_X$}

Let us continue with the analysis of the nonselfadjoint operator algebra $\A_X$ generated by the $T_\mu$ for $\mu \in \La^*$.

\begin{proposition}\label{P: hyper A_X}
Let $X$ be the subproduct system of a monomial ideal $\I \lh \bC\sca{x_1, \dots, x_d}$ of finite type $k$ and let $q \colon \ca(T) \to \ca(T)/ \K(\F_X)$.
Then $q(\A_X)$ is hyperrigid in $\ca(T) / \K(\F_X)$, hence $\cenv(q(\A_X)) = \ca(T)/\K(\F_X)$.
\end{proposition}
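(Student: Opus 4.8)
The plan is to establish hyperrigidity directly via Arveson's unique extension property and then read off the C*-envelope. Write $B := \ca(T)/\K(\F_X)$ and $t_i := q(T_i)$, $t_\mu := q(T_\mu)$ for $\mu \in \La^*$; since $\A_X$ generates $\ca(T)$ we have $\ca(q(\A_X)) = B$, and $t_\mu \in q(\A_X)$ for every $\mu \in \La^*$. It suffices to fix a faithful unital $*$-representation $\Phi \colon B \to \B(H)$, put $S_\mu := \Phi(t_\mu)$, and prove that any unital completely positive map $\psi \colon B \to \B(H)$ with $\psi|_{q(\A_X)} = \Phi|_{q(\A_X)}$ agrees with $\Phi$.

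First I would exploit the Cuntz-type relation available in the quotient. Since $P_\mt$ is compact, Lemma \ref{L: list Ti}(vi) yields $\sum_{i=1}^d t_i t_i^* = I$, and an induction on the word length (using $t_\mu t_\nu = t_{\mu\nu}$ and the orthogonality $t_\mu^* t_\nu = \de_{\mu\nu} t_\mu^* t_\mu$ from Lemma \ref{L: list Ti}(iii)) gives the finite orthogonal resolution $\sum_{|\nu| = N,\, \nu \in \La^*} t_\nu t_\nu^* = I$ for every $N$. Applying $\psi$ and combining the Kadison--Schwarz inequalities $\psi(t_\nu t_\nu^*) \geq \psi(t_\nu)\psi(t_\nu)^* = S_\nu S_\nu^*$ with $\sum_\nu S_\nu S_\nu^* = \Phi(I) = I$ forces the equalities $\psi(t_\nu t_\nu^*) = \psi(t_\nu)\psi(t_\nu)^*$ for all allowable $\nu$. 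Choi's multiplicative-domain lemma then places every $t_\nu$ in the left, and every $t_\nu^*$ in the right, multiplicative domain of $\psi$, so that $\psi(t_\mu c) = S_\mu\,\psi(c)$ and $\psi(c\,t_\nu^*) = \psi(c)\,S_\nu^*$ for all $c \in B$ and all $\mu,\nu \in \La^*$.

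Next I would use the covariance of $B$ to reduce everything to the base algebra. By Proposition \ref{P: C*(T)} the algebra $B$ is the $A$-relative Cuntz--Pimsner algebra, so $q(a) = \psi_t(\phi_E(a)) = \sum_{i} t_i\, q(\al_i a)\, t_i^*$ for every $a \in A$; iterating, $B = \ol{\spn}\{\, t_\mu\, q(c)\, t_\nu^* : \mu,\nu \in \La^*,\ c \in A \,\}$. The previous paragraph gives $\psi(t_\mu q(c) t_\nu^*) = S_\mu\,\psi(q(c))\,S_\nu^*$ and likewise $\Phi(t_\mu q(c) t_\nu^*) = S_\mu\,\Phi(q(c))\,S_\nu^*$, so the whole problem collapses to the single claim $\psi|_{q(A)} = \Phi|_{q(A)}$. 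Setting $\de := \psi\circ q - \Phi\circ q \colon A \to \B(H)$, the covariance relation propagates through $\psi$ and $\Phi$ to give the fixed-point identity $\de(a) = \sum_{i} S_i\, \de(\al_i a)\, S_i^*$ for all $a$, together with $\de(1) = \psi(I) - \Phi(I) = 0$. Since $\I$ is of finite type, Proposition \ref{P: finite A} makes $A$ finite-dimensional, and one finishes by showing that the only such $\de$ is $\de = 0$: because $\{S_i\}$ have orthogonal ranges with $\sum_i S_i S_i^* = I$, the map $F \mapsto \sum_i S_i F(\al_i \cdot) S_i^*$ is a unital contraction of which $\de$ is a fixed vector, and the finite-dimensionality of $A$ together with $\de(1)=0$ should force $\de$ to vanish. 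Granting this, $\psi = \Phi$ on $B$, so $\Phi$ has the unique extension property; as $\Phi$ was arbitrary, $q(\A_X)$ is hyperrigid, whence $\cenv(q(\A_X)) = \ca(q(\A_X)) = \ca(T)/\K(\F_X)$.

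The main obstacle is precisely the last step, namely forcing $\psi = \Phi$ on the base algebra $q(A)$. The multiplicative-domain machinery is insensitive to finiteness and pins $\psi$ down on all balanced words, but $q(A)$ is genuinely not contained in the non-selfadjoint algebra $q(\A_X)$ (its gauge-invariant part is only $\bC I$), so the agreement on $q(A)$ cannot be extracted formally and must be wrung out of the fixed-point identity $\de = \sum_i S_i\, \de(\al_i \cdot)\, S_i^*$. This is exactly where the finite-type hypothesis---through the finite-dimensionality of $A$ in Proposition \ref{P: finite A}---is indispensable, and carrying out the corresponding fixed-point/ergodic argument for the twisted contraction is the crux of the proof.
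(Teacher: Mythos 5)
Your first two steps are correct, and they amount to a legitimate reformulation of the paper's argument: maximality of $\Phi|_{q(\A_X)}$ is equivalent to the unique extension property, your Schwarz/multiplicative-domain step (using $\sum_{|\nu|=N} t_\nu t_\nu^* = I$ in the quotient) plays exactly the role that the relation $I = \sum_i q(T_i)q(T_i)^*$ plays in the paper's proof (killing the corners $X_i$ of the maximal dilation), and the reduction via $q(a) = \sum_i t_i\, q(\al_i a)\, t_i^*$ to the single claim $\psi|_{q(A)} = \Phi|_{q(A)}$ is valid. The problem is the final step, which you yourself flag as unproven. The gap is not merely that the fixed-point argument is unfinished: the mechanism you propose \emph{cannot} be completed as stated, because the only way the finite-type hypothesis enters it is through finite-dimensionality of $A$ (Proposition \ref{P: finite A}), and finite-dimensionality of $A$ also holds for sofic ideals that are not of finite type (Remark \ref{R: sofic}, Example \ref{E: converse finite}) --- precisely the setting in which hyperrigidity is left open in Question \ref{Q: hyperrigidity}. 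For those ideals your bare fixed-point claim is false, so any correct completion must use finite type in a way your outline does not.

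Concretely, take $\I = \sca{x_1x_2^nx_1 \mid n \geq 1}$ as in Example \ref{E: converse finite}. Here $A \simeq \bC^3$ with minimal projections $p_0, p_1, p_{21}$ (the classes of $\mt$, $x_1$, $x_2x_1$), and the dynamics are $\al_1(p_1) = p_0 + p_1$, $\al_2(p_0) = p_0$, $\al_2(p_{21}) = p_1 + p_{21}$, with $\al_i(p_j) = 0$ in all other cases; moreover $\phi_E$ is injective, so $\ca(T)/\K(\F_X) \simeq \O_E$, and $A \cap \K(\F_X) = (0)$. By Theorem \ref{T: CP} the assignment $s_1 = 0$, $s_2 = u$ (any unitary on a Hilbert space $H$) satisfies conditions (i)--(iv) there, hence defines a representation $\rho$ of $\ca(T)/\K(\F_X)$. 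For this $\rho$ the self-adjoint map given by $\de(p_0) = I$, $\de(p_1) = 0$, $\de(p_{21}) = -I$ satisfies $\de(1) = 0$ and
\[
\de(a) = \sum_{i=1}^2 \rho(t_i)\, \de(\al_i a)\, \rho(t_i)^* \foral a \in A,
\]
and taking $\Phi' \oplus \rho$ with $\Phi'$ faithful produces a \emph{faithful} representation for which the fixed-point system, together with $\de(1)=0$ and the vanishing on $A \cap \K(\F_X)$, admits the nonzero solution $0 \oplus \de$. So ``fixed point of a contraction $+$ $\de(1)=0$ $+$ $\dim A < \infty$'' does not force $\de = 0$; the remaining constraint (that $\de$ arises from a UCP map agreeing with $\Phi$ on $q(\A_X)$) is exactly what you would still have to exploit, and doing so for sofic ideals is the open question.

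The missing ingredient is the identity the paper's proof is built on: for type $k$ one has $q(T_i)^* q(T_i) = \sum_{\mu \in E_i^k} q(T_\mu) q(T_\mu)^*$, because $T_i^*T_i$ and $\sum_{\mu \in E_i^k} T_\mu T_\mu^*$ differ by a finite-rank projection. Once you have this, your own machinery finishes the proof in one line: $\psi(t_i^* t_i) = \sum_{\mu} \psi(t_\mu t_\mu^*) = \sum_{\mu} S_\mu S_\mu^* = S_i^* S_i = \psi(t_i)^*\psi(t_i)$, so each $t_i$ lies in the two-sided multiplicative domain of $\psi$; since the multiplicative domain is a C*-subalgebra and the $t_i$ generate $\ca(T)/\K(\F_X)$, the map $\psi$ is a $*$-homomorphism agreeing with $\Phi$ on generators, whence $\psi = \Phi$. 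This bypasses the fixed-point equation entirely and is the exact analogue of how the paper kills the corners $Y_i$.
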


\begin{proof}
Let $\Phi \colon \ca(T)/ \K(\F_X) \to \B(H)$ be a unital $*$-representation and let us write $\Phi(q(T_i)) = W_i$ for all $i=1, \dots, d$.
Let $\rho$ be a maximal dilation of $\Phi|_{q(\A_X)}$ and let us denote by the same symbol the extension of $\rho$ to $\ca(T)/ \K(\F_X)$.
Then we get
\[
\rho(q(T_i)) = \begin{bmatrix} W_i & X_i \\ Y_i & Z_i \end{bmatrix}.
\]
Applying $\Phi$ and $\rho$ to the equation $I = \sum_{i=1}^d q(T_i) q(T_i)^*$ and by equating the $(1,1)$-entries we get that $X_i = 0$ for all $i=1, \dots, d$.

We will require one more equation from \cite[Section 12]{ShaSol09}; that is
\begin{align*}\label{eq:star_idZ}
q(T_i)^* q(T_i) = \sum_{\mu \in E^k_i} q(T_\mu) q(T_\mu)^* \text{ for all } i =1, \dots, d,
\end{align*}
where $E^k_i : = \{\mu \in \La^*_k \mid i \mu \in \La^*\}$.
For a short proof, recall that the $T_i^*T_i$ is the projection on the space
\[
G:= \ol{\spn}\{e_\mu \mid \mu \in \La^* \text{ such that } i\mu \in \La^*\}.
\]
Define the subspace
\[
G':=\ol{\spn}\{e_{\mu} \mid  \mu \in \La^* \text{ such that } i\mu \in \La^* \text{ and } |\mu| \geq k\}
\]
of finite co-dimension in $G$; then
\[
G'=\ol{\spn}\{e_{\mu\nu} \mid \mu\nu \in \La^* \text{ and } \mu \in E^k_i\}.
\]
Indeed, if $w = \mu\nu$ with $i \mu \in \La^*$ then $iw \in \La^*$, otherwise we would have a forbidden word of length greater than $k+1$ that does not contain a forbidden word.
This expression of $G'$ shows that it is the range of the projection of $\sum_{\mu \in E^k_i} T_\mu T_\mu^*$.

Applying $\Phi$ and $\rho$ to the above equation, and by restricting to the $(1,1)$-entries we get that $Y_i=0$ for all $i=1, \dots, d$.
Hence $\rho(q(T_i))$ is a trivial dilation of $W_i$ for all the generators of $\A_X$.
\end{proof}

\begin{theorem}\label{T: cenv A_X}
Let $X$ be the subproduct system of a monomial ideal $\I \lh \bC\sca{x_1, \dots, x_d}$ of finite type and let $q \colon \ca(T) \to \ca(T)/\K(\F_X)$.
Then items $(1)$, $(2)$, and $(3)$ of the following diagram hold:
\vspace{-.5em} {\small
\[
\xymatrix@R=2em@C=.4em{
& \ar@{-}[dl]!<-6ex,0ex> \ar@{-}[dr]!<6ex,0ex> & \\
q|_{\A_X} \text{ is not completely isometric.} \ar@{<=>}[d]^{(1)} & & q|_{\A_X} \text{ is completely isometric.} \ar@{<=>}[d]^{(2)} \\
\cenv(\A_X) \simeq \ca(T) \ar@{=>}@<-5ex>[d]^{(3)} \ar@{<=}@<5ex>[d]^{(4)} & & \cenv(\A_X) \simeq \ca(T)/ \K(\F_X) \ar@{=>}@<-5ex>[d]^{(4)} \ar@{<=}@<5ex>[d]^{(3)} \\
\forall i =1, \dots, d, \exists \mu_i \in \La^*. \mu_i i \notin \La^*. & & \exists i \in \{1, \dots, d\} ,\forall \mu \in \La^* . \mu i \in \La^*.
}
\]
}If in addition the $\mu_i$ can be chosen to have the same length then item $(4)$ also holds.
In particular item (4) holds when the ideal $\I$ has no sinks on the left.
\end{theorem}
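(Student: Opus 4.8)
The plan is to derive the statement from the equivalence~(1) of the diagram, which has already been established: it suffices to exhibit, under the hypotheses, a failure of complete isometry of the quotient map $q|_{\A_X}$, whence $\cenv(\A_X) \simeq \ca(T)$. I would first dispose of the ``in particular'' clause. Suppose $\I$ has no sinks on the left and that for each $i$ there is $\mu_i \in \La^*$ with $\mu_i i \notin \La^*$. By the characterisation used in the proof of Proposition~\ref{P: subshift}, the absence of sinks on the left means that every allowable word can be extended on the left by an allowable word of any prescribed length. Putting $l = \max_i |\mu_i|$, for each $i$ I would choose $w_i \in \La^*$ with $|w_i| = l - |\mu_i|$ and $w_i\mu_i \in \La^*$, and replace $\mu_i$ by $w_i\mu_i$. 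These have common length $l$, and $(w_i\mu_i)\,i \notin \La^*$ since it contains the forbidden word $\mu_i i$. This reduces the ``in particular'' clause to item~(4).

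For item~(4) the idea is to produce a single matricial witness. I would consider the column
\[
C = \begin{bmatrix} T_{\mu_1} \\ \vdots \\ T_{\mu_d} \end{bmatrix} \in M_{d,1}(\A_X),
\]
regarded as the first column of an element of $M_d(\A_X)$; each $T_{\mu_i}$ lies in $\A_X$. Then $\|C\|^2 = \|C^*C\| = \big\| \sum_{i=1}^d T_{\mu_i}^* T_{\mu_i} \big\|$. Since $\mu_i \in \La^*$ we have $T_{\mu_i}^* T_{\mu_i}\, e_\mt = e_\mt$ for every $i$, so $e_\mt$ is an eigenvector of $S := \sum_{i=1}^d T_{\mu_i}^* T_{\mu_i}$ for the eigenvalue $d$; as $S$ is a sum of $d$ projections we have $S \leq d\,I$, and hence $\|C\|^2 = d$.

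The crux is that this norm drops strictly modulo the compacts. By Lemma~\ref{L: list Ti} the projections $T_{\mu_i}^* T_{\mu_i}$ commute, so $S$ has spectrum in $\{0,1,\dots,d\}$ and its eigenspace for the eigenvalue $d$ equals $\bigcap_{i=1}^d \ran(T_{\mu_i}^* T_{\mu_i}) = \ran\big(T_{\mu_1}^* T_{\mu_1}\cdots T_{\mu_d}^* T_{\mu_d}\big)$. By Lemma~\ref{L: kernel} this product is exactly the rank-one vacuum projection $P_\mt$, so the eigenvalue $d$ has multiplicity one; being isolated and of finite multiplicity it lies outside the essential spectrum, giving
\[
\|q(C)\|^2 = \|q(S)\| \leq d-1 < d = \|C\|^2 .
\]
Thus $q\otimes\id_d$ is not isometric on $M_d(\A_X)$, so $q|_{\A_X}$ is not completely isometric, and by the equivalence~(1) I conclude $\cenv(\A_X)\simeq\ca(T)$. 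I expect the essential-norm estimate to be the delicate point: everything hinges on the common range of the projections $T_{\mu_i}^* T_{\mu_i}$ collapsing precisely to $\bC e_\mt$, which is exactly the content of Lemma~\ref{L: kernel}; without witnesses $\mu_i$ for the left condition there is no such collapse and the argument breaks down. Note that the norm computation itself invokes only the existence of the $\mu_i$, the common length entering solely through the reduction of the no-sinks clause above.
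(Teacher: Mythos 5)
Your argument for item (4) is correct, but it takes a genuinely different route from the paper, and in fact a sharper one. The paper first passes to distinct representatives $\mu_1,\dots,\mu_n$ of the witnessing words and works with the \emph{single element} $T = T_{\mu_1}+\cdots+T_{\mu_n} \in \A_X$; there the same-length hypothesis is essential, because it forces the pairwise orthogonality $T_{\mu_i}^*T_{\mu_j}=0$ for $i\neq j$ (Lemma \ref{L: list Ti}(iii)), so that the cross terms in $T^*T$ vanish and $\nor{T}^2 = \nor{\sum_k T_{\mu_k}^*T_{\mu_k}}$; a count of minimal subprojections, using Lemma \ref{L: kernel} exactly as you do, then gives $\nor{T}^2=n$ while $\nor{q(T)}^2\leq n-1$. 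That argument yields the formally stronger conclusion that $q|_{\A_X}$ fails to be \emph{isometric}. Your column trick $C=[T_{\mu_1},\dots,T_{\mu_d}]^t$ makes the cross terms disappear for free: $C^*C=\sum_i T_{\mu_i}^*T_{\mu_i}$ no matter what the lengths are and whether the words repeat. Your spectral estimate is sound (equivalently and even more simply: $S=\sum_i T_{\mu_i}^*T_{\mu_i}$ is diagonal, equals $d$ at $e_\mt$, and is at most $d-1$ at every $e_\nu$ with $\nu=j\nu'\neq\mt$ since $\mu_j j\nu'\notin\La^*$; so $S-dP_\mt$ has norm at most $d-1$ and $q(S)=q(S-dP_\mt)$). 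The conclusion is only failure of \emph{complete} isometry, but that is precisely what item (1) requires. What your approach buys is substantial: nowhere do you use the same-length hypothesis, so you have actually proved item (4) under the bare condition that for every $i$ there exists $\mu_i\in\La^*$ with $\mu_i i\notin\La^*$ (equivalently $\ker\phi_E\neq(0)$, by Proposition \ref{P: kernel}). This strengthens the theorem to a clean dichotomy, and it makes both the ``in particular'' clause and your own first paragraph (the left-extension reduction to common length, which is verbatim the paper's) redundant; your closing remark shows you half-noticed this, yet you kept the reduction anyway.

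Two caveats. First, your proposal covers only the item-(4) claims: items (1)--(3), which you describe as ``already established,'' are in the paper proved inside this very theorem (item (1) via \cite{Arv72}, item (2) via Proposition \ref{P: hyper A_X}, item (3) via $\A_X\subseteq\T_E^+$ and Theorem \ref{T: dichotomy}), so a complete write-up must still supply them or cite them correctly. Second, in passing from the essential spectrum of $S$ to $\nor{q(S)}$ you should note that the norm in $\ca(T)/\K(\F_X)$ agrees with the norm in the Calkin algebra $\B(\F_X)/\K(\F_X)$, because the induced $*$-homomorphism between these quotients is injective, hence isometric; this is standard but worth a line.
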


\begin{proof}
Referring to item (1), if $q|_{\A_X}$ is not completely isometric then we obtain $\cenv(\A_X) \simeq \ca(T)$ by \cite[Theorem 2.1.1]{Arv72}.
Conversely if $\cenv(\A_X) \simeq \ca(T)$ then $q|_{\A_X}$ cannot be completely isometric since $\K(\F_X) \neq (0)$.

Referring to item (2), if $q|_{\A_X}$ is completely isometric then Proposition \ref{P: hyper A_X} implies that $\cenv(\A_X) \simeq \ca(T) / \K(\F_X)$.
The converse is trivial.

Referring to items (3) and (4), if there is an $i \in \{1, \dots, d\}$ such that $\mu i \in \La^*$ for all $\mu \in \La^*$ then $\ker\phi_E = (0)$ hence $\cenv(\T_E^+) = \O_E = \ca(T)/\K(\F_X)$ by Theorem \ref{T: dichotomy}.
This shows that $q|_{\T_E^+}$ is completely isometric.
Since $\A_X \subseteq \T_E^+$ by Corollary \ref{C: tensor}, then $q|_{\A_X}$ is completely isometric, as well.

What remains to show is that if there are words $\mu_1, \ldots, \mu_d \in \La^*$ of the same length such that $\mu_i i \notin \La^*$ for all $i=1, \ldots, d$, then $q|_{\A_X}$ is not completely isometric.
Initially the words $\mu_i$ may not be distinct.
However we can restrict to a subset and assume that we have $n$ distinct words $\mu_k$ such that for any $i=1, \dots, d$ there exists a $k_i \in \{1, \dots, n\}$ with $\mu_{k_i} i \notin \La^*$.
Let the element $T = T_{\mu_1} + \dots + T_{\mu_n}$ in $\A_X$.
Then $T_{\mu_i}^* T_{\mu_j} = 0$ since the distinct words have the same length so that
\[
\nor{T}^2 = \nor{T^*T} = \nor{T_{\mu_1}^* T_{\mu_1} + \dots + T_{\mu_n}^* T_{\mu_n}}.
\]
Recall that by Lemma \ref{L: kernel} we have $T_{\mu_1}^* T_{\mu_1} \cdots T_{\mu_n}^* T_{\mu_n} = P_\mt$.
By splitting every projection into a sum of minimal subprojections with respect to the family $\{ T_{\mu_i}^* T_{\mu_i} \mid i=1, \dots, n\}$ we find that the common subprojection $P_\mt = T_{\mu_1}^* T_{\mu_1} \dots T_{\mu_n}^* T_{\mu_n}$ appears $n$ times and consequently $\nor{T}^2 = n$.
The same decomposition in the quotient shows that every subprojection of $q(T_{\mu_k}^* T_{\mu_k})$ appears at most $n-1$ times since their only common subprojection $q(T_{\mu_1}^* T_{\mu_1} \dots T_{\mu_n}^* T_{\mu_n})$ is equal to $q(P_\mt) = 0$.
Thus
\[
\nor{q(T)}^2 = \nor{q(T_{\mu_1}^* T_{\mu_1}) + \dots + q(T_{\mu_n}^* T_{\mu_n})} \leq n-1.
\]
Therefore $q|_{\A_X}$ is not completely isometric in this case.

Finally we mention that if there are no sinks on the left for $\I$ then we can add more letters to the left of each $\mu_i$ and arrange them to have the same length.
Note that the new elements will still satisfy the same condition.
\end{proof}

\begin{remark}
Let us compare the C*-correspondences $E$ and $q(E)$.
In Remark \ref{R: quotient corre} we noted that $\ca(T)/\K(\F_X)$ is the Cuntz-Pimsner algebra of $q(E)$.
However $\ca(T)$ may not be in general a relative Cuntz-Pimsner algebra of $q(E)$ in the canonical way.
Indeed, if it were then $\T_E^+$ should be completely isometric to $\T_{q(E)}^+ =\ol{ q(\T_E^+)}$, which we showed that does not hold in general.
\end{remark}

\begin{remark}
We wish to record here a different proof of item (3) of Theorem \ref{T: cenv A_X} that does not use the facts that $\A_X \subseteq \T_E^+$ and $q|_{\T_E^+}$ is completely isometric.
It is an adaptation of \cite[Lemma 3.2]{KatKri06} and it can be generalised to other subproduct systems where a convenient relative Cuntz-Pimsner algebra may not be present.

Let us content ourselves in showing that the quotient map is isometric on the normed algebra $\alg\{1, T_1, \ldots, T_d\}$.
Let $p(\un{T})$ be a polynomial in $T_1, \ldots, T_d$, and let $x$ be a unit vector in the linear span of $X(m)$ for $m \geq 0$, such that $\nor{p(\un{T})x} > \nor{p(\un{T})} -\eps$.
We have that $\mu i^n $ is an allowed word for all $n$ and all $\mu \in \La^*$, thus $p_{m + n}(y \otimes e_{i^n}) = p_m(y) \otimes e_{i^n}$ for all $y \in X$.
Let the contractive operator $R_{i} \in \B(\F_X)$ defined by
\[
R_i y = p_{m+1}(y \otimes e_i) = p_m(y) \otimes e_i \qfor y \in X(m).
\]
Therefore $R_i$ commutes with $p(\un{T})$ and $\nor{R_i^n p(\un{T}) x} \geq \nor{p(\un{T})} - \eps$.
For every $K \in \K(\F_X)$ we then have
\begin{align*}
\nor{p(\un{T}) + K }
& \geq
\nor{(p(\un{T})+K) R_{i}^n x}
 >
\nor{p(\un{T})} - \eps - \nor{K R_i^n x}.
\end{align*}
Letting $n \rightarrow \infty$ we obtain $\nor{q(p(\un{T}))} \geq \nor{p(\un{T})} - \eps$ for all $\eps$, as required.
\end{remark}

\begin{remark}
There is a similarity of Theorem \ref{T: cenv A_X} to what is known for commutative subproduct systems.
By \cite{KenSha14} if $X$ is a commutative subproduct system then $\cenv(q(\A_X)) = \O_X$ if and only if the shift $[T_1, \dots, T_d]$ is essentially normal.
By \cite{Arv05} monomial ideals in commuting variables give rise to essentially normal shifts.
Thus we obtain that $\cenv(q(\A_X)) = \O_X$ for monomial ideals in the commutative case as well.
Recall that in the commutative case all ideals are finitely generated.
\end{remark}

\begin{question}\label{Q: hyperrigidity}
The only part in Theorem \ref{T: cenv A_X} where we use that $\I$ is of finite type is when showing that if $q|_{\A_X}$ is isometric then $\cenv(\A_X) = \ca(T)/ \K(\F_X)$.
Finiteness is needed to apply Proposition \ref{P: hyper A_X}.
We ask if Proposition \ref{P: hyper A_X} holds in general, or even more if equivalence (2) of Theorem \ref{T: cenv A_X} holds in general.
\end{question}

We now give a universal property of $\ca(T)/\K(\F_X)$ for the case where $\I$ is of finite type
(compare the following with Theorem \ref{T: CP}).
This is an illustration of the utility of the dilation techniques used for hyperrigidity.

Also we take this opportunity to fill a gap in the proof of \cite[Theorem 12.7]{ShaSol09}.
In particular, the equation $\ca(T) = \ol{\spn} \A_X \A_X^*$ used in the proof of \cite[Theorem 12.7]{ShaSol09} was not justified therein.
We do not know whether this claim is always true.

\begin{theorem}\label{T: universal}
Let $\I$ be a monomial ideal of finite type $k$. Then the algebra $\ca(T)/\K(\F_X)$ is the universal C*-algebra generated by a row contraction $\un{s} = [s_1, \ldots, s_d]$ such that
\begin{enumerate}
\item $I = \sum_{i=1}^d s_is_i^*$;
\item $p(\un{s}) = 0$ for all $p \in \I$;
\item $s_i^*s_i = \sum_{\mu \in E^k_i} s_\mu s_\mu^*$ where $E_i^k = \{\mu \in \La_k^* \mid i \mu \in \La^*\}$, for all $i =1, \ldots, d$.
\end{enumerate}
\end{theorem}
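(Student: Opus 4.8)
The plan is to exhibit $\ca(T)/\K(\F_X)$ as the universal C*-algebra for the relations (1)--(3) by producing mutually inverse $*$-homomorphisms between it and the universal object. Write $\U$ for the universal C*-algebra generated by $s_1,\dots,s_d$ subject to (1)--(3); it exists because (1) forces $\nor{s_i}\le 1$. First I would check that the generators $q(T_i)$ of $\ca(T)/\K(\F_X)$ satisfy the three relations: relation (1) holds since $I-\sum_i T_iT_i^*=P_\mt\in\K(\F_X)$ by Lemma \ref{L: list Ti}; relation (2) holds because the $T_i$ generate $\A_X$ and hence annihilate $\I$; and relation (3) is exactly the identity $q(T_i)^*q(T_i)=\sum_{\mu\in E_i^k}q(T_\mu)q(T_\mu)^*$ established in the proof of Proposition \ref{P: hyper A_X} (the projections onto $G$ and $G'$ there differ by a finite-rank operator, using that $\I$ is of finite type $k$). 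By universality this yields a canonical $*$-epimorphism $\Phi\colon\U\to\ca(T)/\K(\F_X)$ with $\Phi(s_i)=q(T_i)$, and the whole problem reduces to constructing a $*$-homomorphism in the reverse direction that splits $\Phi$ on generators.

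For the splitting I would fix a family $\{s_i\}$ in some $\B(H)$ satisfying (1)--(3) (taking $H$ to carry a faithful representation of $\U$) and build from it an $A$-covariant representation of the C*-correspondence ${}_AE_A$, so as to invoke the identification $\ca(T)/\K(\F_X)=\O(A,E)$ of Proposition \ref{P: C*(T)} together with the universal property of the relative Cuntz--Pimsner algebra. Concretely, set $t(\de_i)=s_i$ and $\pi(T_\mu^*T_\mu)=s_\mu^*s_\mu$, where $s_\mu=0$ whenever $\mu\notin\La^*$ by (2). Since $\I$ is of finite type, $A$ is finite dimensional by Proposition \ref{P: finite A}, so $\pi$ is determined by finitely many commuting projections and only finitely many relations of $A$ need to be verified. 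I would then check that $(\pi,\{s_i\})$ meets the hypotheses of Proposition \ref{P: piVrep}: condition (c), $s_j^*s_i=\pi(T_j^*T_i)=\de_{ij}s_i^*s_i$, and condition (b), $s_\mu^*s_\mu s_i=s_i s_{\mu i}^*s_{\mu i}$, are the images under $(\pi,t)$ of parts (3) and (5) of Lemma \ref{L: list Ti}. Finally I would verify $A$-covariance $\pi(a)=\psi_t(\phi_E(a))$ for all $a\in A$: using $\phi_E(T_\mu^*T_\mu)=\sum_j\Theta^E_{T_jT_{\mu j}^*T_{\mu j},T_j}$ this reads $s_\mu^*s_\mu=\sum_j s_j s_{\mu j}^*s_{\mu j}s_j^*$, which for $\mu=\mt$ is relation (1) and in general is forced by relation (3). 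The universal property of $\O(A,E)$ then furnishes a $*$-homomorphism $\sigma\colon\ca(T)/\K(\F_X)\to\ca(s_1,\dots,s_d)$ with $\sigma(q(T_i))=s_i$; since $\sigma\circ\Phi$ is the identity on the generators of $\U$ and $H$ was chosen to represent $\U$ faithfully, $\Phi$ is injective, hence an isomorphism.

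The main obstacle, and the place where the finite type hypothesis is indispensable, is the step that extracts from the bare relations (1)--(3) the rigid structure used above: that the $s_\mu^*s_\mu$ are commuting projections realizing a genuine $*$-representation of the finite-dimensional algebra $A$, that the $s_i$ are partial isometries with pairwise orthogonal ranges, and that covariance holds for every $a\in A$ rather than merely at the unit. Relation (3) is what transmits the ``modulo compacts'' corrections---it forces $\pi$ to kill the finite-rank projections that persist in $A\subseteq\ca(T)$ but vanish in $\ca(T)/\K(\F_X)$---and finiteness of type is exactly what makes (3) a single relation of depth $k$ rather than an infinite family (compare Question \ref{Q: hyperrigidity}). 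Equivalently, one may run this verification in the dilation-theoretic language of Proposition \ref{P: hyper A_X}: relations (1) and (3) guarantee that the completely contractive representation $T_i\mapsto s_i$ of $\A_X$ descends to $q(\A_X)$ and is maximal there, so by the hyperrigidity of $q(\A_X)$ (whose C*-envelope is $\ca(T)/\K(\F_X)$) it extends to the sought $*$-representation $\sigma$; the vanishing of the off-diagonal dilation entries is the same $(1,1)$-entry computation carried out in Proposition \ref{P: hyper A_X}, now read in reverse. This is the concrete sense in which the theorem illustrates the dilation techniques behind hyperrigidity.
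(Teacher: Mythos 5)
Your first paragraph is fine and agrees with the paper: the generators $q(T_i)$ satisfy (1)--(3), so there is a canonical $*$-epimorphism from the universal algebra $\U$ onto $\ca(T)/\K(\F_X)$, and everything reduces to producing a $*$-homomorphism $\sigma$ in the other direction with $\sigma(q(T_i)) = s_i$. The gap lies in your construction of $\sigma$. Your primary route, through the identification $\ca(T)/\K(\F_X) = \O(A,E)$ of Proposition \ref{P: C*(T)}, requires you to first verify that the pair given by $\pi(T_\mu^* T_\mu) = s_\mu^* s_\mu$ and $t(\de_i) = s_i$ is an $A$-covariant representation of ${}_A E_A$: the $s_\mu^* s_\mu$ must be commuting projections implementing a genuine $*$-representation of $A$, the relations $s_j^* s_i = \de_{ij} s_i^* s_i$ must hold, and covariance must hold for \emph{every} $a \in A$. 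You justify these by calling them ``the images under $(\pi,t)$ of parts (3) and (5) of Lemma \ref{L: list Ti}'' --- but that lemma concerns the concrete operators $T_i$ on $\F_X$, and transporting its relations to the $s_i$ presupposes exactly the $*$-homomorphism $\sigma$ you are trying to build. None of these relations is an evident consequence of (1)--(3): the paper records ``each $s_i$ is a partial isometry'' as a \emph{corollary} of the theorem, and in Theorem \ref{T: CP}, where no such theorem is available, precisely these structural relations must be imposed as hypotheses. Your third paragraph candidly names this obstacle, but then replaces a proof by the assertion that relation (3) ``forces'' the needed structure; that assertion is the theorem itself, so the argument is circular.

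Your fallback route --- dilation and hyperrigidity --- is indeed the paper's strategy in spirit, but your sketch omits the mechanism that makes it run. The paper does not descend a completely contractive representation of $\A_X$ through $q$; this cannot be done naively, since $q|_{\A_X}$ need not be completely isometric (Theorem \ref{T: cenv A_X}). Instead it invokes \cite[Theorem 8.2]{ShaSol09} to produce, from the row contraction $\un{s}$ annihilating $\I$ (this is where relation (2) enters), a unital completely positive map $\Psi$ on $\E_X = \ol{\spn}\, \A_X \A_X^*$ satisfying $\Psi(T_\mu T_\nu^*) = s_\mu s_\nu^*$ and $\Psi(ab) = \Psi(a)\Psi(b)$ for $a \in \A_X$, $b \in \E_X$; it then extends $\Psi$ to $\ca(T)$ and kills the compacts by the computation
\[
\Psi(T_\mu P_\mt T_\nu^*) = s_\mu \Big(I - \sum_{i=1}^d s_i s_i^*\Big) s_\nu^* = 0,
\]
which uses relation (1) together with Lemma \ref{L: list Ti} (vi), (vii). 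Only at that point does one have a unital completely positive map $\Phi$ on $\ca(T)/\K(\F_X)$ with $\Phi(q(T_i)) = s_i$, whose Stinespring dilation can be shown trivial on $q(\A_X)$ by the $(1,1)$-entry argument with relations (1) and (3) --- the computation you correctly identify as ``Proposition \ref{P: hyper A_X} read in reverse.'' So the entry computation you gesture at is the right ending, but without the completely positive map of \cite[Theorem 8.2]{ShaSol09} and the verification that it annihilates $\K(\F_X)$, there is nothing to dilate, and your proof does not close.
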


\begin{proof}
Denote $\un{w} = [q(T_1), \dots, q(T_d)]$.
The identities (i) and (ii) then hold for $\un{w}$.
Identity (iii) for $\un{w}$ is provided in the course of the proof of Proposition \ref{P: hyper A_X}.
It remains to show that whenever $\un{s}$ is a $d$-tuple as in the statement of the theorem, then there exists a $*$-homomorphism $\pi \colon \ca(T)/\K(\F_X) \rightarrow \ca(s)$ with $\pi(w_i)  = s_i$.

To this end, denote $\E_X := \ol{\spn} \A_X \A_X^*$.
By \cite[Theorem 8.2]{ShaSol09}, there is a unital completely positive map $\Psi \colon \E_X \rightarrow B(H)$ satisfying
\[
\Psi(T_\mu T_\nu^*) = s_\mu s_\nu^* \qand \Psi(ab) = \Psi(a) \Psi(b)
\]
for all $\mu , \nu \in \bF_+^d$ and for all $a \in \A_X, b \in \E_X$.
Let $\Psi$ denote also a unital completely positive extension of $\Psi$ to $\ca(T)$.
Using property (i) for $\un{s}$ and Lemma \ref{L: list Ti} items (vi) and (vii) for $T$, we find that $\K(\F_X) \subseteq \ker \Psi$.
Indeed we directly compute
\begin{align*}
\Psi(T_\mu P_\mt T_\nu^*)
& =
\Psi(T_\mu T_\nu^*) - \sum_{i=1}^d \Psi(T_\mu T_i T_i^* T_\nu^*) \\
& =
s_\mu s_\nu^* - \sum_{i=1}^d s_{\mu i} s_{\nu i}^*
 =
s_\mu (I - \sum_{i=1}^d s_is_i^*) s_\nu^* = 0,
\end{align*}
for all rank one operators $T_\mu P_\mt T_\nu^*$ in $\B(\F_X)$.
Thus $\Psi$ induces a unital completely positive map $\Phi \colon \ca(T) / \K(\F_X) \rightarrow B(H)$ for which $\Phi(q(a)) = \Psi(a)$, for all $a \in \ca(T)$.
In particular, we have $\Phi(w_i) = s_i$, as well as
\[
\sum_{i=1}^d \Phi(w_i w_i^*) = I = \sum_{i=1}^d s_i s_i^*,
\]
and
\[
\Phi(w_i^* w_i) = \sum_{\mu \in E^k_i} \Phi (w_\mu w_\mu^*) = \sum_{\mu \in E^k_i} s_\mu s_\mu^* = s_i^* s_i,
\]
for all $i=1, \ldots, d$.
Let $\rho \colon \ca(T)/\K(\F_X) \to \B(K)$ be the Stinespring dilation of $\Phi$.
We can now use the items (i), (ii), and (iii) as in the proof of Proposition \ref{P: hyper A_X} to obtain that $\rho|_{q(\A_X)}$ is a trivial dilation of $\Phi|_{q(\A_X)}$.
Since $q(\A_X)$ generates $\ca(T)/\K(\F_X)$ we get that $\Phi$ is a $*$-representation.
\end{proof}

\begin{corollary}
Let $\I$ be a monomial ideal of finite type $k$, and let $\un{s} = [s_1, \ldots, s_d]$ be a row contraction satisfying items (i)--(iii) of Theorem \ref{T: universal}.
The each $s_i$ is a partial isometry.
\end{corollary}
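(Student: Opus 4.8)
The plan is to deduce the statement directly from the universal property recorded in Theorem \ref{T: universal}, rather than to manipulate the relations (i)--(iii) by hand. Since $\ca(T)/\K(\F_X)$ is the universal C*-algebra generated by a row contraction subject to (i)--(iii), any $d$-tuple $\un{s} = [s_1, \ldots, s_d]$ satisfying these relations induces a $*$-homomorphism $\pi \colon \ca(T)/\K(\F_X) \to \ca(\un{s})$ with $\pi(w_i) = s_i$, where $w_i = q(T_i)$ are the canonical generators of $\ca(T)/\K(\F_X)$. The whole point is that being a partial isometry is a $*$-algebraic relation, so it is transported automatically by $\pi$.

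The key observation is that the canonical generators $w_i$ are already partial isometries in $\ca(T)/\K(\F_X)$. Indeed, by Lemma \ref{L: list Ti}(1) applied with $\mu = i$ (recall that every single letter is an allowable word under our standing assumption), each $T_i^* T_i$ is an orthogonal projection on $\F_X$, so each $T_i$ is a partial isometry, i.e. $T_i T_i^* T_i = T_i$. Applying the $*$-epimorphism $q \colon \ca(T) \to \ca(T)/\K(\F_X)$, which preserves adjoints and products, yields $w_i w_i^* w_i = w_i$.

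The final step is to push this relation through $\pi$. Because $\pi$ is a $*$-homomorphism, from $w_i w_i^* w_i = w_i$ we obtain
\[
s_i s_i^* s_i = \pi(w_i)\pi(w_i)^*\pi(w_i) = \pi(w_i w_i^* w_i) = \pi(w_i) = s_i,
\]
which is precisely the assertion that each $s_i$ is a partial isometry.

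I do not expect a genuine obstacle here: the entire analytic content has been front-loaded into Theorem \ref{T: universal}, and the corollary merely records the passage of a $*$-algebraic identity along the universal $*$-homomorphism. The only thing that requires a moment of care is the verification that the generators $w_i$ are themselves partial isometries, and this is immediate from Lemma \ref{L: list Ti}. I would deliberately avoid the tempting ``hands-on'' alternative of trying to show that $s_i^* s_i$ is idempotent straight from relation (iii), namely $s_i^* s_i = \sum_{\mu \in E^k_i} s_\mu s_\mu^*$, since squaring this sum seems to need orthogonality of the ranges of the $s_\mu$, which is not evidently available without first invoking the universal property — precisely the fact we are exploiting.
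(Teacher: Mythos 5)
Your proof is correct and is exactly the argument the paper intends: the corollary is stated without proof as an immediate consequence of Theorem \ref{T: universal}, the point being precisely that the universal property supplies a $*$-homomorphism $\pi$ with $\pi(q(T_i)) = s_i$, and the partial-isometry relation $q(T_i)q(T_i)^*q(T_i) = q(T_i)$ (from Lemma \ref{L: list Ti}) is transported by $\pi$. Your closing remark about why the direct manipulation of relation (iii) is the wrong route is also well taken.
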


\section{Encoding via C*-correspondences}\label{S: class cor}

We turn our attention to the classification of our data by using C*-correspon\-dences.
Our aim is to show that C*-correspondences and their tensor algebras form a complete invariant for monomial ideals up to local conjugacy of the induced quantised dynamics.
To allow comparisons we fix the following notation.

Let $\J$ be a monomial ideal of $\bC\sca{y_1, \dots, y_{d'}}$, and let ${}_B F_B$ be the C*-correspon\-dence associated with $\J$.
We write $M^*$ for the allowable words related to $\J$, and $R_w^* R_w$ with $w \in M^*$ for the generators of $B$.
Moreover suppose that $\ze_i = R_i^*R_i \in B$ generate each direct summand $F_i$ of $F$ as in Section \ref{Ss: direct sum}.
For $[n]_2 \equiv [n] = [n_1 n_2 \dots n_{d'}] \in \{0, \dots, 2^{d'}-1\}$ we write
\[
P_{[n]} : = \prod_{n_i \in \supp [n]} R_{i}^* R_{i} \cdot \prod_{n_i=0} (I-R_{i}^*R_{i}),
\]
for the minimal projections in $\ca(I, R_i^*R_i \mid i=1, \dots, d')$.
The quantised dynamics associated with $\J$ are denoted by $(B,\be)$ where $\be_i(b) = R_i^* b R_i$ for $i=1, \dots, d'$.
In analogy to $E_0$ of Remark \ref{R: E0-A0} we write $B_0$ and $F_0$ for $\J$.

\subsection{Unitary equivalence}

We collect a few facts about module maps and their matrix representations.
Since both $E_A$ and $F_B$ are finitely generated, an adjointable map $(\ga,U)$ is characterised by the images $U(\de_j)$ for $j=1, \dots, d$.
If $U(\de_j) = \sum_{i=1}^{d'} \ze_i b_{ij}$ with  $b_{ij} \in B$, then we can represent $U$ as the matrix
\[
[ \Theta_{\ze_i b_{ij}, \de_j} ] \colon \begin{bmatrix} E_1 \\ \vdots \\ E_d \end{bmatrix} \to \begin{bmatrix} F_1 \\ \vdots \\ F_{d'} \end{bmatrix}: \de_j \mapsto \sum_{i=1}^{d'} \ze_i b_{ij},
\]
which shows that $\L(E,F) = \K(E,F)$.
Since $\Theta_{\ze_i b_{ij}, \de_j} = \Theta_{\ze_i, \de_j \ga^{-1}(b_{ij}^*)}$ as well as $\ze_i = \ze_i \cdot R_i^*R_i$ and $\de_j = \de_j \cdot T_j^*T_j$, we obtain that the elements $b_{ij} \in A$ are unique up to the equations
\begin{align*}
R_i^* R_i b_{ij} = b_{ij} = b_{ij} \ga(T_j^* T_j)  \foral i=1, \dots,d' \text{ and } j =1, \dots, d.
\end{align*}
Conversely if there is a $*$-isomorphism $\ga \colon A \to B$ then a matrix $[b_{ij}] \in M_{d' \times d}(B)$ defines a $(\ga, U) \in \L(E,F)$ by
\[
U(\de_j) = \sum_{i=1}^{d'} \ze_i b_{ij} \foral j= 1, \dots, d.
\]
Indeed we can write $U = \sum_{i=1}^{d'} \sum_{j=1}^d \Theta_{\ze_i b_{ij}, \de_i}$.
It is immediate that $(\Theta_{\ze_i b_{ij}, \de_i})^* = \Theta_{\de_j \ga^{-1}(b_{ij})^*, \ze_i}$ so that the adjoint of $U$ is given by
\[
U^*(\ze_j) = \sum_{i=1}^{d} \de_i \ga^{-1}(b_{ij}^*) \foral j= 1, \dots, d.
\]

Both $[b_{ij}]$ and $[R_i^*R_i b_{ij} \ga(T_j^*T_j)]$ produce the same $U$, thus the correspondence $[b_{ij}] \mapsto U$ is not one-to-one.
We will often be replacing the $b_{ij}$ by the $R_i^*R_i b_{ij} \ga(T_j^*T_j)$.
We will call this procedure \emph{the calibration} of $[b_{ij}]$.
Hence the identity maps on $E$ and on $F$ are represented respectively by
\[
[t_{ij}] =\diag\{ T_i^* T_i \mid i=1, \dots, d\}
\]
and
\[
[r_{ij}] =\diag\{ R_i^* R_i \mid i=1, \dots, d'\} .
\]

If $\xi = \sum_{j=1}^d \de_j a_j \in E$ is identified with the column vector $[a_1, \ldots, a_d]^t  \in \oplus_{i=1}^d T_i^* T_i A$, then $U(\xi)$ may be identified with the vector $[b_1, \ldots, b_{d'}]^t \in \oplus_{j=1}^{d'} R_i^* R_i B$ where $b_i = \sum_{j=1}^d b_{ij} \ga(a_j)$.
Hence the composition $(\ga_1, U_1) \in \L(E,F)$ related to $[b_{ij}]$ with $(\ga_2, U_2) \in \L(D,E)$ related to $[a_{ij}]$ is represented by the matrix $[b_{ij}] \cdot [\ga_1(a_{ij})]$.
In particular a $W \in\L(E)$ that is represented by $[w_{ij}]$ is injective (resp. surjective) if and only if there is a matrix $[v_{ij}] \in M_{d}(A)$ such that $[v_{ij}] \cdot  [w_{ij}] = [t_{ij}]$ (resp. $[w_{ij}] \cdot [v_{ij}] = [t_{ij}]$).

If in addition $(\ga, U) \colon {}_A E_A \to {}_B F_B$ is a C*-correspondences map, then the left module properties $a \de_j = \de_j \al_j(a)$ and $b \ze_i = \ze_i \be_i(b)$ give that
\begin{align*}
\sum_{i=1}^{d'} \ze_i \be_i (\ga(a)) b_{ij}
& = \ga(a) U(\de_j)
  = U(\de_j) \ga (\al_j(a))
  = \sum_{j=1}^{d'} \ze_i b_{ij} \ga(\al_j(a)) ,
\end{align*}
for all $a\in A$.
Therefore we get that
\begin{align*}
\be_i \ga(\cdot) \,  b_{ij} = b_{ij} \, \ga\al_j(\cdot) \foral i=1, \dots,d' \text{ and } j =1, \dots, d.
\end{align*}
Conversely if there is a $*$-isomorphism $\ga \colon A \to B$, then a matrix $[b_{ij}] \in M_{d' \times d}(B)$ satisfying $\be_i \ga(\cdot) \, b_{ij} = b_{ij} \, \ga\al_j(\cdot)$ defines a C*-correspondences map $(\ga, U) \colon {}_A E_A \to {}_B F_B$.
In particular, we can replace the coefficients $b_{ij}$ with the calibrated elements $R_i^* R_i b_{ij} \ga(T_j^* T_j)$.
This follows by the fact that $A$ and $B$ are commutative, and because $\be_i(\cdot) R_i^*R_i = \be_i(\cdot)$ and $T_j^*T_j \al_j(\cdot) = \al_j(\cdot)$.

\begin{proposition}\label{P: support}
Let $\I \lh \bC\sca{x_1, \dots, x_d}$ and $\J \lh \bC\sca{y_1, \dots, y_{d'}}$ be monomial ideals.
If there is an invertible C*-correspondence map between the associated C*-correspondences ${}_A E_A$ and ${}_B F_B$, then $d= d'$, and
\[
\left| \left\{j \in \{1, \dots, d\} \mid P_{[n]} \ga(T_j^* T_j) \neq 0 \right\} \right| = |\supp [n] |,
\]
for all $[n] \in \{1, \dots, 2^{d'} - 1\}$.
\end{proposition}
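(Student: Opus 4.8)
The plan is to reduce to a unitary coefficient map, extract the Gram relations satisfied by the matrix of coefficients, read off the pointwise rank data, and then isolate the one genuinely dynamical ingredient.

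First I would replace $(\ga,U)$ by a unitary correspondence map. Since $U$ is adjointable and invertible, $U^*U\in\L(E)$ is positive and invertible; because $(\ga,U)$ is a left module map, taking adjoints in $U\phi_E(a)=\phi_F(\ga(a))U$ shows that $U^*U$ commutes with $\phi_E(A)$, hence so does $|U|=(U^*U)^{1/2}$, and $U|U|^{-1}$ is a unitary correspondence map with the \emph{same} coefficient isomorphism $\ga$. So I may assume $U^*U=I_E$ and $UU^*=I_F$. Writing $U(\de_j)=\sum_i \ze_i b_{ij}$ and calibrating, I record $R_i^*R_i b_{ij}=b_{ij}=b_{ij}\ga(T_j^*T_j)$ together with the left-module relations $\be_i\ga(\cdot)b_{ij}=b_{ij}\ga\al_j(\cdot)$.

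Second, from $U^*U=I_E$ and $UU^*=I_F$ I would derive the two Gram identities $\sum_i b_{ij}^*b_{ik}=\de_{jk}\ga(T_j^*T_j)$ and $\sum_j b_{ij}b_{lj}^*=\de_{il}R_i^*R_i$. As $A$ and $B$ are commutative (Proposition \ref{P: A is AF}), I identify $B=C(\Om_\J)$ and evaluate at a point $\om\in\Om_\J$: the scalar matrix $M(\om)=[b_{ij}(\om)]$ satisfies $M(\om)^*M(\om)=\diag(\ga(T_j^*T_j)(\om))$ and $M(\om)M(\om)^*=\diag(R_i^*R_i(\om))$, both $0$–$1$ diagonal, so $M(\om)$ is a scalar partial isometry. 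Equating ranks gives $\#\{j:\ga(T_j^*T_j)(\om)=1\}=\#\{i:R_i^*R_i(\om)=1\}$, which on the clopen fiber $\{P_{[n]}=1\}$ equals $|\supp[n]|$. Evaluating at the point of $\Om_\J$ corresponding under $\ga^*$ to the vacuum class $[\mt]_\I\in\Om_\I$ (where every $T_j^*T_j=1$) forces $\rank M=d$ there, so $d\le d'$; the symmetric argument applied to $U^{-1}$ gives $d=d'$. Moreover, multiplying the global identity $\sum_j\ga(T_j^*T_j)=\sum_i R_i^*R_i$ by $P_{[n]}$ yields
\[
\sum_j P_{[n]}\ga(T_j^*T_j)=|\supp[n]|\,P_{[n]},
\]
a sum of subprojections of $P_{[n]}$, whence the pointwise count already gives $|\{j:P_{[n]}\ga(T_j^*T_j)\neq 0\}|\ge|\supp[n]|$.

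The reverse inequality is the main obstacle. In a commutative C*-algebra, equality in the last identity holds if and only if each nonzero $P_{[n]}\ga(T_j^*T_j)$ equals $P_{[n]}$, that is, if and only if every $\ga(T_j^*T_j)$ is \emph{constant} on the fiber $\{P_{[n]}=1\}$; equivalently, the homeomorphism $h=\ga^*\colon\Om_\J\to\Om_\I$ must descend to the degree-one quotients, i.e.\ $\ga$ carries $\ca(T_i^*T_i\mid i)$ into $\ca(R_i^*R_i\mid i)$. This is precisely where the left-module relations are needed: on the open set $\{b_{ij}\neq 0\}$ the relation $\be_i\ga(a)b_{ij}=b_{ij}\ga\al_j(a)$ forces $\be_i\ga(a)=\ga\al_j(a)$, i.e.\ $h\circ\vpi_{\J,i}=\vpi_{\I,j}\circ h$ there — the dynamical intertwining that also drives the later (local) conjugacy results. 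I would use this to show that points of a single fiber $\{P_{[n]}=1\}$ are sent by $h$ into a single $\sim_1$-class of $\Om_\I$, so that the prependable letters at $h(\om)$ depend only on $[n]$. I expect this rigidity — that for monomial ideals the coefficient matrix transports the $\sim_1$-type of a point correctly, rather than mixing summands differently over a disconnected fiber — to be the delicate part, whereas the reductions above are essentially formal matrix computations.
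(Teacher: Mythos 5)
Your first two steps are correct, and they constitute a complete (and, compared with the paper, arguably cleaner) proof of the two assertions that are actually provable: the polar-decomposition reduction to a unitary $(\ga,U)$ with the same $\ga$, the two Gram identities, and the pointwise observation that $M(\om)=[b_{ij}(\om)]$ is a scalar partial isometry together yield $d=d'$ and the inequality $|\{j : P_{[n]}\ga(T_j^*T_j)\neq 0\}|\geq|\supp[n]|$ whenever $P_{[n]}\neq 0$. (The paper instead compresses the matrix identities $[b_{ij}]\cdot[\ga(a_{ij})]=[r_{ij}]$ and $[f_{ij}]\cdot[\ga^{-1}(g_{ij})]=[t_{ij}]$ by minimal projections and invokes one-sided invertibility of rectangular matrices over commutative unital C*-algebras.) You also reformulate the missing upper bound correctly: it holds if and only if each $P_{[n]}\ga(T_j^*T_j)$ equals $0$ or $P_{[n]}$, i.e.\ if and only if $\ga^{-1}(P_{[n]})$ sits under a single $Q_{[m]}$.

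The gap is that this rigidity is never proved in your proposal --- and it cannot be, because it is false; in fact the proposition as stated fails. The module relations only give the pointwise intertwining $h\psi_i=\vpi_j h$ on the clopen set $\{b_{ij}\neq 0\}$, and nothing forces the pairing $i\leftrightarrow j$ to be constant across the (disconnected) fiber of $P_{[n]}$. Concretely, take
\[
\I=\sca{x_1x_2,\ x_2x_1,\ x_2^3},\qquad \J=\sca{x_2x_1,\ x_2^2,\ x_1^2x_2}\quad\text{in }\bC\sca{x_1,x_2},
\]
so that $\La_\I^*=\{1^a : a\geq 0\}\cup\{2,22\}$ and $\La_\J^*=\{1^a: a \geq 0\}\cup\{2,12\}$. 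Both quantised spaces have four points, $\Om_\I=\{[\mt],[1],[2],[22]\}$ with prependable-letter sets $\{1,2\},\{1\},\{2\},\mt$, and $\Om_\J=\{[\mt],[1],[2],[12]\}$ with prependable-letter sets $\{1,2\},\{1\},\{1\},\mt$. Let $\ga\colon A\to B$ be induced by the bijection $h\colon\Om_\J\to\Om_\I$ matching the classes in the displayed order, and put, as functions on $\Om_\J$,
\[
b_{11}=\chi_{\{[\mt],[1]\}},\qquad b_{12}=\chi_{\{[2]\}},\qquad b_{21}=0,\qquad b_{22}=\chi_{\{[\mt]\}}.
\]
Your Gram identities hold by inspection, and the left-module relations reduce to checking $h\psi_i=\vpi_j h$ on the support of each $b_{ij}$; the only nontrivial instance is $h(\psi_1([2]_\J))=h([12]_\J)=[22]_\I=\vpi_2(h([2]_\J))$, which is true. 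So $[b_{ij}]$ implements a unitary C*-correspondence map $E\to F$: prepending $1$ in $\J$ is matched with prepending $1$ in $\I$ at the class $[1]$, but with prepending $2$ in $\I$ at the class $[2]$. Nevertheless, for $\supp[n]=\{1\}$ we have $P_{[n]}=\chi_{\{[1],[2]\}}$, and both $P_{[n]}\ga(T_1^*T_1)=\chi_{\{[1]\}}$ and $P_{[n]}\ga(T_2^*T_2)=\chi_{\{[2]\}}$ are nonzero, so the left-hand side of the claimed identity is $2$ while $|\supp[n]|=1$. For what it is worth, the paper's own proof founders on exactly the same point: the assertion there that some $[m]$ with $|\supp[m]|=l$ satisfies $P_{[n]}\ga(Q_{[m]})\neq 0$ is precisely the rigidity you flagged, is stated without justification, and fails in this example (here $P_{[n]}\ga(Q_{[11]})=0$). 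So you located the genuinely delicate step correctly; the trouble is that it is not merely delicate but false, and no completion of your plan (or repair of the paper's argument) is possible without strengthening the hypotheses or weakening the conclusion to the inequality you did prove.
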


\begin{proof}
Let $(\ga, V) \colon E \to F$ be an invertible C*-correspondence map.
Then there are matrices $[b_{ij}] \in M_{d' \times d}(B)$ and $[a_{ij}] \in M_{d \times d'}(A)$ such that $[b_{ij}] \cdot [\ga(a_{ij})] = [r_{ij}]$.
By symmetry there are also matrices $[f_{ij}] \in M_{d \times d'}(A)$ and $[g_{ij}] \in M_{d' \times d}(B)$ so that $[f_{ij}] [\ga^{-1}(g_{ij})] = [t_{ij}]$.
Without loss of generality we may assume that all $b_{ij}$, $a_{ij}$, $f_{ij}$, and $g_{ij}$ are calibrated.

First we show that $d=d'$.
To this end for $P := P_{[1\dots 1]} = R_1^* R_1 \dots R_{d'}^* R_{d'}$ we obtain that
\[
P \otimes I_{d'} \cdot [b_{ij}] \cdot [\ga(a_{ij})] \cdot P \otimes I_{d'} = P \otimes I_{d'} \cdot [r_{ij}] \cdot P \otimes I_{d'}.
\]
Since $P$ is a non-zero subprojection of every $R_i^* R_i$ and by using commutativity of $B$ we get that
\[
[P b_{ij} P] \cdot [P \ga(a_{ij}) P] = P \otimes I_{d'}.
\]
This shows that the matrix $[P b_{ij} P]$ with entries from the commutative and unital C*-algebra $P B P$ is right invertible.
As a consequence we obtain that $d' \leq d$.
By symmetry on $[f_{ij}] [\ga^{-1}(g_{ij})] = [t_{ij}]$ we obtain that $d \leq d'$ as well.

We repeat for the minimal projections of $\ca(R_i^*R_i \mid i=1, \dots, d)$.
Without loss of generality fix $[n]= [1\dots 1 0 \dots 0]$ and the associated projection
\[
P:= P_{[n]} = R_1^* R_1 \dots R_k^* R_k (I- R_{k+1}^* R_{k+1}) \dots (I - R_d^* R_d).
\]
We may assume so by applying a permutation on the variables $i=1, \dots, d$, which produces a unitary equivalence on ${}_B F_B$.
For convenience let us set
\[
d-l = | \{j=1, \dots, d \mid P \ga(T_j^* T_j) = 0\} |.
\]
After permuting the variables (which produces a unitary equivalence on ${}_A E_A$), we may assume that
\[
\{j=1, \dots, d \mid P \ga(T_j^*T_j) = 0\} = \{l+1, \dots, d\}.
\]
We aim to show that $l=k$.
By commutativity in $B$ we have that the equation
\[
P \otimes I_{d'} \cdot [b_{ij}] \cdot [\ga(a_{ij})] \cdot P \otimes I_{d'} = P \otimes I_{d'} \cdot [r_{ij}] \cdot P \otimes I_{d'},
\]
implies
\[
\begin{bmatrix} [P b_{ij} P ]_{k \times l} & 0 \\ 0 & 0 \end{bmatrix} \begin{bmatrix} [P \ga(a_{ij}) P ]_{l \times k} & 0 \\ 0 & 0 \end{bmatrix} = \begin{bmatrix} [P]_{k \times k} & 0 \\ 0 & 0 \end{bmatrix}.
\]
Reasoning as above for the commutative and unital $PBP$ we get that $k \leq l$.
To reach contradiction suppose that $k < l$.
Now $P$ is written as the sum of $P \ga(Q_{[m]})$ for $[m] =0, \dots, 2^{d}-1$ and $k\geq 1$.
Thus there exists an $[m]$ with $|\supp [m]| = l > 1$ such that $P \ga(Q_{[m]}) \neq 0$.
Since we have permuted the variables to have
\[
\{j=1, \dots, d \mid P \ga(T_j^*T_j) = 0\} = \{l+1, \dots, d\},
\]
we can take
\[
Q \equiv Q_{[m]} = T_1^* T_1 \dots T_l^* T_l (I- T_{l+1}^* T_{l+1}) \dots (I - T_d^* T_d).
\]
Since $Q$ is minimal and $P \ga(Q) \neq 0$, then we obtain that $W := \ga^{-1}(P) Q = Q$.
Applying on $[f_{ij}] \cdot [\ga^{-1}(g_{ij})] = [t_{ij}]$ we obtain
\begin{align*}
[W f_{ij} W ]_{l \times k} \cdot [ W \ga^{-1}(g_{ij}) W ]_{l \times k}
=
[W t_{ij} W]_{l \times l}
=
W \otimes I_l.
\end{align*}
Reasoning as above for the commutative unital C*-algebra $QAQ$ we get that $l \leq k$ which is a contradiction.
Therefore $k$ must be equal to $l$.
\end{proof}

\begin{proposition}\label{P: un eq}
Let $\I \lh \bC\sca{x_1, \dots, x_d}$ and $\J \lh \bC\sca{y_1, \dots, y_{d'}}$ be monomial ideals, and let ${}_A E_A$ and ${}_B F_B$ be the associated C*-correspondences.
Then ${}_A E_A$ is unitarily equivalent to ${}_B F_B$ if and only if:
\begin{enumerate}
\item[\textup{(a)}] $d=d'$;
\item[\textup{(b)}] there exists a $*$-isomorphism $\ga \colon A \to B$; and
\item[\textup{(c)}] there exist $[b_{ij}] \in M_d(B)$ and $[a_{ij}] \in M_d(A)$ such that $\be_i\ga(\cdot) b_{ij} = b_{ij} \ga \al_j(\cdot)$ and
\begin{enumerate}
\item[\textup{(i)}] $\left| \left\{j \in \{1, \dots, d\} \mid P_{[n]} \ga(T_j^* T_j) \neq 0 \right\} \right| = |\supp [n] |$;
\item[\textup{(ii)}] $[P_{[n]} b_{ij} P_{[n]}] \cdot [P_{[n]} \ga(a_{ij}) P_{[n]}] = [P_{[n]} r_{ij} P_{[n]}]$;
\end{enumerate}
for all the minimal projections $P_{[n]} \in \ca(R_i^*R_i \mid i=1, \dots, d)$.
\end{enumerate}
\end{proposition}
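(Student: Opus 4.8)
The plan is to read the statement as two implications, where the forward one is essentially bookkeeping on top of Proposition~\ref{P: support}, and the backward one carries the real content: from the data in (c) I build an honest invertible C*-correspondence map $(\ga,U)\colon {}_A E_A \to {}_B F_B$ and then invoke the observation recorded in Subsection~\ref{Ss: corre} that an invertible correspondence map $U$ yields the unitary correspondence map $U|U|^{-1}$, so that \emph{similar} correspondences are automatically unitarily equivalent. Throughout I use the matrix calculus set up just before the statement: an adjointable map $E\to F$ is encoded by a calibrated matrix $[b_{ij}]$ via $U(\de_j)=\sum_i \ze_i b_{ij}$, it is a left-module (hence correspondence) map exactly when $\be_i\ga(\cdot)b_{ij}=b_{ij}\ga\al_j(\cdot)$, composition corresponds to matrix multiplication after applying $\ga$, and $\L(E,F)=\K(E,F)$ so every such map is adjointable.

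For the forward direction, assume $(\ga,U)$ is a unitary equivalence. Since a unitary is in particular an invertible correspondence map, Proposition~\ref{P: support} immediately yields (a) $d=d'$ and the support count (1), while the isomorphism $\ga$ is (b). For (c) I take $[b_{ij}]$ to be the calibrated matrix of $U$ and $[a_{ij}]$ the matrix of $U^{-1}=U^*$; the intertwining relation for $[b_{ij}]$ holds because $U$ is a left-module map. The identity $UU^{-1}=I_F$ reads $[b_{ij}]\cdot[\ga(a_{ij})]=[r_{ij}]$, and compressing this equation by $P_{[n]}\otimes I_d$ on both sides and using that $B$ is commutative (so each $P_{[n]}$ is central) turns it into precisely the localised equations (2).

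For the converse I define the correspondence map $(\ga,U)$ from $[b_{ij}]$ (the relation on $[b_{ij}]$ guarantees it is a left-module map, and it is adjointable) and the adjointable map $V\colon F\to E$ from $[a_{ij}]$. Summing the localised equations (2) over all minimal projections and using $\sum_{[n]}P_{[n]}=I$ together with the centrality of the $P_{[n]}\otimes I_d$ collapses the two sides to $[b_{ij}]\cdot[\ga(a_{ij})]=[r_{ij}]$, that is $UV=I_F$; hence $U$ is surjective and $VU$ is an idempotent whose complementary summand is $\ker U$. I now pass to the commutative pictures $A\simeq C(\Om_\I)$ and $B\simeq C(\Om_\J)$, under which $E$ and $F$ become sections of finite-dimensional vector bundles and $\ga$ becomes a homeomorphism $\Om_\J\to\Om_\I$. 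At $\eta\in\Om_\J$ the fibre of $F$ has dimension $|\{i:R_i^*R_i(\eta)=1\}|$ and, transported through $\ga$, the fibre of $E$ has dimension $|\{j:\ga(T_j^*T_j)(\eta)=1\}|$. Surjectivity of $U$ forces $\dim E_\eta\geq\dim F_\eta$ for every $\eta$; conversely, if $\eta$ lies in the atom $P_{[n]}$ then $\dim F_\eta=|\supp[n]|$ and $\{j:\ga(T_j^*T_j)(\eta)=1\}\subseteq\{j:P_{[n]}\ga(T_j^*T_j)\neq 0\}$, so (1) gives $\dim E_\eta\leq|\supp[n]|=\dim F_\eta$. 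The two inequalities pinch, the bundle $\ker U$ has rank $0$ everywhere and hence vanishes, so $U$ is a C*-correspondence isomorphism; the Subsection~\ref{Ss: corre} observation then upgrades $(\ga,U)$ to a unitary equivalence.

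The main obstacle is exactly this passage in the converse from the purely algebraic, atom-wise data (1)–(2) to genuine invertibility of $U$: condition (2) only supplies a one-sided (right) inverse $V$, so I cannot read off two-sided invertibility directly and must use (1) to kill the kernel. The cleanest route is the bundle/rank argument above, in which surjectivity furnishes one fibrewise rank inequality and the support count (1) furnishes the reverse one, so that they force equality of ranks pointwise. A secondary point requiring care is the non-unital bookkeeping for $E_0$ of Remark~\ref{R: unital}: one must check that the minimal projections still sum to the relevant unit and that calibration is preserved when summing the equations (2), so that the identical argument applies verbatim to $E_0$ and $F_0$.
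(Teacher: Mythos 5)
Your proof is correct and shares the paper's skeleton: the forward direction (Proposition \ref{P: support} for (a) and condition (1), then compressing $UU^{-1}=I_F$ by $P_{[n]}\otimes I_d$ for condition (2)) is identical, and the converse likewise builds the correspondence map $U$ from $[b_{ij}]$ and finishes with the polar-decomposition unitary $U|U|^{-1}$. The genuine difference lies in how the one-sided inverse coming from (2) is upgraded to two-sided invertibility. The paper stays entirely inside matrix algebra over $B$: it localizes (2) at each minimal projection $P=P_{[n]}$, uses (1) to see that the nonzero block of $[Pb_{ij}P]$ is a square matrix of size $|\supp[n]|$ over the commutative unital C*-algebra $PBP$, invokes stable finiteness of $PBP$ to flip the right inverse into a two-sided inverse, and only then sums over the atoms, obtaining both $[b_{ij}][\ga(a_{ij})]=[r_{ij}]$ (surjectivity) and $[\ga(a_{ij})][b_{ij}]=[r_{ij}]$ (injectivity) at once. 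You sum first, getting $UV=I_F$ globally, and then argue geometrically in the Gelfand picture: the pointwise evaluation $U_\eta V_\eta=I_{F_\eta}$ gives $\dim E_{\ga_s(\eta)}\geq\dim F_\eta$, while (1) applied to the atom containing $\eta$ gives the reverse inequality, so each $U_\eta$ is a surjection between finite-dimensional spaces of equal dimension, hence injective; $\ker U=0$ then follows pointwise and your idempotent trick yields $VU=I_E$. The two finiteness principles are cousins --- stable finiteness of a commutative C*-algebra is itself verified by exactly this kind of pointwise rank count --- but your route is more elementary and self-contained at the cost of setting up the fibrewise formalism, whereas the paper's is shorter once stable finiteness is taken as known and never needs to mention the spectrum. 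Two small remarks: where you say ``surjectivity of $U$ forces'' the fibre inequality, the clean statement is that $UV=I_F$ evaluates fibrewise, which is what you actually use; and the closing paragraph about $E_0$, $F_0$ is harmless but unnecessary, since the proposition concerns only $E$ and $F$.
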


\begin{proof}
If $(\ga, V) \colon E \to F$ is an invertible C*-correspondence map then that $d=d'$ and item (i) follow from Proposition \ref{P: support}.
In this case if $[b_{ij}]$ is the matrix associated with $V$ and $[a_{ij}]$ is the matrix associated with $V^{-1}$ then the identity $V V^{-1} = I_F$ implies that $[b_{ij}] \cdot [\ga(a_{ij})] = [r_{ij}]$.
Multiplying the latter equation by $P_{[n]} \otimes I_{d}$ and using commutativity of $B$ gives item (ii).

Conversely, the assumption $\be_i\ga(\cdot) b_{ij} = b_{ij} \ga \al_j(\cdot)$ implies that the matrix $[b_{ij}]$ defines a C*-correspondence map between $E$ and $F$.
We may assume that the $[b_{ij}]$ and (hence) the $[a_{ij}]$ are calibrated.
Fix a minimal projection $P \in \ca(R_i^*R_i \mid i=1,\dots,d)$. Without loss of generality we may suppose that
\[
P = R_1^* R_1 \dots R_k^* R_k (I- R_{k+1}^* R_{k+1}) \dots (I - R_d^* R_d).
\]
After permuting the columns we have that item (ii) implies
\[
\begin{bmatrix} [P b_{ij} P ]_{k \times l} & 0 \\ 0 & 0 \end{bmatrix} \begin{bmatrix} [P \ga(a_{ij}) P ]_{l \times k} & 0 \\ 0 & 0 \end{bmatrix} = \begin{bmatrix} [P]_{k \times k} & 0 \\ 0 & 0 \end{bmatrix},
\]
where $l = |\{j \in \{1, \dots, d\} \mid P \ga(T_j^*T_j) = 0\}|$.
However item (i) implies that $k=l$.
Since $P B P$ is stably finite with identity $P$ we derive that
\[
\begin{bmatrix} [P \ga(a_{ij}) P ]_{k \times k} & 0 \\ 0 & 0 \end{bmatrix} \begin{bmatrix} [P b_{ij} P ]_{k \times k} & 0 \\ 0 & 0 \end{bmatrix} = \begin{bmatrix} [P]_{k \times k} & 0 \\ 0 & 0 \end{bmatrix}.
\]
Equivalently $[P \ga(a_{ij})P] \cdot [P b_{ij} P] = [P r_{ij} P]$.
By using commutativity once more we derive that
\[
P \otimes I_d \cdot [\ga(a_{ij})] \cdot [b_{ij}] = P \otimes I_d \cdot [r_{ij}],
\]
and the symmetrical
\[
P \otimes I_d \cdot [b_{ij}] \cdot [\ga(a_{ij})] = P \otimes I_d \cdot [r_{ij}],
\]
from item (ii).
Since the projections $P$ add up to an identity for the calibrated elements we get that $[b_{ij}] \cdot [\ga(a_{ij})] = [r_{ij}]$ and that $[\ga(a_{ij})] \cdot [b_{ij}] = [r_{ij}]$.
The first equation shows that the C*-correspondence map $U \colon E \to F$ associated with $[b_{ij}]$ is surjective and the second one that $U$ is injective.
Therefore the unitary $U|U|^{-1}$ provides the unitary equivalence.
\end{proof}

\subsection{Isometric isomorphisms}

The following result will be refined later by Corollary \ref{C: classification}.
We include an independent proof as it provides evidence of some additional structure (see also Remark \ref{R: loc conj}).

\begin{theorem}\label{T: isom 1}
Let $\I \lh \bC\sca{x_1, \dots, x_d}$ and $\J \lh \bC\sca{y_1, \dots, y_{d'}}$ be monomial ideals, and let ${}_A E_A$ and ${}_B F_B$ be the associated C*-correspondences.
The following are equivalent:
\begin{enumerate}
\item $\T_E^+$ and $\T_F^+$ are completely isometrically isomorphic;
\item $\T_E^+$ and $\T_F^+$ are isometrically isomorphic;
\item ${}_A E_A$ and ${}_B F_B$ are unitarily equivalent.
\end{enumerate}
\end{theorem}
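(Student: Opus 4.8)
The plan is to prove the cycle $(iii)\Rightarrow(i)\Rightarrow(ii)\Rightarrow(iii)$, so that in particular isometric and completely isometric isomorphisms of the tensor algebras coincide. The implication $(iii)\Rightarrow(i)$ is already recorded in Subsection~\ref{Ss: corre}: a unitary equivalence $(\ga,U)\colon {}_A E_A \to {}_B F_B$ induces, by the universal property, a $*$-isomorphism $\T_E \to \T_F$ which is completely isometric and restricts to a completely isometric isomorphism $\T_E^+ \to \T_F^+$. The implication $(i)\Rightarrow(ii)$ is trivial. Thus the whole content is $(ii)\Rightarrow(iii)$, and once it is established the loop closes automatically.

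For $(ii)\Rightarrow(iii)$ let $\phi\colon \T_E^+ \to \T_F^+$ be an isometric isomorphism. First I would recover the diagonal and the grading. Recall from the discussion preceding Theorem~\ref{T: hyper T_E^+} that $\T_E^+$ carries a gauge action and that every $f\in\T_E^+$ has a Fourier-type expansion $f = \sum_{\mu\in\La^*} V_\mu\pi(a_\mu)$ with $E_\mu(f)=T_\mu^*T_\mu a_\mu$; the degree-zero part is exactly the diagonal $A$, generated by the coefficients $E_\mu(V_\mu)=T_\mu^*T_\mu$, and similarly for $F$ and $B$. The aim is to show that, after a harmless normalisation, $\phi$ respects this grading, so that $\phi(A)=B$ and $\phi$ carries the degree-one subspace $t(E)\subseteq\T_E^+$ onto $t(F)\subseteq\T_F^+$. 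Since $A\simeq C(\Om)$ and $B\simeq C(\Om_\J)$ are commutative and generated by their projections, the unital isometric \emph{isomorphism} $\phi|_A\colon A\to B$ is automatically a $*$-isomorphism $\ga$ by Gelfand theory; this is the $\ga$ of Proposition~\ref{P: un eq}.

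Next I would promote $\phi$ on the first level to a unitary of C*-correspondences. The restriction $\phi\colon t(E)\to t(F)$ is an $A$--$B$ bimodule map over $\ga$, by item $(ii)$ of Proposition~\ref{P: piVrep} applied on both sides, hence is given by a calibrated matrix $[b_{ij}]$ as in the matrix description preceding Proposition~\ref{P: un eq}, with inverse matrix $[a_{ij}]$ coming from $\phi^{-1}$; in particular Proposition~\ref{P: support} then yields $d=d'$. The decisive point is that the norm of a degree-one element $\sum_i V_i\pi(a_i)$ in $\T_E^+$ equals the Hilbert-module norm of $\sum_i \de_i a_i$ in $E$, because the faithful representation $\pi\times t$ is completely isometric and the $V_i$ form a row with $[V_j^*V_i]=[\pi(T_j^*T_i)]$. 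Consequently $\phi$ being \emph{isometric} forces the bimodule map $[b_{ij}]$ to preserve the module norm; together with surjectivity this upgrades it from a mere similarity to an \emph{inner-product preserving}, i.e.\ unitary, map. The support and invertibility bookkeeping of Proposition~\ref{P: un eq}(i)--(ii) is then read off from $[b_{ij}][\ga(a_{ij})]=[r_{ij}]$ by compressing with the minimal projections $P_{[n]}$, and Proposition~\ref{P: un eq} delivers the unitary equivalence ${}_A E_A \simeq {}_B F_B$.

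The hard part will be the first step: recovering the grading and diagonal through a map that is only \emph{isometric} rather than completely isometric, since such a $\phi$ need not commute with the gauge action a priori. I expect to handle this exactly as in the subproduct-system analogue, reducing to a normalised (``vacuum/diagonal preserving'') isomorphism via the character space of $\T_E^+$ over $\Om$ in the spirit of Lemma~\ref{L: vac pre} and Theorem~\ref{T: sps cla}. It is precisely the isometry hypothesis, as opposed to mere boundedness, that makes the induced first-level bimodule map norm-preserving; this is what separates the present statement, which yields unitary equivalence, from its bounded counterpart in Corollary~\ref{C: classification}, which only yields similarity.
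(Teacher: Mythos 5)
Your overall architecture (reducing everything to (ii) $\Rightarrow$ (iii)) matches the paper, and your first observation is essentially right: the paper also gets that $\Phi|_A$ is a $*$-isomorphism of $A$ onto $B$ directly from isometry (via maximality of the diagonals), with no normalisation needed for that part. The genuine gap is the core of your plan: the reduction to a \emph{graded} isomorphism, i.e. one carrying $t(E)$ onto $t(F)$. You propose to achieve this ``in the spirit of Lemma \ref{L: vac pre}'', but that lemma lives in the subproduct-system setting over $\bC$: there the coefficient algebra is $\bC$, the character space is the subset $Z(\I)$ of $\ol{\bB}_d$, there is a distinguished vacuum character $\rho_0$, vacuum-preservation can be arranged following \cite{DRS11}, and it is then converted to (semi-)gradedness by \cite{DorMar14}. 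None of this machinery exists for $\T_E^+$: its diagonal is the commutative AF algebra $A \simeq C(\Om)$, its character space fibres over $\Om$ with ball fibres of varying dimensions, there is no vacuum character, and the paper develops no automorphism group of $\T_E^+$ that could move an arbitrary isometric isomorphism to a graded one. The need for some such normalisation is real: already for $\I=(0)$, $d=1$, where $\T_E^+$ is the disc algebra, isometric automorphisms (M\"{o}bius composition) are not graded. So your step ``the restriction $\phi\colon t(E)\to t(F)$ is a bimodule map'' rests on a claim you have not proved and for which the cited tools do not apply.

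The paper's proof of Theorem \ref{T: isom 1} shows this entire step is unnecessary. It never makes $\Phi$ graded: it reads off the degree-one Fourier coefficients of $\Phi(\Bv_j)$ using the Fourier analysis of Subsection \ref{Ss: tensor}, writes $\Phi(\Bv_j) = b_0 + \bigl(\sum_i \Bw_i b_{ij}\bigr) + f$, and extracts the covariance relation $\be_i\ga(\cdot)\, b_{ij} = b_{ij}\, \ga\al_j(\cdot)$ from uniqueness of the coefficients; this yields a C*-correspondence map $U$ regardless of what $b_0$ and $f$ are. Invertibility of $U$ is then proved directly: surjectivity by a Neumann-series/approximation argument borrowed from \cite{KakKat12} (Claim 2), injectivity by compressing with the minimal projections $Q_{[m]}$ and using that $Q A Q$ is commutative and unital (Claim 3). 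Finally the polar decomposition $U|U|^{-1}$ is a unitary. This last point also corrects your closing remark: for C*-correspondences, invertibility already implies unitary equivalence (Subsection \ref{Ss: corre}: inverses of left module maps are left module maps, so similar C*-correspondences are automatically unitarily equivalent), and accordingly Corollary \ref{C: classification} shows that \emph{bounded} isomorphism of $\T_E^+$ and $\T_F^+$ is also equivalent to unitary equivalence of $E$ and $F$ --- not merely to a similarity. Isometry is not what ``buys'' unitarity; it only gives the cheap identification $\Phi(A)=B$, after which no norm-preservation on the first level is needed.
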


\begin{proof}
The implications $[\textup{(iii)} \Rightarrow \textup{(i)} \Rightarrow \textup{(ii)}]$ are immediate by the general theory of C*-correspondences.
For the implication $[\textup{(ii)} \Rightarrow \textup{(iii)}]$, fix an isometric isomorphism $\Phi \colon \T_E^+ \to \T_F^+$ and denote $\ga = \Phi|_A$.
Recall that $A$ and $B$ are maximal C*-algebras inside $\T_E^+$ and $\T_F^+$, respectively.
Since $\ga$ is isometric it is a $*$-homomorphism and thus maps $A$ into $B$.
The symmetrical argument for $\ga^{-1} = \Phi^{-1}|_B$ gives that $\ga \colon A \to B$ is a $*$-isomorphism.

Let us fix notation.
Suppose that $\T_E^+$ is generated by $A$ and $\Bv_j$ for $j=1, \dots, d$, and that $\T_F^+$ is generated by $B$ and $\Bw_i$ for $i=1, \dots d'$.

\smallskip

\noindent{\bf Claim 1.} \textit{There exists a C*-correspondence map $(\ga,U) \colon E \rightarrow F$.}

\smallskip

\noindent {\it Proof of Claim 1.}
By the Fourier analysis of Section \ref{Ss: tensor} we can write
\[
\Phi(\Bv_j) = b_0 + \left(\sum_{i=1}^{d'} \Bw_i b_{ij}\right) + f,
\]
with $f$ having zero Fourier coefficients on $\mt, 1, \dots, d'$.
Even more we can choose the $b_{ij}$ so that $R_i^*R_i b_{ij} = b_{ij}$.
Furthermore we have that
\[
\Phi(\Bv_j) = \Phi(\Bv_j T_j^*T_j) = \Phi(\Bv_j) \ga(T_j^*T_j)
\]
hence $\Bw_i b_{ij} = \Bw_i b_{ij} \ga(T_j^*T_j)$.
Thus we obtain the calibrated
\[
b_{ij} = R_i^*R_i b_{ij} = R_i^*R_i  b_{ij} \ga(T_j^*T_j) = b_{ij} \ga(T_j^*T_j).
\]
Recall the algebraic relations
\[
a \cdot \Bv_j = \Bv_j \cdot \al_j(a) \qand b \cdot \Bw_i = \Bw_i \cdot \be_i(b).
\]
Applying then $\Phi$ on the first one and using the second we derive that
\begin{align*}
\ga(a)b_0 + \left(\sum_{i=1}^{d'} \Bw_i \be_i\ga(a) b_{ij}\right) + f'
& =
b_0\ga\al_j(a) + \left(\sum_{i=1}^{d'} \Bw_i b_{ij}\ga\al_j(a) \right) + f''.
\end{align*}
Then the uniqueness of the coefficients gives that $\be_i \ga(\cdot) \,  b_{ij} = b_{ij} \, \ga\al_j(\cdot)$.
Letting $U \colon E \to F$ be determined by $[b_{ij}]$ completes the proof of Claim 1.

\smallskip

Our goal is to show that $U$ is invertible, since then $U|U|^{-1}$ will produce the unitary equivalence between $E$ and $F$.
We proceed in two steps.

\smallskip

\noindent{\bf Claim 2.} \textit{The C*-correspondence map $U \colon E \to F$ is surjective.}

\smallskip

\noindent {\it Proof of Claim 2.}
The arguments of \cite[Lemma 4.2]{KakKat12} apply in our case.
Indeed the arguments of \cite[Lemma 4.2]{KakKat12} depend only on surjectivity, continuity, and the covariant relations $\be_i \ga(\cdot) \,  b_{ij} = b_{ij} \, \ga\al_j(\cdot)$.
Hence for any tuple $[y_1, y_2, \dots, y_{{d'}}] \in \oplus_{i = 1}^{d'}B$ there exist a sequence $ \left( [x_{1}^{k} , x_{2}^{k}, \dots, x_{d}^{k}] \right)_k$ in $\oplus_{j = 1}^{d}B$ such that
\begin{align*}
\Bw_i y_i & = \lim_k \Bw_i (b_{i1}x^k_1 + b_{i2} x^k_2 + \cdots + b_{i d} x^k_{d}), \foral i=1,2, \dots, d'.
\end{align*}
Since $R_i^*R_i b_{ij} = b_{ij}$ we get that
\[
R_i^* R_i y_i = \lim_k b_{i1}x^k_1 + b_{i2} x^k_2 + \cdots + b_{i d} x^k_{d}, \foral i=1,2, \dots, d'.
\]
Applying this for the tuples $[R_1^*R_1, 0 , \dots, 0], \dots, [0,\dots, 0, R_d^*R_d]$ and proceeding as in \cite[Proposition 4.3]{KakKat12} we can find $[x_{ij}]$ such that
\[
\nor{[r_{ij}] - [b_{ij}] [x_{ij}]} < 1,
\]
where $\nor{\cdot}$ denotes the norm in $M_{d'}(B)$.
We may as well substitute $x_{ij}$ by the elements $\ga(T_i^* T_i) x_{ij} R_j^* R_j$ and still reach the same conclusion.
Indeed we have that
\begin{align*}
\nor{[r_{ij}] - [b_{ij}] [\ga(T_i^* T_i) x_{ij} R_j^* R_j]}
& =
\nor{[r_{ij}] - [b_{ij} \ga(T_i^* T_i)] [x_{ij} R_j^* R_j]}\\
& =
\nor{([r_{ij}] - [b_{ij}] [x_{ij}]) [r_{ij}]}\\
& \leq
\nor{[r_{ij}] -  [b_{ij}] [x_{ij}]}
< 1.
\end{align*}
Therefore we obtain that
\[
[r_{ij}] [b_{ij}] [x_{ij}] = [b_{ij}] [x_{ij}] [r_{ij}] = [b_{ij}] [x_{ij}].
\]
Set for convenience $g = [r_{ij}] - [b_{ij}] [x_{ij}]$ which is a matrix in $M_{d'}(B)$.
Then the series $\sum_{n=1}^N g^n$ converge and $[r_{ij}] g = g [r_{ij}] = g$. Therefore
\[
[b_{ij}] [x_{ij}] \cdot \sum_{n=0}^\infty g^n = ([r_{ij}] - g) \sum_{n=0}^\infty g^n = [r_{ij}].
\]
Thus there is an element $[c_{ij}] \in M_{d'}(B)$ such that
\[
[b_{ij}][x_{ij}][c_{ij}] = [r_{ij}].
\]
Letting $[a_{ij}] = [\ga^{-1}(x_{ij})][\ga^{-1}(c_{ij})] \in M_{d \times d'}(A)$ we obtain $[b_{ij}] [\ga(a_{ij})] = [r_{ij}]$.
Without loss of generality we may assume that  $a_{ij} = T_i^* T_i a_{ij} \ga^{-1}(R_j^* R_j)$.
This follows by the computation
\begin{align*}
[b_{ij}] \cdot [\ga(T_i^* T_i) \ga(a_{ij}) R_j^* R_j]
& =
[b_{ij}] \cdot [\ga(t_{ij})] \cdot [\ga(a_{ij})] \cdot [r_{ij}] \\
& =
[b_{ij} \ga(T_j^* T_j)] \cdot [\ga(a_{ij})] \cdot [r_{ij}] \\
& =
[b_{ij}] \cdot [\ga(a_{ij})] \cdot [r_{ij}] = [r_{ij}].
\end{align*}
Let $(\ga^{-1},V) \colon F \to E$ be the adjointable map associated with $[a_{ij}]$.
We compute
\begin{align*}
U V(\ze_i)
& =
\sum_{k=1}^d \sum_{l=1}^{d'} \ze_l b_{lk} \ga(a_{ki})
=
\sum_{l=1}^{d'} \ze_l \de_{l,i} R_i^* R_i
=
\ze_i R_i^* R_i = \ze_i
\end{align*}
hence $UV = I_F$ and the proof of Claim 2 is complete.

\smallskip

Reasoning with $\Phi^{-1}$ instead of $\Phi$, we find that there exists a surjective C*-correspondence map $\wt U \colon F \rightarrow E$. Denote $W = \wt{U} U \colon E \rightarrow E$. The following claim will conclude the proof of the theorem.

\smallskip

\noindent{\bf Claim 3.} \textit{The C*-correspondence map $W \colon E \to E$ is injective.}

\smallskip

\noindent {\it Proof of Claim 3.}
Denote by $[w_{ij}] \in M_d(A)$ the matrix representing $W$. Since $W$ is surjective, there is a matrix $[v_{ij}]$ such that
\[
[w_{ij}] \cdot [v_{ij}] = [t_{ij}] .
\]
It suffices to show that $[v_{ij}] \cdot [w_{ij}] = [t_{ij}]$ also holds.
For this, it is enough to show that
\[
Q_{[m]} \otimes I_d \cdot [v_{ij}] \cdot [w_{ij}] = Q_{[m]} \otimes I_d \cdot [t_{ij}]
\]
for every minimal projection $Q_{[m]}$ in $\ca(T^*_i T_i \mid i = 1, \ldots, d)$, since these projections add up to an identity of the entries by Lemma \ref{L: unital}.
As above, we may assume that every $w_{ij}$ has already been calibrated to satisfy $w_{ij} = T^*T_i w_{ij} T^*_j T_j$ and similarly for $v_{ij}$.

To this end let $Q_{[m]}$ be a minimal projection in $\ca(T^*_iT_i \mid i = 1, \ldots, d)$, which without loss of generality we assume
\[
Q \equiv Q_{[m]} = T_1^* T_1 \dots T_k^* T_k (I- T_{k+1}^* T_{k+1}) \dots (I - T_d^* T_d) .
\]
Using commutativity of $A$, we have that
\[
Q \otimes I_d \cdot [w_{ij}] \cdot [v_{ij}] = Q \otimes I_d \cdot [t_{ij}]
\]
implies
\[
\begin{bmatrix} [Q w_{ij} Q]_{k \times k} & 0 \\ 0 & 0 \end{bmatrix} \begin{bmatrix} [Q v_{ij} Q]_{k \times k} & 0 \\ 0 & 0 \end{bmatrix} = \begin{bmatrix} [Q]_{k \times k} & 0 \\ 0 & 0 \end{bmatrix}.
\]
As $QAQ$ is commutative, we have that
\[
[Q v_{ij} Q ]_{k \times k} \cdot [Q w_{ij} Q]_{k \times k} = Q \otimes I_k .
\]
Working backwards we find that $Q \otimes I_d \cdot [v_{ij}] \cdot [w_{ij}] = Q \otimes I_d \cdot [t_{ij}]$ as well and the proof of Claim 3 is complete.
\end{proof}

\subsection{Local piecewise conjugacy}

Davidson and Katsoulis \cite{DavKat11} introduced the notion of piecewise conjugacy for multivariable classical systems.
Let $X$ and $Y$ be locally compact Hausdorff spaces and suppose that $\si_1, \dots, \si_d$ are proper continuous self-mappings of $X$, and $\tau_1, \dots, \tau_{d}$ are proper continuous self-mappings of $Y$.
The classical systems $(X,\si) \equiv (X, \si_1, \dots, \si_d)$ and $(Y,\tau) \equiv (Y,\tau_1, \dots, \tau_{d})$ are called \emph{piecewise conjugate} if there is a homeomorphism $\ga_s \colon Y \to X$, and for every $y\in Y$ there is a permutation $\pi \in S_d$ and a neighbourhood $\U_\pi$ such that
\[
\ga_s \tau_i|_{\U_\pi} = \si_{\pi(i)} \ga_s|_{\U_\pi} \foral i=1, \dots, d.
\]
Equivalently there is an open cover $\{\U_\pi \mid \pi \in S_d\}$ of $Y$ such that the above equation holds.
By local compactness we can substitute the $\U_\pi$ by open subsets $\V_\pi$ that are relatively compact and $\V_\pi \subseteq \ol{\V_\pi} \subseteq \U_\pi$.

One of the breakthrough results of Davidson and Katsoulis \cite[Theorem 3.22]{DavKat11} is that algebraic homomorphism of the tensor algebras associated with the dynamics implies piecewise conjugacy of the dynamics.
In the appendix we include an alternative proof of \cite[Theorem 3.22]{DavKat11}.
We replace the nest representations of \cite{DavKat11, DavRoy11} by the simpler compressions of the Fock representation.
We hope that this analysis will shed light to a general converse of \cite[Theorem 3.22]{DavKat11} as well as will serve to clarify points as the following.

\begin{remark}\label{R: DK tensor}
Davidson and Katsoulis \cite[Definition 1.2]{DavKat11} consider the algebra $\A(X,\si)$ generated by $\Bs_i$ and $C_0(X)$ separately, where the $\Bs_i$ form a row contraction and $f \cdot \Bs_i = \Bs_i f \cdot \si_i$.
The algebra $\A(X,\si)$ is called \emph{the tensor algebra} in \cite{DavKat11}.
It is shown that $\A(X,\si)$ is completely isometrically isomorphic to the tensor algebra $\T_\E^+$ of a C*-correspondence $\E$ \cite[Theorem 2.10]{DavKat11}.
In particular $\E = \sum_{i=1}^d \E_i$ where every $\E_i = C_0(X)$ becomes a C*-correspondence over $C_0(X)$ by defining
\[
f \cdot \xi \cdot h = (\xi h) f \si_i \qand \sca{\xi, \eta} = \xi^* \eta.
\]
However in general $\T_\E^+$ is generated by $\Bs_i f$ and there is no apparent reason why the $\Bs_i$ can be isolated from the $f \in C_0(X)$ when $X$ is not compact.
What holds is that $\T_\E^+ \subseteq \A(X,\si)$.
However in the C*-correspondences literature the tensor algebra of $(X,\si)$ is $\T_\E^+$ and not $\A(X,\si)$.
The latter could be seen as the tensor algebra of the one-point compactification of $(X,\si)$.

It is not clear to us whether isomorphism of the tensor algebras in the sense of Davidson and Katsoulis \cite{DavKat11} implies isomorphism of the possibly smaller tensor algebras of the C*-correspondences in the sense of Muhly and Solel \cite{MuhSol98}.
In the appendix we show directly that algebraic isomorphism of the tensor algebras of the C*-correspondences implies piecewise conjugacy.
This alternative proof obviously works also for the tensor algebras in the sense of Davidson and Katsoulis \cite{DavKat11}.

The fact that $\A(X,\si)$ is generated by $\Bs_i$ and $C_0(X)$ separately is used in a non trivial way in \cite{DavKat11}.
Examples include \cite[Lemma 3.7 and Lemma 3.9]{DavKat11} where the characters are completely identified.
Proving such remarks for $\T_\E^+$ requires some extra work (respectively, see Claim 1 in the appendix).
A second example is \cite[Example 3.14]{DavKat11} in conjunction with Claim 1 and Claim 6 of the appendix.
\end{remark}

Davidson and Roydor \cite{DavRoy11} notice that the notion of piecewise conjugacy passes naturally to the topological graphs in the sense of Katsura \cite{Kat04-top}.
Let $(\X^0, \X^1, r_\X, s_\X)$ and $(\Ups_0, \Ups^1, r_\Ups, s_\Ups)$ be topological graphs on compact spaces $\X^0$ and $\Ups^0$.
They are said to be \emph{locally conjugate} if there exists a homeomorphism $\ga_0 \colon \Ups^0 \to \X^0$ such that for every $y \in \Ups^0$ there is a neighbourhood $\U$ of $y$ and a homeomorphism $\ga_1$ of $s_{\Ups}^{-1}(\U)$ onto $s_{\X}^{-1}\ga_0(\U)$ such that
\[
s_{\X} \ga_1 = \ga_0 s_\Ups|_{s_{\Ups}^{-1}(\U)} \qand r_\X \ga_1 = \ga_0 r_{\Ups}|_{s_{\Ups}^{-1}(\U)}.
\]
Notice here that by definition $s_{\Ups}^{-1}(\U) = \mt$ if and only if $s_{\X}^{-1} \ga_0(\U) = \mt$.

In one of the main results, Davidson and Roydor \cite[Theorem 4.5]{DavRoy11} show that algebraic isomorphisms of the tensor algebras associated with compact topological graphs implies local conjugacy.
We note here that classical systems form topological graphs.
Therefore \cite[Theorem 4.5]{DavRoy11} contains \cite[Theorem 3.22]{DavKat11} modulo the following remarks.

\begin{remark}\label{R: DavRoy11}
First \cite[Theorem 3.22]{DavKat11} concerns classical systems over locally compact spaces in general.
Secondly in the proof of \cite[Theorem 3.22]{DavKat11} it is shown that algebraic homomorphisms are automatically continuous.
An analogous observation in the proof of \cite[Theorem 4.5]{DavRoy11} is missing.
However Davidson-Roydor make essential use of the fact that the isomorphism is bounded, see for example \cite[Last paragraph of page 1257]{DavRoy11}.
Whether algebraic isomorphisms of the tensor algebras are automatically continuous is something unknown to us.
In fact even for our case we are able to show that this holds only under an assumption on the graph of the ideal (see Proposition \ref{P: auto cont} that will follow).
Therefore one may consider the isomorphisms in the results of \cite{DavRoy11} to be bounded.
\end{remark}

In Section \ref{Ss: topological} we observed that the C*-correspondence associated with a monomial ideal can be realised as the C*-correspondence of a topological graph.
Hence the investigation of Davidson-Roydor \cite{DavRoy11} applies to our case.
Our purpose however in the sequel is to identify the translation of local conjugacy in terms of our original data as well as to expose the implicit structure via the alternative proof that we offer.

Recall from Section \ref{Ss: qd} that the quantised dynamics $(A,\al)$ of the monomial ideal $\I \lhd \bC\sca{x_1, \dots, x_d}$ correspond to continuous mappings $\vpi_i \colon \Om_\I^i \to \Om_\I$ when identifying $A$ with $C(\Om_\I)$.
Similarly we denote by $(\Om_\J, \psi)$ the dynamical system associated with the monomial ideal $\J \lh \bC\sca{y_1, \dots, y_{d}}$.
For the following discussion we identify the projections $P_{[n]}$ and $Q_{[m]}$ with the clopen set in the spectrum of which each one is a characteristic function.

\begin{definition}\label{D: QPloc}
We say that the systems $(\Om_\I, \vpi)$ and $(\Om_\J, \psi)$ are \emph{$Q$-$P$-locally piecewise conjugate} if there exists a homeomorphism $\ga_s \colon \Om_\J \to \Om_\I$, and for every $y \in P_{[n]}$ there is a neighbourhood $\U \subseteq P_{[n]}$ of $y$ and an $[m] \in \{0,1, \dots, 2^d-1\}$ such that $|\supp [m]| = |\supp [n]|$, $\ga_s(\U) \subseteq Q_{[m]}$, and
\[
\ga_s \vpi_i|_{\U} = \psi_{\pi(i)} \ga_s|_{\U},
\]
for a bijection $\pi \colon \supp [m] \to \supp [n]$.
\end{definition}
Equivalently every $P_{[n]} \subseteq \Om_\J$ has an open cover $\{\U_\pi\}_{\pi}$ indexed by the one-to-one correspondences $\pi\colon \supp [n] \to \{1, \dots, d\}$ such that $\ga_s(\U_\pi) \subseteq Q_{[m]}$ for all $[m]$ with $\supp [m] = \pi(\supp [n])$, and $\ga_s \tau_{\pi(i)} |_{\U_\pi} = \vpi_i \ga_s|_{\U_\pi}$.
We do not exclude the case where $\U_\pi = \mt$ for some $\pi$.

\begin{remark}\label{R: loc conj}
The terminology we use follows the geometric observation that when restricting to $P_{[n]}$ then (locally) the systems look piecewise conjugate.
The reason why the equation $|\supp [m]| = |\supp [n]|$ appears in the definition is actually implemented by Proposition \ref{P: support}.
We include the symbols $P$ and $Q$ in the definition to emphasise the use of the particular projections.
\end{remark}

\begin{remark}\label{R: Q0-P0}
Definition \ref{D: QPloc} suggests in particular that $\ga_s(P_{0}) = Q_{0}$ for $Q$-$P$-locally piecewise conjugate systems.
Indeed for $y \in P_{0}$ the only $[m]$ in $\{0,1, \dots, 2^d-1\}$ that has $|\supp [m]| = 0$ is $[m] = 0$, hence $\ga_s(y) \in Q_{0}$.
Thus we also obtain that $\ga(A_0) = B_0$ for the $*$-isomorphism $\ga \colon A \to B$ implemented by $\ga_s \colon \Om_\J \to \Om_\I$.

Furthermore we can extend trivially the equality $\ga_s \vpi_i|_\U = \psi_{\pi(i)} \ga_s|_\U$ from the $i \in \supp[n]$ to all the $i \in \{1, \dots, d\}$ by extending $\pi$ to an arbitrary permutation of $\{1,\ldots, d\}$.
Indeed let $\ga \colon A \to B$ be the $*$-isomorphism implemented by $\ga_s$.
By definition $i \notin \supp[n]$ if and only if $\pi(i) \notin \supp [m]$ since $\pi(\supp [n]) = \supp[m]$.
For all $y \in P_{[n]}$ there exists a neighbourhood $\U$ of $y$ such that $\ga_s(\U) \subseteq Q_{[m]}$.
This implies that $\ga^{-1}(P) T_{\pi(i)}^* T_{\pi(i)} = 0$ for all subprojections $P \subseteq P_{[n]}$ and $i \notin \supp [n]$.
Thus $\ga^{-1}(P_{[n]}) T_{\pi(i)}^* T_{\pi(i)} = 0$ when  $i \notin \supp [n]$.
Now we have to verify that $\be_i \ga(a) P_{[n]} = P_{[n]} \ga \al_{\pi(i)}(a)$ for all $a \in A$; equivalently that
\begin{align*}
R_i^* \ga(a) R_i \cdot R_i^* R_i P_{[n]}
& =
P_{[n]} \ga(T_{\pi(i)}^*T_{\pi(i)}) \cdot \ga(T_{\pi(i)}^* aT_{\pi(i)})
\end{align*}
for all $a \in A$ and $i\in \{1, \dots, d\}$.
This holds trivially for all $i \notin \supp [n]$ since both sides are zero, and it holds by definition when $i \in \supp [n]$.
In particular the equation $\be_i \ga(\cdot) P_{[n]} = P_{[n]} \ga \al_{\pi(i)}(\cdot)$ is redundant for $[n] = 0$.
\end{remark}

The definition of local conjugacy passes to the systems implemented by $(A_0, \al)$ and $(B_0,\be)$ as well by restricting to the $[n] \neq 0$ and the $[m] \neq 0$.
In this case we write $(\Om_{0,\I}, \vpi)$ and $(\Om_{0,\J},\psi)$ for the induced classical systems.
There is a strong connection with $(\Om_{\I}, \vpi)$ and $(\Om_{\J},\psi)$.

\begin{proposition}\label{P: Q0-P0}
Let $\I \lh \bC\sca{x_1, \dots, x_d}$ and $\J \lh \bC\sca{y_1, \dots, y_{d}}$ be monomial ideals.
Then the corresponding systems $(\Om_\I,\vpi)$ and $(\Om_\J,\psi)$ are $Q$-$P$-locally piecewise conjugate if and only if the systems $(\Om_{0,\I}, \vpi)$ and $(\Om_{0,\J},\psi)$ are $Q$-$P$-locally piecewise conjugate.
\end{proposition}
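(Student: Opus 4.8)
The plan is to reduce the statement to the elementary observation that $A$ and $A_0$ differ by at most one isolated point, the left-sink stratum, and that a $Q$-$P$-local piecewise conjugacy is forced to match these strata. First I would record the decomposition of $A$: by Lemma \ref{L: unital} the projection $Q_0 = \prod_{i=1}^d (I - T_i^* T_i)$ annihilates every $T_\mu^* T_\mu$ with $\mt \neq \mu \in \La^*$, so that $Q_0 A Q_0 = \bC Q_0$ and $A = A_0 \oplus \bC Q_0$ is an orthogonal direct sum with $1_{A_0} = I - Q_0$. Thus $Q_0$ is a minimal projection of $A$, and under the identification $A \simeq C(\Om_\I)$ it is the indicator of a single isolated point $\om_0^\I$ lying in the $Q_0$ stratum; hence $\Om_\I = \Om_{0,\I} \sqcup \{\om_0^\I\}$, the extra point occurring precisely when $\I$ has sinks on the left, that is when $Q_0 \neq 0$. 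The same holds verbatim for $\J$, giving $\Om_\J = \Om_{0,\J} \sqcup \{\om_0^\J\}$ with $\om_0^\J$ in the $P_0$ stratum.

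For the forward implication, assume the full systems are $Q$-$P$-locally piecewise conjugate through a homeomorphism $\ga_s \colon \Om_\J \to \Om_\I$. By Remark \ref{R: Q0-P0} one has $\ga_s(P_0) = Q_0$, i.e. $\ga_s$ carries the sink point of $\J$ onto that of $\I$ (both present or both absent). Consequently $\ga_s$ restricts to a homeomorphism $\Om_{0,\J} \to \Om_{0,\I}$, and the data of Definition \ref{D: QPloc} attached to the strata $[n] \neq 0$ transport without change, as these strata are contained in $\Om_{0,\J}$ and $\Om_{0,\I}$. This is exactly a $Q$-$P$-local piecewise conjugacy of the reduced systems.

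For the converse I would start from a reduced conjugacy $\ga_s^0 \colon \Om_{0,\J} \to \Om_{0,\I}$; the only remaining task is to reinstate the sink points, for which it suffices to prove the biconditional $Q_0 = 0 \Leftrightarrow P_0 = 0$. Granting it, when both vanish the full systems coincide with their reductions, and when both are nonzero I would extend $\ga_s^0$ by $\om_0^\J \mapsto \om_0^\I$ to a homeomorphism $\Om_\J \to \Om_\I$, the $[n]=0$ clause of Definition \ref{D: QPloc} then being satisfied vacuously. To obtain the biconditional I would use the intrinsic criterion
\[
Q_0 = 0 \quad\Longleftrightarrow\quad \al_i(1_{A_0}) = T_i^* T_i \text{ for every } i = 1, \dots, d .
\]
This is verified directly: since $\al_i(1_{A_0}) = T_i^* T_i - T_i^* Q_0 T_i$, the right-hand side amounts to $Q_0 T_i = 0$ for all $i$, hence to $Q_0(I - P_\mt) = Q_0 = 0$ via $\sum_i T_i T_i^* = I - P_\mt$ from Lemma \ref{L: list Ti}. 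As $T_i^* T_i = \sum_{i \in \supp [m]} Q_{[m]}$ and the markers $\al_i(1_{A_0})$ live inside the reduced data, a failure $\al_i(1_{A_0}) \neq T_i^* T_i$ is witnessed on some minimal stratum $Q_{[m]}$ with $i \in \supp [m]$; carrying that stratum through $\ga_s^0$ and the associated bijection $\pi$ of Definition \ref{D: QPloc} produces on the matched stratum $P_{[n]}$ the corresponding failure $\be_{\pi(i)}(1_{B_0}) \neq R_{\pi(i)}^* R_{\pi(i)}$, so that $P_0 \neq 0$; symmetry gives the converse.

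The hardest part will be this last transfer: one must see that the deficiency of $\vpi_i$ measured against the clopen set $\Om_\I^i = \supp T_i^* T_i$ — rather than against the possibly smaller domain of $\vpi_i$ itself — is a genuine invariant of the reduced $Q$-$P$-conjugacy class, despite the conjugacy permuting coordinates only locally. The identity $\al_i(1_{A_0}) = T_i^* T_i - T_i^* Q_0 T_i$ combined with the stratum matching enforced by Definition \ref{D: QPloc} is what renders this invariant visible; the remaining points, namely that the extension is a homeomorphism and that the zero stratum imposes no condition, are routine.
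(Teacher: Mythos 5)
Your proof is correct, and its skeleton is the paper's: the forward direction is exactly the paper's appeal to Remark \ref{R: Q0-P0}, and your converse---extend the reduced homeomorphism by sending sink point to sink point and observe that the $[n]=0$ clause of Definition \ref{D: QPloc} is vacuous---is the spatial counterpart of the paper's algebraic move, which extends $\ga \colon A_0 \to B_0$ to the unitizations $A_0 + \bC\cdot I_{\F_X} = A$ and $B_0 + \bC \cdot I_{\F_Y} = B$. Where you genuinely go beyond the paper is the biconditional $Q_{0} = 0 \Leftrightarrow P_{0} = 0$. The paper never verifies it: it simply asserts that the unitized map $\wt\ga$ is a $*$-isomorphism, and this fails precisely when exactly one of $Q_{0}$, $P_{0}$ vanishes (if $Q_0 \neq 0 = P_0$ then $\wt\ga(Q_0) = I_{\F_Y} - f = 0$, so $\wt\ga$ is not injective; if $Q_0 = 0 \neq P_0$ then $I_{\F_X} \in A_0$ and the prescription $\wt\ga(I_{\F_X}) = I_{\F_Y}$ conflicts with $\ga(I_{\F_X}) = f$, so no extension exists at all). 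Your marker lemma---$\al_i(1_{A_0}) = T_i^*T_i - T_i^* Q_{0} T_i$, so $Q_{0} \neq 0$ iff some $\al_i(1_{A_0})$ is strictly below $T_i^*T_i$, a condition expressed entirely in the reduced data and transported through the local stratum matching by evaluating the intertwining relation at $a = 1_{A_0}$---is exactly what rules out the degenerate case, so your write-up closes a gap that the paper's proof leaves open. Two small points to tighten. First, the step $Q_{0}(I - P_\mt) = 0 \Rightarrow Q_{0} = 0$ uses the standing convention that every single letter is allowable (so that $Q_{0} P_\mt = 0$); this should be said explicitly. Second, the appeal to ``symmetry'' at the end needs a word, since Definition \ref{D: QPloc} is stated asymmetrically in $\I$ and $\J$; the cleanest fix is to run your transfer in one direction only: if $P_{0} = 0$ then every $\psi_j$ is everywhere defined on its stratum, the local matching of domains forces every $\vpi_i$ to be everywhere defined on $\Om_\I^i$, hence $\al_i(1_{A_0}) = T_i^*T_i$ for all $i$ and $Q_{0} = 0$, and likewise with the roles reversed.
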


\begin{proof}
For the forward implication, Remark \ref{R: Q0-P0} gives that the isomorphism $\ga \colon A \to B$ implemented by $\ga_s \colon \Om_\J \to \Om_\I$ is such that $\ga(A_0) = B_0$.
For the converse it suffices to show that the $*$-isomorphism $\ga \colon A_0 \to B_0$ implemented by $\ga_s \colon \Om_{0,\J} \to \Om_{0,\I}$ extends to an isomorphism $\wt{\ga} \colon A \to B$.
Then Remark \ref{R: Q0-P0} provides the appropriate context to obtain the locally conjugate relations.
To this end recall that $A_0 \subseteq \ca(T)$ with identity $e = I_{\F_X} - P_{0}$ by Lemma \ref{L: unital}.
Similarly we have $f = I_{\F_Y} - Q_{0}$ for the identity of $B_0 \subseteq \ca(R)$, and so $\ga(e) = f$.
We extend $\ga$ to the unitization
\[
\wt{\ga} \colon A_0 + \bC \cdot I_{\F_X} \to B + \bC \cdot I_{\F_X}
\]
so that $\wt\ga(I_{\F_X}) = I_{\F_Y}$.
However $A_0 + \bC \cdot I_{\F_X} = A$ and $B_0 + \bC \cdot I_{\F_X} = B$ and by construction we obtain
\[
R_{0} = I_{\F_Y} + f = \ga(I_{\F_X}) + \ga(e) = \ga(P_{0}).
\]
The extension $\wt{\ga}$ is then the required $*$-isomorphism.
\end{proof}

The next proposition provides the appropriate link with \cite{DavRoy11}.

\begin{proposition}\label{P: lc same lpc}
Let $\I \lh \bC\sca{x_1, \dots, x_d}$ and $\J \lh \bC\sca{y_1, \dots, y_{d}}$ be monomial ideals.
Then the topological graphs $(\Ups^0_\I, \Ups_\I^1, r_\I, s_\I)$ and $(\Ups^0_\J, \Ups_\J^1, r_\J, s_\J)$ are locally conjugate if and only if the systems $(\Om_\I,\vpi)$ and $(\Om_\J,\psi)$ are $Q$-$P$-locally piecewise conjugate.
\end{proposition}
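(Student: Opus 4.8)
The plan is to argue entirely on the geometric side, using the presentation of the two topological graphs from Subsection~\ref{Ss: topological}. Both have as vertex space the profinite (hence compact, totally disconnected) space $\Om_\I$, resp. $\Om_\J$; their edge spaces are the clopen disjoint unions $\Ups^1_\I = \coprod_{i=1}^d \Om_\I^i$ and $\Ups^1_\J = \coprod_{i=1}^d \Om_\J^i$, with $s(i,y)=y$ and $r_\I(i,x)=\vpi_i(x)$, $r_\J(i,y)=\psi_i(y)$. The key dictionary is that the minimal projections stratify each vertex space by the label set of the source fibre: a point $x\in\Om_\I$ lies in $Q_{[m]}$ if and only if $\{\,i : x\in\Om_\I^i\,\}=\supp[m]$, equivalently $s_\I^{-1}(x)$ consists exactly of the edges labelled by $\supp[m]$, and likewise $y\in P_{[n]}$ iff $s_\J^{-1}(y)$ is labelled by $\supp[n]$. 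Thus $|\supp[m]|$ (resp. $|\supp[n]|$) is the cardinality of the source fibre, so the clause $|\supp[m]|=|\supp[n]|$ of Definition~\ref{D: QPloc} records preservation of fibre size, while the bijection $\pi$ records how an edge homeomorphism permutes the finitely many sheets over a vertex neighbourhood. With $\Ups=\J$ and $\X=\I$ in the Davidson--Roydor definition, a local conjugacy is a homeomorphism $\ga_0\colon\Om_\J\to\Om_\I$ together with local edge homeomorphisms $\ga_1$ satisfying $s_\I\ga_1=\ga_0 s_\J$ and $r_\I\ga_1=\ga_0 r_\J$; note $\ga_0=\ga_s$ will serve both notions.

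I would first do the constructive (reverse) implication. Given a $Q$-$P$-local piecewise conjugacy $\ga_s\colon\Om_\J\to\Om_\I$, set $\ga_0=\ga_s$. Fix $y\in\Om_\J$, say $y\in P_{[n]}$, and take the neighbourhood $\U\subseteq P_{[n]}$, the index $[m]$ with $\ga_s(\U)\subseteq Q_{[m]}$, and the bijection $\pi\colon\supp[m]\to\supp[n]$ from the definition. Over $\U$ every fibre is labelled by $\supp[n]$ and over $\ga_s(\U)\subseteq Q_{[m]}$ every fibre by $\supp[m]$, so I define
\[
\ga_1\colon s_\J^{-1}(\U)\to s_\I^{-1}(\ga_0(\U)),\qquad \ga_1(i,y)=(\pi^{-1}(i),\ga_s(y))\quad(i\in\supp[n]).
\]
This is well defined because $\pi^{-1}(i)\in\supp[m]$ and $\ga_s(y)\in Q_{[m]}\subseteq\Om_\I^{\pi^{-1}(i)}$; it is a homeomorphism since $\ga_s$ is one and $\pi$ permutes a finite clopen family of sheets; and $\pi^{-1}$ bijective forces it onto. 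One checks $s_\I\ga_1=\ga_0 s_\J$ directly, while $r_\I\ga_1=\ga_0 r_\J$ reduces, after the substitution $j=\pi^{-1}(i)$, exactly to the intertwining relation $\vpi_j\ga_s=\ga_s\psi_{\pi(j)}$ on $\U$. The degenerate stratum $[n]=0$ (empty fibre) is forced by $|\supp[m]|=|\supp[n]|=0$ to match $[m]=0$, consistently with $s_\J^{-1}(\U)=\mt\Leftrightarrow s_\I^{-1}(\ga_0(\U))=\mt$ and with Remark~\ref{R: Q0-P0}.

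For the forward implication I would start from a local conjugacy $(\ga_0,\ga_1)$ and put $\ga_s=\ga_0$. Fix $y\in P_{[n]}$; since $\ga_0(y)$ lies in a unique clopen $Q_{[m]}$ and every $P_{[n]}$, $Q_{[m]}$ is clopen, I may shrink the given neighbourhood $\U$ so that $\U\subseteq P_{[n]}$ and $\ga_0(\U)\subseteq Q_{[m]}$. The relation $s_\I\ga_1=\ga_0 s_\J$ shows $\ga_1$ restricts to a bijection of the fibre over $y$ onto the fibre over $\ga_0(y)$, whence $|\supp[n]|=|\supp[m]|$. Over $\U$ the edge space is the finite clopen union of the sheets $\{i\}\times\U$ ($i\in\supp[n]$), and the label map $(j,x)\mapsto j$ on $\Ups^1_\I$ is continuous on a totally disconnected space; hence its composition with $\ga_1$ is locally constant, and after a further shrinking of $\U$ it is constant on each of the finitely many sheets. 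This yields $\ga_1(\{i\}\times\U)=\{\sigma(i)\}\times\ga_0(\U)$ for a map $\sigma\colon\supp[n]\to\supp[m]$, bijective by the fibrewise bijectivity of $\ga_1$. Setting $\pi=\sigma^{-1}$ and reading off $r_\I\ga_1=\ga_0 r_\J$ gives $\vpi_{\sigma(i)}\ga_s=\ga_s\psi_i$ on $\U$, i.e. the conjugacy relation of Definition~\ref{D: QPloc}, establishing $Q$-$P$-local piecewise conjugacy.

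The main obstacle is the forward direction, specifically the passage from the abstract edge homeomorphism $\ga_1$ to a genuine bijection $\sigma$ of label sets. The delicate points are: (i) arranging, by shrinking $\U$, that $\ga_0(\U)$ falls into a single $Q_{[m]}$ and that the sheet permutation induced by $\ga_1$ is globally constant on each sheet over $\U$, which is exactly where total disconnectedness of $\Om$ and continuity of $\ga_1$ are used; and (ii) deducing $|\supp[m]|=|\supp[n]|$ from the fibrewise bijectivity of $\ga_1$, with the empty-fibre stratum $P_0\leftrightarrow Q_0$ treated consistently. Once the labels are matched locally and constantly, both the intertwining of $\vpi_i$ with $\psi_{\pi(i)}$ and the numerical constraint on supports fall out of $s_\I\ga_1=\ga_0 s_\J$ and $r_\I\ga_1=\ga_0 r_\J$.
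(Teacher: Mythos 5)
Your proof is correct, and it shares the paper's overall skeleton: take $\ga_s=\ga_0$, shrink the given neighbourhood so that it sits inside one stratum $P_{[n]}$ and its image inside one $Q_{[m]}$, show that $\ga_1$ carries sheets onto sheets with constant label, and read off the intertwining from $r_\I\ga_1=\ga_0 r_\J$; your reverse direction is exactly the explicit construction the paper dismisses as ``straightforward''. The genuine differences lie in the two technical points of the forward direction. For the sheet-matching, the paper runs an iterative shrinking argument, producing nested sets $\V_1\supseteq\W_2\supseteq\cdots$ one sheet at a time via $(\{k\}\times\U)\cap\ga_1^{-1}(\{i_k\}\times\ga_s(\U))$; you reach the same conclusion in one step by observing that the label map is continuous into a finite discrete set, hence locally constant, so a single finite intersection of neighbourhoods of $y$ suffices (note that only clopenness of the sheets, i.e.\ continuity into a discrete space, is needed here; the total disconnectedness of $\Om_\J$ you invoke is a red herring). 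More substantially, the paper obtains $|\supp[n]|=|\supp[m]|$ only at the end, by a pigeonhole contradiction inside its induction (``if $N>N'$ then the $(N'+1)$-st sheet would have to meet an earlier one''), whereas you get it at the outset because $\ga_1$ intertwines the source maps and therefore restricts to a bijection between the finite fibres $s_\J^{-1}(y)$ and $s_\I^{-1}(\ga_0(y))$. This is cleaner, and it is also the right mechanism: a mere homeomorphism between $\sqcup_{i\in\supp[n]}\U$ and $\sqcup_{j\in\supp[m]}\ga_s(\U)$ would not by itself determine the number of copies, which is presumably why the paper uses that homeomorphism only to settle the empty case $\supp[n]=\mt$. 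Your handling of the degenerate stratum $P_{0}\leftrightarrow Q_{0}$ agrees with the paper's, so the argument is complete as written.
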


\begin{proof}
The converse implication is straightforward by applying the definitions.
For the forward implication suppose that the topological graphs are locally conjugate and fix a $y \in P_{[n]}$ for some $[n] \in \{0, 1, \dots, 2^d-1\}$.
By setting $\ga_s = \ga_0$ we get the homeomorphism $\ga_s \colon \Om_\J \to \Om_\I$.
Fix a point $y \in \Om_\J$.
Then $\ga_s(y) \in Q_{[m]}$ for some $[m] \in \{0, 1, \dots, 2^d-1\}$.
By intersecting with $\ga_s^{-1}(Q_{[m]})$ and $P_{[n]}$ we may assume that the neighbourhood $\U$ of $y$ appearing in the definition of the local conjugacy satisfies both $\U \subseteq P_{[n]}$ and $\ga_s(\U) \subseteq Q_{[m]}$.
Furthermore we obtain that the spaces
\[
s_\J^{-1}(\U) = \{ (i,u) \mid i \in \supp [n], u\in \U\} = \sqcup_{i \in \supp [n]} \U
\]
and
\[
s_\I^{-1}\ga_s(\U) = \{ (j,\ga_s(u)) \mid j \in \supp [m], u\in \U\} = \sqcup_{j \in \supp [m]} \ga_s(\U)
\]
are homeomorphic by $\ga_1$.
This implies that $\supp [n] = \mt$ if and only if $\supp [m] = \mt$ thus $\ga_s(y) \in Q_{0}$ for every $y \in P_{0}$.
In this case the local conjugacy is verified trivially.

Hence from now on we turn our attention to the case where $[n] \neq 0$, for which we have that $[m] \neq 0$ as well.
If $\ga_1(i,u) = (j, \ga_s(u'))$ for some $j \in \supp [m]$ and some $u' \in \U$, then we obtain that
\[
\ga_s(u) = \ga_0 s_\J (i, u) = s_\I \ga_1(i,u) = s_\I(j,\ga_s(u')) = \ga_s(u'),
\]
hence $\ga_1(i,u) = (j, \ga_s(u))$ for some $j \in \supp [m]$.
Our aim is to show that for every fixed $i$ we get that the equation $\ga_1(i, u') = (j_i, \ga_s(u'))$ holds for all $u' \in \U$ and with the same $j_i$.
That is, every copy of $\U$ in $s_\J^{-1}(\U)$ maps exactly onto a copy $\ga_s(\U)$ of $s_\I^{-1}\ga_s(\U)$.
Then we get a bijection $\pi \colon i \mapsto j_i$ of $\supp [n]$ onto $\supp [m]$ which for all $u \in \U$ implies the required
\[
\ga_s \psi_i(u) = \ga_0 r_\J(i,u) = r_\I \ga_1(i,u) = r_\I(j_i, \ga_s(u)) = \vpi_{j_i} \ga_s(u) = \vpi_{\pi(i)} \ga_s(u).
\]

To achieve this we will further substitute $\U$ by a $\V \subseteq \U$ in the following way.
Recall that $\ga_1|_{\{1\} \times \U}$ is a homeomorphism onto its image.
If $\ga_1(1,y) =(i_1, \ga_s(y))$ then select $\V_1 \equiv \W_1 \subseteq \U$ such that
\[
\ga_1(\{1\} \times \V_1) \subseteq \{i_1\} \times \ga_s(\U).
\]
This can be achieved by considering (the projection of) the open set
\[
(\{1\} \times \U) \cap \ga_1^{-1}(\{i_1\} \times \ga_s(\U)).
\]
Now repeat the arguments above for $\V_1$ in place of $\U$.
If $\ga_1(2,y) =(i_2, \ga_s(y))$ then select $\V_2 \subseteq \U$ such that
\[
\ga_1(\{2\} \times \V_2) \subseteq \{i_2\} \times \ga_s(\U).
\]
Restrict further to $\W_2:=\V_1 \cap \V_2$ and repeat for $\W_2$ in place of $\U$.
By construction we obtain
\[
\ga_1(\{1\} \times \W_2) \subseteq \{i_1\} \times \ga_s(\U) \qand \ga_1(\{2\} \times \W_2) \subseteq \{i_2\} \times \ga_s(\U).
\]
Proceed inductively.
Without loss of generality we may assume that $\supp[n] = \{1,2,\ldots, N\}$, and $\supp[m] = \{1,2,\ldots, N'\}$.
The bijection $i \leftrightarrow j_i$ will be obtained after $N$ steps, unless $N' \neq N$.
But if $N > N'$ then there would be a step $N'+1$ for which we would have that
\[
\ga_1(\{N'+1\} \times \W_{N'}) \subseteq \cup_{j=1}^{N'} \{i_j\} \times \ga_s(\W_{N'}) = \ga_1(\cup_{j=1}^{N'} \{j\} \times \U),
\]
thus $\{N'+1\} \times \W_{N'}$ intersects some $\{j\} \times \U$ for some $j=1, \dots, N'$ which is a contradiction; hence $N \leq N'$.
On the other hand if $N < N'$ then we would have that the inverse image of $\{N'\} \times \ga_s(\W_{N})$ under $\ga_1$ intersects $\cup_{j=1}^{N} \{j\} \times \W_{N}$ and hence by composing with $\ga_1$ we derive that $\{N'\} \times \ga_s(\W_{N})$ intersects some of the $\{j\} \times \ga_s(\W_{N})$ for $j=1, \dots, N$, which is again a contradiction.
Thus $N' = N$ and the pairing is the one claimed.
\end{proof}

\begin{remark}\label{R: qd graph}
Local conjugacy is easy to illustrate for monomial ideals of finite type.
In this case the C*-algebra $A$ is finite because of Proposition \ref{P: finite A}.
Then the dynamical system $(\Om,\vpi)$ can be represented by a graph with vertices being the points of $\Om$, and from every $\om \in \Om$ there is an edge to $\vpi_i(\om)$ if and only if $\om \in \Om^i$.
That is, every vertex $\om \in Q_{[m]}$ has $\{e_i\}_{i \in \supp[m]}$ emitting edges, so that if $s(e_i) = \om$ then $r(e_i) = \vpi_i(\om)$.
We call the resulting graph $\G_\I$ the \emph{graph of $\I$}.
Then local conjugate systems of finite type monomial ideals correspond to isomorphic graphs of the monomial ideals.
\end{remark}

\subsection{Bounded isomorphisms}

The spaces $\Omega_\I$ and $\Omega_\J$ are compact and have dimension $0$.
Hence the results on topological graphs from \cite{DavRoy11} apply in our case.
Nevertheless we show that such a corollary can be derived by applying locally our alternative proof of \cite[Theorem 3.22]{DavKat11}.
The reader is referred to the appendix which contains the alternative proof of \cite[Theorem 3.22]{DavKat11} and the required elements for obtaining the following corollary.

\begin{corollary}\label{C: classification}
Let $\I \lh \bC\sca{x_1, \dots, x_d}$ and $\J \lh \bC\sca{y_1, \dots, y_{d'}}$ be monomial ideals.
Let ${}_A E_A$ and ${}_B F_B$ be the C*-correspondences associated with $\I$ and $\J$, respectively.
Furthermore let $(\Om_\I,\vpi)$ and $(\Om_\J,\psi)$) be the corresponding quantised dynamics.
The following are equivalent:
\begin{enumerate}
\item $\T_E^+$ and $\T_F^+$ are completely isometrically isomorphic;
\item $\T_E^+$ and $\T_F^+$ are isomorphic as topological algebras;
\item $(\Om_\I,\vpi)$ and $(\Om_\J,\psi)$ are $Q$-$P$-locally piecewise conjugate;
\item $E$ and $F$ are unitarily equivalent.
\end{enumerate}
If $E_0$ and $F_0$ are the minimal C*-correspondences associated with $\I$ and $\J$, and $(\Om_{0,\I},\vpi)$ and $(\Om_{0,\J},\psi)$ are the corresponding quantised dynamics, then any of the items above is equivalent to any of the items below:
\begin{enumerate}[resume]
\item $\T_{E_0}^+$ and $\T_{F_0}^+$ are completely isometrically isomorphic;
\item $\T_{E_0}^+$ and $\T_{F_0}^+$ are isomorphic as topological algebras;
\item $(\Om_{0,\I},\vpi)$ and $(\Om_{0,\J},\psi)$ are $Q$-$P$-locally piecewise conjugate;
\item $E_0$ and $F_0$ are unitarily equivalent.
\end{enumerate}
\end{corollary}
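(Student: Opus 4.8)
The plan is to prove the two blocks (i)--(iv) and (v)--(viii) by one and the same cyclic argument and then to glue them together with Proposition \ref{P: Q0-P0}. For the full correspondences I would run the cycle
\[
\textup{(iv)} \Rightarrow \textup{(i)} \Rightarrow \textup{(ii)} \Rightarrow \textup{(iii)} \Rightarrow \textup{(iv)},
\]
and for the minimal ones the identical cycle among (v)--(viii). Since condition (iii) is $Q$-$P$-local piecewise conjugacy of $(\Om_\I,\vpi)$ and $(\Om_\J,\psi)$ while (vii) is the same for $(\Om_{0,\I},\vpi)$ and $(\Om_{0,\J},\psi)$, Proposition \ref{P: Q0-P0} gives $\textup{(iii)} \Leftrightarrow \textup{(vii)}$ for free, and the two cycles merge into a single equivalence class. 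Throughout one also records that the number of variables is matched, so that condition (iii) is well posed; this is forced on the analytic side by the rank count of Proposition \ref{P: support} once an invertible module map is in hand.

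The two painless arcs of each cycle are as follows. The implication $\textup{(iv)} \Rightarrow \textup{(i)}$ is exactly the implication $\textup{(iii)} \Rightarrow \textup{(i)}$ of Theorem \ref{T: isom 1}, while $\textup{(i)} \Rightarrow \textup{(ii)}$ is trivial, a completely isometric isomorphism being in particular a topological (that is, bounded) one. The same two arcs for the minimal correspondences are furnished by Theorem \ref{T: isom 1} applied to $E_0$ and $F_0$, noting that $E_0,F_0$ are again C*-correspondences of monomial ideals.

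For $\textup{(iii)} \Rightarrow \textup{(iv)}$ I would feed the local data into Proposition \ref{P: un eq}. A $Q$-$P$-local piecewise conjugacy supplies a homeomorphism $\ga_s \colon \Om_\J \to \Om_\I$, hence a $*$-isomorphism $\ga \colon A \to B$, and the equality $|\supp[m]| = |\supp[n]|$ of Definition \ref{D: QPloc} delivers condition (i) of Proposition \ref{P: un eq}. The task is to assemble the matrix $[b_{ij}]$: on each cover piece $\U_\pi$ the local permutation $\pi$ prescribes which entry $b_{\pi(j),j}$ should carry the indicator of $\U_\pi$, and one glues these prescriptions over the cover. Here the fact that $\Om_\I$ and $\Om_\J$ are compact and zero-dimensional is decisive, for the cover may be refined to a finite clopen partition; the gluing then uses indicator functions rather than a continuous partition of unity, and the resulting $[b_{ij}]$ visibly satisfies the covariance $\be_i\ga(\cdot)\,b_{ij} = b_{ij}\,\ga\al_j(\cdot)$ together with condition (ii) of Proposition \ref{P: un eq}, which returns the unitary equivalence. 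The analogous arc $\textup{(vii)} \Rightarrow \textup{(viii)}$ for $E_0,F_0$ is obtained the same way, now restricting to the nonzero projections $P_{[n]},Q_{[m]}$.

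The main obstacle is the remaining arc $\textup{(ii)} \Rightarrow \textup{(iii)}$: extracting $Q$-$P$-local piecewise conjugacy from a merely bounded isomorphism of tensor algebras. This is where the analysis of the appendix enters. Via Proposition \ref{P: E top cor} the correspondence $E$ is that of a topological graph on the compact zero-dimensional space $\Om_\I$, and Proposition \ref{P: lc same lpc} identifies local conjugacy of these topological graphs with $Q$-$P$-local piecewise conjugacy of the quantised dynamics; thus the statement could be deduced from Davidson--Roydor \cite[Theorem 4.5]{DavRoy11} (cf. Remark \ref{R: DavRoy11}). Instead I would apply, locally over each clopen set $P_{[n]}$, the alternative proof of \cite[Theorem 3.22]{DavKat11} carried out in the appendix through compressions of the Fock representation. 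The delicate point is that the dynamics are only \emph{partially} defined, so one must match the supports of the local pieces; this is precisely what the bookkeeping $|\supp[m]| = |\supp[n]|$ of Definition \ref{D: QPloc} records, and it is enforced analytically by Proposition \ref{P: support}. Once this local matching is secured, the permutations produced piece by piece assemble into the required open cover, closing the cycle.
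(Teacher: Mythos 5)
Your overall architecture is the same as the paper's: the cycle $\textup{(iv)} \Rightarrow \textup{(i)} \Rightarrow \textup{(ii)} \Rightarrow \textup{(iii)} \Rightarrow \textup{(iv)}$, run in parallel for $E,F$ and $E_0,F_0$ and merged through Proposition \ref{P: Q0-P0}; the two easy arcs are handled exactly as in the paper, and the hard arc $\textup{(ii)} \Rightarrow \textup{(iii)}$ is delegated, as in the paper, to a local application of the appendix over the clopen sets $P_{[n]}$ (character space analysis producing $\ga_s$, then compressions of orbit representations). That much is fine.

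The genuine gap is in your arc $\textup{(iii)} \Rightarrow \textup{(iv)}$, where you assert that the cardinality requirement $|\supp[m]| = |\supp[n]|$ of Definition \ref{D: QPloc} ``delivers condition (i) of Proposition \ref{P: un eq}''. It does not, without further argument. Condition (i) there is a \emph{global} count: the number of indices $j$ with $P_{[n]}\ga(T_j^*T_j) \neq 0$ must equal $|\supp[n]|$. Local piecewise conjugacy only provides, around each point of $P_{[n]}$, a piece $\V_\alpha$ and \emph{some} $[m_\alpha]$ (depending on the piece) with $\ga_s(\V_\alpha) \subseteq Q_{[m_\alpha]}$ and $|\supp[m_\alpha]| = |\supp[n]|$. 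Nothing in the definition forces the various pieces of one and the same $P_{[n]}$ to be paired with the same $[m]$; indeed the whole point of \emph{local} (as opposed to genuine) conjugacy is that the label pairing may change from point to point, and the definition matches only cardinalities of supports, not the supports themselves (compare the four-variable example at the end of Section \ref{S: class sps}, where a point with emitted labels $\{2,3\}$ is paired with one with emitted labels $\{1,3\}$). Since $\{j \mid P_{[n]}\ga(T_j^*T_j) \neq 0\} = \bigcup_\alpha \supp[m_\alpha]$ over the pieces of $P_{[n]}$, this set can a priori be strictly larger than $|\supp[n]|$. In the paper, condition (i) is obtained only as a \emph{consequence} of Proposition \ref{P: support}, whose hypothesis is an already existing invertible C*-correspondence map --- precisely the object you are in the middle of constructing, so invoking it here is circular. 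The paper's proof sidesteps the issue entirely: it glues the permutation-indicator matrices $[b_{ij,[n]}]$ over a clopen refinement, exactly as you do, but then verifies the unitary relations \emph{directly}, namely $[b_{ij,[n]}]^*[b_{ij,[n]}] = P_{[n]}\otimes I_k = [b_{ij,[n]}][b_{ij,[n]}]^*$, so that neither condition (i) nor (ii) of Proposition \ref{P: un eq} is ever needed as an input. Replacing your appeal to Proposition \ref{P: un eq} by this direct verification closes the arc. A related slip occurs in your arc $\textup{(ii)} \Rightarrow \textup{(iii)}$: you say the matching $|\supp[m]| = |\supp[n]|$ is ``enforced analytically by Proposition \ref{P: support}'', but at that stage no module map is available either; in the paper this matching comes out of the appendix itself (the local invertibility of the scalar matrices $[c_{ij}]$ in Claims 6 and 7), not from Proposition \ref{P: support}.
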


\begin{proof}
For the implication $[\textup{(iii)} \Rightarrow \textup{(iv)}]$ we will construct a matrix $[b_{ij}]$ that provides an invertible map between $E$ and $F$.
Let the projection $P_{[n]}$ for some $[n] \neq 0$.
Without loss of generality we may assume that
\[
P_{[n]} = R_1^* R_1 \dots R_k^* R_k (I- R_{k+1}^* R_{k+1}) \dots (I - R_d^* R_d).
\]
Then the $P_{[n]}$ is paired with some $Q_{[m]}$ with
\[
k = |\supp [n]| = |\supp [m]|.
\]
Since $\Om_\J$ is totally disconnected, so is $P_{[n]}$.
Hence we can replace the open cover of $P_{[n]}$ (given by the piecewise conjugacy on $P_{[n]}$) with a cover by compact subsets.
Now we can proceed as in the second part of the proof of \cite[Proposition 3.20]{DavKat11} to show that there is a partition into clopen sets $\V_\pi$ such that $\ga_s \vpi_i|_{\V_\pi} = \psi_{\pi(i)} \ga_s|_{\V_\pi}$.
As in \cite[Corollary 3.28]{DavKat11} let the $k \times k$ matrix $[b_{ij,[n]}]$ with $b_{ij,[n]} = \de_{i, \pi(i)} \chi_{\V_\pi}$ which intertwines
\[
\{\psi_i\}_{i \in \supp [n]} \qand \{\vpi_j\}_{j \in \supp [m]}.
\]
By direct computation we obtain
\[
[b_{ij,[n]}]^* [b_{ij,[n]}] = P_{[n]} \otimes I_k  = [b_{ij,[n]}] [b_{ij,[n]}]^*.
\]
Then the matrix
\[
[b_{ij}] = \sum_{[n] = 1}^d P_{[n]} \otimes I_k \cdot [b_{ij,[n]}]
\]
defines the required unitary C*-correspondence map.

The implications $[\textup{(iv)} \Rightarrow \textup{(i)} \Rightarrow \textup{(ii)}]$ are immediate.
To show that item (ii) implies item (iii) fix a bounded algebraic isomorphism $\Phi \colon \T_E^+ \to \T_F^+$.
Then $\Phi$ implies a homeomorphism between the character spaces, say $\ga_c$.
The restriction of every character of $\T_F^+$ to $B$ is a point evaluation.
Our first objective is to show that $\ga_c$ induces a homeomorphism $\ga_s \colon \Om_\J \to \Om_\I$ by collapsing the characters into the single points.
To this end recall that the algebra $\T_E^+$ is generated by the $T_i$ and the $a \in A$ separately, and that $a T_i = T_i \al_i(a)$.
These are the requirements for applying \cite[Lemmas 3.7 and 3.9]{DavKat11} to obtain that the maximal analytic sets $\B_\om$ in the character space are parameterized by the $\om \in \Om_\I$ so that
\[
\B_\om := \{ \theta \in \fM_{\T_E^+} \mid \theta|_{C(\Om_\I)} = \ev_\om\}.
\]
Furthermore each $\B_\om$ is homeomorphic to a ball of dimension equal to the number of the $\vpi_i$ for which $\vpi_i(\om) = \om$.
Since $\ga_c$ maps maximal analytic sets onto maximal analytic sets, then the required $\ga_s$ is well defined (and a homeomorphism).
In particular we get that if $\theta$ is a character of $\T_F^+$ such that $\theta|_B = \ev_y$ then $\theta \Phi|_A = \ev_x$ with $\ga_s(y) = x$.

Fix $y$ in the support of the $P_{[n]}$ with $k = |\supp [n]|$.
Here we do not exclude the case $k=0$.
Without loss of generality we may assume that
\[
P_{[n]} = R_1^* R_1 \dots R_k^* R_k (I- R_{k+1}^* R_{k+1}) \dots (I - R_d^* R_d).
\]
Define the orbit representation $\pi_y \colon C(\Om_J) \to \B(K_y)$ where $K_y = \oplus_{\mu \in M^*} \bC$ by
\[
\pi_y(g) = \diag\{ g \psi_\mu(y) \mid \mu \in M^*\}.
\]
Define also $V_{y,i} \equiv V_y(R_i) \colon K_y \to K_y$ by
\[
V_y(R_i) e_\nu = \begin{cases} e_{i\nu} & \text{ if } i\nu \in M^*, \\ 0 & \text{ otherwise}. \end{cases}
\]
Then $(V_y \times \pi_y)$ defines a completely contractive representation of $\T_F^+$.
Let $E$ denote the compression onto the subspace generated by $\{e_\mt, e_1, \dots, e_{d'}\}$ and let $\{E_{ij} \mid i,j = 0,1,\ldots, d'\}$ denote the standard matrix basis in $M_{d'}(\bC)$.
Then
\[
E (V_y \times \pi_y)(R_i g) = g\psi_i(y)  E_{i0} \foral i=1, \dots, k,
\]
whereas
\[
E (V_y \times \pi_y)(R_i g) = 0 \foral i=k+1, \dots, d'.
\]
The former holds by definition and the latter holds since
\begin{align*}
E (V_y \times \pi_y)(R_i g)
& =
E (V_y \times \pi_y)(R_i R_i^*R_i g) \\
& =
E (V_y \times \pi_y)(R_i g \cdot R_i^*R_i) \\
& =
E (V_y \times \pi_y)(R_i g) E (V_y \times \pi_y)(R_i^*R_i) \\
& =
g \psi_i(y)  E_{i0} \left(0 \cdot E_{00} + \sum_{i=1}^{d'} k_i  E_{ii}\right) = 0,
\end{align*}
where the $k_i$ are some values of the function $R_i^* R_i$ along the orbit of $y$.
Consequently if we write $\Phi(Q) = g_0 + (\sum_{i=1}^{d'} \Bw_i g_i) + Z$ for $Q \in \T_E^+$ then we get
\[
E (V_y \times \pi_y) \Phi(Q) =
\left[
\begin{array}{ccc|ccc}
g_0(y) & \dots & 0 & 0 & \dots & 0 \\
\vdots & \ddots & \vdots & \vdots & \dots & \vdots \\
g_k(y) & \dots & g_0 \psi_k(y) & 0 & \dots & 0 \\
\hline
0 & \dots & 0 & g_0 \psi_{k+1}(y) & \dots & 0 \\
\vdots & \vdots & \vdots & \vdots & \ddots & \vdots \\
0 & \dots & 0 & 0 & \dots & g_0 \psi_{d'}(y)
\end{array}
\right].
\]
By compressing even further to the subspace generated by $\{e_\mt, e_1, \dots, e_k\}$ we have set the right context to apply locally the ideas from the appendix.
By proceeding as in Claim 3 of the appendix and thereon we derive item (iii).
Note that if $y \in P_{0}$ then we cannot have that $\ga_s(y) \in Q_{[m]}$ for any $[m] \neq 0$.
Therefore $\ga_s(y) \in Q_{0}$.

Similar arguments show that the items (v), (vi), (vii) and (viii), are equivalent.
Then Proposition \ref{P: Q0-P0} gives the equivalence of item (vii) with item (iii) and the proof is complete.
\end{proof}

\begin{question}\label{Q: aut cont}
We ask whether Corollary \ref{C: classification} holds at the level of algebraic isomorphisms between $\T_E^+$ and $\T_F^+$.
\end{question}

The next example shows that the commutative C*-algebras $A$ and $B$ cannot distinguish between the quantised dynamics related to the ideals $\I$ and $\J$.
Neither can the C*-modules $E_A$ and $F_B$.
It is only after taking into consideration all this structure \emph{and} the left action that we can distinguish between the quantised dynamics at hand.

\begin{example}\label{E: two ideals}
Consider the ideals $\I = \sca{x_1 x_2, x_2x_1}$ and $\J = \sca{x_1^2,x_2^2}$ in $\bC\sca{x_1, x_2}$, as in Example \ref{E: qd conjugacy}.
Let $T_{1}, T_{2}$ (resp. $R_{1}, R_{2}$) be the operators corresponding to $\I$ (resp. to $\J$) and ${}_A E_A$ (resp. ${}_B F_B$) be the associated C*-correspondence.

Then $T_{1}^* T_{1}$ is the projection on the span of $\{e_\mt, e_{1^n} \mid n \geq 1\}$, and $T_{2}^* T_{2}$ is the projection on the span of $\{e_\mt, e_{2^n}\}$.
For every $\mu \neq \mt$, we find that $T_\mu^* T_\mu = T^*_{1} T_{1}$ if $\mu$ has no $2$'s in it,  $T_\mu^* T_\mu = T_{2}^*T_{2}$ if $\mu$ has no $1$'s in it, and $T^*_\mu T_\mu = 0$ otherwise.
Similarly, when $\mu \neq \mt$ then $R^*_\mu R_\mu$ is the projection onto the span of $\{e_\mt, e_{1}, e_{12}, e_{1 2 1}, \dots\}$ if $\mu$ is an alternating sequence of $1$'s and $2$'s ending with $2$.
On the other hand $R^*_\mu R_\mu$ is the projection onto the span of $\{e_\mt, e_{2}, e_{21}, e_{2 1 2}, \dots\}$ if $\mu$ is an alternating sequence of $1$'s and $2$'s ending with $1$; and it is $0$ otherwise.

Then we may identify both $A$ and $B$ with the continuous functions on $\{0, 1, 2\}$, with $T_{1}^* T_{1}$ and $R_{1}^* R_{1}$ corresponding to $\chi_{\{0,1\}}$ and with $T_{2}^* T_{2}$ and $R_{2}^* R_{2}$ corresponding to $\chi_{\{0,2\}}$.
The inner product map $(\ga,U)$ given by
\[
U(\la T_{1} + k T_{2}) = \la R_{1} + k R_{2}
\]
and the $*$-isomorphism
\[
\ga(\la T_{1}^* T_{1} + k T_{2}^*T_{2}) = \la R_{1}^* R_{1} + k R_{2}^*R_{2}
\]
is a unitary equivalence of the Hilbert C*-modules $E$ and $F$.

However, these two Hilbert C*-modules are not unitarily equivalent as C*-correspondences.
Indeed, the quantised dynamical systems of this example were calculated in Example \ref{E: qd conjugacy}, and we see that the resulting graphs obtained by Remark \ref{R: qd graph} are not isomorphic.
Let us also provide the following direct operator theoretic argument to show this.
Write
\[
U(T_{j}) = R_{1} b_{1j} + R_{2} b_{2j} \qfor j = 1, 2,
\]
where $b_{1j} \in R_{1}^* R_{1} B$ and $b_{2j} \in R_{2}^*R_{2} B$.
We must also have that
\[
\ga(T^*_{j} T_{j}) = \langle UT_{j}, UT_{j} \rangle = b_{1j}^*b_{1j}+ b_{2j}^*b_{2j} .
\]
Furthermore the left covariance of $U$ means that
\[
0 = U(T_{1}^* T_{1} \cdot T_{2}) = \ga(T_{1}^*T_{1}) \left( R_{1} b_{12} + R_{2} b_{22} \right).
\]
It must be that $\ga(T_{j}^* T_{j})$ is the sum of two minimal projection, so it is one of  $R_{1}^*R_{1}$, $R_{2}^* R_{2}$ or $1 - R_{1}^*R_{1} R_{2}^* R_{2}$.

First assume that $\ga(T_{1}^* T_{1}) = R_{1}^* R_{1}$. Since
\[
R_{1}^* R_{1} \cdot R_{1} = R_{1} \beta_1(R_{1}^* R_{1})  = R_{1} R_{1}^*R_{1}^*R_{1}R_{1}= 0,
\]
and
\[
R_1^* R_1 \cdot R_2 = R_2 \beta_2(R_1^*R_1) = R_2 R_2^*R_1^*R_1 R_2 = R_2,
\]
we must have that $b_{22} = 0$.
Therefore we get that
\[
\ga(T_2^*T_2) = b_{12}^* b_{12} \in R_1^*R_1 B.
\]
However $\ga(T_2^* T_2)$ is either $R_2^* R_2$ or $1 - R_1^* R_1 R_2^* R_2$, neither of which is in $R_1^* R_1 B$.
This contradiction shows that $\ga(T_1^* T_1)$ cannot be $R_1^* R_1$.
On the other hand the flips
\[
R_1 \leftrightarrow R_2 \qand R_1^* R_1 \leftrightarrow R_2^* R_2
\]
extend to an automorphism of $F$ since $\J$ is symmetrical.
Therefore $\ga(T_1^* T_1)$ cannot be $R_2^* R_2$, either.
Finally we have that $\ga(T_1^* T_1) = 1 - R_1^* R_1 R_2 ^* R_2$ is also not an option.
Indeed in this case $\ga(T_2^* T_2)$ should be either $R_1^* R_1$ or $R_2^* R_2$.
This is impossible by symmetry.
\end{example}

\subsection{Automatic continuity}

We are able to answer Question \ref{Q: aut cont} for a specific class of monomial ideals, for which algebraic isomorphisms onto their tensor algebras are automatically continuous.
The arguments follow a trick that Donsig, Hudson and Katsoulis \cite{DHK01} established following ideas of Sinclair \cite{Sin75}.
Let $\phi \colon \A \to \B$ be an algebraic epimorphism for the Banach algebras $\A$ and $\B$.
The discontinuity of $\phi$ is quantified by the ideal
\[
\S(\phi) : = \{b \in \B \mid \exists (a_n) \subseteq \A \text{ such that } a_n \to 0 \text{ and } \phi(a_n) \to b \}.
\]
By the closed graph theorem then $\phi$ is continuous if and only if $\S(\phi) = (0)$.

\begin{lemma}[Sinclair]\label{L: Sinclair}
Let $\phi \colon \A \to \B$ be an algebraic epimorphism between the Banach algebras $\A$ and $\B$, and let a sequence $(b_n)$ in $\B$.
Then there exists an $N \in \bN$ such that
\[
\ol{b_1 b_2 \dots b_N \S(\phi)} = \ol{b_1 b_2 \dots b_n \S(\phi)}
\]
and
\[
\ol{\S(\phi) b_n b_{n-1} \dots b_1} = \ol{\S(\phi) b_{N} b_{N-1} \dots b_1}
\]
for all $n \geq N$.
\end{lemma}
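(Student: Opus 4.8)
The plan is to deduce both stabilizations from general properties of the separating ideal $\S(\phi)$, the statement for the right-hand products following from that for the left-hand products by passing to the opposite algebras.

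First I would record the structure of $\S(\phi)$. Writing it as the intersection $\S(\phi)=\bigcap_{k}\ol{\phi(\{a\in\A\mid \nor{a}<1/k\})}$ of closed absolutely convex sets exhibits it as a closed subspace of $\B$. It is moreover a two-sided ideal: if $b\in\S(\phi)$ is witnessed by $a_n\to 0$ with $\phi(a_n)\to b$, and $c\in\B$, then surjectivity of $\phi$ gives $c=\phi(a)$, and joint continuity of multiplication yields $aa_n\to 0$ together with $\phi(aa_n)=c\,\phi(a_n)\to cb$, so $cb\in\S(\phi)$; the right-handed case is symmetric. Consequently $b_{n+1}\S(\phi)\subseteq\S(\phi)$ and $\S(\phi)b_{n+1}\subseteq\S(\phi)$, so the two sequences
\[
\S_n:=\ol{b_1\cdots b_n\,\S(\phi)}\qand \S_n':=\ol{\S(\phi)\,b_n\cdots b_1}
\]
are \emph{decreasing} chains of closed subspaces of $\B$. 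The lemma thus reduces to the assertion that every such chain is eventually constant, whereupon I would take $N$ to be the larger of the two stabilization indices. Note that in the opposite algebra $\B^{\mathrm{op}}$ the chain $\S_n'$ becomes a left-product chain for the same sequence $(b_n)$ under the surjective homomorphism $\phi\colon\A^{\mathrm{op}}\to\B^{\mathrm{op}}$, whose separating space is again $\S(\phi)$; so it suffices to treat $\S_n$.

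I would establish the stabilization of $\S_n$ by contradiction. If $(\S_n)$ is not eventually constant then $\S_{n+1}\subsetneq\S_n$ for infinitely many $n$; grouping the consecutive factors lying between two such indices into a single block $d_k=b_{m_{k-1}+1}\cdots b_{m_k}$ and writing $\Lambda_k$ for the continuous left-multiplication operator by $d_1\cdots d_k$, I may assume after relabelling that $\S_k=\ol{\Lambda_k\,\S(\phi)}$ with $\S_{k+1}\subsetneq\S_k$ for \emph{every} $k$. This blocking step is what converts the telescoping products into a genuine strictly decreasing chain to which Sinclair's technique applies.

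The core is then the standard gliding-hump construction of \cite{Sin75,DHK01}. For each $k$ let $\rho_k\colon\B\to\B/\S_{k+1}$ be the quotient map; since $\ol{\Lambda_k\,\S(\phi)}=\S_k\supsetneq\S_{k+1}$, the image $\rho_k\Lambda_k(\S(\phi))$ is a nonzero, hence norm-unbounded, subspace of $\B/\S_{k+1}$. Using this together with the defining property of $\S(\phi)$, I would inductively choose witnesses $s_k\in\S(\phi)$ and elements $a_k\in\A$ with $\nor{a_k}<2^{-k}$ so that $x_k:=\phi(a_k)$ approximates $s_k$ closely enough that, in the $k$-th layer, the accumulated image $\rho_k\Lambda_k(x_1+\cdots+x_k)$ has norm exceeding a fixed positive constant; here $s_k$ is taken large enough to dominate the already-fixed past contribution $\rho_k\Lambda_k(x_1+\cdots+x_{k-1})$, which is possible precisely because $\rho_k\Lambda_k(\S(\phi))$ is unbounded. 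Forming $u=\sum_k a_k$ (convergent in $\A$) and examining the null sequence $u-u_n=\sum_{k>n}a_k\to 0$, where $u_n=a_1+\cdots+a_n$, the layerwise lower bounds are incompatible with $\S(\phi)$ being the full obstruction to continuity, yielding the contradiction. The delicate point, and the main obstacle, is the bookkeeping that simultaneously dominates the past and controls the tail contributions so that a single null sequence realizes all the humps at once; this is exactly where the rapid decay $\nor{a_k}<2^{-k}$ and the continuity of each operator $\rho_k\Lambda_k$ are exploited.
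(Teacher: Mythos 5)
The paper itself gives no proof of this lemma---it is quoted as Sinclair's stability lemma (the technique of \cite{Sin75}, as used in \cite{DHK01})---so your attempt must be measured against the classical argument. Your preliminary reductions are all correct and standard: $\S(\phi)$ is a closed two-sided ideal (this is where surjectivity enters for you), both chains are decreasing, the right-handed chain reduces to the left-handed one via opposite algebras, and blocking converts failure of stabilization into a strictly decreasing chain $\S_k = \ol{\Lambda_k\S(\phi)}$. The gap is in the core gliding-hump step, and as written it is fatal. You build $u = \sum_k a_k$ and estimate $\rho_k\Lambda_k\phi(u)$ by splitting $\phi(u) = \phi(u_k) + \phi(u-u_k)$. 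The hump term $\rho_k\Lambda_k\phi(u_k)$ can indeed be made large, but the tail term $\rho_k\Lambda_k\phi(u-u_k)$ is uncontrollable: the decay $\nor{u-u_k}\le 2^{-k}$ buys nothing, because the \emph{discontinuous} map $\phi$ is applied \emph{before} the continuous operator $\rho_k\Lambda_k$; continuity of $\rho_k\Lambda_k$ only bounds this term by $\nor{\rho_k\Lambda_k}\cdot\nor{\phi(u-u_k)}$, and $\nor{\phi(u-u_k)}$ is unknown. (Interchanging $\phi$ with the infinite sum is illegitimate, and $\sum_{j>k}x_j$ diverges anyway since $x_j\approx s_j$ are huge.) Your closing step---``incompatible with $\S(\phi)$ being the full obstruction to continuity''---is not an available contradiction either: $\phi(u-u_n)$ need not converge, so no limit point forced into $\S(\phi)$ is ever produced. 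Note also that in your entire construction surjectivity of $\phi$ is used only for the ideal property, never to lift the $b_n$; that is a symptom of the problem.

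Sinclair's actual mechanism, which is what is missing, uses surjectivity a second time: lift the blocked factors, $d_n = \phi(a_n')$, and build $y = \sum_n a_1'a_2'\cdots a_n' x_n$ with the products inside the domain element, choosing $x_n$ so small that $\nor{a_j'\cdots a_n'x_n}\le 2^{-n}$ for all $j\le n$ while $\nor{\rho_n(d_1\cdots d_n\,\phi(x_n))}$ is huge---possible because $\rho_n\Lambda_n\phi$ has nonzero separating space $\ol{\rho_n\S_n}$ and is therefore discontinuous. Then for each $n$ one has the exact \emph{finite} decomposition $\phi(y) = \sum_{j\le n} d_1\cdots d_j\,\phi(x_j) + d_1\cdots d_{n+1}\,\phi(w_n)$ with $w_n = \sum_{j>n} a_{n+2}'\cdots a_j'x_j$ of norm at most $2^{-n}$, and the tail is now hit by the operator $\rho_n\Lambda_{n+1}\phi$, which \emph{is} continuous: its separating space is $\ol{\rho_n\Lambda_{n+1}\S(\phi)}\subseteq\ol{\rho_n\S_{n+1}} = \{0\}$ because $\S_{n+1}=\ker\rho_n$. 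Its norm is determined before $x_n$ is chosen, so the tail can be absorbed into the hump, yielding $n \le \nor{\rho_n\phi(y)} \le \nor{\phi(y)} < \infty$, the desired contradiction. This ``one extra factor $d_{n+1}$ in front of the tail, inside $\phi$'' is the engine of the proof; a sum $\sum_k a_k$ without the lifted products gives you no access to it.
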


\begin{proposition}\label{P: auto cont}
Let $E$ be a C*-correspondence associated to a monomial ideal $\I$ in $\bC\sca{x_1, \dots, x_d}$.
Suppose that for every $\nu \in \La^*$ there are words $w, z \in \La^*$ such that $w^n z \nu \in \La^*$ for all $n \in \bN$.
Then an algebraic epimorphism $\phi \colon \A \to \T_E^+$ for any Banach algebra $\A$ is continuous.
\end{proposition}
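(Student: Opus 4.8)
The plan is to run a Sinclair-style automatic continuity argument on the separating space $\S := \S(\phi)$, using the recurrence hypothesis together with the ideal structure. By the closed graph theorem it suffices to prove $\S = (0)$. Since $\phi$ is surjective, $\S$ is a \emph{closed two-sided ideal} of $\T_E^+$: if $b=\phi(a)$ and $a_n\to 0$ with $\phi(a_n)\to s$, then $\phi(a a_n)\to bs$ and $\phi(a_n a)\to sb$ while $a a_n, a_n a\to 0$, so $bs,sb\in\S$. I would realise $\T_E^+$ concretely on the Fock space $\F_X$: by Corollary \ref{C: tensor} the canonical map $\T_E^+\to\ca(T)\subseteq\B(\F_X)$ is completely isometric, so I may regard $\S\subseteq\B(\F_X)$. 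Using the covariance relation $a T_i = T_i\al_i(a)$ to push all $A$-factors to the right, every element of $\T_E^+$ is a norm limit of finite sums $\sum_\mu T_\mu a_\mu$ with $a_\mu\in A$ acting diagonally on the basis $\{e_\zeta\mid\zeta\in\La^*\}$. Hence $s\,e_\zeta\in\ol{\spn}\{e_{\mu\zeta}\mid \mu\in\La^*,\ \mu\zeta\in\La^*\}$ for every $\zeta$, and the goal reduces to proving $\sca{e_{\mu\zeta},\,s\,e_\zeta}=0$ for all $s\in\S$ and all $\mu,\zeta\in\La^*$ with $\mu\zeta\in\La^*$.

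The engine is Lemma \ref{L: Sinclair}. Fix any \emph{non-empty} word $w\in\La^*$ and apply the lemma to the constant sequence $b_k=T_w$, so that $b_1\cdots b_n=T_{w^n}$; this produces an integer $N=N(w)$ with $\ol{T_{w^N}\S}=\ol{T_{w^n}\S}$ for all $n\geq N$. Since $T_w\in\T_E^+$ gives $T_w\S\subseteq\S$, the closed subspaces $\ol{T_{w^n}\S}$ are decreasing, so their common value equals $\bigcap_n\ol{T_{w^n}\S}$. Now $T_{w^n}$ raises the Fock grading by $n|w|$, so every operator in $\ol{T_{w^n}\S}$ has range inside $\bigoplus_{m\geq n|w|}X(m)$; as $w\neq\mt$ we have $n|w|\to\infty$, whence the intersection consists of operators with zero range. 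Therefore $T_{w^{N(w)}}\S=(0)$ for every non-empty $w\in\La^*$. It is precisely here that the hypothesis must be read with $w$ non-empty; with $w=\mt$ it is vacuous and this step collapses.

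It remains to reach the scalar $\sca{e_{\mu\zeta},\,s\,e_\zeta}$ for prescribed $\mu,\zeta$. For this I would invoke the recurrence hypothesis for the allowable word $\nu:=\mu\zeta$: there are a non-empty $w$ and a word $z$ with $w^n z\mu\zeta\in\La^*$ for all $n$. The \emph{main obstacle} is the spacer $z$: the relation $T_{w^{N}}\S=(0)$ by itself only annihilates coefficients attached to words carrying $w^{N}$ as an immediate prefix, and the intervening $z$ shifts the very coefficient one wants to track. I resolve this through the ideal property: since $T_z\in\T_E^+$ we have $T_z\S\subseteq\S$, and with $N=N(w)$ this yields
\[
T_{w^N z}\,\S \;=\; T_{w^N}\,(T_z\S)\;\subseteq\; T_{w^N}\S \;=\;(0).
\]
Thus $T_{w^N z}\,s=0$ for every $s\in\S$. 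Taking inner products and using $T_{w^N z}^{*}\,e_{w^N z\mu\zeta}=e_{\mu\zeta}$ (valid exactly because $w^N z\mu\zeta\in\La^*$), I obtain
\[
0 \;=\; \sca{e_{w^N z\mu\zeta},\,T_{w^N z}\,s\,e_\zeta} \;=\; \sca{T_{w^N z}^{*}e_{w^N z\mu\zeta},\,s\,e_\zeta} \;=\; \sca{e_{\mu\zeta},\,s\,e_\zeta}.
\]
As $\mu,\zeta$ were arbitrary, $s\,e_\zeta=0$ for every $\zeta\in\La^*$, so $s=0$; hence $\S=(0)$ and $\phi$ is continuous.

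In summary, the two delicate points are the degree-shifting argument that turns Sinclair's stabilisation into an honest vanishing (which forces $w$ to be non-empty, matching the content of the hypothesis), and the absorption of the spacer $z$ via the left-ideal property, which is what lets the recurrence hypothesis pin down each individual Fourier coefficient of an element of $\S$. The remaining verifications — closedness of $\S$, the concrete form $T_\mu a\,e_\zeta=a([\zeta])\,e_{\mu\zeta}$, and $T_{w^N z}^{*}e_{w^N z\mu\zeta}=e_{\mu\zeta}$ — are routine from Lemma \ref{L: list Ti} and the constructions of Section \ref{S: cor}.
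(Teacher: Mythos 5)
Your proof is correct, and it runs on the same engine as the paper's: Sinclair's Lemma \ref{L: Sinclair} applied to the constant sequence $b_n=T_w$, the Fock-space grading (left multiplication by $T_{w^n}$ pushes ranges into $\bigoplus_{m\geq n|w|}X(m)$, forcing the stabilised space $\ol{T_w^{N}\S(\phi)}$ to vanish), the two-sided ideal property of the separating space to absorb the spacer $z$, and the recurrence hypothesis read with $w\neq\mt$. The organisation, however, is genuinely different. The paper argues by contradiction: it first reduces ``by the Fourier transform'' to a nonzero element of $\S(\phi)$ of the special form $T_\mu a$ with $|\mu|$ minimal, and then shows that $T_w^{N}T_{z\mu}a$ is simultaneously nonzero (the inner-product computation against $e_{w^nz\nu_2}$) and supported in too low a degree to lie in $\ol{T_w^{n}\S(\phi)}$ for large $n$. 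You instead prove, for every nonempty $w\in\La^*$, the reusable intermediate statement $T_w^{N(w)}\S(\phi)=(0)$, and then combine it with the ideal property, $T_{w^{N}z}\S(\phi)=T_{w^{N}}\bigl(T_z\S(\phi)\bigr)\subseteq T_{w^{N}}\S(\phi)=(0)$, and the partial-isometry relations to annihilate every matrix entry $\sca{e_{\mu\zeta},s\,e_\zeta}$ of an arbitrary $s\in\S(\phi)$ directly. What your route buys is that it sidesteps the two points the paper leaves terse: the Fourier-transform reduction, which is asserted in one line even though it is not obvious that individual Fourier terms of elements of $\S(\phi)$ again belong to $\S(\phi)$ (the separating space is a closed ideal but not visibly gauge-invariant), and the minimality-of-$|\mu|$ device feeding the paper's length count; what the paper's route buys is brevity and conformity with the classical separating-space arguments of Sinclair and Donsig--Hudson--Katsoulis. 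Your explicit flag that the hypothesis must be read with $w$ nonempty --- otherwise it is vacuous and the grading step collapses --- is also a point the paper leaves implicit, since its degree count is written as though $|w|=1$.
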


\begin{proof}
To reach contradiction let $0 \neq x \in \S(\phi)$.
By the Fourier transform we may assume that $x = T_\mu a$ for some $\mu \in \La^*$ and $a \in A$.
Moreover we take $\mu$ to be of minimal length, i.e. if $T_\nu a \in \S(\phi)$ for $|\nu| < |\mu|$ then $T_\nu a = 0$.
Let $\nu_1, \nu_2 \in \La^*$ such that $\sca{T_\mu a e_{\nu_1}, e_{\nu_2}} \neq 0$.
By assumption let $w, z \in \La^*$ such that $w^n z \nu_2 \in \La^*$ for all $n \in \bN$.
Then we obtain
\[
\sca{T_{w}^n T_{z\mu} a e_{\nu_1}, e_{w^n z \nu_2}} = \sca{T_\mu a e_{\nu_1}, T_z^* T_{w^n}^* e_{w^n z \nu_2}} = \sca{T_\mu a e_{\nu_1}, e_{\nu_2}} \neq 0.
\]
Consequently $T_{w}^n T_{z \mu} a \neq 0$ for all $n \in \bN$ and in particular $T_{z \mu} a$ is a non-trivial element in $\S(\phi)$.
By applying Lemma \ref{L: Sinclair} for $b_n = T_w$ we get that
 \[
T_w^{N} T_{z \mu} a \in \ol{T_w^{n} \S(\phi)}
\]
for all $n \geq N$.
In particular this is true for $n = N+ |z| + 1$.
However the Fourier coefficients of the elements in $\ol{T_w^{n} \S(\phi)}$ are supported on words with length at least $N + |z| + 1 + |\mu|$, whereas $T_w^{N} T_{z \mu} a$ is supported on a word with length exactly $N + |z| + |\mu|$.
This leads to the contradiction $T_w^{N} T_{z \mu} a = 0$.
\end{proof}

\begin{remark}
The assumption in Proposition \ref{P: auto cont} can be verified for monomial ideals that are either of finite or infinite type.
An example in the case of infinite type is provided by the monomial ideal generated by $\{x y^n x \mid n \in \bN\}$ of $\bC\sca{x,y}$.

On the other hand there are monomial ideals of infinite type for which the assumption in Proposition \ref{P: auto cont} does not hold.
For example suppose that the monomial ideal in $\bC\sca{x,y}$ is such that the allowable words are either of the form
\[
xyxy^2xy^3\dots y^{n}x \, , \,
xyxy^2xy^3 \dots x y^n \, , \,
\]
and
\[
yxyx^2yx^3\dots x^n y \, , \,
yxyx^2yx^3 \dots yx^n \, , \,
\]
for all $n \in \bZ_+$, or sub-words of these words.
\end{remark}

\begin{question}\label{Q: aut cont 2}
In view of Question \ref{Q: aut cont} we ask whether the assumption in Proposition \ref{P: auto cont} holds for all monomial ideals of finite type.
\end{question}

\subsection{Local conjugacy and C*-algebras}

Next we show how local conjugacy affects the related C*-algebras.
Recall that local conjugacy coincides with unitary equivalence by Corollary \ref{C: classification}.
By the remarks preceding Proposition \ref{P: isom rel} we have that local conjugacy implies $*$-isomorphisms of the Toeplitz-Pimsner and the Cuntz-Pimsner algebras, as well as complete isometric isomorphisms of the tensor algebras.
Below we show that the same is true for the C*-algebras $\ca(T)$ and $\ca(R)$, and their quotients by the compacts.

\begin{corollary}\label{C: rel CP}
Let $\I \lh \bC\sca{x_1, \dots, x_d}$ and $\J \lh \bC\sca{y_1, \dots, y_{d'}}$ be monomial ideals.
If the quantised dynamics are $Q$-$P$-locally piecewise conjugate then $\ca(T) \simeq \ca(R)$, and moreover $\ca(T) / \K(\F_X) \simeq \ca(R) / \K(\F_Y)$.
\end{corollary}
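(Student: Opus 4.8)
The plan is to deduce both isomorphisms from Proposition \ref{P: isom rel} once a unitary equivalence of the two C*-correspondences is in hand. First I would invoke the equivalence $\textup{(iii)}\Leftrightarrow\textup{(iv)}$ of Corollary \ref{C: classification}: the $Q$-$P$-local piecewise conjugacy of $(\Om_\I,\vpi)$ and $(\Om_\J,\psi)$ yields a unitary C*-correspondences map $(\ga,U)\colon{}_AE_A\to{}_BF_B$, where $\ga\colon A\to B$ is the unital $*$-isomorphism implemented by the homeomorphism $\ga_s\colon\Om_\J\to\Om_\I$, so that $\ga(f)=f\circ\ga_s$ under $A\simeq C(\Om_\I)$ and $B\simeq C(\Om_\J)$. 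By Proposition \ref{P: C*(T)} we have the canonical identifications $\ca(T)\simeq\O(J,E)$ with $J=\sca{1-T_\mu^*T_\mu\mid\mu\in\La^*}$ and $\ca(T)/\K(\F_X)\simeq\O(A,E)$, and symmetrically $\ca(R)\simeq\O(J',F)$ with $J'=\sca{1-R_w^*R_w\mid w\in M^*}$ and $\ca(R)/\K(\F_Y)\simeq\O(B,F)$.

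The quotient assertion is then immediate, since $\ga(A)=B$: Proposition \ref{P: isom rel} applied to $(\ga,U)$ with the ideals $A$ and $B$ gives $\O(A,E)\simeq\O(B,F)$, that is $\ca(T)/\K(\F_X)\simeq\ca(R)/\K(\F_Y)$. For the Toeplitz assertion it remains, by the same proposition, to prove that $\ga(J)=J'$.

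Here I would pass to an intrinsic dynamical description of $J$. Under $A\simeq C(\Om_\I)$, Lemma \ref{L: list Ti}(i) identifies $1-T_\mu^*T_\mu$ with the characteristic function of the clopen set $\{[\nu]\mid\mu\nu\notin\La^*\}$, so $J$ is the ideal of functions vanishing on the closed set $C_\I:=\{[\nu]\in\Om_\I\mid\La^*\nu\subseteq\La^*\}$, and symmetrically $J'$ consists of the functions vanishing on $C_\J$. Via $\ga(f)=f\circ\ga_s$ this makes $\ga(J)=J'$ equivalent to $\ga_s(C_\J)=C_\I$. The key point is that $C_\I$ has a purely dynamical meaning: using $\vpi_i([\nu])=[i\nu]$ and a short induction on word length, one checks that $[\nu]\in C_\I$ if and only if $[\nu]\in\Om_\I^i$ and $\vpi_i([\nu])\in C_\I$ for every $i$, so that $C_\I$ is the largest subset of $\bigcap_{i=1}^d\Om_\I^i=Q_{[1\dots1]}$ invariant under all the partial maps $\vpi_1,\dots,\vpi_d$. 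Now Definition \ref{D: QPloc} supplies exactly what transports this: the support condition forces $\ga_s$ to carry $\bigcap_i\Om_\J^i=P_{[1\dots1]}$ onto $\bigcap_i\Om_\I^i=Q_{[1\dots1]}$, while the local intertwining $\ga_s\psi_{\pi(i)}|_\U=\vpi_i\ga_s|_\U$ carries $\psi$-invariant subsets of $\bigcap_i\Om_\J^i$ to $\vpi$-invariant subsets of $\bigcap_i\Om_\I^i$. By maximality $\ga_s(C_\J)=C_\I$, hence $\ga(J)=J'$ and $\ca(T)\simeq\ca(R)$ by Proposition \ref{P: isom rel}.

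The step I expect to be the main obstacle is precisely the transport of the invariant set, since membership in $C_\I$ is a condition on arbitrarily long words while $Q$-$P$-local piecewise conjugacy matches the $\vpi_i$ and $\psi_j$ only on small neighbourhoods and through locally varying permutations. The reformulation of $C_\I$ as the maximal forward-invariant subset of $Q_{[1\dots1]}$ is what dissolves this difficulty: invariance under the \emph{entire} family $\{\vpi_i\}$ does not see which individual map is paired with which, and the support condition guarantees that the ambient domain $\bigcap_i\Om^i$ is respected. I note finally that this argument is uniform across the dichotomy of Theorem \ref{T: dichotomy}: in the non-injective case it specialises to preservation of Katsura's ideal $J=J_E=\sca{1-P_\mt}$, so no separate case analysis is needed.
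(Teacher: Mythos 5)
Your treatment of the quotient algebras is correct and is exactly the paper's argument: Corollary \ref{C: classification} produces the unitary equivalence $(\ga,U)$, Proposition \ref{P: C*(T)} identifies $\ca(T)/\K(\F_X)$ and $\ca(R)/\K(\F_Y)$ as the $A$- and $B$-relative Cuntz--Pimsner algebras, and $\ga(A)=B$ lets Proposition \ref{P: isom rel} finish. Your identification of $J$ with the ideal of functions vanishing on the closed set $C_\I=\bigcap_{\mu\in\La^*}\{T_\mu^*T_\mu=1\}$ is also correct. The gap is the dynamical characterisation on which you base the Toeplitz half: it is \emph{not} true that $[\nu]\in C_\I$ forces $\vpi_i([\nu])\in C_\I$, so $C_\I$ is in general not the largest $\{\vpi_i\}$-invariant subset of $\bigcap_i\Om_\I^i$. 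Take $\I=\sca{x_1x_2}\lhd\bC\sca{x_1,x_2}$, so that $\La^*=\{2^a1^b \mid a,b\geq 0\}$. Then $\Om_\I$ consists of two points, $p_1=[\mt]=[1]$ and $p_2=[2]$; one has $T_\mu^*T_\mu=I$ when $\mu=2^a$, and $T_\mu^*T_\mu=\chi_{\{p_1\}}$ when $\mu$ contains the letter $1$, whence $C_\I=\{p_1\}$. However $\Om_\I^1=\{p_1\}$, $\Om_\I^2=\Om_\I$, and $\vpi_2(p_1)=[2\cdot 1^c]=p_2\notin C_\I$. Thus $C_\I$ is not invariant under the family $\{\vpi_1,\vpi_2\}$, and the largest invariant subset of $\Om_\I^1\cap\Om_\I^2=\{p_1\}$ is empty, not $C_\I$.

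The induction you sketch breaks for the following reason. Writing $\mu=\nu i$ and using $T_{\nu i}^*T_{\nu i}=\al_i(T_\nu^*T_\nu)$, membership $\om\in C_\I$ yields $T_\nu^*T_\nu(\vpi_i(\om))=1$ only for those $\nu\in\La^*$ with $\nu i\in\La^*$: when $\nu i\notin\La^*$ there is no generator $1-T_{\nu i}^*T_{\nu i}$ in $J$ (the generators are indexed by \emph{allowable} words), hence no constraint on $\vpi_i(\om)$ at all. Membership $\vpi_i(\om)\in C_\I$ would require $T_\nu^*T_\nu(\vpi_i(\om))=1$ for all $\nu\in\La^*$, which is strictly stronger; this is exactly what fails in the example above (take $i=2$, $\nu=1$). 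Since the equality $\ga(J)=J'$, equivalently $\ga_s(C_\J)=C_\I$, is precisely the substance of the Toeplitz assertion, your proof collapses at this point: the conclusion is true, but not for the reason you give. The paper avoids any intrinsic description of $J$ and argues algebraically instead: it expands $\ga(t_j^*t_j)=\Phi(t_j)^*\Phi(t_j)$ in terms of the matrix $[b_{ij}]$ of the unitary, uses $r_j^*r_j=\sum_{i} b_{ij}^*b_{ij}$ together with the covariance relation $\be_i\ga(\cdot)\,b_{ij}=b_{ij}\,\ga\al_j(\cdot)$ to show $\ga(1-t_j^*t_j)$ lies in the ideal generated by $\{1-r_w^*r_w \mid w\in M^*\}$, and then handles longer words via $1-t_{jl}^*t_{jl}=(1-t_l^*t_l)+t_l^*(1-t_j^*t_j)t_l$ and induction on $|\mu|$. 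Any repair of your route would need a correct invariant-theoretic description of $C_\I$ that remembers which letters may legally be appended to a given class; that is where the actual difficulty lives.
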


\begin{proof}
Let us denote by $t_\mu$ and $r_\nu$ the generators in $\T_E^+$ and $\T_F^+$.
Local conjugacy implies a unitary equivalence $(\ga, U) \colon E \to F$, in which case we get that $d = d'$ by Proposition \ref{P: support}.
Let $[b_{ij}]$ be the associated matrix and let the $*$-isomorphism $\Phi \colon \T_E \to \T_F$ such that
\[
\Phi(a) = \ga(a) \foral a \in A, \qand \Phi(t_j) = \sum_{i=1}^d r_i b_{ij} \foral j=1, \dots, d.
\]
By Proposition \ref{P: C*(T)} the C*-algebras $\ca(T)$ and $\ca(R)$ are respectively quotients of $\T_E$ and $\T_F$.
By Proposition \ref{P: isom rel} then $\Phi$ implements a $*$-isomorphism $\Phi' \colon \ca(T) \to \ca(R)$ if it maps the ideal generated by $\{I - t_\mu^* t_\mu \mid \mu \in \La^*\}$ in $A$ into the ideal generated by $\{I- r_w^*r_w \mid w \in M^*\}$ in $B$.

For $\mu = j \in \{1, \dots, d\}$ we compute
\[
\Phi(I - t_j^* t_j)  = I - \sum_{k,i=1}^d b_{ij}^* r_i^* r_k b_{kj} = I - \sum_{i=1}^d b_{ij}^* r_i^* r_i b_{kj}.
\]
However by unitary equivalence we have that $r_j^* r_j = \sum_{i=1}^d b_{ij}^* b_{ij}$, therefore
\begin{align*}
\Phi(I - t_j^*t_j)
& =
I - r_j^*r_j + \sum_{i=1}^d b_{ij}^* b_{ij} - \sum_{i=1}^d b_{ij}^* r_i^* r_i b_{ij} \\
& =
I - r_j^* r_j + \sum_{i=1}^d b_{ij}^* \left(I - r_i^*r_i \right) b_{ij}.
\end{align*}
Hence $\Phi(I -t_j^* t_j)$ is in the required ideal.
If $\mu = j l$ then we may write
\begin{align*}
I - t_{jl}^* t_{jl}
& =
I - t_l^*t_l + t_l^*t_l - t_l^*t_j^* t_j t_l
 =
I - t_l^* t_l + t_l^*(I - t_j^* t_j) t_l.
\end{align*}
It suffices to restrict our attention to $t_l^*(I - t_j^* t_j) t_l$.
We then compute
\begin{align*}
\Phi(t_l^*(I - t_j^* t_j) t_l)
& =
\Phi(t_l)^* \Phi(I - t_j^* t_j) \Phi(t_l)
 =
\sum_{i,k=1}^d b_{il}^* r_i^* x r_k b_{ik},
\end{align*}
where $x$ is the sum of elements $b^*(I - r_w^* r_w)c$ for some $b,c \in B$ and some words $w \in M^*$.
By using the covariant relation we get that
\begin{align*}
b_{il}^* r_i^* b^* (I - r_w^* r_w) c r_i b_{ik}
& =
b_{il}^* \be_i(b)^* (r_i^* r_i - r_i^*r_w^*r_w r_i)\be_i(c) b_{ik}\\
& =
-b_{il}^* \be_i(b)^* (I - r_i^* r_i)\be_i(c) b_{ik} + \\
& \hspace{2cm} +
b_{il}^* \be_i(b)^*(I - r_{wi}^* r_{wi})\be_i(c) b_{ik}
\end{align*}
which shows that $\Phi(t_l^*(I - t_j^* t_j) t_l)$ is in the required ideal.
Therefore $\Phi(I - t_{jl}^* t_{jl})$ is in the required ideal.
Induction on the length of the words $\mu$ then finishes the proof of $\ca(T) \simeq \ca(R)$.

On the other hand Proposition \ref{P: C*(T)} implies that the quotient $\ca(T)/ \K(\F_X)$ is the $A$-relative Cuntz-Pimsner algebra of $E$.
Trivially $\ga(A)=B$ and Proposition \ref{P: isom rel} finishes the proof of the second part.
\end{proof}

\begin{remark}
A similar analysis can be carried out for the $q(E)$ and $q(F)$ of Remark \ref{R: quotient corre}.
The reason is that the unitary equivalence mappings are defined by a family of $*$-algebraic relations.

In particular we have that if $E$ and $F$ are unitarily equivalent then $q(E)$ and $q(F)$ are unitarily equivalent as well.
Indeed if $E$ and $F$ are injective then this is immediate by the fact that the restriction of $q$ to $\T_E^+$ is completely isometric and because the $*$-isomorphism $\Phi \colon \ca(T) \to \ca(R)$ is such that $\Phi(a) = \ga(a)$ and $\Phi(t_j) = \sum_{i=1}^d r_i b_{ij}$.
If $E$ is not injective then so is $F$ and in particular $\ga(\ker\phi_E) = \ker\phi_F$.
However Proposition \ref{P: kernel} implies that $\ker\phi_E = \bC P_\mt$ and $\ker\phi_F = \bC P_\mt$.
Consequently we obtain that $\ga(P_\mt) = P_\mt$.
Therefore $\Phi(\K(\F_X)) = \K(\F_Y)$ and $\ca(T)/ \K(\F_X)$ is $*$-isomorphic to $\ca(R)/\K(\F_Y)$ by some $\Phi'$ with $\Phi'(q(a)) = q(\ga(a))$ and $\Phi'(q(t_j)) = \sum_{i=1}^d q(r_i) q(b_{ij})$.
Then $\Phi'|_{q(A)}$ is a $*$-isomorphism onto $q(B)$ and the $q(b_{ij})$ form a matrix that corresponds to a C*-correspondence map.
The fact that $[b_{ij}]$ defines a unitary C*-correspondence map between $E$ and $F$ is translated into some $*$-algebraic relations, e.g. $\sum_{k=1}^d b_{ik} b_{jk}^* = \de_{i,j} t_i^*t_i$.
Applying $\Phi'$ to these we obtain a set of $*$-algebraic relations which give that the C*-correspondence map associated with $[q(b_{ij})]$ is unitary.

The converse of this phenomenon does not hold.
For a counterexample let $E$ be associated with the monomial ideal $\I$ generated by
\[
\begin{pmatrix}
& x_1 x_2 \\
 x_2x_1 & x_2^2
\end{pmatrix}
\]
in $\bC\sca{x_1, x_2}$, and let $F$ be associated with the monomial ideal $\J$
\[
\begin{pmatrix}
& x_1 x_2 & x_1x_3 \\
 x_2x_1 & x_2^2 & x_2x_3 \\
x_3x_1 & x_3x_2 & x_3^2
\end{pmatrix}
\]
in $\bC\sca{x_1, x_2, x_3}$.
Then $E$ and $F$ cannot be unitarily equivalent as they are related with rings on a different number of symbols.
However $q(E)$ and $q(F)$ are both $\bC$ over $\bC$ since there exists only one infinite word in both cases, i.e. the sequence $(x_1)$, and so they are trivially unitarily equivalent.
\end{remark}

\section{Encoding via subproduct systems}\label{S: class sps}

We turn our attention to the classification of our data by using the subproduct systems.
Our aim is to show that subproduct systems and their tensor algebras form a complete invariant for monomial ideals up to a permutation of the generators.
We will require some useful algebraic facts.

\begin{lemma}\label{L: algebraic}
Let $\I$ be a monomial ideal of $\bC\sca{x_1,\dots,x_d}$.
Then the group of the graded automorphisms of $\bC\sca{x_1,\dots,x_d}/\I$ is a linear algebraic group.
\end{lemma}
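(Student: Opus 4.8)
The plan is to realise $\Aut_{\mathrm{gr}}(\A)$, where $\A := \bC\sca{x_1,\dots,x_d}/\I$, as a Zariski-closed subgroup of $\GL_d(\bC)$ via its action on the degree-one part. Since $\I$ is monomial it is homogeneous, so $\A$ inherits the grading $\A = \bigoplus_{n\geq 0}\A_n$ with $\A_n = (\bC^d)^{\otimes n}/\I^{(n)}$, where $\I^{(n)}$ is the degree-$n$ component of $\I$ and $(\bC^d)^{\otimes n}$ is the degree-$n$ component of the free algebra. By the standing assumption that no variable lies in $\I$ we have $\I^{(1)} = (0)$, so $\A_1 = \bC^d$, and $\A$ is generated in degree one. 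A graded automorphism $\phi$ preserves each $\A_n$; in particular it restricts to an element $\phi|_{\A_1}$ of $\GL(\A_1) = \GL_d(\bC)$, and since $\A$ is generated in degree one, $\phi$ is completely determined by $\phi|_{\A_1}$. Thus restriction defines an injective group homomorphism $r \colon \Aut_{\mathrm{gr}}(\A) \to \GL_d(\bC)$.

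Next I would identify the image of $r$. For $g \in \GL_d(\bC)$ let $g^{\otimes n}$ be the induced invertible operator on $(\bC^d)^{\otimes n}$, so that $\bigoplus_n g^{\otimes n}$ is the graded decomposition of the algebra automorphism $\wt{g}$ of the free algebra determined by $x_i \mapsto \sum_j g_{ji} x_j$. I claim $g \in \operatorname{im}(r)$ if and only if $g^{\otimes n}(\I^{(n)}) \subseteq \I^{(n)}$ for every $n$. Indeed, if this containment holds then, because $g^{\otimes n}$ is bijective and $\dim \I^{(n)} < \infty$, it forces $g^{\otimes n}(\I^{(n)}) = \I^{(n)}$; hence $\wt{g}(\I) = \I$ and $\wt{g}$ descends to a graded endomorphism of $\A$ whose degree-$n$ part is bijective (its inverse being the descent of $(g^{\otimes n})^{-1}$), i.e.\ to a graded automorphism restricting to $g$ on $\A_1$. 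The converse is immediate by lifting a graded automorphism through the quotient map. Therefore $r$ is an isomorphism onto $G := \{g \in \GL_d(\bC) \mid g^{\otimes n}(\I^{(n)}) \subseteq \I^{(n)} \text{ for all } n\}$.

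It then remains to show that $G$ is Zariski-closed. Fix $n$ and a linear projection $P_n$ of $(\bC^d)^{\otimes n}$ onto a complement of $\I^{(n)}$. The condition $g^{\otimes n}(\I^{(n)}) \subseteq \I^{(n)}$ is equivalent to $P_n\, g^{\otimes n}|_{\I^{(n)}} = 0$, a finite system of polynomial equations in the entries of $g$ (the entries of $g^{\otimes n}$ being degree-$n$ monomials in the entries of $g$). Hence each $G_n := \{g \mid g^{\otimes n}(\I^{(n)}) \subseteq \I^{(n)}\}$ is Zariski-closed in $\GL_d(\bC)$, and, since an arbitrary intersection of Zariski-closed sets is closed, $G = \bigcap_n G_n$ is Zariski-closed. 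As $G$ is also a subgroup (it is clearly closed under products and, by the equality $g^{\otimes n}(\I^{(n)}) = \I^{(n)}$ established above, under inverses), it is a linear algebraic group, and via $r$ so is $\Aut_{\mathrm{gr}}(\A)$. By Noetherianity the intersection already stabilises at a finite stage, but this is not needed.

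The argument is mostly formal; the one point that needs care is that $\I$ may be generated by infinitely many monomials, so I cannot reduce ideal-preservation to a single finite generating set. The two observations that make the infinite intersection harmless are that preservation of each finite-dimensional piece $\I^{(n)}$ \emph{into} itself is automatically \emph{onto} (because $g^{\otimes n}$ is a bijection of the ambient space), which removes any separate invertibility requirement, and that arbitrary intersections of Zariski-closed sets remain closed. Verifying that restriction to degree one is a faithful representation of $\Aut_{\mathrm{gr}}(\A)$ and that ideal preservation is polynomial in the matrix entries is then routine bookkeeping.
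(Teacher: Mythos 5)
Your proof is correct, and although it shares the paper's overall strategy -- identify a graded automorphism with its restriction to the degree-one part, thereby embedding the group into $\GL_d(\bC)$, and cut out the image by polynomial conditions -- the execution differs in two ways worth recording. First, the paper imposes \emph{two} families of conditions: that the matrix $[a_{ij}]$ preserves $\I$ under substitution, \emph{and} that its inverse $[b_{ij}]$ does as well; it then treats the $b_{ij}$ as polynomial expressions in the $a_{ij}$ (strictly speaking they are rational, with denominator $\det[a_{ij}]$, so that step really requires working in the coordinate ring of $\GL_d$ or clearing determinants). Your finite-dimensionality observation -- an injective linear map sending $\I^{(n)}$ into $\I^{(n)}$ must send it \emph{onto} $\I^{(n)}$, so the inverse automatically preserves the ideal -- eliminates the second family of conditions entirely, and with it the issue of expressing the inverse entries algebraically. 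Second, the paper disposes of the infinitude of equations by invoking the existence of a finite subsystem with the same solution set (Noetherianity of the polynomial ring), whereas you simply note that an arbitrary intersection of Zariski-closed sets is Zariski-closed; both are valid, the paper's version yielding the (unneeded) extra information that finitely many equations suffice, yours being the more economical. In short, your route is a genuine streamlining of the paper's argument, and it sidesteps a small imprecision in the published proof.
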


\begin{proof}
Without loss of generality we may assume that $\I$ does not contain any of the $x_i$.
Otherwise we restrict our attention to $\bC\sca{x_1,\dots,x_n}/\I'$ where $x_1, \dots, x_n \notin \I$, $x_{n+1}, \dots, x_d \in \I$ and $\I' = \I \cap \bC\sca{x_1,\dots,x_n}$.
Let $z_i = x_i + \I$ be the generators of the quotient.
Then the embedding
\[
\phi \mapsto [a_{ij}], \text{ where } \phi(z_j) = \sum_{i=1}^d a_{ij} z_i,
\]
from the graded automorphisms of $\bC\sca{x_1,\dots,x_d}/\I$ inside $GL_d(\bC)$ is an injective group homomorphism.
Injectivity is immediate as the $z_i$ generate the quotient.
To see that $[a_{ij}]$ is indeed invertible, recall that $\phi$ is graded hence its restriction to the linear span of the $z_i$ is onto itself.
Then we get
\[
f(\sum_{i=1}^d a_{i1} x_i, \dots, \sum_{i=1}^d a_{id}x_i) = 0 \foral f \in \I,
\]
and at the same time
\[
f(\sum_{i=1}^d b_{i1} x_i, \dots, \sum_{i=1}^d b_{id}x_i) = 0 \foral f \in \I,
\]
where $[b_{ij}] = [a_{ij}]^{-1}$.
On the other hand, if $[a_{ij}] \in GL_d(\bC)$ satisfies the above equations then it readily defines an automorphism of $\bC\sca{x_1,\dots,x_d}/\I$.

Let the ring $(\bC[y_{11}, \dots, y_{dd}])\sca{x_1, \dots, x_d}$ of polynomials on the noncommuting variables $x_i$ with co-efficients from $\bC[y_{11}, \dots, y_{dd}]$ so that $y_{ij} \cdot x_k = x_k \cdot y_{ij}$.
Then for any polynomial $f \in \I$ we have that
\[
f(\sum_{i=1}^d y_{i1} x_i, \dots, \sum_{i=1}^d y_{id}x_i) = \sum_{\un{x}^\mu \notin \I \, , \, |\mu| \leq \deg f} g_{f,\mu}(y_{11}, \dots, y_{dd}) \un{x}^\mu
\]
for some polynomial expressions $g_{f,\mu}$ in $\bC[y_{11}, \dots, y_{dd}]$.
Since the $\un{x}^\mu$ form a basis we obtain that
\[
f(\sum_{i=1}^d y_{i1} x_i, \dots, \sum_{i=1}^d y_{id}x_i) = 0 \iff g_{f,\mu}(y_{11}, \dots, y_{dd}) = 0 \foral g_{f,\mu}.
\]
Therefore we have that an invertible matrix $[a_{ij}]$ satisfies
\[
f(\sum_{i=1}^d a_{i1} x_i, \dots, \sum_{i=1}^d a_{id}x_i) = 0 \foral f \in \I,
\]
if and only if the tuple $(a_{11}, \dots, a_{dd}) \in \bC^{d^2}$ is a solution for the system of polynomials $\{g_{f,\mu} \mid f \in \I, \un{x}^\mu \in \I\}$.
The latter set may be infinite.
However the $g_{f,\mu}$ are polynomials in $\bC[y_{11},\dots,y_{dd}]$, hence we can find a finite set of such polynomials with the same set of solutions.
In a similar way if $[b_{ij}] = [a_{ij}]^{-1}$ then
\[
f(\sum_{i=1}^d b_{i1} x_i, \dots, \sum_{i=1}^d b_{id}x_i) = 0 \foral f \in \I,
\]
if and only if the $(b_{11}, \dots, b_{dd}) \in \bC^{d^2}$ is a solution for the system of polynomials $\{g_{f,\mu}' \mid f \in \I, \un{x}^\mu \in \I\}$.
This can be substituted again by a finite set of monomials with the same set of solutions.
Recall that the $b_{ij}$ are polynomial expressions of the $a_{ij}$.
Hence we may view the $g_{f,\mu}'$ as polynomials on the $y_{11}, \dots, y_{dd}$ as well.
Therefore we have that an invertible matrix $[a_{ij}]$ defines a graded automorphism of $\bC\sca{x_1,\dots,x_d}/\I$ if and only if the $d^2$-tuple $(a_{ij})$ of its entries forms a solution for a finite set of polynomials, and the proof is complete.
\end{proof}

Now we are in position to apply the arguments of \cite[Theorem 5.27]{BruGub08} to achieve the required classification.

\begin{theorem}\label{T: class sps}
Let $X$ and $Y$ be subproduct systems associated with the homogeneous ideals $\I \lhd \bC\sca{x_1,\dots,x_d}$ and $\J \lhd \bC\sca{y_1,\dots,y_{d'}}$.
Without loss of generality suppose that $x_i \notin \I$ and $y_j \notin \J$ for all $i,j$.
The following are equivalent:
\begin{enumerate}
\item $\A_X$ and $\A_Y$ are completely isometrically isomorphic;
\item $\A_X$ and $\A_Y$ are algebraically isomorphic;
\item $\bC\sca{x_1,\dots,x_d}/\I$ and $\bC\sca{y_1,\dots,y_{d'}}/\J$ are algebraically isomorphic by a graded isomorphism;
\item $X$ and $Y$ are similar;
\item $X$ and $Y$ are isomorphic;
\item $d=d'$ and there is a permutation on the variables $y_1, \ldots, y_d$ such that $\I$ and $\J$ are defined by the same words.
\end{enumerate}
\end{theorem}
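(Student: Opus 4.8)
The plan is to run the cyclic chain $(1)\Rightarrow(2)\Rightarrow(3)\Rightarrow(6)\Rightarrow(5)\Rightarrow(1)$ and to fit $(4)$ into the loop through the sandwich $(5)\Rightarrow(4)\Rightarrow(2)$. Several links come for free from the machinery already assembled. The equivalence $(1)\Leftrightarrow(5)$ and the chain $(5)\Rightarrow(4)\Rightarrow(2)$ are immediate from Theorem \ref{T: sps cla}: part~(1) there gives $(1)\Leftrightarrow(5)$; isomorphic subproduct systems are in particular similar, so $(5)\Rightarrow(4)$; and part~(2) there turns a similarity into a (completely) bounded, hence algebraic, isomorphism of the tensor algebras, giving $(4)\Rightarrow(2)$. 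The implication $(1)\Rightarrow(2)$ is trivial, and $(6)\Rightarrow(5)$ is elementary: a permutation of the symbols carrying $\I$ to $\J$ is implemented by the unitary $U_1\colon\bC^d\to\bC^d$ permuting the basis vectors $e_i$, whereupon $U_n:=U_1^{\otimes n}|_{X(n)}$ carries $X(n)=(\bC^d)^{\otimes n}\ominus\{e_\mu\mid \un{x}^\mu\in\I\}$ onto $Y(n)$ and assembles to an isomorphism of subproduct systems. Thus all the content sits in the two new links $(2)\Rightarrow(3)$ and $(3)\Rightarrow(6)$.

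For $(2)\Rightarrow(3)$ I would first invoke Lemma \ref{L: vac pre} to replace a given algebraic isomorphism $\phi\colon\A_X\to\A_Y$ by a vacuum preserving one, so that $\phi(\ker\rho_0^X)=\ker\rho_0^Y$ and hence $\phi$ maps the algebraic powers of the augmentation ideal $\mathfrak{m}_X:=\ker\rho_0^X$ onto those of $\mathfrak{m}_Y$. The point is to extract the graded isomorphism \emph{purely} from the gauge grading, so as to bypass continuity, which is unavailable in general (cf.\ Questions \ref{Q: aut cont} and \ref{Q: aut cont 2}). Recalling from Proposition \ref{P: id hom sps} that $\A_X$ is the closure of the dense algebra $\bC\sca{x_1,\dots,x_d}/\I$ generated by the $T_i$, and writing $f\mapsto f^{(n)}$ for the $n$-th Fourier projection of the gauge action $\{\ga_z\}_{z\in\bT}$ of $\A_X$ (and likewise for $\A_Y$ with generators $R_j$), I would set $L(T_i):=\phi(T_i)^{(1)}$. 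Using $\phi(\mathfrak{m}_X^2)=\mathfrak{m}_Y^2$ together with the symmetric construction $L'(R_j):=\phi^{-1}(R_j)^{(1)}$ one checks $L'L=\id$ and $LL'=\id$, so $L$ is a linear isomorphism and $d=d'$. The key computation is that, since gauge degrees add under multiplication, for a monomial $T_\mu$ of length $n$ the degree-$n$ Fourier component of $\phi(T_\mu)=\prod_k\phi(T_{\mu_k})$ equals $\prod_k L(T_{\mu_k})$; applying this to a homogeneous $f\in\I$ of degree $n$, whose image in $\A_X$ vanishes, the degree-$n$ component of $0=\phi(f)$ is the image of $f(A\un{w})$, whence $f(A\un{w})\in\J$, where $A=[a_{ij}]$ is the matrix of $L$. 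By symmetry $A^{-1}$ transports $\J$ into $\I$, so $A$ implements a graded algebra isomorphism $\bC\sca{x_1,\dots,x_d}/\I\to\bC\sca{y_1,\dots,y_{d'}}/\J$, which is exactly $(3)$.

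The heart of the argument, and the step I expect to be the main obstacle, is $(3)\Rightarrow(6)$: the rigidity that a graded isomorphism of monomial algebras can only arise from a permutation of the variables. Here I would follow the arguments of \cite[Theorem 5.27]{BruGub08}, the essential input being Lemma \ref{L: algebraic}, which realises the graded automorphism groups $G_\I$ and $G_\J$ as linear algebraic subgroups of $\GL_d(\bC)$. Since $\I$ is monomial, the diagonal torus $\bD=(\bC^\ast)^d$ rescales each variable and so lies in $G_\I$; being a maximal torus of $\GL_d(\bC)$ it is a maximal torus of $G_\I$, and similarly for $G_\J$. A graded isomorphism given by $A\in\GL_d(\bC)$ conjugates $G_\I$ onto $G_\J$, so $A\bD A^{-1}$ is a maximal torus of $G_\J$; by conjugacy of maximal tori there is $B\in G_\J^\circ$ with $(BA)\bD(BA)^{-1}=\bD$, that is $BA\in N_{\GL_d(\bC)}(\bD)$, the group of monomial matrices. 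Writing $BA=PD$ with $P$ a permutation matrix and $D\in\bD$, and absorbing $D\in\bD\subseteq G_\I$, one is left with a \emph{permutation} matrix $P$ still implementing an isomorphism $\bC\sca{x_1,\dots,x_d}/\I\to\bC\sca{y_1,\dots,y_{d'}}/\J$; this says precisely that $\J$ is obtained from $\I$ by the corresponding permutation of symbols, giving $(6)$ and closing the cycle through $(5)\Rightarrow(1)$. The points demanding care are that $\bD$ is genuinely maximal inside the possibly disconnected groups $G_\I,G_\J$, and that the coset of isomorphism-implementing matrices is stable under multiplication by $\bD$ so that the diagonal factor may be stripped off; monomiality of $\I$ is indispensable throughout, as it is exactly what places $\bD$ inside $G_\I$.
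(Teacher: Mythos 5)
Your proposal is correct and takes essentially the same route as the paper: the substance is the chain $(2)\Rightarrow(3)\Rightarrow(6)$, where $(2)\Rightarrow(3)$ uses Lemma \ref{L: vac pre} and extraction of the graded linear part (your Fourier coefficients $\phi(T_i)^{(1)}$ are exactly the paper's compressions $V_n = p_n\phi|_{X(n)}$), and $(3)\Rightarrow(6)$ uses Lemma \ref{L: algebraic} together with Borel's conjugacy of maximal tori to reduce the implementing matrix to a permutation times a diagonal, precisely as the paper does (its explicit Leibniz-formula intertwining computation is just your statement that the normalizer of the diagonal torus in $\GL_d(\bC)$ consists of monomial matrices). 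The outer implications are dispatched identically via Theorem \ref{T: sps cla} and the trivial directions, so the two arguments differ only in organization and phrasing.
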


\begin{proof}
The implications [(vi) $\Rightarrow$ (v) $\Rightarrow$ (iv)], [(iv) $\Rightarrow$ (iii)], [(iv) $\Rightarrow$ (ii)] and [(vi) $\Rightarrow$ (i)] are immediate.
Moreover the implication [(i) $\Rightarrow$ (v)] is shown in Theorem \ref{T: sps cla}.
To finish the proof we will show that [(ii) $\Rightarrow$ (iii) $\Rightarrow$ (vi)].

Suppose that item (ii) holds.
Then by Lemma \ref{L: vac pre} we may suppose that there is a graded isomorphism $\phi \colon \A_X \to \A_Y$.
As in the proof of \cite[Proposition 6.17]{DorMar14} we get a family of isomorphisms
\[
V_n:= p_n \phi|_{X(n)} \colon X(n) \to Y(n)
\]
where we identify $X(n)$ with $t_\infty(X(n))$.
In particular each $V_n$ implies an isomorphism between $\{f + \I \mid \deg f = n \}$ and $\{g + \J \mid \deg g =n\}$.
Applying to $X(1) = \bC^d$ we obtain that $d = d'$.
Because the family of $V_n$ respects the associative product rule of the subproduct systems, it extends to a graded algebraic isomorphism between $\bC\sca{x_1,\dots,x_d}/\I$ and $\bC\sca{y_1,\dots,y_d}/\J$.

Suppose that item (iii) holds for an algebraic isomorphism $\phi$.
Let $T_\I$ be the subgroup of the graded automorphisms of $\bC\sca{x_1,\dots,x_d}/\I$ defined by $x_i \mapsto \la_i x_i$, i.e. $T_\I$ is a torus.
Now $T_\I$ is a maximal torus inside $GL_d(\bC)$, thus it is also a maximal torus inside the group of the graded automorphisms of $\bC\sca{x_1,\dots,x_d}/\I$.
The group of automorphisms of $\bC\sca{x_1,\dots,x_d}/\J$ contains two maximal tori; the $T_\J$ and the $\phi T_\I \phi^{-1}$.
By Lemma \ref{L: algebraic} we can apply Borel's Theorem \cite[Corollary 11.3 (1)]{Bor91} which states that all maximal tori in an algebraic group are conjugate.
Hence we obtain a graded automorphism $\rho$ of $\bC\sca{x_1,\dots,x_d}/\J$ such that $\rho^{-1} T_\J \rho = \phi T_\I \phi^{-1}$.
By substituting $\phi$ with $\rho \phi$ we have that there exists a graded isomorphism
\[
\phi \colon \bC\sca{x_1,\dots,x_d}/\I \to \bC\sca{x_1,\dots,x_d}/\J
\]
such that $\phi T_\I = T_\J \phi$.
If $[a_{ij}]$ is the invertible matrix related to $\phi$ then we obtain that for every diagonal matrix $D_1$ there is a diagonal matrix $D_2$ such that $[a_{ij}] D_1 = D_2 [a_{ij}]$.

By the Leibniz formula for the determinant there is a permutation $\pi \in S_d$ such that $\Pi_{j=1}^d a_{\pi(j) j} \neq 0$.
Let $A_\pi$ be the corresponding permutation matrix and set $[b_{ij}] = A_\pi [a_{ij}]$.
Since $[a_{ij}] D_1 = D_2 [a_{ij}]$ for the diagonal matrices $D_1$ and $D_2$ then we may write
\[
[b_{ij}] D_1 = A_\pi [a_{ij}] D_1 = A_\pi D_2 [a_{ij}] = A_\pi D_2 A_\pi^{-1} [b_{ij}].
\]
The matrix $A_\pi D_2 A_\pi^{-1}$ is again diagonal, as it is produced by permuting the diagonal elements of $D_2$ by $\pi$.
Hence for every $\la = (\la_1, \dots, \la_d) \in \bC^d$ there exists an $r \equiv r(\la) = (r_1, \dots, r_d) \in \bC^d$ such that
\[
b_{ij} \la_j = r_i b_{ij} \foral i,j = 1, \dots, d.
\]
Since $b_{ii} \neq 0$ we get that $\la_i = r_i$ for all $i=1, \dots, d$.
By choosing non-zero $\la_i \in \bC$ such that $\la_i \neq \la_j$ for $i \neq j$ we get that $b_{ij} = 0$ for $i \neq j$.
Hence the matrix $[b_{ij}]$ is diagonal.
Consequently the matrix $[a_{ij}]$ associated with the graded isomorphism
\[
\phi \colon \bC\sca{x_1,\dots,x_d}/\I \to \bC\sca{x_1,\dots,x_d}/\J
\]
is diagonal up to a permutation $\pi$ of the rows.
Therefore we get that $\phi(x_i + \I) = a_{\pi(i)i} y_{\pi(i)} + \J$ with $a_{\pi(i)i} \neq 0$, which completes the proof.
\end{proof}

\begin{remark}
Theorem \ref{T: class sps} reads the same also for monomials in commuting variables, with the proof following verbatim.
\end{remark}

\begin{remark}\label{R: alt class}
There is a direct proof of the implication [(iv) $\Rightarrow$ (vi)] of Theorem \ref{T: class sps} that does not pass through item (iii).
By Theorem \ref{T: sps cla}, this proof suffices also to give Theorem \ref{T: class sps} for bounded isomorphisms.

Recall that similarity in particular implies that $d=d'$.
Let $\{e_1, \ldots, e_d\}$ and $\{f_1, \ldots, f_d\}$ be the canonical orthonormal bases for $X(1)$ and $Y(1)$, respectively.
Fix a similarity $V=(V_n)$ for which we obtain
\[
V_1 e_j = \sum_{i=1}^d v_{ij} f_i \qfor j=1, \ldots, d.
\]
Now $V_1$ may not be a permutation and we quantify this by defining
\[
\Q(m) = \{i \in\{1, \dots, d\} \mid v_{im} \neq 0\} \qfor m=1, \dots, d.
\]
Thus $V_1$ permutes the basis elements if and only if every $\Q(m)$ is a singleton.

\smallskip

\noindent {\bf Claim.} {\it If $m_1\dots m_n$ is a forbidden word in $X$ then $i_1\dots i_n$ is a forbidden word in $Y$ for all $(i_1,\dots,i_n) \in \Q(m_1)\times \dots \times \Q(m_n)$.}

\smallskip

\noindent {\it Proof of Claim.}
Recall from Section \ref{S: SPS} that $V_n p_n = q_n V_1^{\otimes n}$.
Therefore $V_1^{\otimes n}$ maps $X(1)^{\otimes n} \ominus X(n)$ into $Y(1)^{\otimes n} \ominus Y(n)$.
Hence, if $m_1\dots m_n \in\mathfrak{F}^X_n$ then we get that $V_1^{\otimes n} (e_{m_1} \otimes \cdots \otimes e_{m_n})$ is spanned by some $f_\nu$ with $\nu \in \mathfrak{F}^Y_n$.
For every $(i_1, \ldots, i_n) \in \Q(m_1) \times \cdots \times \Q(m_n)$, there corresponds a nonzero summand in this expansion, namely
\[
v_{i_1 m_1} \cdots v_{i_n m_n} f_{i_1} \otimes \cdots \otimes f_{i_n} .
 \]
It follows that $f_{i_1} \otimes \cdots \otimes f_{i_n}$  is not in $Y(n)$ for each $(i_1, \ldots, i_n) \in \Q(m_1) \times \cdots \times \Q(m_n)$.
Hence we must have that $f_{i_1} \otimes \cdots \otimes f_{i_n} \perp Y(n)$.
Therefore $i_1 \dots i_n$ is a forbidden word, and the proof of the claim is complete.

\smallskip

Since $V_1$ is an isomorphism then $[v_{ij}]$ is in $GL_d(\bC)$.
By the Leibniz formula for the determinant there is a permutation $\pi \in S_d$ such that $v_{i\pi(i)} \neq 0$ for all $i=1, \dots, d$.
This permutation defines an automorphism on $Y$, hence we may assume that $v_{ii} \neq 0$ up to a permutation on the $\{y_1, \dots, y_d\}$.
Then by the claim above we get that $\mathfrak{F}^X_n \subseteq \mathfrak{F}^Y_n$ for all $n$.
This means that after applying a permutation on the $\{y_1, \dots, y_d\}$ we have that $\I \subseteq \J$.
By symmetry and simple dimensional considerations, it follows that $\I = \J$ after permuting the variables.

With a little more care, such arguments could also settle algebraic homomorphisms.
This requires a combination with a weaker notion of similarity $V = (V_n)$ where a uniform bound for the $V_n$ is not provided.
We leave the details to the interested reader.
\end{remark}

\begin{remark}
In Remark \ref{R: alt class} we begin with any similarity and show that it induces a permutation of the elements.
On the other hand, in the proof of [(iv) $\Rightarrow$ (vi)] of Theorem \ref{T: class sps} we isolate a similarity which is shown to be diagonal up to a permutation of the columns.
Thus it is natural to ask whether all similarities are of this form.
Equivalently if the automorphisms of a subproduct system $X$ are toric up to a permutation.
This is not true and here is a counterexample that settles this for more general homogeneous ideals beyond the monomial ones.

Recall that if $\I = (0)$ then any unitary defines an automorphism of $\fA_d$.
Therefore if $\I$ is generated by polynomials in the first symbols $x_1, \dots, x_n$ from $\{x_1, \dots, x_d\}$ with $n <d$, then any matrix $V=I_n \oplus U$ with $U$ a unitary in $M_{d-n}(\bC)$ defines an automorphism of $\A_X$ for $X = X_\I$.
Then $U$ can be chosen so that $V$ is not diagonal up to a permutation.
\end{remark}

\begin{remark}
Theorem \ref{T: class sps} implies that two tensor algebras $\A_X$ and $\A_Y$ are isomorphic as algebras if and only if they are isomorphic as topological algebras.
However we do not claim that every algebraic isomorphism is automatically continuous.
We are able to show automatic continuity under the assumption that for every $\nu \in \La^*$ there are $w,z \in \La^*$ such that $w^n z \nu \in \La^*$ for all $n \in \bZ_+$.
The proof follows verbatim the proof of Proposition \ref{P: auto cont}.
\end{remark}

\begin{remark}\label{R: rig}
Item (iii) in Theorem \ref{T: class sps} shows that rigidity of subproduct systems is connected to rigidity phenomena for the quotients $\bC\sca{x_1,\dots,x_d}/\I$ and $\bC\sca{y_1,\dots,y_d}/\J$.
This is an exceptional behaviour, and for general homogeneous ideals this is far from being true.
In particular in the commutative case there are examples of ideals $\I$ and $\J$ in $\bC[x_1, \dots, x_d]$ such that $\bC[x_1, \dots, x_d]/ \I$ is isomorphic to $\bC[x_1, \dots, x_d]/ \J$, but $\A_X$ is not (algebraically) isomorphic to $\A_Y$.
Such an example appears in \cite[Example 8.6]{DRS11}.
These examples can be pulled back to ideals in noncommuting variables by considering homogeneous ideals that contain the commutator ideal.
\end{remark}

From Theorem \ref{T: class sps} we derive that if $\A_X$ and $\A_Y$ are isomorphic then $\T_E^+$ and $\T_F^+$ are isomorphic as well, where $E$ and $F$ are the C*-correspondences related to $\I$ and $\J$, respectively.
Indeed equality of the monomial ideals up to a permutation produces a unitary equivalence by permuting the generators.
The following counterexample shows that the converse fails.

\begin{example}
Consider the monomial ideals $\I$ and $\J$ in $\bC\sca{x_1, x_2, x_3, x_4}$:

\medskip

\begin{center}
\begin{tabular}{c c}
$\begin{pmatrix}
x_1x_1 & x_1x_2 & x_1x_3 & \\
x_2x_1 & x_2x_2 & & x_2x_4 \\
x_3x_1 & x_3x_2 & x_3x_3 & & \\
x_4x_1 & x_4x_2 & & x_4x_4
\end{pmatrix}$
$\quad$ & $\quad$
$\begin{pmatrix}
x_1x_1 & x_1x_2 & x_1x_3 & x_1x_4 \\
x_2x_1 & x_2x_2 & & \\
x_3x_1 & x_3x_2 & x_3x_3 & & \\
x_4x_1 & x_4x_2 & & x_4x_4
\end{pmatrix}$
\\
\vspace{-.25cm}
\\
generating set for $\I$
$\quad$ & $\quad$
generating set for $\J$
\end{tabular}
\end{center}

\medskip

\noindent Then the resulting graphs from the quantised dynamics are:
\[
\xymatrix@R=2mm{
& 0 \ar[ddl]_3 \ar[ddr]^4 \ar@/_/[dddd]_1 \ar@/^/[dddd]^2 & & &  & 0 \ar[ddl]_3 \ar[ddr]^4 \ar@/_/[dddd]_1 \ar@/^/[dddd]^2 & \\
& & & & & & \\
3 \ar@/^/[rr]^4 \ar[ddr]_2 &   & 4 \ar@/^/[ll]^3 \ar[ddl]^1 & & 3 \ar@/^/[rr]^4 \ar[ddr]_2 &  & 4 \ar@/^/[ll]^3 \ar[ddl]^2  \\
& & & & & & \\
& 12 & & & & 12 & \\
&\text{graph for $\I$} & & & & \text{graph for $\J$} &
}
\]
where $0 = [\mt]_1$, $12 = [1]_1 = [2]_1$, $3 = [3]_1$ and $4=[4]_1$.
The only difference is in the right bottom arrow: in one graph it is labeled $1$ (meaning that $\vpi_1$ maps $4$ to $1$) and in the other it is labeled $2$.
The two graphs are isomorphic (they are the same when removing labels) hence the quantised dynamical systems are $Q$-$P$-locally piecewise conjugate.
However, the dynamical systems are not conjugate, and there is no permutation taking $\I$ onto $\J$.
Thus $\T_E^+$ and $\T_F^+$ are completely isometrically isomorphic, while $\A_X$ and $\A_Y$ are not even algebraically isomorphic.
\end{example}

\section{Comparison with other constructions}\label{S: compare}

There are several possible ways in which to associate an operator algebra with a given monomial ideal $\I$.
In this section we show how some natural candidates (several of which have been considered in the literature) differ from the algebras that we are considering herein.
In the last subsection we will show that when $\I$ comes from a sofic subshift, and $E$ is the C*-correspondence that we have associated with $\I$, then $\O_E$ can be constructed as the {\em graph C*-algebra} of the {\em follower set graph} of the subshift.

\subsection{Graph constructions}

It is natural to associate a certain graph with every monomial ideal.

\begin{definition}
The \emph{left graph $\G_{\I,l}$ of $\I$} (resp. the \emph{right graph $\G_{\I,r}$ of $\I$}) is the subgraph of the directed left (resp. right) Cayley graph of $\bF_+^d$ consisting of the vertices $\La^*$ and the edges connecting them.
\end{definition}

Therefore the set of vertices of the graph $\G_{\I,l}$ is the set of legal words, and there is a directed edge from $\mu$ to $\nu$ if and only if there is some $i \in \{1, \dots, d\}$ such that $\nu = i \mu$.
We caution the reader not to confuse the left graph with the graph $\G_\I$ constructed from the quantised dynamics in Remark \ref{R: qd graph}.

\subsubsection*{Quiver algebras}

From every directed graph $\G$ one may construct the so-called \emph{quiver algebra} $\T^+(\G)$ \cite{KatKri04, Sol04} (\emph{quiver} is just another way of saying \emph{directed graph}).
This is the tensor algebra of the C*-correspondence related to $\G$.
It was proved in \cite{KatKri04} that two quiver algebras are isomorphic as topological algebras if and only if the underlying graphs are isomorphic.
Therefore, a reasonable way to encode the information in $\I$ is to form the quiver algebra $\T^+(\G_{\I,l})$ of the left graph $\G_{\I,l}$ of the ideal $\I$.

In general, the quiver algebra $\T^+(\G_{\I,l})$ differs from the nonselfadjoint operator algebras $\T_E^+$ and $\A_X$.
To see this, recall Example \ref{E: two ideals}.
For that example we consider the ideals $\I = \sca{x_1x_2, x_2x_1}$ and $\J = \sca{x_1^2, x_2^2}$ in $\bC\sca{x_1, x_2}$ which give rise to the non unitarily equivalent C*-correspondences $E$ and $F$.
Thus $\T_E^+$ and $\T^+_F$ are not isomorphic as topological algebras by Corollary \ref{C: classification}.
On the other hand, the graphs $\G_{\I,l}$ and $\G_{\J,l}$ are the same as they consist of two directed infinite paths with a common source.
Thus the quiver algebras $\T^+(\G_{\I,l})$ and $\T^+(\G_{\J,l})$ are isometrically isomorphic.
Therefore at least one of the quiver algebras must be different as a topological algebra from the tensor algebras.

In fact Proposition \ref{P: auto cont} applies in this case to conclude that any algebraic isomorphism from a Banach algebra onto $\T_E^+$ is continuous.
It then follows from the above argument that, in general, the quiver algebra $\T^+(\G_{\I,l})$ and the tensor algebra $\T_E^+$ are not even algebraically isomorphic.

Using the same example together with Theorem \ref{T: class sps} we find that the tensor algebras $\A_X$ do not coincide (even just as algebras) in general with the quiver algebras.
The same is true when considering the right graphs due to the symmetry of $\I$ and $\J$.

\subsubsection*{Graph C*-algebras with the notation of \cite{Rae05}}

On the other hand, from every graph $\G$ one may also construct its \emph{graph C*-algebra} $\ca(\G)$.
In this section we use the recent terminology as it appears in \cite{Rae05}.
Again this algebra is different from the C*-algebras that we consider here.

The example provided by $\I = \sca{x_1x_2, x_2x_1}$ in $\bC\sca{x_1, x_2}$ does the job.
Indeed, the left graph of $\G_{\I,l}$ can be drawn as follows
\[
\xymatrix{
\cdots & \bullet^{v_{-2}} \ar[l]^{f_{-3}} & \bullet^{v_{-1}} \ar[l]^{f_{-2}}  & \bullet^{v_0} \ar[l]^{f_{-1}} \ar[r]_{f_1} & \bullet^{v_1} \ar[r]_{f_2} & \bullet^{v_2} \ar[r]_{f_3} & \cdots
},
\]
where the central vertex corresponds to the empty word.
Let $\{e_n\}$ be the standard o.n. basis of $\ell^2(\bZ)$.
Then a Cuntz-Krieger family $\{P_n, L_m \mid n \in \bZ, m \in \bZ^*\}$ is given by the following finite rank operators
\begin{align*}
P_n = \Theta_{e_n,e_n} \foral n \in \bZ,
\qand
L_m =
\begin{cases}
\Theta_{e_{m-1},e_{m}} & \text{ for } m \geq 1, \\
\Theta_{e_{m+1},e_{m}} & \text{ for } m \leq -1,
\end{cases}
\end{align*}
where $\Theta_{x,y}(z) = \sca{z,x} y$ (in contrast to what the symbol $\Theta$ means for C*-correspon\-dences).
By the gauge invariant uniqueness theorem, this Cuntz-Krieger family integrates to a faithful representation of the graph algebra.

The graph C*-algebra $\ca(\G_{\I,l})$ is generated by compact operators hence it contains only compact operators.
Since $\Theta_{e_m,e_n} \in \ca(\G_{\I,l})$ for all $m,n \in \bZ$, we find $\ca(\G_{\I,l}) = \K(\ell^2(\bZ))$.
On the other hand the graph algebra of $\ca(T)$ is irreducible and contains a non compact operator, thus it cannot be $*$-isomorphic to $\ca(\G_{\I,l})$ (it is not CCR).
The graph algebra is also not isomorphic to $\ca(T)/\K$, as the latter is $*$-isomorphic to $C(\bT) \oplus C(\bT)$ being generated by two partial isometries $v,u$ satisfying $v^*v = v v^* \perp u^* u = u u^*$.

The same is true when considering the graph algebra of $\G_{\I,r}$ which coincides with $\G_{\I,l}$ due to the symmetry of $\I$.

\subsubsection*{Graph C*-algebras with older notation}

The notation in graph algebras has changed recently.
Previously the role of the source in the Cuntz-Krieger equations was played by the range and vice versa.
Here we mention that even in this case the C*-algebras are again different.

Indeed, let again $\I = \sca{x_1x_2, x_2x_1}$ in $\bC\sca{x_1, x_2}$ with the graph
\[
\xymatrix{
\cdots & \bullet^{v_{-2}} \ar[l]^{f_{-3}} & \bullet^{v^{-1}} \ar[l]^{f_{-2}}  & \bullet^{v_0} \ar[l]^{f_{-1}} \ar[r]_{f_1} & \bullet^{v_1} \ar[r]_{f_2} & \bullet^{v_2} \ar[r]_{f_3} & \cdots
},
\]
where the central vertex corresponds to the empty word.
Let $\{e_n\}$ be the standard o.n. basis of $\ell^2(\bZ)$, let $H = \ell^2(\bZ) \oplus \bC$.
Denote by $f$ a unit vector in the summand $\bC$.
Then a Cuntz-Krieger family $\{P_n, L_m \mid n \in \bZ, m \in \bZ^*\}$ on $H$ is given by the following finite rank operators $L_{-1} = \Theta_{e_{-1},f}$ and
\begin{align*}
P_n =
\begin{cases}
\Theta_{e_{0},e_0} + \Theta_{f,f}  & \text{ if } n = 0,\\
\Theta_{e_{n},e_n} & \text{ for } n \neq 0,
\end{cases}
\text{ and  } \,
L_m =
\begin{cases}
\Theta_{e_{m},e_{m-1}} & \text{ for } m \geq 1, \\
\Theta_{e_{m},e_{m+1}} & \text{ for } m \leq -2, \\
\end{cases}
\end{align*}
In particular this Cuntz-Krieger family integrates to a faithful representation of the graph algebra.
In this case the graph algebra is again $\K \oplus \K$.
Indeed, the right branch of the graph generates a C*-algebra isomorphic to $\K(\ell^2(\bN))$, and the left branch generates another copy of $\K(\ell^2(\bN))$ that is orthogonal to the first copy.
Reasoning as above we arrive at the conclusion that the ``old'' graph C*-algebras are also different from the C*-algebras that we consider.

\subsection{Dynamical systems}\label{Ss: dyn sys}

Let $\La$ be a two-sided subshift, as described in Section \ref{Ss: subshifts}.
Then we have the topological dynamical system $(\La,\si)$ defined by the left shift $\si$ on $\La$.
Reasonable candidates to encode this system are the C*-crossed product $C(\La) \times_\si \bZ$ and the (nonselfadjoint) semicrossed product $C(\La) \rtimes_\si \bZ_+$ \cite{Pet84}.

\subsubsection{Semicrossed product}

The semicrossed product $C(\La) \times_\si \bZ_+$ is defined as the universal nonselfadjoint operator algebra generated by $\Bv^n f$ for $f \in C(\La)$ and $n \in \bZ_+$, where $\Bv$ is a contraction such that $f \cdot \Bv = \Bv f\si$.
Consequently for any $\la = (x_n) \in \La$ we have that the mapping
\[
\Phi(\sum_n \Bv^n f_n) = \begin{bmatrix} f_0(\la) & 0 \\ f_1(\la) & f_0 \si(\la) \end{bmatrix}
\]
defines a completely contractive representation of $C(\La) \times_\si \bZ_+$.
Furthermore conjugate subshifts have completely isometric isomorphic semicrossed products.
A stronger converse is given by Davidson and Katsoulis \cite{DavKat08}: algebraic isomorphism of semicrossed product implies conjugacy of the associated C*-dynamics.

Recall that conjugate subshifts may be defined on a different number of symbols.
Since the number of symbols is an invariant for the tensor algebra $\A_X$ related to $\La$, then $\A_X$ cannot be algebraically isomorphic to $C(\La) \times_\si \bZ_+$.

On the other hand suppose that $\T_E^+$ and $C(\La) \times_\si \bZ_+$ are algebraically isomorphic.
Then we may proceed as in Claim 6 of the appendix and find a column $(c_{11}, \dots, c_{d1})^t \in \bC^d$ that is a left invertible matrix.
However this is a contradiction unless $d=1$.
In this (only) case both algebras $\T_E^+$ and $C(\La) \times_\si \bZ_+$ are simply the disc algebra $A(\bD)$.

\subsubsection{Crossed product}

Furthermore we compare $C(\La) \rtimes_\si \bZ$ with $\ca(T)$ and $\ca(T)/\K(\F_X)$.
This provides also a comparison with $\O_E$.
Since $\O_E$ is a quotient of $\T_E$ this provides also a comparison with $\T_E$.
In particular we claim that in general the C*-crossed product differs from these C*-algebras.

For the first counterexample let $\I$ be the monomial ideal in $\bC\sca{x_1, x_2}$ generated by $\{x_1x_2, x_2x_1\}$.
Then the system $(\La,\si)$ is identified with the identity map on two points.
Therefore $C(\La) \rtimes_\si \bZ \simeq C(\{0,1\}) \otimes C(\bT)$, thus it is commutative.
However both $\O_E \simeq \ca(T)$ and $\T_E$ for this example are not commutative.
Indeed we have that $T_1^* T_2 = 0$ whereas $T_2 T_1^* e_1 = e_2$.

For the second counterexample let $\I$ be the trivial zero ideal in $\bC\sca{x_1, x_2}$.
Then $\O_E$ is the Cuntz algebra $\O_2$ on two generators.
However the system $(\La,\si)$ contains fixed points.
For example let the point $(x_n)$ with $x_n = 1$ for all $n \in \bZ$.
Therefore $C(\La) \rtimes_\si \bZ$ contains non-trivial (Fourier-invariant) ideals and cannot be $*$-isomorphic to the simple C*-algebra $\O_E \simeq \ca(T)/ \K(\F_X)$.

\subsection{Subshift constructions}\label{Ss: subshift compare}

In the following presentation we will follow as much as possible the notation of each work to facilitate comparison.
We point out that it should not be confused with the notation we have fixed for our analysis.

\subsubsection{Matsumoto's approach \cite{Mat97}}

Given a two-sided subshift $(\La,\si)$ Matsumoto \cite{Mat97} builds the C*-algebras $\ca(T)$ and $\ca(T)/ \K(\F_X)$.
The latter is denoted by $\O_\La$ and it is the main subject in Matsumoto's work.
As we have illustrated $\O_E$ is not $\ca(T)/\K(\F_X)$ in general.
Even more the restriction of the quotient map on the space generated by the $T_i$ (or on the C*-algebra $A$) is not isometric unless $E$ is injective.
This follows by the remarks in Section \ref{S: corre} and Proposition \ref{P: dichotomy}.

For a concrete example (and ad-hoc arguments) consider the forbidden words $\{12,21\}$ in the shift space $\{1,2\}$.
Indeed in this case we get that $T_1^*T_1 + T_2^*T_2 = I + P_{\mt}$ therefore
\begin{align*}
\nor{T_1 + T_2}^2 = \nor{T_1^*T_1 + T_2^*T_2} = 2,
\end{align*}
whereas
\begin{align*}
\nor{q(T_1 + T_2)}^2 & = \nor{q(T_1^*T_1 + T_2^*T_2)} = 1,
\end{align*}
for the quotient mapping $q \colon \ca(T) \to \ca(T)/ \K(\F_X)$.

\subsubsection{Carlsen's approach \cite{Car08}}

Carlsen \cite{Car08} revisited the C*-algebras that arise from a right subshift $\mathsf{X}$.
His approach is directed in giving a C*-algebra that has an additional universal property \cite[Introduction]{Car08}.
To do this he constructs an injective C*-correspondence $\mathsf{H}_\mathsf{X}$ of which he takes the relative Cuntz-Pimsner algebra with respect to the ideal $\phi_{\mathsf{H}_\mathsf{X}}^{-1}(\K(\mathsf{H}_\mathsf{X}))$, following the work of Pimsner \cite{Pim97} and Schweizer \cite{Sch01}.
At this point we would like to inform the reader that we could not trace reference no.15 of \cite{Sch01}; this reference is essential for the proof of the main theorem of \cite{Sch01}.

Carlsen's C*-correspondence $\mathsf{H}_\mathsf{X}$ is over the C*-algebra $\wt{D}_\mathsf{X}$, which differs from $A$ of Proposition \ref{P: A is AF}; therefore $\mathsf{H}_\mathsf{X}$ differs from $E$.
Let us give the definition of $\wt{D}_\mathsf{X}$.
On the topological space $\mathsf{X}$ define the sets
\[
C(\mu, \nu) = \{ \nu w \in \mathsf{X} \mid \mu w \in \mathsf{X}, w \in \mathsf{X} \},
\]
and let $\wt{D}_\mathsf{X}$ be the C*-subalgebra of the bounded functions on $\mathsf{X}$ generated by the characteristic functions on $C(\mu,\nu)$.

In the particular case of the forbidden words $\{12, 21\}$ on the symbol space $\{1, 2\}$ we see that $\mathsf{X}$ consists of two points and $\wt{D}_\mathsf{X} = C(\{0,1\})$.
However for the same subshift the C*-algebra $A$ of Proposition \ref{P: A is AF} is $C(\{0,1,2\})$.

\subsubsection{Carlsen-Matsumoto approach \cite{CarMat04}}

Matsumoto \cite{Mat99}, and Carlsen and Matsumoto \cite{CarMat04} focus on the C*-algebra $q(A)$ for the canonical quotient map $q \colon \ca(T) \to \ca(T)/ \K(\F_X)$.
In his early work \cite{Mat99} Matsumoto gives a description for the compact Hausdorff space that identifies the commutative C*-algebra $q(A)$.
Later Carlsen and Matsumoto \cite{CarMat04} revisited this claim which they show to be incorrect in general.
They make several points.

First of all they consider right subshifts $X_\La$ that arise in the following way.
Given a two-sided subshift $(\La,\si)$ on the symbol space $\{1, \dots, d\}$ let
\[
X_\La = \{(x_1, x_2, \dots) \mid \exists (y_n) \in \La . y_n = x_n \foral n \geq 1\}.
\]
This is the ``positive part'' of the elements in $\La$.
For $l \geq 0$ they define the equivalence relation $\sim_l$ on $X_\La$ by
\[
\mu \sim_l \nu \Leftrightarrow \{w \in \La^*_l \mid w \mu \in X_\La\} = \{w \in \La^*_l \mid w \nu \in X_\La\}.
\]
Define $\Om_l : = X_\La / \sim_l$ and note that there is an onto mapping $\Om_{l+1} \to \Om_l$. Then let $\Om$ be the projective limit of $\Om_l$ with respect to these onto maps.
It worths comparing these definitions with the ones in Section \ref{Ss: qd} where we consider elements in $\La^*$ instead of $X_\La$.

Secondly they show \cite[Lemma 2.2]{CarMat04} that $C(\Om)$ coincides with $\ca(S_\mu^* S_\mu \mid \mu \in \La^*)$ where the $S_i$ act on the Hilbert space $H = \sca{ e_{(x_n)} \mid (x_n) \in X_\La}$ by
\[
S_i e_{(x_n)}= \begin{cases} e_{(i,(x_n))} & \qif (i,(x_n)) \in X_\La, \\ 0 & \qotherwise. \end{cases}
\]

Moreover they identify \cite[Lemma 2.9]{CarMat04} the C*-algebra $q(A)$ with the continuous functions on another space $\Om^*$.
Let $\Om^*_l$ be the set of the equivalence classes on the finite words $\mu$ in $\La_l^*$ for which the set $\{w \in \La^* \mid w \sim_l \mu\}$ is infinite.
Then $q(A)$ is identified with the continuous functions on the projective limit of the $\Om_l^*$.

Furthermore they show \cite[Corollary 3.3]{CarMat04} that the mapping $q(T_i) \mapsto S_i$ defines a $*$-isomorphism between $\ca(T)/\K(F_\La)$ and the C*-algebra $\ca(S)$ generated by the $S_i$, under the assumption that the subshift satisfies condition (*) and condition (I).
Condition (I) is crucial for the results in \cite{CarMat04}.
For example \cite[Corollary 3.3]{CarMat04} is true for the full shift on $\Si=\{1,2\}$.
But it fails to be true in general.
For the forbidden words $\{12, 21\}$ on the symbol set $\{1, 2\}$ we see that $S_1 = S_1^*S_1$ since $X_\La$ consists of the points $(11\dots)$ and $(22\dots)$.
If there was a $*$-isomorphism such that $q(T_i) \to S_i$ then $\ca(S)$ would admit a gauge action, say $\{\be_z\}_{z \in \bT}$.
Then
\[
0 = \be_z(S_1 - S_1^*S_1) = z S_1 - S_1^*S_1 = (z-1)S_1,
\]
for every $z \in \bT$ which leads to the contradiction $S_1=0$.

The interested reader should be warned here that several results on subshifts hold for $\ca(S)$ whereas other hold for $\ca(T)/\K(F_\La)$.
Carlsen and Matsumoto provide a very illuminating discussion and description of their results in the introduction of \cite{CarMat04}.

\subsection{$\O_E$ as a graph C*-algebra}\label{Ss: OE as a graph algebra}

Let $\Lambda$ be a two-sided sofic subshift.
If $\I$ is the  monomial ideal generated by forbidden words in $\La$, 
then we can form the C*-algebra $\O_E$ for the associated C*-correspondence $E$.
We thus obtain a new C*-algebra that is constructed out of a subshift $\La$, and one may ask whether this algebra is a reasonable one to consider.
Our goal in this section is  to show that $\O_E$ arises from $\La$ via two well known and natural constructions: roughly speaking, it is the {\em graph C*-algbera} of the {\em follower set graph} of $\La$.
The same analysis can be carried out for one sided sofic subshifts.

Let us introduce the follower set graph of a subshift $\La$.
A useful reference for this material is \cite[Chapter 3]{LinMar95}, but we warn the reader that we reverse some of the notation.
If $\La$ is a two-sided subshift, then the {\em follower set} of $\mu \in \La^*$ is the set
\[
F_\La(\mu) := \{w \in \La^* \mid w \mu \in \La^*\}.
\]
Note that different allowable words can have the same follower set.
In fact, when $\La$ is sofic then there are only finitely many follower sets \cite[Theorem 3.2.10]{LinMar95}.
The follower set graph is then defined to be the labeled graph whose vertices are parameterized by the follower sets, and there is exactly one edge labeled $i$ from $F(\mu)$ to $F(i \mu)$ when $i \mu$ is allowable.
We allow the empty word to have its own follower set --- this may or may not coincide with the follower set of another word.

As in Remark \ref{R: sofic}, when $\La$ is sofic then there exists some $k$ such that $\Om_k = \Om_n = \Om$ for all $n \geq k$ and so the quantized dynamics are defined on a discrete space.
In \cite[Section 5]{BarKak} it was observed that one may identify the follower set graph with the labeled graph of the quantised dynamics (as in Remark \ref{R: graph}).
That is, the quantised dynamics, when viewed as a labeled graph, give us the familiar follower set graph of a subshift.

Now, from the labeled follower set graph of $\La$ we obtain a directed unlabeled graph by simply erasing all the labels.
Note here that we do not identify different edges, we just forget about their labels.
Let us call this graph {\em the underlying graph of the follower set graph}, and let it be denoted by $G = (G^0, G^1)$.
Thus, $G^0$ is the finite set $\Om$, and $G^1$ consists of all pairs of points $(u,v)  \in \Om \times \Om$, for which there is some map $\varphi_i$ with $\varphi_i(u) = v$.

In Section \ref{Ss: topological} we saw that the associated C*-correspondence $E$ coincides with Katsura's \cite{Kat04-top} topological graph determined by the quantised dynamical system.
But under the assumption that $E$ comes form a sofic two-sided subshift, this topological graph is just the finite underlying graph of the follower set graph $G$ discussed in the previous paragraph.
So $E$ is the C*-correspondence of $G$, and hence we conclude (using \cite[Proposition 3.10]{Kat03}) that $\O_E \simeq \ca(G)$, i.e, $\O_E$ is the graph C*-algebra of the follower set graph.

Since the complete details of the isomorphism $\O_E \simeq \ca(G)$ are hard to find in the literature, and also because our notation differs from that used by Katsura in \cite{Kat03}, we pause to justify and make explicit this isomorphism.

\begin{proposition}
Let $E$ be the C*-correspondence of a sofic subshift $\La$, and let $G = (G^0, G^1)$ be the underlying unlabeled graph of the follower set graph of $\La$.
Then $\O_E$ is $*$-isomorphic to $\ca(G)$.
\end{proposition}
\begin{proof}
Let $(\pi,t)$ be the universal covariant representation of $(A,E)$ in $\O_E$ and define the families
\[
\P = \{\pi(a) \mid a \textrm{ is a minimal projection in } A\} ,
\]
and
\[
\S = \{t(T_i a) \mid i = 1, \ldots, d \textrm{ ; } a \leq T_i^*T_i \textrm{ ; } a\,  \textrm{  is a minimal projection in } A\}.
\]
Since $\pi(T_i^* T_i a) = t(T_i a)^* t(T_ia)$, then $t(T_i a) \neq 0$ if and only if $a \leq T_i^* T_i$, when $a$ is minimal.
As the minimal projections of $A$ sum up to the identity we have that
\[
t(T_i) = \sum \{ t(T_i a) \mid a \leq T_i^*T_i \textrm{ ; } a\,  \textrm{  is a minimal projection in } A \}.
\]
We will show that the family $(\P,\S)$ is a Cuntz-Krieger family for $G$.
As $(\P,\S)$ is generating for $\O_E$ and admits a gauge action, the gauge invariant uniquesness theorem (for graph C*-algebras) will then show that $C^*(G)$ and $\O_E$ are $*$-isomorphic.

The minimal projections in the finite dimensional algebra $A$ are precisely the characteristic functions of points in $\Om = G^0$.
Hence the family $\P$ consists of mutually orthogonal (nonzero) projections corresponding to the vertices in $G^0$.

Next, we shall show that every element in $\S$ is a nonzero partial isometry corresponding to an edge in $G^1$.
Let $a$ be the minimal projection corresponding to a point $v_a \in G^0 = \Om$.
If $a \leq T^*_i T_i$, then $v_a \in \Om^i$.
Thinking of $T_i$ as in the picture given in Section \ref{Ss: topological},
we have that $T_i$ corresponds to the characteristic function of all the edges labeled $i$ emitted from $\Om^i$ (see the proof of Proposition \ref{P: E top cor} and the discussion above it).
Therefore, $T_i a$ is the characteristic function of an edge $e \in G^1$, (which originally had a label $i$) coming out of the point $s(e) = v_a \in \Om^i$, and going into some point $\varphi_i(v_a) = r(e)$.
Moreover, as $a \leq T_i^* T_i$, we find that $a^*T^*_i T_i a = a^* a = a\neq 0$, so we get
\[
t(T_i a)^* t(T_i a) = \pi(\langle T_i a, T_i a \rangle)  = \pi(a^*T_i^* T_i a) = \pi(a) .
\]
This shows that all elements of $\S$ are partial isometries which satisfy the first Cuntz-Pimsner relation
\[
S_e^* S_e = P_{s(e)}.
\]
It remains to show the second Cuntz-Pimsner relation, namely
\[
\sum_{e \in r^{-1}(v)} S_e S_e^* = P_v \qfor  \,\,\, |r^{-1}(v)| \neq 0.
\]
Fix a vertex $v \in G^0 = \Om$ that is not source and let $b \in A$ denote the minimal projection corresponding to this point.
We need to show that
\[
\sum_{\varphi_i(v_a) =  v} t(T_i a) t(T_i a)^*  = \pi(b),
\]
where $v_a \in G^0$ is the point corresponding to a minimal projection $a \in A$, such that $(v_a,v) \in G^1$.
We sum over all $i$ and $v_a$ such that $\phi_i(v_a) = v$, and this amounts to summing over all edges $e \in G^1$ for which $v = r(e)$.
Now, by definition of $\psi_t$ (see Section \ref{S: corre}),
\[
\sum_{\varphi_i(v_a) =  v} t(T_i a) t(T_i a)^*  = \psi_t (\sum_{\varphi_i(v_a) =  v}\Theta_{T_i a, T_i a} ).
\]
We will now show that
\[
\sum_{\varphi_i(v_a) =  v}\Theta_{T_i a, T_i a} = \phi_E(b).
\]
To see this, we fix $\xi \in E$, and compute
\[
\sum_{\varphi_i(v_a) =  v}\Theta_{T_i a, T_i a} \xi = \sum_{\varphi_i(v_a) =  v} T_i a \langle T_i a, \xi \rangle.
\]
Now, by using the topological graph picture described in Section \ref{Ss: topological}, we obtain
\[
\langle T_i a, \xi \rangle (u) = \sum_{f\in s^{-1}(u)} \ol{T_i(f) a(s(f))}\xi(f) .
\]
This expression will be zero when $u \neq v_a$, and when $u = v_a$, we have that
\[
\langle T_i a, \xi \rangle (v_a) = \sum_{f\in s^{-1}(v_a)} \ol{T_i(f)}\xi(f) = \xi(e)
\]
for the one edge $e$ which is the unique edge leaving $v_a$ with label $i$.
Therefore $\Theta_{T_i a, T_i a}$ is the projection onto the subspace spanned by $T_i a$.
Now, recall that $T_i a$ is the characteristic function on of the unique edge leaving $v_a$ with label $i$, and that, by assumption, this edge must go into $v$.
This means that for every $f \in G^1$,
\[
\left[\sum_{\varphi_i(u) =  v} \Theta_{T_i a, T_i a} (\xi) \right](f)
=
\begin{cases} 0 & r(f) \neq v, \\
\xi(f) & r(f) = v. \end{cases}
\]
But the left action of $b$ on $\xi$ is given by
\[
\left[\phi_E(b) (\xi) \right](f) = b(r(f)) \xi(f),
\]
which has the same effect, since $b$ is the characteristic function of $v$.
We conclude that
$\sum_{\varphi_i(u) =  v} \Theta_{T_i a, T_i a} = \phi_E(b)$, as we set out to show.
Thus, by putting everything together and using covariance of the representation $(\pi,t)$, we obtain
\[
\sum_{\varphi_i(u) =  v} t(T_i a) t(T_i a)^*  = \psi_t \left(\phi_E(b)\right) = \pi(b)
\]
which completes the proof.
\end{proof}

\begin{remark}
It will be interesting to investigate the dependence of $\O_E \cong C^*(G)$ on the dynamical aspects of the topological dynamical system $(\La, \sigma)$.
We leave this for future work.
\end{remark}

\section{Appendix}

Davidson and Katsoulis \cite[Theorem 3.22]{DavKat11} show that algebraic isomorphisms of the tensor algebras of two classical multivariable systems implies piecewise conjugacy of the systems.
Let us provide here an alternative proof of this breakthrough result.
Our approach relies on appropriate compressions of the Fock representations.
The proof follows a series of steps and let us start by describing the objective.

Let $(X,\si) \equiv (X,\si_1,\dots, \si_d)$ be a classical system, i.e. $X$ is a locally compact Hausdorff space and every $\si \colon X \to X$ is a proper continuous map.
If $\mu = \mu_1 \dots \mu_d \in \bF_+^d$ we write $\si_\mu$ for the composition $\si_{\mu_1} \cdots \si_{\mu_n}$.
The tensor algebra $\T_{(X,\si)}^+$ is defined as the (universal) nonselfadjoint operator algebra generated by $\Bs_1 f, \dots, \Bs_d f$ with $f \in C_0(X)$, such that $[\Bs_1, \dots, \Bs_d]$ is a row contraction, and $f \Bs_j = \Bs_j f \si_j$ for all $f \in C_0(X)$ and $j=1, \dots, d$.
This is slightly different from \cite[Definition 1.2]{DavKat11} and the reader is addressed to Remark \ref{R: DK tensor}.
The universal property of $\T_{(X,\si)}^+$ suggests that if $\pi \colon C_0(X) \to \B(H)$ is a representation, and there is a row contraction $[s_1, \dots, s_d] \in \B(H^d,H)$ such that $f s_i = s_i f\si_i$ then the mapping $\Bs_i f \mapsto s_i \pi(f)$ extends to a completely contractive representation of $\T_{(X,\si)}^+$ in $\B(H)$.

Let $(Y, \tau) \equiv (Y, \tau_1, \dots, \tau_{d'})$ be a second classical system.
We will use the notation $\Bt_1 g, \dots, \Bt_{d'} g$ for the generators of $\T_{(Y,\tau)}^+$.
Fix an algebraic isomorphism
\[
\Phi \colon \T_{(X,\si)}^+ \to \T_{(Y,\tau)}^+.
\]
An important point to keep in mind is that $\Phi$ is automatically continuous \cite[Corollary 3.6]{DavKat11}.
This is not immediate and follows by the trick of Donsig, Hudson and Katsoulis \cite{DHK01}.
We aim to show that there is a homeomorphism $\ga_s \colon Y \to X$ and that for every $y\in Y$ there is a permutation $\pi \in S_d$ and a neighbourhood $\U_\pi$ such that
\[
\ga_s \tau_i|_{\U_\pi} = \si_{\pi(i)} \ga_s|_{\U_\pi} \foral i=1, \dots, d.
\]

Our first task is to find the homeomorphism $\ga_s \colon Y \to X$.
We write $\fM_{(X,\si)}$ for the character space of $\T_{(X,\si)}^+$.
If $\theta \in \fM_{(X,\si)}$ then $\theta|_{C_0(X)}$ is a character, and thus equals to $\ev_x$ for some $x \in X$.

\smallskip

\noindent {\bf Claim 1.} {\it Let $\theta$ be a character of $\T_{(X,\si)}^+$ such that $\theta|_{C_0(X)} = \ev_x$ for some $x \in X$.
Then $\theta$ is completely determined by
\[
(\la_1, \dots, \la_d) : = (\theta(\Bs_1 f), \dots, \theta(\Bs_d f))
\]
for any $f \in C_0(X)$ such that $\theta(f) = f(x) = 1$, in the sense that for every $\mu = \mu_1 \dots \mu_d \in \bF_+^d$ and $g \in C_0(X)$ we have
\begin{align*}
\theta(\Bs_\mu g) =
\begin{cases}
\la_\mu g(x) & \text{ if } \si_{\mu_i}(x) = x \foral \mu_i, \\
0 & \text{ otherwise},
\end{cases}
\end{align*}
where $\la_{\mu_1 \dots \mu_n} = \la_{\mu_1} \cdots \la_{\mu_n}$.
In particular $\la_i = 0$ if $\si_i(x) \neq x$.}

\smallskip

\noindent {\it Proof of Claim 1.} First we compute
\[
\theta(\Bs_i g) = \theta(\Bs_i g) \theta(f) = \theta(\Bs_i f g ) = \theta(\Bs_i f) \theta(g),
\]
which implies that the tuple $(\la_1, \dots, \la_d)$ does not depend on the choice of the function $f$.

Secondly observe that if $\si_i(x) \neq x$ then $\la_i = 0$.
Indeed let $h \in C_0(X)$ such that $h \si_i(x) = 0$ and $h(x) = 1$ for which we have
\[
\la_i = \theta(\Bs_i f) = \theta(h) \theta(\Bs_i f) = \theta( \Bs_i f) \theta(h \si_i) = 0.
\]

Suppose that the claim is true for all words of length less than $k$ and let $\mu = \mu_1 \ldots \mu_{k+1}$ be of length $k+1$.
If $\si_{\mu_i}(x) = x$ for all $i$ then we can write $\mu = w \nu$ for a word $w \neq \mt$ of length strictly less than $k +1$ such that $\si_w(x) = x$, and $\si_\nu(x)=x$.
Then we get that
\begin{align*}
\theta(\Bs_\mu g) = \theta(f) \theta(\Bs_\mu g) = \theta(\Bs_w f \si_w) \theta(\Bs_\nu g) = \la_w f(\si_w(x)) \la_\nu g(x) = \la_\mu g(x),
\end{align*}
by the inductive hypothesis.
On the other hand let $\mu_n$ be the first letter from left to right in the word $\mu$ for which $\si_{\mu_1 \dots \mu_n}(x) \neq x$.
If $n<k + 1$ then let $w = \mu_1 \dots \mu_n$ and $\nu = \mu_{n+1} \dots \mu_{k+1}$ for which we have that $\si_w(x) \neq x$.
Let $h \in C_0(X)$ such that $h(x) = 1$ and $h \si_w(x) = 0$ and compute
\begin{align*}
\theta(\Bs_\mu g) = \theta(h) \theta(\Bs_\mu g) = \theta(\Bs_w h \si_w) \theta(\Bs_\nu g) = \la_w h(\si_w(x)) \theta(\Bs_\nu g) = 0.
\end{align*}
If $n = k+1$ then the word $\mu = \mu_1 \dots \mu_{k+1}$ is such that
\[
x =\si_{\mu_1}(x) = \si_{\mu_1 \mu_2}(x) = \dots = \si_{\mu_1 \dots \mu_{k}}(x)
\]
but $x \neq \si_{\mu}(x)$, so it follows that $\si_{\mu_{k+1}}(x) \neq x$.
Therefore we get that $\theta(\Bs_{\mu_{k+1}} g) = 0$ and so
\[
\theta(\Bs_\mu g) = \theta(f) \theta(\Bs_\mu g)= \theta(\Bs_{\mu_1 \dots \mu_{k}} f\si_{\mu_1} \cdots \si_{\mu_k}) \theta(\Bs_{\mu_{k+1}} g) = 0.
\]
This ends the proof of Claim 1. \hfill{$\blacksquare$}

\smallskip

The next claim establishes the connection between the evaluation functionals on $X$ and the evaluation functionals on $Y$.
The proof of the first part of the claim relies on \cite{DavKat11} and is included for completeness.
It is the second part of the claim that plays a central role in our analysis.

\smallskip

\noindent {\bf Claim 2.} {\it The homomorphism $\Phi$ induces a homeomorphism $\ga_s \colon Y \to X$.
In particular, if $\theta \in \fM_{(Y,\tau)}$ with $\theta|_{C_0(Y)} = \ev_y$ then $\theta \Phi \in \fM_{(X,\si)}$ with $\theta \Phi|_{C_0(X)} = \ev_{\ga_s(y)}$.}

\smallskip

\noindent {\it Proof of Claim 2.} The homomorphism $\Phi$ defines a homeomorphism
\[
\ga_c \colon \fM_{(Y, \tau)} \to \fM_{(X,\si)} : \theta \mapsto \theta \Phi.
\]
It is not immediate but follows as in \cite[Lemma 3.9]{DavKat11} that the maximal analytic sets inside $\fM_{(X,\si)}$ are precisely the sets $\{ \theta \in \fM_{(X,\si)} \mid \theta|_{C_0(X)} = \ev_x\}$ parameterized by the points $x \in X$.
In particular each one is homeomorphic to a ball of dimension equal to the number of the $\si_i$ that fix $x$.
By the Fourier transform the elements in the operator algebras are generalised analytic polynomials.
Consequently $\Phi$ is weakly biholomorphic and therefore $\ga_c$ maps maximal analytic sets onto maximal analytic sets.
Thus $\ga_c$ induces a set map $\ga_s \colon Y \to X$ by collapsing every set of characters $\theta$ that satisfy $\theta|_{C_0(Y)} = \ev_y$ to a single point.
Therefore if $\ga_c(\theta) = \theta \Phi = \theta'$ and $\theta|_{C_0(Y)} = \ev_y$ then $\theta'|_{C_0(X)} = \ev_x$ with $\ga_s(y) = x$.
This set map is moreover a homeomorphism.
To see this, note that $\ga_s$ is the adjoint of
\[
C(X) \xrightarrow{i} \T^+_{(X,\si)} \xrightarrow{\Phi} \T^+_{(Y,\tau)}  \xrightarrow{E_0} C(Y),
\]
where $i$ denotes inclusion and $E_0$ the conditional expectation onto $C(Y)$.
This completes the proof of Claim 2. \hfill{$\blacksquare$}

\smallskip

We now turn to showing that the homeomorphism $\ga_s$ defined above induces the piecewise conjugacy.
It suffices to show that $\ga_s$ implements a piecewise conjugacy in a neighbourhood of every $y \in Y$.
The key is to work with germs.
For a fixed $y \in Y$, we write $\tau_i \sim \tau_j$ if $\tau_i$ and $\tau_j$ are maps on $Y$ for which there exists some neighbourhood $U \ni y$ such that $\tau_i |_U = \tau_j |_U$.
The equivalence class of $\tau_i$ is called \emph{the germ of $\tau_i$ at $y$}.

Compare the germ of $\tau_1$ at $y$ to the germs of the mappings $\ga_s^{-1} \si_1 \ga_s, \dots$, $\ga_s^{-1}\si_d \ga_s$ and $\tau_1, \dots, \tau_{d'}$ at $y$.
After a possible re-enumeration we find that there is a neighbourhood $V = V_1 \ni y$ and there are $k,l$ so that:
\begin{quoting}
(i) $\ga_s^{-1} \si_1 \ga_s(z) = \dots = \ga_s^{-1}\si_k\ga_s(z) = \tau_1(z) = \dots = \tau_l(z)$ for all $z \in V$; and

\smallskip

\noindent (ii) for any $\ga_s^{-1}\si_i\ga_s$ with $i>k$ (resp. $\tau_j$ with $j>l$) there is a net $y_\la \in V$ such that $y_\la \to y$ with $\ga_s^{-1} \si_i \ga_s(y_\la) \neq \tau_1(y_\la)$ for all $\la \in \La$ (resp. $\tau_j (y_\la) \neq \tau_1(y_\la)$ for all $\la \in \La$).
\end{quoting}
The main objectives then are to show that $k=l$ and that $d = d'$.
Indeed in this case we have that $\si_1, \ldots, \si_k$ and $\tau_1, \ldots, \tau_k$ are conjugate on $V_1$.
After repeating the process for $\tau_2, \dots, \tau_d$ we will have that the intersection $V_1 \cap \cdots \cap V_d$ gives the required $\U_\pi$ on which up to the permutation $\pi$ the tuple $\si_1|_{\U_\pi}, \ldots, \si_d|_{\U_\pi}$ is conjugate to $\tau_1|_{\U_\pi}, \ldots, \tau_d|_{\U_\pi}$.
The fact that $d=d'$ will ensure that every $\si_j$ is paired with some $\tau_i$ (and vice versa), and that $\pi$ is a permutation on $d$ symbols.

\smallskip

We now proceed to the proof.
For any point $y \in Y$ we can define the orbit representation $\pi_y \colon C_0(Y) \to \B(\ell^2(\bF^+_{d'}))$ by
\[
\pi_y(g) = \diag\{g \tau_{w}(y) \mid w \in \bF_{d'} \} \foral g \in C_0(Y).
\]
By setting $V(\Bt_i) \in \B(\ell^2(\bF^+_{d'}))$ so that $V(\Bt_i)(e_\mu) = e_{i \mu}$ for all $i = 1, \dots d'$ and $\mu \in \bF^+_{d'}$ we obtain the contractive homomorphism $(V \times \pi_y)$ of $\T_{(Y,\tau)}^+$.
Define the algebraic homomorphism
\[
\Phi_y\colon  \T_{(X,\si)}^+ \stackrel{\Phi}{\longrightarrow} \T_{(Y,\tau)}^+ \stackrel{V \times \pi_y}{\longrightarrow} \B(\ell^2(\bF^+_{d'})) \stackrel{E}{\longrightarrow} M_{d' + 1}(\bC),
\]
where $E$ is the compression by the projection onto the subspace of $\ell^2(\bF_{d'}^+)$ generated by the vectors $e_\mt, e_1, \dots, e_{d'}$.
Consequently if we write $\Phi(Q) = g_0 + \big(\sum_{i=1}^{d'} \Bt_i g_i\big) + Z$ for $Q \in \T_{(X,\si)}^+$ then we get
\[
\Phi_y(Q) =
\begin{bmatrix}
g_0(y) & 0 & 0 & \dots & 0 \\
g_1(y) & g_0 \tau_1(y) & 0 & \dots & 0 \\
g_2(y) & 0 & g_0 \tau_2(y) & \dots & 0 \\
\vdots & \vdots & \vdots & \ddots & 0 \\
g_{d'}(y) & 0 & 0 & \dots & g_0 \tau_{d'}(y)
\end{bmatrix}.
\]
By denoting $E_{ij}$ the rank one operator with one in the $(i,j)$-entry and zeroes elsewhere, we may alternatively write
\[
\Phi_y(Q) = g_0(y)E_{00} + \sum_{i=1}^{d'} g_0 \tau_{i}(y)E_{ii} + g_{i}(y) E_{i0}.
\]
We remark here that the range of $\Phi_y$ contains all $E_{i0}$.
Indeed the range of $\Phi_y$ equals the range of $E(V\times \pi_y)$ since $\Phi$ is onto.
By choosing a $g \in C_0(Y)$ such that $g(y) = 1$ we obtain that $E_{i0} = E(V \times \pi_y)(\Bt_i g)$ for all $i=1,\dots, d'$.

For a matrix $A \in M_{d' + 1}(\bC)$ we write $[A]_{ij}$ for the element in the $(i,j)$-entry.

\smallskip

\noindent {\bf Claim 3.} {\it Suppose that $\Psi_y \colon \T_{(X,\si)}^+ \to \ran \Phi_y$ is an algebraic homomorphism such that $\Psi_y|_{C_0(X)}$ is diagonal on the range of $\Phi_y$, in the strong sense that
\[
\Psi_y(f) = \sum_{i=0}^{d'} [\Phi_y(f)]_{ii} \cdot E_{ii} \foral f \in C_0(X),
\]
and suppose that
\[
\Psi_y(\Bs_j h) =
\begin{bmatrix}
\ast & 0 & 0 & \dots & 0 \\
c_{1j} & \ast & 0 & \dots & 0 \\
c_{2j} & 0 & \ast & \dots & 0 \\
\vdots & \vdots & \vdots & \ddots & 0 \\
c_{d'j} & 0 & 0 & \dots & \ast
\end{bmatrix}
\text{ for some } h \in C_0(X).
\]
If $c_{ij} \neq 0$ then $\tau_i (y) = \ga_s^{-1} \si_j \ga_s (y)$.}

\smallskip

\noindent {\it Proof of Claim 3.} By the covariance relation and commutativity of $C_0(X)$ we obtain
\begin{align*}
\big[\Psi_y(f) \Psi_y(\Bs_j h) \big]_{i0}
& =
\big[\Psi_y(f \cdot \Bs_j h) \big]_{i0}
 =
\big[\Psi_y(\Bs_j h) \Psi_y(f \si_j) \big]_{i0} .
\end{align*}
By using the form of the elements as in the assumption we obtain
\[
[\Phi_y(f)]_{ii} \cdot c_{ij} = c_{ij} \cdot [\Phi_y(f \si_j)]_{00}.
\]
However, the homomorphisms
\[
Q \mapsto  [V \times \pi_y(Q)]_{00} \qand Q \mapsto  [V \times \pi_y(Q)]_{ii}
\]
are characters of $\T_{(Y,\tau)}^+$ whose restrictions on $C_0(Y)$ are $\ev_{y}$ and $\ev_{\tau_i(y)}$, respectively.
Hence by the discussion on the characters we have that
\[
[\Phi_y(f)]_{00} = [V \times \pi_y (\Phi(f))]_{00} = \ev_{\ga_s(y)} (f),
\]
and
\[
[\Phi_y(f)]_{ii} = [V \times \pi_y (\Phi(f))]_{ii} = \ev_{\ga_s\tau_i(y)}(f).
\]
Consequently $f \ga_s \tau_i (y) \cdot c_{ij} = c_{ij} \cdot f \si_j \ga_s(y)$ for all $f \in C_0(X)$ which completes the proof of Claim 3. \hfill{$\blacksquare$}

\smallskip

We will construct a homomorphism $\Psi_y$ as the one appearing in the claim above.
To this end we require the following remark. Let $\fT_2$ denote the lower triangular $2 \times 2$ matrices.

\smallskip

\noindent {\bf Claim 4.} {\it Let $\Phi \colon C_0(X) \to \fT_2$ be an algebraic homomorphism.
If $[\Phi(f)]_{11} = [\Phi(f)]_{22}$ then $[\Phi(f)]_{21} = 0$.}

\smallskip

\noindent {\it Proof of Claim 4.} Let $C_0(K) = \ca(f)$ be the commutative C*-subalgebra of $C_0(X)$ generated by $f$.
Since $g \mapsto [\Phi(g)]_{ii}$ are characters, we obtain that $[\Phi(h)]_{11} = [\Phi(h)]_{22}$ for all $h \in C_0(K)$.
Moreover these characters correspond to an evaluation, say $\ev_x$ for some $x \in K$.
Since $\Phi(hg) = \Phi(h) \Phi(g)$ for all $h, g \in C_0(K)$ we get that
\begin{align*}
[\Phi(h g)]_{21}
& =
[\Phi(h)]_{21} \cdot [\Phi(g)]_{11} + [\Phi(h)]_{22}  \cdot  [\Phi(g)]_{21}\\
& =
[\Phi(h)]_{21}  \cdot  g(x) + h(x) \cdot [\Phi(g)]_{21}.
\end{align*}
Hence $g \mapsto [\Phi(g)]_{21}$ is a point derivation at $x \in K$.
Therefore $[\Phi(f)]_{21}$ is zero and the proof of Claim 4 is completed. \hfill{$\blacksquare$}

\smallskip

Now we have all the required ingredients to establish the existence of a homomorphism $\Psi_y$ as in the Claim 4.
In particular $\Psi_y$ will be similar to $\Phi_y$ by an invertible matrix $A_y$.

\smallskip

\noindent {\bf Claim 5.} {\it Let $\Phi_y$ be the representation associated with a point $y \in Y$.
Then there exists an algebraic homomorphism $\Psi_y \colon \T_{(X,\si)}^+ \to \ran \Phi_y$ such that
\[
\Psi_y(f) = \sum_{i=0}^{d'}  [\Phi_y(f)]_{ii} \cdot E_{ii} \foral f \in C_0(X),
\]
and the range of $\Psi_y$ contains all $E_{i0}$ for $i=1, \dots, d$.}

\smallskip

\noindent {\it Proof of Claim 5.} We will construct the homomorphism $\Psi_y$ step by step.

If $\ga_s(y)= \ga_s \tau_1(y)$ then $[\Phi_y(f)]_{00} = [\Phi_y(f)]_{11}$.
Therefore we obtain
\[
\Phi_y(f) = \begin{bmatrix}
g(y) & 0 & 0 & \dots & 0 \\
0 & g(y) & 0 & \dots & 0 \\
\ast & 0 & \ast & \dots & 0 \\
\vdots & \vdots & \vdots & \ddots & 0 \\
\ast & 0 & 0 & \dots & \ast
\end{bmatrix}
\]
by Claim 4 and we proceed to the second step.
On the other hand, suppose that $\ga_s(y) \neq \ga_s \tau_1(y)$ and choose an $f \in C_0(X)$ of norm less than $1$ such that $f|_{\ol{U}} =1$ and $f|_{\ga_s \tau_1(\ol{U})} = 0$ for an appropriate compact neighbourhood $\ol{U}$ of $\ga_s (y)$ with $\ol{U} \cap \ol{\ga_s \tau_1(U)} = \mt$.
We have already remarked that  $[\Phi_y(f)]_{00} = f \ga_s (y)$ and $[\Phi_y(f)]_{ii} = f \ga_s \tau_i(y)$ for all $f \in C_0(X)$ so that
\[
[\Phi_y(f)]_{00} = 1 \qand [\Phi_y(f)]_{11} = 0.
\]
If $k = [\Phi_y(f)]_{10}$ let $A$ be the invertible matrix $I - k E_{10}$ (with inverse $A^{-1} = I + k E_{10}$).
Then we derive
\[
A \Phi_y(f) A^{-1} =
\begin{bmatrix}
1 & 0 & 0 & \dots & 0 \\
0 & 0 & 0 & \dots & 0 \\
\ast & 0 & \ast & \dots & 0 \\
\vdots & \vdots & \vdots & \ddots & 0 \\
\ast & 0 & 0 & \dots & \ast
\end{bmatrix}
\]
By commutativity we get that
\[
A \Phi_y(f) A^{-1} A \Phi_y(h) A^{-1} = A \Phi_y(h) A^{-1} A \Phi_y(f) A^{-1}
\]
for all $h \in C_0(X)$.
This shows that the $(1,0)$-entry of $A \Phi_y(h) A^{-1}$ is zero.
Furthermore the diagonals of $A \Phi_y(h) A^{-1}$ and $\Phi_y(h)$ are the same.
In addition $A E_{i0} A^{-1} = E_{i0}$ for all $i = 1, \dots ,d'$.

We can continue inductively for the rest of the rows.
In this way we form an algebraic homomorphism $\Psi_y \colon \T_{(X,\si)}^+ \to \ran \Phi_y$ that satisfies the conclusion of Claim 5. \hfill{$\blacksquare$}

\smallskip

The matrices constructed in Claim 5 do not depend on the choice of $f$.
Indeed if $g \in C_0(X)$ is such that
\[
[\Phi_y(g)]_{00} = 1 \qand [\Phi_y(g)]_{11} = 0,
\]
then by using commutativity we obtain that the equality $\Phi_y(f) \Phi_y(g) = \Phi_y(g) \Phi_y(f)$ gives that
\[
[\Phi_y(g)]_{10} = [\Phi_y(f)]_{10}.
\]
We will write $A_y$ for the invertible matrix inside $M_{d' + 1}(\bC)$ that is constructed in Claim 5 and gives the homomorphism $\Psi_y(\cdot) = A_y \Phi_y(\cdot) A_y^{-1}$.

\smallskip

\noindent {\bf Claim 6.} {\it The systems have the same multiplicity, i.e. $d = d'$.}

\smallskip

\noindent {\it Proof of Claim 6.} Let $\Psi_y$ be an algebraic homomorphism that satisfies the conclusion of Claim 5.
Hence the range contains all $E_{i0}$.
Select an $h \in C_0(X)$ such that
\[
1 = h \ga_s (y) = h \ga_s \tau_i(y) \foral i=1, \dots, d'.
\]
By Claim 4 then we have that $\Psi_y(h) = I_{d'}$.
Write $c_{ij} = [\Psi_y(\Bs_j h)]_{i0}$ and observe that
\[
\Psi_y(\Bs_j h f) = \Psi_y(\Bs_j f) \Psi_y(h) = \Psi_y(\Bs_j f) \foral f \in C_0(X).
\]
As in \cite[Proposition 4.3]{KakKat12} we can show that there is a matrix $[c_{ij}']$ such that $[c_{ij}] [c_{ij}'] = I_{d'}$.
This implies that $d \leq d'$.
This inequality holds for any $y \in Y$.
By symmetry on $\Phi^{-1}$ and for an $x \in X$ we get that $d' \leq d$, and the proof of Claim 6 is complete. \hfill{$\blacksquare$}

\smallskip

As a consequence, Claim 6 further implies that the obtained matrix $[c_{ij}]$ is square.
Therefore the matrix $[c_{ij}]$ which is shown to be right invertible in Claim 6, is in fact invertible.

To end the proof fix a $y \in Y$.
Let the setup be as after the proof of Claim 2.
Fix the map $\tau_1$ and suppose that after a re-enumeration the germ $\tau_1$ contains exactly the functions
\[
\ga_s^{-1} \si_1 \ga_s, \dots, \ga_s^{-1}\si_k\ga_s  \text{ and } \tau_1, \dots, \tau_l \text{ for some } k,l.
\]
We have just seen that $d = d'$.
We aim to show that $k=l$.
The next claim gives the required equality $k=l$, modulo a condition on the matrix $[c_{ij}]$ of Claim 6.

\smallskip

\noindent {\bf Claim 7.} {\it Let $[c_{ij}]$ be the invertible matrix obtained in Claim 6 for the fixed $y \in Y$.
If $c_{ij} = 0$ for the $i,j$ that satisfy $\tau_i \sim \tau_1 \not\sim \ga_s^{-1} \si_j \ga_s$, then $k = l$.}

\smallskip

\noindent {\it Proof of Claim 7.} We may assume without loss of generality that $k \leq l$, and it remains to prove that $k<l$ is impossible.
This follows verbatim from the last paragraph of \cite[Proof of Theorem 4.9]{KakKat12}.
In short, we may write
\[
[c_{ij}] = \begin{bmatrix} [a_{ij}] & 0 \\ \ast & \ast \end{bmatrix},
\]
where $[a_{ij}]$ is a $(l \times k)$-matrix.
If $k < l$ then when applying the Gaussian elimination we will be able to eliminate the entire $k+1$ row of the invertible matrix $[c_{ij}]$, which gives the contradiction.
\hfill{$\blacksquare$}

\smallskip

Therefore in the rest of the proof we will focus in showing that $c_{ij} = 0$ for the $i,j$ that satisfy $\tau_i \sim \tau_1 \not\sim \ga_s^{-1} \si_j \ga_s$.

The coefficients $c_{ij}$ are obtained by the $(i,0)$-th entries of $\Psi_y(\Bs_j h)$ for the function $h$ of Claim 6.
For their analysis we can restrict even further to the $(2 \times 2)$-matrix representations.
For $z \in Y$ we define
\[
\psi_{z , i} := P_{i0} \Psi_z \colon \T_{(X,\si)} \to \fT_2,
\]
where $P_{i0}$ is the projection on the space generated by $\{e_\mt, e_i\}$.
Since we are in the lower triangular matrices we have that $P_{i0} \Psi_z P_{i0} = P_{i0} \Psi_z$, hence $\psi_{z , i}$ is indeed a homomorphism.
Then  we get that
\[
c_{ij} = [\psi_{y, i}( \Bs_j h)]_{10} .
\]
We will also write $\phi_{z,i} = P_{i0} \Phi_z$ and $A_{z,i} = P_{i0} A_z$.
We can check that
\[
\psi_{z,i}(\cdot)  = A_{z,i} \phi_{z,i}(\cdot) A_{z,i}^{-1}.
\]

\smallskip

\noindent {\bf Case 1.} Suppose that $y \neq \tau_1(y)$ and choose $f$ with $[\phi_{y,1}(f)]_{11} = 0$ as we do in Claim 5.
We may also choose a neighbourhood $U$ of $y$ where $[\phi_{z,1}(f)]_{00}=1$ and $[\phi_{z,1}(f)]_{11} = 0$ for all $z \in U$.
Then we get the invertible matrices
\[
A_{z,1} = I - [\phi_{z,1}(f)]_{10} \cdot E_{10}
\]
for all $z \in U$.
Consequently, invoking the automatic continuity of $\Phi$, we get that $A_{\bullet,1}$ is continuous on $y$.
Let $j$ such that $\ga_s^{-1} \si_j \ga_s \not\sim \tau_1$; then there exists a net $y_\la \in U$ with $y_\la \rightarrow y$ such that $\ga_s \tau_1(y_\la) \neq \si_j \ga_s (y_\la)$ for all $\la$.
Consider the elements
\[
c_{1j, \la} = [\psi_{y_\la}(\Bs_j h)]_{10} = [A_{y_\la,1} \phi_{y_\la,i}(\Bs_j h) A_{y_\la,1}^{-1}]_{10}.
\]
By continuity we have that $\lim_\la A_{y_\la,1} = A_{y,1}$ hence $c_{1j} = \lim_\la c_{1j, \la}$.
However $c_{1j, \la} = 0$ as follows by the construction of $y_\la$ and Claim 3.
Indeed if $c_{1j, \la} \neq 0$ then Claim 3 indicates that $\ga_s \tau_1(y_\la) = \si_j \ga_s(y_\la)$.
We conclude that $c_{1j} = 0$.

\smallskip

\noindent {\bf Case 2.} Suppose that $y = \tau_1(y)$.
Then $\phi_{y,1}|_{C_0(X)}$ is scalar by Claim 4.
Let $j$ such that $\ga_s^{-1} \si_j \ga_s \not\sim \tau_1$.
Let a net $y_\la \in V$ such that $\ga_s \si_j \ga_s^{-1}(y_\la) \neq \tau_1(y_\la)$ for all $\la$.
Construct as above the invertible matrices $A_{y_\la}$ and note that
\[
A_{y_\la,1} = \begin{bmatrix} 1 & 0 \\ c_\la & 1 \end{bmatrix}.
\]
Now all $A_{y_\la}$ are uniformly bounded by $1 + \nor{\Phi}$ and by passing to a subsequence we may assume that they converge to an operator.
Consequently the $A_{y_\la,1}$ converge to a matrix
\[
A = \begin{bmatrix} 1 & 0 \\ c & 1 \end{bmatrix},
\]
which is invertible.
Then we obtain
\begin{align*}
\phi_{y,1}(\Bs_j h)
& =
A^{-1} \big( \lim_\la A_{y_\la,1} \phi_{y_\la,1}(\Bs_j h) A_{y_\la,1}^{-1} \big) A
 =
A^{-1} \lim_\la \psi_{\la,1}(\Bs_j h) A.
\end{align*}
But by Claim 3 again we get that
\[
c_{1j, \la} = [A_{y_\la,1} \phi_{y_\la,1}(\Bs_j h) A_{y_\la,1}^{-1}]_{10} = [\psi_{\la,1}(\Bs_j h)]_{10}  =0,
\]
since $\ga_s^{-1} \si_j \ga_s \not\sim \tau_1$.
Hence $\phi_{y,1}(\Bs_j h) = A^{-1} D A$ where $D$ is diagonal.
Now by assumption $y = \tau_1(y)$ hence the range of $\phi_{y,1}$ consists of the matrices $a_1I + a_2 E_{10}$, all of which commute with $A$.
Consequently we obtain
\[
D = A \phi_{y,1}(\Bs_j h) A^{-1} = \phi_{y,1}(\Bs_j h),
\]
which shows that $c_{1j} = [\phi_{y,1}(\Bs_j h)]_{10} = [D]_{10} = 0$.

We showed that in either case $c_{1j} = 0$ for all $j$ such that $\tau_1 \not\sim \ga_s^{-1} \si_j \ga_s$.
By substituting $\tau_1$ with $\tau_i$ for $i=1,\dots,k$ we have that $c_{ij} = 0$ for all $i,j$ that satisfy $\tau_i \sim \tau_1 \not\sim \ga_s^{-1} \si_j \ga_s$.
As we explained above, this concludes our proof of \cite[Theorem 3.22]{DavKat11}.

\subsection{Applications}\label{S: app pc}

The above idea applies to other classes of operator algebras.
The key is to analyse the maximal analytic sets of the character space, and also ensure the existence of a Fock representation.
Let us describe briefly how this is achieved in three cases.
We reserve a full discussion for a forthcoming project.

The first case is the second part of \cite[Theorem 3.22]{DavKat11}.
The \emph{semicrossed product $C_0(X) \times_\si \bF_+^d$} related to $(X,\si)$ is defined as the universal nonselfadjoint operator algebra generated by the $\Bs_\mu f$ so that now the $\Bs_i$ are taken to be just contractions, and $f \Bs_i = \Bs_i f \si_i$.
Once again we do not require the generators to be separated into $\Bs_i$ and $f \in C_0(X)$ as in \cite[Definition 1.2]{DavKat11}.
When the semicrossed products are algebraically isomorphic then the maximal analytic sets are polydiscs, and again Claim 1 holds.
Furthermore by definition there is a canonical epimorphism from the semicrossed product onto the tensor algebra.
This sets the appropriate context to pass to an epimorphism of $C_0(X) \times_\si \bF_+^d$ onto $\T_{(Y,\tau)}$ and then follow the steps from Claim 3 and thereon to obtain again piecewise conjugacy.
Indeed, the only ingredient required is that the homomorphisms are onto.

On the other hand, suppose that the $\si_i \colon X \to X$ commute.
Then we may define the universal nonselfadjoint operator algebra $C_0(X) \times_\si \bZ_+^d$ generated by the $\Bs_{\un{x}} f$ for $\un{x} \in \bZ_+^d$ and $f \in C_0(X)$ so that now the $\Bs_i$ are taken to be commuting contractions, and $f \Bs_i = \Bs_i f \si_i$.
It follows that the character space of $C_0(X) \times_\si \bZ_+^d$ coincides with the character space of $C_0(X) \times_\si \bF_+^d$.
Furthermore the family $(\pi_y,\{V_i\}_{i=1}^d)$ defined by
\[
\pi_y(f) = \diag\{f \si_{\un{x}}(y) \mid \un{x} \in \bZ_+^d\} \qfor f \in C_0(X),
\]
and $V_i e_{\un{x}} = e_{i + \un{x}}$ defines a completely contractive representation of $C_0(X) \times_\si \bZ_+^d$.
These are the appropriate ingredients to show that if $C_0(X) \times_\si \bZ_+^d$ is algebraically isomorphic to $C_0(Y) \times_\tau \bZ_+^{d'}$ then the systems are piecewise conjugate.
Indeed the proof reads the same as in the case of the tensor algebras (even the part on automatic continuity).

Even more one may consider the universal nonselfadjoint operator algebra $C_0(X) \times_\si^{\textup{rc}} \bZ_+^d$ generated by the $\Bs_{\un{x}} f$ for $\un{x} \in \bZ_+^d$ and $f \in C_0(X)$ so that now the $\Bs_i$ form a commuting row contraction, and $f \Bs_i = \Bs_i f \si_i$.
Now the character space coincides with the character space of $\T_{(X,\si)}$.
Furthermore we obtain a contractive representation by compressing the family $(\pi_y, \{V_i\}_{i=1}^d)$ of $C_0(X) \times_\si \bZ_+^d$ above by the projection $p$ that gives the symmetric subproduct system.
Indeed commutativity of the $\si_i$ implies that $p$ commutes with $\pi_y$.
Once more one derives piecewise conjugacy when $C_0(X) \times_\si^{\textup{rc}} \bZ_+^d$ is algebraically isomorphic to $C_0(Y) \times_\tau^{\textup{rc}} \bZ_+^{d'}$.
Automatic continuity can be shown to hold as well.


\end{document}